\documentclass[10pt]{article}

\usepackage[english]{babel}
\usepackage[utf8]{inputenc}
\usepackage{amsmath}
\usepackage{amssymb}
\usepackage{amsthm}
\usepackage[all,cmtip]{xy}
\usepackage[bookmarks=true]{hyperref}
\usepackage{tikz}
\usepackage{todonotes}
\usepackage{slashed}
\usepackage{mathtools}
\usepackage{dsfont}
\usepackage{tensor}
\usepackage{verbatim}
\usepackage{graphicx}
\usepackage[shortlabels]{enumitem}
\usepackage{xstring}
\usepackage[left=3cm,right=3cm,top=3cm]{geometry}
\usepackage[capitalize]{cleveref}
\usepackage{tensor}
\usepackage{mathrsfs}

\definecolor{darkolivegreen}{rgb}{0.33, 0.42, 0.18} 
\definecolor{cobalt}{rgb}{0.0, 0.24, 0.43}

\newcounter{comments}
\newenvironment{displaycomment}{\begin{list}{}{\rightmargin=1cm\leftmargin=1cm}\item\sf\begin{small}}{\end{small}\end{list}}

\setlength{\parskip}{1.5ex}


\newcommand{\C}{\mathbb{C}}
\newcommand{\R}{\mathbb{R}}
\newcommand{\Z}{\mathbb{Z}}

\newcommand{\mc}[1]{\mathcal{#1}}
\def\smooth{\mathrm{s}} 
\newcommand{\Clsm}{\Cl(V)^{\smooth}} 

\newcommand*{\defeq}{\mathrel{\vcenter{\baselineskip0.5ex \lineskiplimit0pt
                        \hbox{\scriptsize.}\hbox{\scriptsize.}}}%
        =}

\newcommand{\ph}{\varphi}

\newcommand{\eps}{\varepsilon}

\newcommand{\Aut}{\operatorname{Aut}}
\newcommand{\SO}{\operatorname{SO}}
\newcommand{\Spin}{\operatorname{Spin}}
\newcommand{\U}{\operatorname{U}}

\newcommand{\Imp}{\operatorname{Imp}}

\newcommand{\Cl}{\operatorname{Cl}}
\newcommand{\ClvN}{\operatorname{ClvN}}
\newcommand{\lie}[1]{\mathfrak{#1}}

\newcommand{\opp}{\operatorname{opp}}
\newcommand{\pr}{\operatorname{pr}}

\newcommand{\lact}{\triangleright}
\newcommand{\ract}{\triangleleft}

\newcommand{\Hom}{\operatorname{Hom}}

\newcommand{\Diff}{\mathcal{D}i\!f\!\!f}

\newcommand{\Open}{\mathcal{O}pen}
\newcommand{\Frech}{\mathcal{F}\!r\acute{e}ch}
\newcommand{\Cat}{\mathcal{C}\!at}

\def\grbtech#1{\mathcal{G}\hspace{-0.06em}r\hspace{-0.06em}b_{\hspace{-0.07em}{#1}}}

\def\grbcon#1#2{\grbtech{#1}^{\nabla\!}\left(#2\right)}
\def\buntech#1#2{\mathcal{B}\hspace{-0.01em}un_{\hspace{0.05em}#1}^{#2}}
\def\ufusbun#1{\mathcal{F}\!us\buntech{}{}(#1)}

\newcommand{\String}{\operatorname{String}}
\newcommand{\SStruct}{\smash{\widetilde{L\Spin}}(M)}
\newcommand{\BCE}{\smash{\widetilde{L\Spin}}(d)}

\renewcommand{\O}{\operatorname{O}}
\renewcommand{\d}{\operatorname{d}}
\newcommand{\der}[2]{\frac{\d #1}{\d #2}} 


\def\res{L}
\def\osplit{\O_{\res}^{\theta}(V)_{0}}

\newcommand{\id}{\mathds{1}}

\def\quot#1{``#1''}

\def\cstar{C$^{*}\!$}

\makeatletter
\def\thm@space@setup{%
  \thm@preskip=0.6em
  \thm@postskip=0.00em
}
\makeatother

\theoremstyle{definition}
\newtheorem{definition}{Definition}[subsection]
\newtheorem{remark}[definition]{Remark}

\theoremstyle{plain}
\newtheorem{theorem}[definition]{Theorem}
\newtheorem{proposition}[definition]{Proposition}
\newtheorem{lemma}[definition]{Lemma}
\newtheorem{corollary}[definition]{Corollary}

\setlist{topsep=-0.2em,itemsep=0em}

\crefname{enumi}{\unskip}{\unskip}
\crefname{equation}{}{}

\title{Connes fusion of spinors on loop space}

\author{Peter Kristel and Konrad Waldorf}
\date{}

\makeatletter
\hypersetup{
colorlinks=true,
urlcolor=cobalt,
linkcolor=cobalt,
citecolor=darkolivegreen,
pdfmenubar=false,
pdftoolbar=true,
pdftitle = {\@title},
pdfauthor = {\@author},
}
\makeatother

\begin{document}

\maketitle

\begin{abstract}
\noindent
The loop space of a string manifold supports an infinite-dimensional Fock space bundle, which is an analog of the spinor bundle on a spin manifold.
This spinor bundle on loop space appears in  the description of 2-dimensional sigma models as the bundle of  states over the configuration space of the superstring. We construct a product on this bundle covering the fusion of loops, i.e., the merging of two loops along a common segment.
For this purpose, we exhibit it as a bundle of bimodules over a certain von Neumann algebra bundle, and realize our product fibrewise using the Connes fusion of von Neumann bimodules. Our main technique is to establish a novel relation between string structures, loop fusion, and the Connes fusion of Fock spaces. The fusion product on the spinor bundle on loop space was proposed by Stolz and Teichner as a part of a programme to explore the relation between generalized cohomology theories, functorial field theories, and index theory. It is related to the pair of pants worldsheet of the superstring, to the extension of the corresponding smooth functorial field theory down to the point, and to a higher-categorical bundle on the underlying string manifold, the stringor bundle. 
%
\end{abstract}


     \setlength{\parskip}{0ex}
     \begingroup
     \tableofcontents
     \endgroup
     \setlength{\parskip}{1.5ex}

\section{Introduction}

In this article we construct a Connes fusion product on the spinor bundle on the loop space of a string manifold, and thus solve a problem formulated by Stolz and Teichner in 2005 \cite{stolz3}. We recall that a string manifold is a spin manifold $M$ such that
\begin{equation*}
\textstyle\frac{1}{2}p_1(M)=0\text{.}
\end{equation*}
If $M$ is a string manifold, then its free loop space $LM=C^{\infty}(S^1,M)$ is a spin manifold in the sense of Killingback \cite{killingback1}, i.e., it comes equipped with a certain principal bundle for the basic central extension
\begin{equation*}
\U(1) \to \BCE \to L\Spin(d)
\end{equation*}
of the loop group of $\Spin(d)$, where $d=\dim(M)$. In our previous work \cite{Kristel2020} we have constructed an infinite-dimensional Hilbert space bundle $\mathcal{F}(LM)$ on $LM$ by associating a  certain unitary representation
\begin{equation*}
\BCE \to \U(\mathcal{F})
\end{equation*}
to this principal bundle, where $\mathcal{F}$ is the Fock space of \quot{free fermions on the circle}. We have proved \cite{Kristel2020} that the bundle $\mathcal{F}(LM)$ realizes precisely what Stolz-Teichner called the spinor bundle on loop space \cite{stolz3}.

In the present article, we construct a hyperfinite type III$_1$ von Neumann algebra bundle $\mathcal{N}$ over the space $PM$ of smooth paths in $M$, and prove (\cref{lem:vonNeumannBimodule}) that the spinor bundle $\mathcal{F}(LM)$ is a $p_1^{*}\mathcal{N}$-$p_2^{*}\mathcal{N}$-bimodule bundle, where $p_1,p_2: LM \to PM$ are the maps that divide a loop into two halves. From a fibrewise point of view, if $\beta_1,\beta_2\in PM$ are paths with common initial points and common endpoints, and $\beta_1 \cup\beta_2$ denotes the corresponding loop, then $\mathcal{F}(LM)_{\beta_1\cup\beta_2}$ is an $\mathcal{N}_{\beta_1}$-$\mathcal{N}_{\beta_2}$-bimodule.
Our main result (\cref{th:fusionfibrewise}) is the construction of unitary intertwiners
\begin{equation*}
\chi_{\beta_1,\beta_2,\beta_3}: \mathcal{F}(LM)_{\beta_1\cup\beta_2} \boxtimes \mathcal{F}(LM)_{\beta_2\cup\beta_3} \to \mathcal{F}(LM)_{\beta_1\cup\beta_3}
\end{equation*}
of $\mathcal{N}_{\beta_1}$-$\mathcal{N}_{\beta_3}$-bimodules, where $\boxtimes$ is the Connes fusion of bimodules over $\mathcal{N}_{\beta_2}$, and $\beta_1,\beta_2,\beta_3\in PM$ is any triple of paths with common initial points and common endpoints.
One may regard the loop $\beta_1\cup\beta_3$ as the fusion of the loops $\beta_1\cup\beta_2$ and $\beta_2\cup\beta_3$ along the common segment $\beta_2$, and regard the Connes fusion product $\chi_{\beta_1,\beta_2,\beta_3}$ as lifting this loop-fusion product.  
We prove  that the intertwiners $\chi_{\beta_1,\beta_2,\beta_3}$ equip the spinor bundle with an associative product over the fusion of loops (\cref{th:fusionass}). 

Our construction of the Connes fusion product became possible because we found a specific way  to relate string structures, loop fusion, and  Connes fusion:
\begin{enumerate}[(1)]

\item 
We use the recent discovery of a certain loop-fusion product on Killingback's spin structure on loop space \cite{waldorfa,waldorfb}. This fusion product belongs to a loop space equivalent formulation of a string structure on $M$. 

\item
A similar,  Lie group-theoretical loop-fusion product exists on the basic central extension $\BCE$, and it satisfies a certain compatibility condition with the fusion product of (1). In our previous work \cite{Kristel2019} we have found  an operator-theoretic description of this loop-fusion product, using Tomita-Takesaki theory of the free fermions $\mathcal{F}$. 

\item
We use functoriality of Connes fusion, together with the fact that the free fermions $\mathcal{F}$ are a standard form, to construct a Connes fusion product of certain operators on $\mathcal{F}$ (\cref{eq:ImpFusionDiagram}). We derive a new and crucial relation (\cref{thm:FusionProductsAgree}) between   the loop-fusion product of (2) and this Connes fusion product. 

\end{enumerate}
Our construction of the Connes fusion product of spinors in \cref{th:fusionfibrewise} makes use of all these three fusion products and of the relations among them.

The question for the existence of  an associative Connes fusion product on $\mathcal{F}(LM)$ was formulated as \quot{Theorem 1} in \cite{stolz3}. In addition to proving its (fibrewise) existence, we address and solve the question in which way these structures can be upgraded into a \emph{smooth} setting. In our previous work \cite{Kristel2020}, we started this by generalizing the concept of a \emph{rigged Hilbert space} to  \cstar-algebras and bundle versions of both. In this article, 
we further extend this framework to von Neumann algebras, and exhibit our von Neumann algebra bundle $\mathcal{N}$ over $PM$ as a \emph{rigged} von Neumann algebra bundle, and the spinor bundle $\mathcal{F}(LM)$ as a \emph{rigged} von Neumann $p_1^{*}\mathcal{N}$-$p_2^{*}\mathcal{N}$-bimodule bundle (\cref{lem:vonNeumannBimodule}). Moreover, we show that our Connes fusion product on the spinor bundle is smooth with respect to these rigged structures (\cref{prop:smoothness2}).

Our results here are part of a programme to explore the relation between functorial field theories, generalized cohomology theories, and index theory. 
Within this programme there are, among others, two big (and still open) problems, and we believe that the results of this article make a contribution to them. 

The first problem is to realize the 2-dimensional supersymmetric sigma model (the \quot{free superstring}) rigorously as a smooth  functorial field theory (FFT) in the sense of Atiyah, Segal, and Stolz-Teichner \cite{atiyah1,Segal1991,ST04}. In this sigma model, the Hilbert space  $\mathcal{F}(LM)_{\gamma}$ is the \quot{state space} for  superstrings with underlying \quot{world-sheet embedding} $\gamma:S^1 \to M$; in other words, it is the value of the FFT on the circle (equipped with map $\gamma$). Our result that the spinor bundle is a \emph{rigged} Hilbert space bundle is at the basis of the statement that this FFT is smooth. The main contribution is, however, that our Connes fusion product on $\mathcal{F}(LM)$ gives one ingredient of the answer to the question what the FFT assigns to a pair of pants. Another contribution is that our rigged von Neumann algebra bundle $\mathcal{N}$ over $PM$ is part of the answer to the question how the FFT can be extended to a point: we discuss in \cref{sec:stringorbundle} that $\mathcal{N}$ furnishes a certain  \emph{2-vector bundle} over $M$, the \quot{stringor bundle} of the string manifold $M$. 

Yet, two major steps  are still missing in the construction of the supersymmetric sigma model as a smooth extended FFT. First, a proper definition of an appropriate smooth bordism category over $M$ (probably, it will be a sheaf of $(\infty,2)$-categories). There have been very recent promising proposals \cite{Ludewig2020,Grady2020} raising hope that this is accomplishable. Second,  a construction and investigation of a \emph{connection} on the spinor bundle $\mathcal{F}(LM)$. Some ideas and results in this direction have already been reported by Stolz-Teichner under the name \quot{string connection} \cite{ST04} or \quot{conformal connection} \cite{stolz3}. Further results have been obtained in \cite{waldorf8,waldorfb}, discussing string connections in terms of Killingback's spin structures on loop spaces. All together, we believe that a complete solution to the problem to cast the free superstring as an extended smooth FFT is now in reach.

The second problem that arises in the above-mentioned programme is the quest for a Dirac type operator acting on spinors on loop space \cite{witten2,Taubes1989}. Admittedly, our results do not yet place this goal in easy reach.
However, they might be relevant in order to specify the precise space of spinors on which such an operator may act. At this point, we would like to highlight the philosophy behind fusion on loop space, following Stolz-Teichner \cite{stolz3}: fusion characterizes geometric structure on $LM$ that subtly encodes geometry on $M$. This is supported by the following list of incarnations of this philosophy:
\begin{enumerate}[(a)]

\item
A fusion product on a line bundle on $LM$ encodes a bundle gerbe on $M$; similarly, a fusion product on a central extension of a loop group $LG$ encodes a multiplicative bundle gerbe on $G$. The precise statements are the main results of \cite{waldorf11,Waldorfc}.

\item 
A fusion product on a Killingback spin structure on $LM$ encodes a string structure on $M$, as  mentioned above. The precise statement is in \cite{waldorfb}.

\item
Our Connes fusion product on the spinor bundle on $LM$ encodes the stringor bundle on $M$, as outlined above and described in \cref{sec:stringorbundle}. 

\item
Finally, the following lower-categorical analog of (b) has been proved by Stolz-Teichner \cite{stolz3}: the orientation bundle of $LM$ carries a canonical fusion product, and the fusion-preserving sections encode precisely the spin structures on $M$.  

\end{enumerate}
In analogy to (d), we expect that the relevant space of spinors on loop space  consists of \emph{fusion-preserving} sections of  the spinor bundle $\mathcal{F}(LM)$. In this sense, we believe that our construction of the Connes fusion product on the spinor bundle will help to approach the mysterious Dirac operator on loop space.

For completeness, we remark that both problems described above are conjectured to be related by a yet unknown index theory with family indices
taking values in a generalized cohomology theory, whose objects  can be represented by certain smooth FFTs. It is expected that the index of the Dirac operator on loop space corresponds  precisely to the free superstring FFT \cite{ST04,stolz3}.

This article is organized as follows.
In \cref{sec:HilbertBundle} we develop the theory of rigged von Neumann algebra bundles and bimodule bundles, first over Fr\'echet manifolds and then over diffeological spaces (the space of tuples of paths with common initial point and common end point are diffeological spaces). In \cref{sec:freefermions} we discuss the free fermions $\mathcal{F}$, exhibit it as a rigged von Neumann bimodule, and construct the Connes fusion of certain operators on $\mathcal{F}$. \cref{sec:fusion} is devoted to all aspects of loop-fusion, including a discussion of the relation between Killingback spin structures and string structures. In \cref{sec:SpinorBundleOnLoopSpaceI} we carry out our main constructions: the rigged von Neumann algebra bundle $\mathcal{N}$, the $\mathcal{N}$-$\mathcal{N}$-bimodule structure on the spinor bundle $\mathcal{F}(LM)$, and finally, the Connes fusion product on $\mathcal{F}(LM)$. We include two appendices about bimodules of von Neumann algebras: \cref{sec:StandardForms}  collects  results about the theory of standard forms that we mainly use in \cref{sec:ReflectionFreeFermions}, and \cref{sec:ConnesFusion} contains definitions and properties of Connes fusion.

\paragraph{Acknowledgements. } This project was funded by the German Research Foundation (DFG) under project code WA 3300/1-1. We would like to thank Andr\'e Henriques, Bas Janssens, Matthias Ludewig, and Peter Teichner for helpful discussions.

\section{Rigged von Neumann algebras} 
\label{sec:HilbertBundle}

One of the central objects of consideration in this article are certain bundles of von Neumann algebras over the space of smooth paths/loops in a manifold.
It is highly desirable that these bundles are equipped with a smooth structure, because these bundles are expected to host interesting differential operators.
However, to the best of our knowledge, no treatment of smooth bundles of Hilbert spaces, \cstar-algebras, or von Neumann algebras is available.
In our previous work \cite{Kristel2020} on spinor bundles on loop space, we found that these smoothness issues are best addressed in the setting of \emph{rigged} Hilbert spaces, which we extended there to rigged \cstar-algebras and smooth bundles thereof.

In this section, we expand these notions further to include  rigged von Neumann algebras and smooth bundles of rigged von Neumann algebras, as well as rigged bimodules over von Neumann algebras and bundles of these. Then, in order to discuss fusion in loop spaces, we further extend these structures from Fr\'echet manifolds to  diffeological spaces.

\subsection{Representations on rigged Hilbert spaces}
\label{sec:HilbertBundle:1}

In this paper, we work with rigged Hilbert spaces and rigged \cstar-algebras as introduced in Section 2 of our paper \cite{Kristel2020}; we briefly review all required notions and results, and refer the reader to that paper for more motivation and context.

\begin{definition}
\label{def:rhs}
A \emph{rigged Hilbert space}  is a Fr\'echet space equipped with a continuous (sesquilinear) inner product. A \emph{morphism of rigged Hilbert spaces} is simply a continuous linear map.
 A morphism of rigged Hilbert spaces is called \emph{bounded}/\emph{isometric} if it is bounded/isometric with respect to the  inner products, and it is called \emph{unitary}, if it is an isometric isomorphism.
\end{definition}

Given a rigged Hilbert space $E$, one obtains an honest Hilbert space, denoted $E^{\langle \cdot, \cdot \rangle}$, by completion with respect to the inner product.
The prototypical example of a rigged Hilbert space is the Fr\'echet space of smooth functions on the circle, equipped with the $L^2$-inner product; its Hilbert completion is the space of square-integrable functions on the circle.

By a \emph{smooth representation} of a Fr\'echet Lie group $\mathcal{G}$ on a rigged Hilbert space $E$ we mean an action  of $\mathcal{G}$ on $E$ by unitary morphisms of rigged Hilbert spaces, such that the map $\mathcal{G} \times E \to E$ is smooth.
 The typical example in this paper is the rigged Hilbert space $\mathcal{F}_L^{\smooth}$ of smooth vectors in a Fock space of a Lagrangian $L\subset V$, which carries a smooth representation of a  Lie group $\Imp_L^{\theta}(V)$ of implementers, see \cref{prop:smoothFockSpace}.

\begin{definition}
\label{def:riggedcstaralgebra}
 A \emph{rigged \cstar-algebra} is a Fr\'echet algebra $A$, equipped with a continuous norm $\| \cdot \|: A \rightarrow \R_{\geqslant 0}$ and a continuous complex anti-linear involution $*:A \rightarrow A$, such that its completion  with respect to the norm is a \cstar-algebra. A \emph{morphism of rigged \cstar-algebras} is a morphism of Fr\'echet algebras that is bounded with respect to the norms and intertwines the involutions. A morphism of rigged \cstar-algebras is called \emph{isometric}, if it is an isometry with respect to the norms.
\end{definition}

By definition, an actual \cstar-algebra $A^{\|\cdot\|}$ can be  obtained from a rigged \cstar-algebra $A$ by norm completion. By a \emph{smooth representation} of a Fr\'echet Lie group $\mathcal{G}$ on a rigged \cstar-algebra $A$ we mean an action of $\mathcal{G}$ on $A$ by isometric isomorphisms of rigged \cstar-algebras, such that the map $\mathcal{G} \times A \to A$ is smooth.
The typical example in this paper is the Fr\'echet algebra $\Cl(V)^{\smooth}$ of smooth vectors in the Clifford algebra of a real Hilbert space $V$, which carries a smooth representation of the orthogonal group $\O(V)$ via Bogoliubov automorphisms, see \cref{prop:CliffordFrechetAlgebra}.

Of major importance for this article is a certain type of representations of rigged \cstar-algebras on rigged Hilbert spaces.

\begin{definition}\label{def:RiggedRepresentation}
Let $A$ be a rigged \cstar-algebra. A \emph{rigged $A$-module} is a rigged Hilbert space $E$ together with a  representation $\rho$
 of (the underlying algebra of) $A$ on (the underlying vector space of) $E$, such that the map $\rho:A \times E \to E$ is smooth and the following conditions hold for all $a \in A$ and all $v,w \in E$:
 \begin{equation}\label{eq:BoundednessCriteria}
  \langle \rho(a,v), \rho(a,v) \rangle \leqslant \| a \|^{2} \langle v, v \rangle \quad \text{ and } \quad \langle \rho(a,v),w \rangle = \langle v, \rho(a^{*},w) \rangle.
 \end{equation}
 A \emph{(unitary)} \emph{intertwiner} from a rigged $A_1$-module $E_1$ to a rigged $A_2$-module $E_2$ is a pair $(\phi,\psi)$ of a  morphism $\phi:A_1 \to A_2$ of rigged \cstar-algebras and a bounded (unitary) morphism $\psi:E_1\to E_2$ of rigged Hilbert spaces that intertwines the representations along $\phi$.
\end{definition}

The typical example is the rigged Hilbert space $\mathcal{F}_L^{\smooth}$, which becomes under Clifford multiplication a  rigged $\Cl(V)^{\smooth}$-module, see \cref{prop:SmoothCliffordActionOnF}. A couple of remarks are in order. 

\begin{remark}
\label{ex:OppositeRep}
\begin{enumerate}[(a)]

\item 
Conditions \cref{eq:BoundednessCriteria} in \cref{def:RiggedRepresentation} are chosen such that a rigged $A$-module $E$ with underlying representation $\rho$ induces a  $\ast$-homomorphism $\rho^{\vee}: A^{\|\cdot \|} \to \mathcal{B}(E^{\left \langle  \cdot,\cdot \right \rangle})$, i.e., a representation of the \cstar-algebra $A^{\|\cdot\|}$ on the Hilbert space $E^{\left \langle \cdot,\cdot  \right \rangle}$, see \cite[Rem. 2.2.11]{Kristel2020}.

\item
\label{re:inducedcstarrep}
If $\phi:A \to B$ is a morphism of rigged \cstar-algebras, and $E$ is a rigged $B$-module with representation $\rho$, then $E$ becomes a rigged $A$-module under the induced representation $(a,v) \mapsto \rho(\phi(a),v)$. Moreover, the pair $(\phi,\id)$ is an isometric  intertwiner. 

\item
Rigged \cstar-algebras and rigged modules  are compatible with dualization.
If $A$ is a rigged \cstar-algebra, then its opposite algebra, $A^{\mathrm{opp}}$, is a rigged \cstar-algebra in a natural way, and its completion $(A^{\opp})^{\|\cdot\|}$ is the usual opposite \cstar-algebra.
 Let $E$ be a rigged $A$-module.
 The inner product on $E$ gives us a complex anti-linear injection $\iota: E \rightarrow E^{*}$ mapping $E$ into its continuous linear dual $E^{*}$.
 Denote the image of $\iota$ by $E^{\sharp}$.
 We turn $E^{\sharp}$ into a  Fr\'{e}chet space using the identification with $E$.
 If $\rho$ denotes the representation of $A$ on $E$, then the map 
 \begin{align*}
  \rho^{\sharp}: A^{\mathrm{opp}} \times E^{\sharp} \rightarrow E^{\sharp},
  (a,\ph) &\mapsto \ph \circ \rho(a,-),
 \end{align*}
 is a  representation of  $A^{\mathrm{opp}}$ on  $E^{\sharp}$, and it is straightforward to show that it turns $E^{\sharp}$ into a rigged $A^{\opp}$-module.

\end{enumerate}
\end{remark}

Next we extend our framework, which we have so far recalled from \cite{Kristel2020}, by introducing new structures that ultimately lead to the notion of rigged bimodules over rigged von Neumann algebras.

\begin{definition}
\label{def:rbim}
Let $A_1$ and $A_2$ be rigged \cstar-algebras.
 A \emph{rigged $A_1$-$A_2$-bimodule} is a rigged Hilbert space $E$ 
with commuting representations of $A_1$ and $A_2^{\opp}$, in such a way that $E$  is a rigged $A_1$-module and a rigged $A_2^{\opp}$-module. A \emph{(unitary)} \emph{intertwiner} from a rigged $A_1$-$A_2$-bimodule $E$ to a rigged $A'_1$-$A_2'$-bimodule $E'$ is a triple $(\phi^1,\phi^2,\psi)$ consisting of   morphisms $\phi^1:A_1 \to A'_1$ and $\phi^2:A_2 \to A_2'$ of rigged \cstar-algebras, and of a bounded (unitary) morphism $\psi:E \to E'$ of rigged Hilbert spaces that intertwines both representations along $\phi^1$ and $\phi^2$. 
\end{definition}

Under completion, a rigged $A$-$B$-bimodule $E$ becomes a Hilbert space $E^{\left \langle \cdot,\cdot  \right \rangle}$ with commuting representations of the \cstar-algebras $A^{\|\cdot\|}$ and $(B^{\opp})^{\|\cdot\|}$. 
Our main example of a rigged bimodule is again the space $\mathcal{F}_L^{\smooth}$ of smooth vectors in a Fock space, which we equip with a rigged bimodule structure over a certain subalgebra of $\Cl(V)^{\smooth}$, see \cref{sec:ReflectionFreeFermions}. Next, we develop the setting of rigged von Neumann algebras. 

\begin{definition}
\label{def:riggedvonneumannbundle}
 A \emph{rigged von Neumann algebra} is a pair $N=(A,E)$ consisting of a rigged \cstar-algebra $A$ and a rigged $A$-module $E$, with the property that the induced \cstar-representation  $A^{\| \cdot \|} \rightarrow \mc{B}(E^{\langle \cdot, \cdot \rangle })$ is faithful.
 The representation underlying the rigged $A$-module $E$ is called the \emph{defining representation} of $N$.
\end{definition}

\begin{remark}
\label{re:vnacl}
 If $N=(A,E)$ is a rigged von Neumann algebra, and $\rho$  its defining representation, then the assumption that $\rho^{\vee}: A^{\| \cdot \|} \rightarrow \mc{B}(E^{\langle \cdot, \cdot \rangle})$ is faithful implies that it is an isometry, and hence a homeomorphism onto its image.
We may thus identify $\rho^{\vee}(A^{\|\cdot \|})$ with $A^{\| \cdot \|}$.
Then, we define the von Neumann algebra 
\begin{equation*}
N''\defeq(A^{\| \cdot \|})'' \subseteq \mc{B}(E^{\langle \cdot, \cdot \rangle})\text{,}
\end{equation*}
and conclude that it contains $A^{\| \cdot \|}$ as a weakly dense (and thus also $\sigma$-weakly dense) subspace.
In particular, any \emph{rigged} von Neumann algebra $N$ gives rise to an \emph{ordinary} von Neumann algebra $N''$. 
\end{remark}

An example of a rigged von Neumann algebra is the Clifford algebra $\Cl(V)^{\smooth}$ with its defining representation on Fock space $\mathcal{F}^{\smooth}_L$,  see \cref{prop:SmoothCliffordActionOnF}. The associated  von Neumann algebra is $\mathcal{B}(\mathcal{F}_L)$. More interesting examples will be constructed in \cref{cor:SmoothHalfAction}; these give rise to type III$_1$-factors.

\begin{remark}
\label{re:oppositevNA}
If $N=(A,E)$ is a rigged von Neumann algebra, then $N^{\mathrm{opp}}\defeq (A^{\mathrm{opp}},E^{\sharp})$ is a rigged von Neumann algebra, see \cref{ex:OppositeRep}.
\end{remark}

\begin{definition}\label{def:vNaMorphism}
Let $N_{1} = (A_{1},E_{1})$ and $N_{2} = (A_{2},E_{2})$ be rigged von Neumann algebras.
A \emph{morphism} of rigged von Neumann algebras from $N_1$ to $N_2$ is a morphism $\phi: A_{1} \rightarrow A_{2}$ of rigged \cstar-algebras that extends to a normal $\ast$-homomorphism $N_{1}'' \rightarrow N_{2}''$.
\end{definition}

We recall that \quot{normal} means to be continuous in the $\sigma$-weak topologies, and we recall that these topologies are generated by semi-norms $p_{\rho}(T) \defeq | \mathrm{tr}( \rho T) |$, where $\rho$ is a trace-class operator. Since $A^{\| \cdot \|}\subset N''$ is $\sigma$-weakly dense, it is clear that the extension $N_1'' \to N_2''$ is unique, if it exists. The following lemma gives us a useful sufficient criterion for the extendability.

\begin{lemma}\label{lem:SpatialIsomorphisms}
 Let $N_{1} = (A_{1},E_{1})$ and $N_{2} = (A_{2},E_{2})$ be rigged von Neumann algebras.
 Suppose that $(\phi,\psi)$ is a unitary   intertwiner from $E_1$ to $E_2$ in the sense of \cref{def:RiggedRepresentation}.
 Then, $\phi$ is a  morphism of rigged von Neumann algebras.
\end{lemma}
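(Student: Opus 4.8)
The plan is to show that a unitary intertwiner $(\phi,\psi)$ implements a spatial isomorphism between the ambient von Neumann algebras, and that spatial isomorphisms are automatically normal. First I would unpack what the data gives us: by \cref{ex:OppositeRep}(a), the rigged modules $E_1$ and $E_2$ induce faithful $*$-representations $\rho_1^{\vee}:A_1^{\|\cdot\|}\to\mathcal{B}(E_1^{\langle\cdot,\cdot\rangle})$ and $\rho_2^{\vee}:A_2^{\|\cdot\|}\to\mathcal{B}(E_2^{\langle\cdot,\cdot\rangle})$, and by \cref{re:vnacl} these are isometries onto $\sigma$-weakly dense subalgebras of $N_1''$ and $N_2''$ respectively. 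The morphism $\phi:A_1\to A_2$ of rigged \cstar-algebras extends to a bounded (in fact, since $\psi$ is unitary, isometric — see the commented remark after \cref{def:RiggedRepresentation}) $*$-homomorphism $\phi^{\|\cdot\|}:A_1^{\|\cdot\|}\to A_2^{\|\cdot\|}$; I should spell out that this extension exists because $\phi$ is bounded for the \cstar-norms and $A_i$ is norm-dense in $A_i^{\|\cdot\|}$.

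Next I would exploit that $\psi:E_1\to E_2$ is a unitary morphism of rigged Hilbert spaces, hence extends to a unitary operator $\psi^{\langle\cdot,\cdot\rangle}:E_1^{\langle\cdot,\cdot\rangle}\to E_2^{\langle\cdot,\cdot\rangle}$ of the Hilbert completions, with $\|\psi v\|=\|v\|$ for all $v\in E_1$ and density of $E_i$ in $E_i^{\langle\cdot,\cdot\rangle}$. The intertwining condition for the rigged modules, namely $\psi(\rho_1(a,v))=\rho_2(\phi(a),\psi(v))$ for all $a\in A_1$, $v\in E_1$, passes to the completions by continuity and density, giving
\[
\psi^{\langle\cdot,\cdot\rangle}\,\rho_1^{\vee}(a)\,(\psi^{\langle\cdot,\cdot\rangle})^{-1}=\rho_2^{\vee}(\phi^{\|\cdot\|}(a))
\]
for all $a\in A_1^{\|\cdot\|}$. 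In other words, conjugation by the unitary $U:=\psi^{\langle\cdot,\cdot\rangle}$ sends $\rho_1^{\vee}(A_1^{\|\cdot\|})$ into $\rho_2^{\vee}(A_2^{\|\cdot\|})$ and agrees with $\phi^{\|\cdot\|}$ under the identifications of \cref{re:vnacl}.

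Now the endgame is purely von-Neumann-algebraic: conjugation by a fixed unitary $U:E_1^{\langle\cdot,\cdot\rangle}\to E_2^{\langle\cdot,\cdot\rangle}$, viewed as a map $\mathcal{B}(E_1^{\langle\cdot,\cdot\rangle})\to\mathcal{B}(E_2^{\langle\cdot,\cdot\rangle})$, is a normal $*$-isomorphism (it is $\sigma$-weakly continuous, since it is continuous for the weak operator topology on bounded sets and the $\sigma$-weak and weak topologies coincide there, and in any case $\mathrm{Ad}_U$ carries trace-class operators to trace-class operators, so the defining semi-norms $p_{\rho}$ pull back to semi-norms of the same type). It therefore restricts to a normal $*$-isomorphism from the $\sigma$-weak closure of $\rho_1^{\vee}(A_1^{\|\cdot\|})$, which is $N_1''$, onto the $\sigma$-weak closure of $U\rho_1^{\vee}(A_1^{\|\cdot\|})U^{*}\subseteq\rho_2^{\vee}(A_2^{\|\cdot\|})$, which lands inside $N_2''=\rho_2^{\vee}(A_2^{\|\cdot\|})''$. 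This restriction extends $\phi^{\|\cdot\|}$, hence extends $\phi$, so $\phi$ is a morphism of rigged von Neumann algebras in the sense of \cref{def:vNaMorphism}.

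The only genuinely delicate point — and the one I would treat carefully rather than wave at — is the normality of $\mathrm{Ad}_U$, i.e. its $\sigma$-weak continuity; everything else is bookkeeping about extending continuous maps from dense subspaces. I would handle it by the standard observation that for a unitary $U$, $\mathrm{Ad}_U$ maps the trace-class operators on $E_1^{\langle\cdot,\cdot\rangle}$ bijectively onto those on $E_2^{\langle\cdot,\cdot\rangle}$ with $\mathrm{tr}(\sigma\,\mathrm{Ad}_U(T))=\mathrm{tr}((\mathrm{Ad}_{U^{*}}\sigma)\,T)$, so $p_{\sigma}\circ\mathrm{Ad}_U=p_{\mathrm{Ad}_{U^{*}}\sigma}$, which is exactly continuity in the $\sigma$-weak topologies. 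With that in hand the proof is complete.
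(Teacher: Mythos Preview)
Your proof is correct and follows essentially the same route as the paper: identify $\phi$ with conjugation by the unitary extension of $\psi$ to the Hilbert completions, verify that $\mathrm{Ad}_U$ is $\sigma$-weakly continuous via the pullback identity $p_{\sigma}\circ\mathrm{Ad}_U=p_{U^{*}\sigma U}$ for trace-class $\sigma$, and conclude by $\sigma$-weak density that the extension maps $N_1''$ into $N_2''$. The paper's proof is slightly terser but the argument is the same.
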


\begin{proof}
 All that needs to be shown is that $\phi :A_{1} \rightarrow A_{2}$ extends to a normal $*$-homomorphism $N_{1}'' \rightarrow N_{2}''$.
 According to \cref{re:vnacl} we view $A_{1}$ as a subset of $\mc{B}(E_{1}^{\langle \cdot, \cdot \rangle })$ and $A_{2}$ as a subset of $\mc{B}(E_{2}^{\langle \cdot, \cdot \rangle})$.
 Now, we have, for all $a \in A_{1}$ and all $v \in E_{2}^{\langle \cdot, \cdot \rangle}$,
 \begin{equation*}
   \phi(a)v = \phi(a) \psi \psi^{*} v = \psi a\psi^{*}v.
 \end{equation*}
Here, $\psi\psi^{*}=\id$ since  $\psi$ is unitary and thus extends to a unitary operator $\psi:E_1^{\left \langle  \cdot,\cdot \right \rangle} \to E_2^{\left \langle  \cdot,\cdot \right \rangle}$ on the completions. 
It follows that $\phi(a) = \psi a \psi^{*}$ as elements of $\mc{B}(E_{2}^{\langle \cdot, \cdot \rangle})$, which implies that the map $C_{\psi}:\mc{B}(E_{1}^{\langle \cdot, \cdot \rangle}) \rightarrow \mc{B}(E_{2}^{\langle \cdot, \cdot \rangle}), a \mapsto \psi a \psi^{*}$ extends $\phi$.
 Next, we prove that $C_{\psi}$ is normal. If $\rho$ is a trace-class operator on $E_{2}^{\langle \cdot, \cdot \rangle}$, then $\psi^{*} \rho \psi$ is a trace-class operator on $E_{1}^{\langle \cdot, \cdot \rangle}$, and we have
 \begin{equation*}
  p_{\psi^{*} \rho \psi}(a) = p_{\rho}(C_{\psi}(a)),
 \end{equation*}
 for all $a \in \mc{B}(E_{1}^{\langle \cdot, \cdot \rangle})$.
 This implies that $C_{\psi}$ is $\sigma$-weakly continuous.
 Now, because $A_{1}$ is $\sigma$-weakly dense in $N_{1}''$, and $A_{2}$ is $\sigma$-weakly dense in $N_{2}''$ it follows that $C_{\psi}(N_{1}'') \subseteq N_{2}''$, and that, moreover, $C_{\psi}: N_{1}'' \rightarrow N_{2}''$ is a $*$-homomorphism.
\end{proof}

\begin{remark}
\cref{lem:SpatialIsomorphisms} motivates the following terminology: a \emph{spatial morphism} from a rigged von Neumann algebra $N_1$ to another rigged von Neumann algebra $N_2$ is a unitary  intertwiner $(\phi,\psi)$ between the underlying rigged modules.  \cref{lem:SpatialIsomorphisms}  implies then that $\phi$ is a morphism of rigged von Neumann algebras. 
Additionally, a spatial morphism $(\phi,\psi)$ will be called \emph{invertible}, or spatial \emph{iso}morphism if it is invertible by another spatial morphism. We note that this is the case if and only if $(\phi,\psi)$ is an invertible unitary  intertwiner. Spatial isomorphisms appear frequently in the context of \emph{bundles} of rigged von Neumann algebras, whose local trivializations will be spatial isomorphisms in each fibre, see \cref{def:rvnab,re:loctrivmorph}.
\end{remark}

Next is the discussion of modules and bimodules for rigged von Neumann algebras.

\begin{definition}
\label{def:smoothrepriggedvN}
Let $N=(A,E)$ be a rigged von Neumann algebra. A \emph{rigged von Neumann $N$-module} is  a rigged $A$-module  $F$ whose induced $\ast$-homomorphism $A^{\| \cdot \|} \rightarrow \mc{B}(F^{\langle \cdot, \cdot \rangle})$ extends to a normal $\ast$-homomorphism  $N'' \rightarrow \mc{B}(F^{\langle \cdot, \cdot \rangle})$.
\end{definition}
\begin{remark}
\label{re:rvNmodule}
Again, the extension $N'' \rightarrow \mc{B}(F^{\langle \cdot, \cdot \rangle})$ in \cref{def:smoothrepriggedvN} is unique, if it exists.
It  guarantees that, for $N$  a rigged von Neumann algebra and $F$  a rigged von Neumann $N$-module, the completion  $F^{\left \langle \cdot,\cdot  \right \rangle}$ is an  $N''$-module in the usual von Neumann-theoretical sense (see \cref{sec:StandardForms}).
Observe moreover that the defining representation of any rigged von Neumann algebras is automatically a rigged von Neumann module.
\end{remark}

We will need the following result, which will be used later in the proof of \cref{rem:vonNeumanBimoduleCompletion}.
\begin{lemma}\label{rem:Extending*Homomorphisms}
Let $N=(A,E)$ be a rigged von Neumann algebra, and let $F$ be a rigged von Neumann $N$-module with underlying representation $\rho:A \times F \to F$. Then, the normal $\ast$-homomorphism $N'' \to \mc{B}(F^{\langle \cdot, \cdot \rangle})$ factors through the von Neumann algebra  
$(\rho^{\vee}(A^{\| \cdot \| }))''\subset \mc{B}(F^{\langle \cdot, \cdot \rangle})$.
\end{lemma}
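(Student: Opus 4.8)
The plan is to exploit that the extension in \cref{def:smoothrepriggedvN} is \emph{normal} and that $A^{\|\cdot\|}$ sits $\sigma$-weakly densely in $N''$, so that the image of $N''$ is forced into the $\sigma$-weak closure of the image of $A^{\|\cdot\|}$, which in turn lies inside the relevant bicommutant.

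Concretely, first I would set up notation: let $\pi\colon N'' \to \mc{B}(F^{\langle \cdot,\cdot\rangle})$ denote the normal $\ast$-homomorphism provided (and, by \cref{re:rvNmodule}, uniquely determined) by the hypothesis that $F$ is a rigged von Neumann $N$-module. Using the identification from \cref{re:vnacl} of $A^{\|\cdot\|}$ with its image under the defining representation of $N$, we have $A^{\|\cdot\|}\subseteq N''$, and the restriction $\pi|_{A^{\|\cdot\|}}$ is precisely the induced $\ast$-homomorphism $\rho^{\vee}\colon A^{\|\cdot\|}\to\mc{B}(F^{\langle\cdot,\cdot\rangle})$ of \cref{ex:OppositeRep}; in particular $\pi(A^{\|\cdot\|}) = \rho^{\vee}(A^{\|\cdot\|})$.

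The main step is then a short chain of inclusions. By \cref{re:vnacl}, $A^{\|\cdot\|}$ is $\sigma$-weakly dense in $N''$; since $\pi$ is normal, i.e., continuous for the $\sigma$-weak topologies, it sends the $\sigma$-weak closure of $A^{\|\cdot\|}$ into the $\sigma$-weak closure of $\pi(A^{\|\cdot\|}) = \rho^{\vee}(A^{\|\cdot\|})$, so $\pi(N'')$ is contained in that closure. On the other hand, $(\rho^{\vee}(A^{\|\cdot\|}))''$ is a $\sigma$-weakly closed subset of $\mc{B}(F^{\langle\cdot,\cdot\rangle})$ containing $\rho^{\vee}(A^{\|\cdot\|})$, hence it contains its $\sigma$-weak closure. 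Combining the two inclusions yields $\pi(N'')\subseteq(\rho^{\vee}(A^{\|\cdot\|}))''$, which is exactly the assertion that $\pi$ factors through the von Neumann algebra $(\rho^{\vee}(A^{\|\cdot\|}))''$.

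I do not expect a real obstacle here. The only points requiring a little care are bookkeeping matters: verifying that the abstract extension $\pi$ restricted to $A^{\|\cdot\|}$ genuinely coincides with $\rho^{\vee}$ (which follows from $\sigma$-weak density of $A^{\|\cdot\|}$ in $N''$ together with the corresponding uniqueness), and recalling the elementary facts that a map continuous for the $\sigma$-weak topologies carries closures into closures and that commutants -- hence bicommutants -- are automatically $\sigma$-weakly closed. In particular, one does \emph{not} need the deeper theorem that the image of a normal $\ast$-homomorphism between von Neumann algebras is again a von Neumann algebra: the bare set inclusion already suffices to express the factorization claimed in the lemma.
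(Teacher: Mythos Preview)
Your argument is correct and is essentially the same as the paper's: both use $\sigma$-weak density of $A^{\|\cdot\|}$ in $N''$ together with normality of the extension to force the image into the $\sigma$-weak closure of $\rho^{\vee}(A^{\|\cdot\|})$, hence into its bicommutant. If anything, your phrasing via closures is slightly cleaner than the paper's sequential formulation, since the $\sigma$-weak topology need not be metrizable.
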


\begin{proof}
Let $a \in N''$ be arbitrary. We wish to prove that its image in $\mc{B}(F^{\langle \cdot, \cdot \rangle})$ in fact lies in $(\rho^{\vee}(A^{\| \cdot \| }))''$.
 So, let $a_{n}$ be a sequence in $A^{\| \cdot \|}$ that $\sigma$-weakly converges to $a$.
 Then, by continuity of $N'' \to \mc{B}(F^{\langle \cdot, \cdot \rangle})$, the image of the sequence converges to some element in $\mc{B}(F^{\langle \cdot, \cdot \rangle})$, but because all elements $a_{n}$ were in $A^{\| \cdot \|}$ to begin with, the limit must already lie in its completion $(\rho^{\vee}(A^{\| \cdot \| }))''$. 
\end{proof}

Finally, we come to bimodules for rigged von Neumann algebras.
One of the fundamental results of this works exhibits the Fock space $\mathcal{F}^{\smooth}$ as a rigged von Neumann bimodule, see \cref{prop:FockRiggedBimodule}. 

\begin{definition}
\label{def:riggedvonNeumannbimodule}
 If $N_1=(A_1,E_1)$ and $N_2=(A_2,E_2)$ are rigged von Neumann algebras, then a \emph{rigged von Neumann $N_1$-$N_2$-bimodule}  is a rigged $A_1$-$A_2$-bimodule $F$ that is a rigged von Neumann $N_1$-module and a rigged von Neumann $N_2^{\opp}$-module.
A \emph{(unitary)} \emph{intertwiner} from a rigged von Neumann $N_1$-$N_2$-bimodule $F$ to a rigged von Neumann $\tilde N_1$-$\tilde N_2$-bimodule $\tilde F$ is a (unitary) intertwiner $(\phi^1,\phi^2,\psi)$ between the underlying rigged bimodules in the sense of \cref{def:rbim}, such that  $\phi^1:N_1 \to \tilde N_1$ and $\phi^2:N_2 \to \tilde N_2$ are morphisms of rigged von Neumann algebras.
A \emph{spatial intertwiner} from $F$ to $\tilde F$ is a quintuple $(\phi^1,\psi^1,\phi^2,\psi^2,\psi)$ in which $(\phi^1,\phi^2,\psi)$ is a unitary intertwiner from $F$ to $\tilde F$,  and $(\phi^1,\psi^1):N_1 \to \tilde N_1$ and $(\phi^2,\psi^2):N_2 \to \tilde N_2$ are spatial morphisms.
\end{definition}

Here, \cref{lem:SpatialIsomorphisms} implies again that spatial intertwiners are in particular (unitary) intertwiners.  
We assure by the following result that completion brings us  into the classical setting, see \ref{sec:StandardForms}.

\begin{lemma}
\label{rem:vonNeumanBimoduleCompletion}
If $F$ is a rigged von Neumann $N_1$-$N_2$-bimodule, then its completion $F^{\left \langle \cdot,\cdot  \right \rangle}$ is a  $N_1''$-$N_2''$-bimodule in the ordinary von Neumann-theoretical sense.
Likewise, any (unitary) intertwiner between rigged von Neumann bimodules induces a bounded (unitary) intertwiner in the ordinary sense.
\end{lemma}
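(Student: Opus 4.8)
The plan is to assemble the statement from the module-level results already established, namely \cref{re:rvNmodule} and the uniqueness of normal extensions noted there, together with the fact that the bimodule axioms are algebraic relations preserved under $\sigma$-weak limits. First I would recall the data: if $F$ is a rigged von Neumann $N_1$-$N_2$-bimodule with $N_i=(A_i,E_i)$, then by \cref{def:riggedvonNeumannbimodule} it is simultaneously a rigged von Neumann $N_1$-module and a rigged von Neumann $N_2^{\opp}$-module, with commuting representations $\rho_1$ of $A_1$ and $\rho_2$ of $A_2^{\opp}$. By \cref{re:rvNmodule} applied to each side, the completion $F^{\left\langle\cdot,\cdot\right\rangle}$ carries normal $\ast$-homomorphisms $\lambda_1\colon N_1''\to\mc{B}(F^{\left\langle\cdot,\cdot\right\rangle})$ and $\lambda_2\colon N_2^{\opp\,\prime\prime}\to\mc{B}(F^{\left\langle\cdot,\cdot\right\rangle})$ extending $\rho_1^{\vee}$ and $\rho_2^{\vee}$, and hence $F^{\left\langle\cdot,\cdot\right\rangle}$ is a left $N_1''$-module and a right $N_2''$-module (using $N_2^{\opp\,\prime\prime}=(N_2'')^{\opp}$, which follows from \cref{re:oppositevNA} and the fact that commutants of opposite algebras are opposites of commutants).

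The one genuinely new point is that the two extended actions $\lambda_1$ and $\lambda_2$ still commute — this is precisely what it means for $F^{\left\langle\cdot,\cdot\right\rangle}$ to be an $N_1''$-$N_2''$-bimodule in the sense of \cref{sec:StandardForms}. Here is where I would do the work: we know $\rho_1(a_1,-)$ commutes with $\rho_2(a_2,-)$ for $a_1\in A_1$, $a_2\in A_2^{\opp}$, hence (by density of $A_i$ in $N_i''$, $\sigma$-weak continuity of both $\lambda_i$, and separate $\sigma$-weak continuity of multiplication in $\mc{B}(F^{\left\langle\cdot,\cdot\right\rangle})$ on bounded sets) $\lambda_1(T_1)$ commutes with $\rho_2^{\vee}(a_2)$ for every $T_1\in N_1''$ and $a_2\in A_2^{\opp}$; then a second density-and-continuity argument, now in the $N_2$-variable with $\lambda_1(T_1)$ held fixed, gives $[\lambda_1(T_1),\lambda_2(T_2)]=0$ for all $T_1\in N_1''$, $T_2\in N_2^{\opp\,\prime\prime}$. (One should be slightly careful that $\sigma$-weak convergence is only jointly continuous with multiplication on bounded sets, so the nets should be taken inside balls, which is harmless by Kaplansky density.) This establishes the first assertion.

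For the second assertion, let $(\phi^1,\phi^2,\psi)$ be a (unitary) intertwiner of rigged von Neumann bimodules from $F$ to $\tilde F$. By \cref{def:riggedvonNeumannbimodule} each $\phi^i$ is a morphism of rigged von Neumann algebras, so it extends to a normal $\ast$-homomorphism $N_i''\to\tilde N_i''$; and $\psi\colon F\to\tilde F$ is a bounded (unitary) morphism of rigged Hilbert spaces, hence extends to a bounded (unitary) operator $\psi\colon F^{\left\langle\cdot,\cdot\right\rangle}\to\tilde F^{\left\langle\cdot,\cdot\right\rangle}$ on completions. On the dense subspaces $A_i$ the relation $\psi(\rho_i(a,v))=\tilde\rho_i(\phi^i(a),\psi(v))$ holds by hypothesis; approximating $T_i\in N_i''$ $\sigma$-weakly by elements of $A_i$, applying $\lambda_i$ and $\tilde\lambda_i$ respectively, and using continuity of $\psi$ together with $\sigma$-weak continuity of the actions, the intertwining relation persists to all of $N_i''$. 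Thus $\psi$ is a bounded (unitary) intertwiner of $N_1''$-$N_2''$-bimodules in the ordinary sense.

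I expect the main obstacle to be purely bookkeeping rather than conceptual: keeping straight the opposite-algebra identifications ($N_2^{\opp\,\prime\prime}\cong(N_2'')^{\opp}$, compatibility of $(-)^{\sharp}$ with completion from \cref{ex:OppositeRep}) and making sure every passage to the limit is justified by the correct continuity statement (normality of $\lambda_i$ for single operators, joint $\sigma$-weak continuity of multiplication only on norm-bounded sets). None of this requires new ideas beyond what is already packaged in \cref{re:rvNmodule} and \cref{lem:SpatialIsomorphisms}, so the proof should be short.
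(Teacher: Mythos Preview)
Your proposal is correct and the overall structure matches the paper's: reduce to showing that the two extended actions commute, then handle intertwiners by the obvious continuity argument. The one genuine difference is in how you establish commutation. You run a two-step $\sigma$-weak density argument (first in the $N_1$-variable, then in the $N_2$-variable), invoking Kaplansky density to stay on bounded sets where multiplication is separately $\sigma$-weakly continuous. The paper instead uses a pure commutant trick: from $\rho_2^{\vee}(A_2^{\|\cdot\|})\subset \rho_1^{\vee}(A_1^{\|\cdot\|})'$ one takes commutants to get $\rho_1^{\vee}(A_1^{\|\cdot\|})''\subset \rho_2^{\vee}(A_2^{\|\cdot\|})'''=\rho_2^{\vee}(A_2^{\|\cdot\|})'$, so the two bicommutants in $\mc{B}(F^{\langle\cdot,\cdot\rangle})$ already commute; then \cref{rem:Extending*Homomorphisms} says the normal extensions $N_i''\to\mc{B}(F^{\langle\cdot,\cdot\rangle})$ factor through these bicommutants, and commutation follows. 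The paper's route is shorter and sidesteps any delicacy about joint continuity of multiplication, at the cost of needing the auxiliary \cref{rem:Extending*Homomorphisms}; your route is more hands-on and self-contained. Either is fine.
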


\begin{proof}
 For the first statement, we only have to argue that the actions of the von Neumann algebras $N_1''$ and $N_2''$ on $F^{\left \langle  \cdot,\cdot \right \rangle}$  commute.
 Indeed, as the actions of the rigged \cstar-algebras commute, it is clear that the images $\rho_1^{\vee}(A_1^{\|\cdot\|})$ and $\rho_2^{\vee}(A_2^{\|\cdot\|})$ in $\mc{B}(F^{\langle \cdot, \cdot \rangle})$ commute, for instance,  $\rho_2^{\vee}(A_2^{\|\cdot\|})\subset \rho_1^{\vee}(A_1^{\|\cdot\|})'$.
 Taking commutants, we obtain $\rho_1^{\vee}(A_1^{\|\cdot\|})''\subset \rho_2^{\vee}(A_2^{\|\cdot\|})'=\rho_2^{\vee}(A_2^{\|\cdot\|})'''$.
 This shows that the von Neumann algebras $\rho_1^{\vee}(A_1^{\|\cdot\|})''$ and $\rho_2^{\vee}(A_2^{\|\cdot\|})''$ commute.
 Now, \cref{rem:Extending*Homomorphisms} proves the claim.
 The statement about intertwiners follows from the usual continuity arguments. 
\end{proof}

We remark that ordinary von Neumann algebras, bimodules and intertwiners form a bicategory, in which the composition of morphisms is given by the Connes fusion of bimodules \cite{Brouwer2003,ST04}. We have, unfortunately, not yet been able to lift Connes fusion to the setting of  rigged von Neumann bimodules, and thus, there is no corresponding bicategory of rigged von Neumann algebras. This is an important issue that we are going to address in future work.

\subsection{Locally trivial rigged bundles}

\label{sec:locallytrivialbundles}
\label{sec:riggedvonNeumann}

In this section we introduce locally trivial bundles of rigged von Neumann algebras and rigged von Neumann bimodules over Fr\'echet manifolds. The prerequisite notions of rigged Hilbert space bundles, rigged \cstar-algebra bundles, and rigged module bundles have been introduced in Section 2 of \cite{Kristel2020}, and are discussed there in more detail. Throughout this section, we let $\mathcal{M}$ be a Fr\'echet manifold.

\begin{definition}\label{def:HilbertBundle}
Let $E$ be a rigged Hilbert space. A \emph{rigged Hilbert space bundle} over $\mc{M}$ with typical fibre $E$ is a Fr\'echet vector bundle  $\pi:\mc{E} \to \mathcal{M}$  with typical fibre $E$ equipped with a map $g:\mc{E} \times_{\pi} \mc{E} \rightarrow \C$, such that $g$ is fibrewise an inner product, and local trivializations $\Phi: \mathcal{E}|_U \to E \times U$ of $\mathcal{E}$ can be chosen to be fibrewise isometric. A \emph{morphism of rigged Hilbert space bundles} is a morphism of the underlying Fr\'{e}chet vector bundles. A morphism is called \emph{locally bounded}/\emph{isometric} if it is locally bounded/isometric with respect to the inner products, and it is called \emph{unitary} if it is an isometric isomorphism of vector bundles.
\end{definition}

It is straightforward to see that the fibres of a rigged Hilbert space bundle $\mathcal{E}$ are rigged Hilbert spaces \cite[Rem.~2.1.10]{Kristel2020}, and that the fibrewise completion of $\mathcal{E}$ is a  locally trivial continuous Hilbert space bundle over $\mc{M}$ with typical fibre $E^{\left \langle \cdot,\cdot  \right \rangle}$ \cite[Lem.~2.1.13]{Kristel2020}.
Here, a locally trivial continuous Hilbert space bundle with fibre $H$ has continuous local transition functions $U \times H \rightarrow H$, or equivalently,  \emph{strongly} continuous maps $U \rightarrow \U(H)$.
Likewise,   on the level of morphisms, any locally bounded morphism of rigged Hilbert space bundles extends uniquely to a continuous morphism of the corresponding continuous Hilbert space bundles. The following lemma \cite[Prop. 2.1.15]{Kristel2020} shows that our notion of smooth representations (see \cref{sec:HilbertBundle:1}) fits well into the context of rigged Hilbert space bundles.

\begin{lemma}
\label{lem:AssociatedFrechetBundle}
Let $\mathcal{G}$ be a Fr\'echet Lie group, $\mathcal{P}$ be a Fr\'echet principal $\mathcal{G}$-bundle over $\mathcal{M}$, and let $\rho: \mathcal{G} \times E \to E$ be a smooth representation of $\mathcal{G}$ on a rigged Hilbert space $E$. Then, the  associated bundle $(\mathcal{P} \times E)/\mathcal{G}$ is a rigged Hilbert space bundle with typical fibre $E$ in  a unique way, such that every local trivialization $\Phi: \mathcal{P}|_U \to \mathcal{G} \times U$ of $\mathcal{P}$ induces a local trivialization $[p,v] \mapsto (\rho(g(p),v),\pi(p))$ of $(\mathcal{P} \times E)/\mathcal{G}$, where $p\in \mathcal{P}$, $v\in E$, and $\Phi(p)=(g(p),\pi(p))$.
\end{lemma}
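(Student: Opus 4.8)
The plan is to construct the claimed rigged Hilbert space bundle structure on $\mathcal{P}\times_{\mathcal{G}} E$ directly from the trivializations of $\mathcal{P}$ and then verify the required properties. First I would recall that $\mathcal{P}$ is a locally trivial Fr\'echet principal $\mathcal{G}$-bundle, so there is an open cover $\{U_\alpha\}$ of $\mathcal{M}$ with trivializations $\Phi_\alpha:\mathcal{P}|_{U_\alpha}\to\mathcal{G}\times U_\alpha$; write $\Phi_\alpha(p)=(g_\alpha(p),\pi(p))$, with transition functions $g_{\alpha\beta}:U_\alpha\cap U_\beta\to\mathcal{G}$ smooth and satisfying $g_\alpha(p)=g_{\alpha\beta}(\pi(p))g_\beta(p)$. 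The associated bundle $\mathcal{P}\times_{\mathcal{G}} E$ inherits a Fr\'echet vector bundle structure whose trivializations are $\psi_\alpha:[p,v]\mapsto(\rho(g_\alpha(p),v),\pi(p))$; the induced transition functions on overlaps are $(e,x)\mapsto(\rho(g_{\alpha\beta}(x),e),x)$, which are smooth because $\rho$ is smooth and $g_{\alpha\beta}$ is smooth. This is essentially the standard associated-bundle construction in the Fr\'echet setting, and I would cite the relevant statement from \cite{Kristel2020} if available, or spell it out briefly.

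Next I would define the fibrewise inner product $g$ on $\mathcal{P}\times_{\mathcal{G}} E$ by transporting the inner product of $E$ along any trivialization: for $e_1,e_2$ in the fibre over $x\in U_\alpha$, set $g(e_1,e_2):=\langle \pr_E\psi_\alpha(e_1),\pr_E\psi_\alpha(e_2)\rangle_E$. This is well defined (independent of $\alpha$) precisely because $\rho(g,-)$ acts on $E$ by unitary morphisms of rigged Hilbert spaces for every $g\in\mathcal{G}$: on an overlap the two candidate values differ by applying $\rho(g_{\alpha\beta}(x),-)$, which is an isometry, so they agree. The map $g:\mathcal{E}\times_\pi\mathcal{E}\to\C$ is continuous because in the local chart it is the composition of the (continuous) trivialization with the (continuous, being sesquilinear and continuous on $E$) inner product of $E$; and it is fibrewise an honest inner product since each $\rho(g_\alpha(p),-)$ is a linear isomorphism of $E$, pulling back a genuine inner product to a genuine inner product. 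With this $g$, each $\psi_\alpha$ is by construction fibrewise isometric, which is exactly the condition in \cref{def:HilbertBundle}. Hence $\mathcal{P}\times_{\mathcal{G}} E$ is a rigged Hilbert space bundle with typical fibre $E$, and by construction the local trivialization induced by $\Phi_\alpha$ is $[p,v]\mapsto(\rho(g_\alpha(p),v),\pi(p))$ as asserted.

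For uniqueness, suppose $g'$ is another fibrewise inner product making $\mathcal{P}\times_{\mathcal{G}} E$ a rigged Hilbert space bundle for which every $\Phi_\alpha$ induces the stated trivialization $\psi_\alpha$. Since a rigged Hilbert space bundle structure requires local trivializations to be chosen fibrewise isometric, and the $\psi_\alpha$ are declared to be among the admissible trivializations, $g'$ must make each $\psi_\alpha$ fibrewise isometric; but that determines $g'$ fibrewise by the same formula as $g$, so $g'=g$. I would phrase this slightly carefully: the definition of rigged Hilbert space bundle only asks that trivializations \emph{can} be chosen isometric, so strictly I should argue that the induced $\psi_\alpha$ are forced to be isometric for \emph{any} admissible $g'$ — this follows because isometric trivializations exist on a refinement and differ from $\psi_\alpha$ by fibrewise-unitary transition functions, which forces $\psi_\alpha$ itself to be isometric with respect to $g'$. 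The main obstacle, and the only genuinely non-formal point, is verifying that all the relevant maps (the transition functions, the inner product $g$, and — implicitly — the global smooth/continuous structure) are honestly smooth resp. continuous in the Fr\'echet topology; this rests entirely on the smoothness of $\rho:\mathcal{G}\times E\to E$ assumed in the hypothesis and on the basic fact that sesquilinear continuous maps on a Fr\'echet space are jointly continuous, so there is no real analytic difficulty, only bookkeeping.
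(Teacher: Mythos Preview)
The paper does not give its own proof of this lemma; it simply cites \cite[Prop.~2.1.15]{Kristel2020}. Your argument is the standard associated-bundle construction and is correct for existence: build the Fr\'echet vector bundle from the trivializations of $\mathcal{P}$, push the inner product of $E$ to the fibres via the induced $\psi_\alpha$, and use unitarity of $\rho$ to see well-definedness and that the $\psi_\alpha$ are fibrewise isometric. This is almost certainly what the cited proof in \cite{Kristel2020} does.

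Your uniqueness paragraph, however, takes an unnecessary detour and the detour has a gap. You worry that the definition only requires \emph{some} isometric trivializations to exist, and then try to deduce that the specific $\psi_\alpha$ must be isometric for any candidate $g'$ by comparing with isometric trivializations on a refinement and claiming the transition is ``fibrewise unitary''. That claim is unjustified: two Fr\'echet trivializations of the associated bundle differ by an arbitrary smooth $\GL(E)$-valued map, not a priori a unitary one, so the argument as written does not close. The correct (and much simpler) reading of the lemma is that the phrase ``induces a local trivialization'' means a local trivialization \emph{of the rigged Hilbert space bundle}, i.e.\ a fibrewise isometric one in the sense of \cref{def:HilbertBundle}. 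Under that reading, the condition in the lemma forces each $\psi_\alpha$ to be isometric with respect to $g'$, which pins down $g'$ fibrewise by the very formula you wrote for $g$; uniqueness is then immediate. So drop the refinement argument and state uniqueness directly.
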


The spinor bundle on loop space is a rigged Hilbert space bundle over the loop space $\mathcal{M}=LM$, and it is defined using \cref{lem:AssociatedFrechetBundle}, see \cref{def:SpinorBundle}.
We proceed similarly for rigged \cstar-algebra bundles.

\begin{definition}\label{def:cstarBundle}
Let $A$ be a rigged \cstar-algebra.
 A \emph{rigged \cstar-algebra bundle} over $\mc{M}$ with typical fibre $A$ is a Fr\'echet vector bundle $\pi: \mc{A} \rightarrow \mc{M}$, equipped with
 \begin{itemize}
 \item a map $\| \cdot \|: \mc{A} \rightarrow \R_{\geqslant 0}$, and
 \item fibre-preserving maps $m: \mc{A} \times_{\pi} \mc{A} \rightarrow \mc{A}$ and $*: \mc{A} \rightarrow \mc{A}$,
 \end{itemize}
 such that the following conditions hold for each $x \in \mc{M}$:
 \begin{itemize}
    \item The map $\| \cdot \|_{x} : \mc{A}_{x} \rightarrow \R_{\geqslant 0}$ is a norm.
    \item The maps $m_{x} : \mc{A}_{x} \times \mc{A}_{x} \rightarrow \mc{A}_{x}$ and $*_{x}: \mc{A}_{x} \rightarrow \mc{A}_{x}$ turn  $\mc{A}_{x}$ into a $*$-algebra.
    \item There exists a local trivialization around $x$ that is fibrewise an isometric $\ast$-homomorphism.
 \end{itemize}
 A \emph{morphism of rigged \cstar-algebra bundles} over $\mathcal{M}$ is a morphism $\phi: \mathcal{A}_1 \to \mathcal{A}_2$ of Fr\'echet vector bundles that is fibrewise a morphism of $\ast$-algebras and locally bounded with respect to the norms.
A morphism is called \emph{isometric} if it is isometric with respect to the norms.
\end{definition}

One can  show that each fibre of a rigged \cstar-algebra bundle $\mathcal{A}$ is a rigged \cstar-algebra, and that the opposite multiplication produces a rigged  \cstar-algebra bundle $\mathcal{A}^{\opp}$ with typical fibre $A^{\opp}$.
Further, the fibrewise norm completion gives a locally trivial continuous bundle of \cstar-algebras with typical fibre $A^{\|\cdot\|}$, and strongly continuous transition functions \cite[Lem.~2.2.6]{Kristel2020}. Likewise, any morphism of rigged \cstar-algebra bundles extends uniquely to a continuous morphism of continuous bundles of \cstar-algebras.
In order to avoid confusion, we remark that this notion of a continuous bundle of \cstar-algebras is not the same as the notion of a continuous field of \cstar-algebras, which is typically not locally trivial.

Rigged \cstar-algebra bundles can be obtained by associating a smooth representation to a principal bundle, as the following result \cite[Prop.~2.2.8]{Kristel2020} shows.

\begin{lemma}
\label{lem:cstarbundleass}
Let $\mathcal{P}$ be a Fr\'echet principal $\mc{G}$-bundle over $\mathcal{M}$, and let $\rho: \mathcal{G} \times A \to A$ be a smooth representation on a rigged \cstar-algebra $A$. Then, the associated bundle $(\mathcal{P} \times A)/\mathcal{G}$ is a rigged \cstar-algebra bundle with typical fibre $A$  in  a unique way, such that every local trivialization $\Phi: \mathcal{P}|_U \to \mathcal{G} \times U$ of $\mathcal{P}$ induces a local trivialization $[p,a] \mapsto (\rho(g(p),a),\pi(p))$ of $(\mathcal{P} \times A)/\mathcal{G}$, where $p\in \mathcal{P}$, $a\in A$, and $\Phi(p)=(g(p),\pi(p))$.
\end{lemma}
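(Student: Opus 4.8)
The plan is to mirror the proof strategy of \cref{lem:AssociatedFrechetBundle}, since the statement to be proved (\cref{lem:cstarbundleass}) is the exact analog for rigged \cstar-algebra bundles of what that lemma does for rigged Hilbert space bundles. First I would recall that, forgetting the extra structure, the associated bundle $(\mathcal{P}\times A)/\mathcal{G}$ is already a Fr\'echet vector bundle with typical fibre $A$: this follows from general theory of associated bundles for Fr\'echet principal bundles together with the fact that $\rho$ acts by isometric isomorphisms of rigged \cstar-algebras, so in particular by continuous linear automorphisms of the Fr\'echet space $A$. The local trivializations of $(\mathcal{P}\times A)/\mathcal{G}$ are then precisely those displayed in the statement, namely $[p,a]\mapsto(\rho(g(p),a),\pi(p))$ induced by local trivializations $\Phi$ of $\mathcal{P}$; and the transition functions are of the form $x\mapsto \rho(g_{ij}(x),-)$, which are smooth into $\Aut(A)$ because $\rho$ is a smooth representation.

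Next I would define the three pieces of structure fibrewise and check they are well-defined on equivalence classes and assemble into (fibre-preserving, respectively smooth) maps on the total space. For the norm: on the fibre over $[p]$ declare $\|[p,a]\|_{[p]} \defeq \|a\|_A$; this is well-defined because the transition maps $\rho(g,-)$ are isometric for the norm on $A$. For the multiplication and involution: set $m([p,a],[p,b])\defeq [p,ab]$ and $*([p,a])\defeq [p,a^{*}]$; these are well-defined since $\rho(g,-)$ is a $\ast$-homomorphism. In each case, expressing the maps in a local trivialization shows they are continuous (for $\|\cdot\|$) respectively morphisms of Fr\'echet vector bundles (for $m$, $*$), because in the trivialization $m$ becomes the constant map $(a,b)\mapsto ab$ and $*$ becomes $a\mapsto a^{*}$, both of which are continuous on $A$ by \cref{def:riggedcstaralgebra}. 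Fibrewise, $\mathcal{A}_{[p]}$ is then a $\ast$-algebra isometrically isomorphic to $A$ via the chosen trivialization, hence the chosen trivialization is fibrewise an isometric $\ast$-homomorphism — so the third bullet of \cref{def:cstarBundle} holds with this trivialization around any point. This verifies all the axioms and simultaneously shows the local trivialization has the asserted form.

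Finally, uniqueness: if $\mathcal{A}'$ is another rigged \cstar-algebra bundle structure on the same Fr\'echet vector bundle for which every $\Phi$ induces the same local trivialization $[p,a]\mapsto(\rho(g(p),a),\pi(p))$ as a fibrewise isometric $\ast$-homomorphism, then reading off $\|\cdot\|$, $m$, $*$ in that trivialization forces them to agree with the ones defined above (the trivialization being a fibrewise isometric $\ast$-isomorphism pins down the norm, product, and involution on each fibre), so $\mathcal{A}'=\mathcal{A}$ as structured bundles. I would also note the opposite-bundle remark is immediate by running the same construction with $\rho$ viewed as a smooth representation on $A^{\opp}$ (an isometric $\ast$-isomorphism of $A$ is one of $A^{\opp}$ as well), although strictly that is stated in the surrounding text rather than in the lemma itself.

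I do not anticipate a serious obstacle here: the only mildly delicate point is confirming that the associated-bundle construction produces a genuine \emph{Fr\'echet} vector bundle with smooth transition functions valued in $\Aut(A)$, which is exactly where smoothness of the representation $\rho$ (as opposed to mere continuity) is used, and this is the same input already exploited in \cref{lem:AssociatedFrechetBundle}; everything else is a routine fibrewise verification through local trivializations. Accordingly, I expect the proof to consist essentially of the remark "this is the \cstar-algebra analog of \cref{lem:AssociatedFrechetBundle}, obtained by transporting the pointwise structure of $A$ through the local trivializations", with the checks above left to the reader or sketched in a line or two.
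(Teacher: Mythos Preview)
Your proposal is correct and follows exactly the expected route. Note, however, that the paper does not actually prove \cref{lem:cstarbundleass}: it is stated as a direct citation of \cite[Prop.~2.2.8]{Kristel2020}, just as \cref{lem:AssociatedFrechetBundle} is cited from \cite[Prop.~2.1.15]{Kristel2020}. Your sketch is precisely the argument one would expect to find in that reference --- transport the norm, product, and involution of $A$ fibrewise through the associated-bundle construction, using that $\rho$ acts by isometric $\ast$-isomorphisms to ensure well-definedness, and read off the axioms of \cref{def:cstarBundle} in the induced local trivializations --- so there is nothing to correct.
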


The Clifford bundle on loop space is a rigged \cstar-algebra bundle on $LM$, and it is defined using \cref{lem:cstarbundleass}, see \cref{def:CliffordBundle}.
Next, we discuss module bundles and bimodule bundles for rigged \cstar-algebra bundles.

\begin{definition}
\label{def:repcstarbundle}
Let $A$ be a rigged \cstar-algebra and  $E$ be a rigged $A$-module, with representation $\rho_0$, and let $\mathcal{A}$ be a rigged \cstar-algebra bundle over $\mathcal{M}$ with typical fibre $A$. A \emph{rigged $\mathcal{A}$-module bundle with typical fibre $E$} is a rigged Hilbert space bundle $\mathcal{E}$ with typical fibre $E$, and  a fibre-preserving map
 \begin{equation*}
  \rho: \mc{A} \times_{\mathcal{M}} \mc{E} \rightarrow \mc{E}
 \end{equation*}
 with the property that around every point in $\mathcal{M}$ there exist local trivializations $\Phi$ of $\mc{A}$ and $\Psi$ of $\mc{E}$  that fibrewise intertwine $\rho$ with $\rho_0$, i.e.~we have
$\Psi_x(\rho(a,v))=\rho_0(\Phi_x(a),\Psi_x(v))$
for all $x\in \mathcal{M}$ over which $\Phi$ and $\Psi$ are defined, and all $a\in \mathcal{A}_x$ and $v\in \mathcal{E}_x$.
A pair $(\Phi,\Psi)$ of local trivializations with this property   is called \emph{compatible}. 
\end{definition}

One can easily show that $\rho$ is automatically a morphism of Fr\'echet vector bundles.
Furthermore, for each $x \in \mc{M}$, the map $\rho_{x}$ turns $\mathcal{E}_x$ into a rigged $\mathcal{A}_x$-module; and every pair of compatible local trivializations $(\Phi,\Psi)$ around $x$ yields an invertible unitary intertwiner $(\Phi_x,\Psi_x)$ between the rigged $\mathcal{A}_x$-module $\mathcal{E}_x$ and the rigged $A$-module $E$. The definition of intertwiners between rigged module bundles is the natural one.

\begin{definition}
\label{def:intertwinerriggedmodulebundles}
A \emph{(unitary)} \emph{intertwiner} between a rigged $\mathcal{A}_1$-module bundle $\mathcal{E}_1$ and a rigged $\mathcal{A}_2$-module bundle $\mathcal{E}_2$ is a pair $(\Phi,\Psi)$ of a  morphism $\Phi:\mathcal{A}_1\to \mathcal{A}_2$ of rigged \cstar-algebra bundles, and a locally bounded (unitary)  morphism $\Psi:\mathcal{E}_1\to \mathcal{E}_2$ of rigged Hilbert space bundles, such that $(\Phi_x,\Psi_x)$ is a (unitary) intertwiner of rigged modules in the fibre over each point $x\in \mathcal{M}$.  
\end{definition}

The definition of a rigged \emph{bimodule} bundle now follows naturally; it is, however, important enough that we give it in full detail.

\begin{definition}
\label{def:cstarbimodulebundle}
Let $\mathcal{A}_1$ and $\mathcal{A}_2$ be rigged \cstar-algebra bundles over $\mathcal{M}$ with typical fibres $A_1$ and $A_2$, respectively, and let $E$ be a rigged $A_1$-$A_2$-bimodule.  
A \emph{rigged $\mathcal{A}_1$-$\mathcal{A}_2$-bimodule bundle $\mathcal{E}$ with typical fibre $E$} is a rigged Hilbert space bundle $\mathcal{E}$ over $\mathcal{M}$ that is both a rigged $\mathcal{A}_1$-module bundle and a rigged $\mathcal{A}_2^{\opp}$-module bundle, such that around every point in $\mathcal{M}$ there exist local trivializations  $\Phi^1$ of $\mc{A}_1$, $\Phi^2$ of $\mathcal{A}_2$,  and $\Psi$ of $\mc{E}$ with both $(\Phi^1,\Psi)$ and $(\Phi^2,\Psi)$ compatible. A triple $(\Phi^1,\Phi^2,\Psi)$ of local trivializations with this property is again called \emph{compatible}.  
\end{definition}

It is straightforward to see that  the fibres $\mathcal{E}_x$ are rigged $(\mathcal{A}_1)_x$-$(\mathcal{A}_2)_x$-bimodules,  for each $x\in \mathcal{M}$, and that compatible local trivializations around $x$ establish an invertible unitary intertwiner between $\mathcal{E}_x$ and $E$.
For completeness, we also note the bundle version of a bimodule intertwiner.

\begin{definition}
\label{def:cstarbimodintertwiner}
A \emph{(unitary) intertwiner} from a rigged $\mathcal{A}_1$-$\mathcal{A}_2$-bimodule bundle $\mathcal{E}$ to a rigged $\widetilde{\mathcal{A}}_1$-$\widetilde{\mathcal{A}}_2$-bimodule bundle $\widetilde{\mathcal{E}}$ is a triple $(\Phi^1,\Phi^2,\Psi)$ consisting of  morphisms $\Phi^1:\mathcal{A}_1\to \widetilde{\mathcal{A}}_1$ and $\Phi^2:\mathcal{A}_2\to \widetilde{\mathcal{A}}_2$ of rigged \cstar-algebra bundles and of a locally bounded (unitary) morphism $\Psi: \mathcal{E} \to \widetilde{\mathcal{E}}$ of rigged Hilbert space bundles, such that over each point $x\in \mathcal{M}$ the triple $(\Phi^1_x,\Phi^2_x,\Psi_x)$ is a (unitary) intertwiner of rigged bimodules.
\end{definition}

Later, we will exhibit the spinor bundle on loop space as a rigged module bundle for the Clifford bundle, see \cref{sec:fusion}. 
Next we come to the definition of rigged von Neumann algebra bundles and rigged von Neumann bimodule bundles. These definitions are a novelty introduced in this article; we are not aware of any other treatment of smooth bundles of von Neumann algebras, or von Neumann bimodules.

\begin{definition}
\label{def:rvnab}
Let $N = (A,E)$ be a rigged von Neumann algebra.
 A \emph{rigged von Neumann algebra bundle} $\mathcal{N}=(\mc{A},\mc{E})$ with typical fibre $N$  over  $\mathcal{M}$ is a rigged \cstar-algebra bundle $\mc{A}$ with typical fibre $A$ and a rigged $\mathcal{A}$-module bundle $\mathcal{E}$ with typical fibre $E$.  \end{definition}

Thus, the difference between a rigged $\mathcal{A}$-module bundle $\mathcal{E}$ and a rigged von Neumann algebra bundle $\mathcal{N}=(\mathcal{A},\mathcal{E})$ only lies in fact that the \emph{typical fibre} is  required to be a rigged von Neumann algebra.
Via compatible local trivializations (\cref{def:repcstarbundle}), this property extends to all fibres; thus, every fibre $\mathcal{N}_x=(\mathcal{A}_x,\mathcal{E}_x)$ is a rigged von Neumann algebra.  
Moreover, \cref{lem:SpatialIsomorphisms} guarantees that any choice of compatible local trivializations induces fibrewise spatial isomorphisms of rigged von Neumann algebras from $\mathcal{N}_x$ to $N$.

\begin{definition}
\label{def:morphismriggedvonneumann}
 A \emph{morphism} $\Phi: \mathcal{N}_1\to \mathcal{N}_2$ between rigged von Neumann algebra bundles $\mathcal{N}_1=(\mathcal{A}_1,\mathcal{E}_1)$ and $\mathcal{N}_2=(\mathcal{A}_2,\mathcal{E}_2)$ is a morphism $\Phi: \mathcal{A}_1\to \mathcal{A}_2$ of rigged \cstar-algebra bundles that is fibrewise a morphism of rigged von Neumann algebras. 
\end{definition}

\begin{remark}
\label{re:spatialmorphismbundles}
Again, it is useful to introduce the notion of a \emph{spatial morphism} from $\mathcal{N}_1=(\mathcal{A}_1,\mathcal{E}_1)$ to $\mathcal{N}_2=(\mathcal{A}_2,\mathcal{E}_2)$ as  a unitary intertwiner $(\Phi,\Psi)$ of rigged module bundles (\cref{def:intertwinerriggedmodulebundles}). In this situation, we have  in the fibre over each point $x\in \mathcal{M}$ a unitary intertwiner $(\Phi_x,\Psi_x)$ from $(\mathcal{E}_1)_x$ to $(\mathcal{E}_2)_x$, i.e., a spatial morphism from $(\mathcal{N}_1)_x$ to $(\mathcal{N}_2)_x$. By  \cref{lem:SpatialIsomorphisms},   $\Phi_x$ is a morphism of rigged von Neumann algebras, and thus, by definition, $\Phi$ is a morphism of rigged von Neumann algebra bundles. This shows that spatial morphisms of rigged von Neumann algebra bundles are, in particular, morphisms in the sense of \cref{def:morphismriggedvonneumann}. 
\end{remark}

\begin{remark}
\label{re:loctrivmorph}
Let $\mathcal{N}=(\mathcal{A},\mathcal{E})$ be a rigged von Neumann algebra bundle with typical fibre $N$. 
Then, there exist compatible local trivializations $(\Phi,\Psi)$ over open subsets $U \subset \mathcal{M}$ as  in \cref{def:repcstarbundle}, which are invertible unitary intertwiners between the rigged module bundles $\mathcal{N}|_U$ and $N \times U$, and thus, spatial isomorphisms of rigged von Neumann algebra bundles.  In particular, our rigged von Neumann algebra bundles are locally trivial in this strong sense of spatial isomorphisms.   
\end{remark}

The  rigged von Neumann algebra bundles that appear in this paper  are certain  Clifford algebra bundles over loop spaces, and they appear in \cref{sec:spinstructuresandspinorbundles,sec:algebrabundlepathspace}.  For now, it remains to define rigged von Neumann bimodule bundles.
 
\begin{definition}\label{def:vonNeumannBimoduleBundle}
Let $N_{1} = (A_{1},E_{1})$ and $N_{2} = (A_{2},E_{2})$ be rigged von Neumann algebras, and let $F$ be a rigged von Neumann $N_{1}$-$N_{2}$-bimodule.
 Further, let $\mc{N}_{1} = (\mc{A}_{1},\mc{E}_{1})$ and $\mc{N}_{2} = (\mc{A}_{2},\mc{E}_{2})$ be rigged von Neumann algebra bundles over $\mathcal{M}$ with typical fibres $N_1$ and $N_2$, respectively. 
 A \emph{rigged von Neumann $\mc{N}_{1}$-$\mc{N}_{2}$-bimodule bundle} with typical fibre $F$ is a rigged  $\mathcal{A}_1$-$\mathcal{A}_2$-bimodule bundle $\mathcal{F}$ with typical fibre $F$, such that around each point $x\in \mathcal{M}$ there exist compatible local trivializations $(\Phi^1,\Psi^1)$ of $\mathcal{N}_1$ and $(\Phi^2,\Psi^2)$ of $\mathcal{N}_2$, and a local trivialization $\Psi$ of $\mathcal{F}$ such that $(\Phi^1,\Phi^2,\Psi)$ is compatible in the sense of \cref{def:cstarbimodulebundle}. 

\end{definition}

The following lemma is to ensure that \cref{def:vonNeumannBimoduleBundle} gives the correct structure in each fibre.

 \begin{lemma}
 \label{re:fibresofvonneumannbimodule}
 If $\mathcal{F}$ is a rigged von Neumann $\mathcal{N}_1$-$\mathcal{N}_2$-bimodule bundle with typical fibre $F$, then the fibre $\mathcal{F}_x$ is a rigged von Neumann $(\mathcal{N}_1)_x$-$(\mathcal{N}_2)_x$-bimodule, for each $x\in \mathcal{M}$. 
\end{lemma}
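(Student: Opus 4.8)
The plan is to verify the three defining conditions of a rigged von Neumann $(\mathcal{N}_1)_x$-$(\mathcal{N}_2)_x$-bimodule (\cref{def:riggedvonNeumannbimodule}) fibrewise, by transporting everything to the typical fibre $F$ along the compatible local trivializations supplied by \cref{def:vonNeumannBimoduleBundle}. So, fix $x\in\mathcal{M}$ and choose compatible local trivializations $(\Phi^1,\Psi^1)$ of $\mathcal{N}_1$, $(\Phi^2,\Psi^2)$ of $\mathcal{N}_2$, and $\Psi$ of $\mathcal{F}$ over some $U\ni x$, so that $(\Phi^1,\Phi^2,\Psi)$ is compatible as a trivialization of the underlying rigged $\mathcal{A}_1$-$\mathcal{A}_2$-bimodule bundle.

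First I would record what is already known for free. By the general theory of rigged bimodule bundles (the discussion following \cref{def:cstarbimodulebundle}), the fibre $\mathcal{F}_x$ is a rigged $(\mathcal{A}_1)_x$-$(\mathcal{A}_2)_x$-bimodule, and $(\Phi^1_x,\Phi^2_x,\Psi_x)$ is an invertible unitary intertwiner onto the rigged $A_1$-$A_2$-bimodule $F$. Similarly, since $(\Phi^1,\Psi^1)$ is a compatible trivialization of $\mathcal{N}_1$, the pair $(\Phi^1_x,\Psi^1_x)$ is a spatial morphism from $(\mathcal{N}_1)_x$ to $N_1$, hence by \cref{lem:SpatialIsomorphisms} an isomorphism of rigged von Neumann algebras; likewise $(\Phi^2_x,\Psi^2_x)$ for $\mathcal{N}_2$. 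In particular $(\mathcal{N}_1)_x$ and $(\mathcal{N}_2)_x$ really are rigged von Neumann algebras (this is already noted after \cref{def:rvnab}).

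The remaining substance is to show that $\mathcal{F}_x$, viewed as a rigged $(\mathcal{A}_1)_x$-module via $\rho_1$, is a rigged von Neumann $(\mathcal{N}_1)_x$-module, i.e.\ that the induced $\ast$-homomorphism $((\mathcal{A}_1)_x)^{\|\cdot\|}\to\mathcal{B}(\mathcal{F}_x^{\langle\cdot,\cdot\rangle})$ extends to a normal $\ast$-homomorphism $(\mathcal{N}_1)_x''\to\mathcal{B}(\mathcal{F}_x^{\langle\cdot,\cdot\rangle})$, and symmetrically for $(\mathcal{N}_2)_x^{\opp}$. I would argue this by comparison with the typical fibre: $F$ is, by hypothesis, a rigged von Neumann $N_1$-$N_2$-bimodule, so the extension $N_1''\to\mathcal{B}(F^{\langle\cdot,\cdot\rangle})$ exists; conjugating by the unitary $\Psi_x:\mathcal{F}_x^{\langle\cdot,\cdot\rangle}\to F^{\langle\cdot,\cdot\rangle}$ and composing with the normal isomorphism $(\mathcal{N}_1)_x''\to N_1''$ coming from $\Phi^1_x$ (whose existence and normality is \cref{lem:SpatialIsomorphisms}) produces the desired extension, and one checks using compatibility of $(\Phi^1_x,\Psi_x)$ that on $((\mathcal{A}_1)_x)^{\|\cdot\|}$ it agrees with the given $\ast$-representation. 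The same argument with $\Phi^2_x$, $\Psi^2_x$ and the opposite algebras handles the right action. Since $\mathcal{F}_x$ is then a rigged $(\mathcal{A}_1)_x$-$(\mathcal{A}_2)_x$-bimodule that is both a rigged von Neumann $(\mathcal{N}_1)_x$-module and a rigged von Neumann $(\mathcal{N}_2)_x^{\opp}$-module, it is by \cref{def:riggedvonNeumannbimodule} a rigged von Neumann $(\mathcal{N}_1)_x$-$(\mathcal{N}_2)_x$-bimodule, as claimed.

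The only mild subtlety — and the place where one must be slightly careful rather than purely formal — is the bookkeeping of \emph{which} normal extensions one is composing: one must make sure the conjugation isomorphism $C_{\Psi^1_x}$ from the proof of \cref{lem:SpatialIsomorphisms} restricts correctly on the dense \cstar-subalgebras so that the triangle of $\ast$-homomorphisms (given representation on $\mathcal{F}_x$, transported representation on $F$, and the algebra isomorphism) actually commutes, not merely up to the ambiguity of non-unique extensions. But since $((\mathcal{A}_1)_x)^{\|\cdot\|}$ is $\sigma$-weakly dense in $(\mathcal{N}_1)_x''$ and all maps in sight are $\sigma$-weakly continuous, commutativity on the dense subalgebra — which is exactly the compatibility condition $\Psi_x\circ\rho_1 = \rho_{1,0}\circ(\Phi^1_x\times\Psi_x)$ rephrased on completions — forces commutativity everywhere, so this is routine. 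I expect the whole proof to be short, essentially a transport-of-structure argument invoking \cref{lem:SpatialIsomorphisms} twice.
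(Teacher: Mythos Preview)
Your proposal is correct and follows essentially the same approach as the paper's proof: both argue that $\mathcal{F}_x$ is already a rigged $(\mathcal{A}_1)_x$-$(\mathcal{A}_2)_x$-bimodule by the general theory, and then construct the required normal extension $(\mathcal{N}_1)_x''\to\mathcal{B}(\mathcal{F}_x^{\langle\cdot,\cdot\rangle})$ as the composite $(\mathcal{N}_1)_x''\xrightarrow{\Phi^1_x} N_1''\to\mathcal{B}(F^{\langle\cdot,\cdot\rangle})\xrightarrow{C_{\Psi_x^{-1}}}\mathcal{B}(\mathcal{F}_x^{\langle\cdot,\cdot\rangle})$, using \cref{lem:SpatialIsomorphisms} for the first arrow and the compatibility of $(\Phi^1,\Psi)$ to verify that this composite restricts to $\rho_1^\vee$. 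The paper's write-up is more compressed but the logical structure is identical; your paragraph on the ``mild subtlety'' is exactly the point the paper disposes of with the phrase ``using the compatibility of $\Phi^1$ with $\Psi$''.
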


\begin{proof}
 Because $\mc{F}$ is a rigged $\mc{A}_{1}$-$\mc{A}_{2}$-bimodule bundle, we have that $\mc{F}_{x}$ is a rigged $(\mc{A}_{1})_{x}$-$(\mc{A}_{2})_{x}$-bimodule.
 Let $(\rho_{1})_{x}^{\vee}: (\mc{A}_{1})^{\| \cdot \|}_{x} \rightarrow \mc{B}(\mc{F}^{\langle \cdot, \cdot \rangle}_{x})$ be the corresponding $*$-homomorphism.
 The statement that $\mc{F}_{x}$ is a rigged $(\mc{N}_{1})_{x}$-module is then equivalent to the statement that $\rho_1^{\vee}: (\mathcal{A}_1)_x^{\|\cdot\|} \to \mathcal{B}(\mathcal{F}_x^{\left \langle  \cdot,\cdot \right \rangle})$ extends to $(\mathcal{N}_1)_x''$.
 But, since the typical fibre $F$ is a rigged von Neumann bimodule, its corresponding maps $A_1^{\|\cdot\|} \to \mathcal{B}(F^{\left \langle \cdot,\cdot  \right \rangle})$ extend to $N_1''$.
Next, consider local trivializations $(\Phi^1,\Psi^1)$ and $\Psi$ around $x$ as in \cref{def:vonNeumannBimoduleBundle}. Then, we have the extensions $\Phi^1_x:(\mathcal{N}_1)_x'' \to N_1''$, and the conjugation by the unitary operator $\Psi_x$, which yields a normal $\ast$-homomorphism $\mathcal{B}(F^{\left \langle \cdot,\cdot  \right \rangle}) \to \mathcal{B}(\mathcal{F}_x^{\left \langle \cdot,\cdot  \right \rangle})$.
 Using the compatibility of $\Phi^{1}$ with $\Psi$ it then follows that the map
\begin{equation*}
(\mathcal{N}_1)_x'' \to N_1'' \to \mathcal{B}(F^{\left \langle \cdot,\cdot  \right \rangle}) \to \mathcal{B}(\mathcal{F}_x^{\left \langle \cdot,\cdot  \right \rangle})
\end{equation*}  
is an extension of $\rho_1^{\vee}$, which is moreover the composition of normal $*$-homomorphisms, and thus a normal $*$-homomorphism itself.
A similar argument for the $*$-homomorphism $\rho_{2}^{\vee}: (\mc{A}_{2}^{\mathrm{opp}})_{x}^{\| \cdot \|} \rightarrow \mc{B}(\mc{F}_{x}^{\langle \cdot, \cdot \rangle })$ proves that $\mc{F}_{x}$ is a rigged von Neumann $(\mc{N}_{2})_{x}^{\mathrm{opp}}$-module.
\end{proof}

In the next section, we will  glue rigged von Neumann bimodules bundles; therefore, we also need to introduce (spatial) intertwiners between them.
\begin{definition}
\label{def:morphvNbimodbun}
A \emph{(unitary)} \emph{intertwiner} from a rigged von Neumann $\mathcal{N}_1$-$\mathcal{N}_2$-bimodule bundle $\mathcal{F}$ to a rigged von Neumann $\widetilde{\mathcal{N}}_1$-$\widetilde{\mathcal{N}}_2$-bimodule bundle $\widetilde{\mathcal{F}}$ is a (unitary) intertwiner $(\Phi^1,\Phi^2,\Psi)$ between the underlying rigged bimodule bundles in the sense of \cref{def:cstarbimodintertwiner}, such that $\Phi^1$ and $\Phi^2$ are morphisms of rigged von Neumann algebra bundles. A \emph{spatial intertwiner} from $\mathcal{F}$ to $\widetilde{\mathcal{F}}$ is a quintuple $(\Phi^1,\Psi^1,\Phi^2,\Psi^2,\Psi)$ in which $(\Phi^1,\Phi^2,\Psi)$ is a unitary intertwiner of rigged von Neumann bimodules bundles, and  $(\Phi^1,\Psi^1):\mathcal{N}_1 \to \widetilde{\mathcal{N}}_1$ and $(\Phi^2,\Psi^2):\mathcal{N}_2\to\widetilde{\mathcal{N}}_2$ are spatial morphisms.
\end{definition}

The definitions guarantees full compatibility with the fibrewise notions: any (spatial)  intertwiner induces in the fibre over each point $x\in \mathcal{M}$ a (spatial) intertwiner  between rigged von Neumann bimodules
in the sense of \cref{def:riggedvonNeumannbimodule}. Moreover, if $\mathcal{F}$ is a rigged von Neumann $\mathcal{N}_1$-$\mathcal{N}_2$-bimodule bundle with typical fibre $F$, any choice of local trivializations $(\Phi^1,\Psi^1)$, $(\Phi^2,\Psi^2)$, and $\Psi$ over an open subset $U \subset \mathcal{M}$ as in \cref{def:vonNeumannBimoduleBundle}, assemble into an invertible spatial intertwiner $(\Phi^1,\Psi^1,\Phi^2,\Psi^2,\Psi)$ from $\mathcal{F}|_U$ to $F \times U$. In particular, rigged von Neumann bimodule bundles are locally trivial in this strong sense of invertible spatial intertwiners.

We would like to make clear and summarize the following important feature of our definitions. Rigged von Neumann algebra bundles and bimodule bundles over $\mathcal{M}$ have in the fibre over each point $x\in \mathcal{M}$ rigged von Neumann algebras and bimodules, as introduced in \cref{sec:HilbertBundle:1} (see \cref{re:fibresofvonneumannbimodule}). Likewise, (spatial) morphisms between rigged von Neumann algebra bundles, and (spatial) intertwiners between rigged von Neumann bimodules restrict in each fibre to (spatial) morphisms of rigged von Neumann algebras and (spatial) intertwiners of rigged von Neumann bimodules, as introduced in \cref{sec:HilbertBundle:1}. In turn, we have shown in \cref{re:vnacl,rem:vonNeumanBimoduleCompletion} that these fibrewise structures induce, respectively, ordinary von Neumann algebras, ordinary (spatial) morphisms,  ordinary bimodules between von Neumann algebras, and ordinary bounded intertwiners between these, in the classical sense.
 Thus, one can pass, at any time and in any fibre, to this classical setting.

\subsection{Rigged bundles over diffeological spaces}
\label{sec:HilbertBundle:3}

For the discussion of fusion on loop space, we need to treat bundles over \emph{diffeological spaces}. We recall briefly that a diffeology on a set $X$ consists of a set of maps $c:U \to X$ called \quot{plots}, where  $U \subset \R^{k}$ is open and $k\in \mathbb{N}_0$ can be arbitrary, subject to a number of axioms, see \cite{iglesias1}  for details.
A map $f:X \to Y$ between diffeological spaces is called \emph{smooth}, if its composition with any plot of $X$ results in a plot of $Y$. We let $\Diff$ denote the category of diffeological spaces. Any smooth manifold $M$ or Fr\'echet manifold  $M$ becomes a diffeological space by saying that every smooth map $c: U \to M$, for every open subset $U \subset \R^{k}$ and any $k$, is a plot. 
Maps between smooth manifolds or Fr\'echet manifolds are then smooth in the classical sense if and only if they are smooth in the diffeological sense. In other words, the category $\Frech$ of  Fr\'echet manifolds fully faithfully embeds into $\Diff$.

Next we describe a general procedure to extend a presheaf $\mathcal{F}$ of categories on $\Frech$ to one on $\Diff$, and then apply this to various presheaves of bundles defined in the previous section.   We briefly recall that a \emph{presheaf $\mathcal{F}$ of categories on a category $\mathcal{C}$ }is a weak functor $\mathcal{F}:\mathcal{C}^{\opp} \to \Cat$ to the bicategory of categories, functors, and natural transformations. That is, $\mathcal{F}$ assigns to each object $X$ of $\mathcal{C}$ a category $\mathcal{F}(X)$, to each morphism $f:X \to Y$ in $\mathcal{C}$ a functor $f^{*}: \mathcal{F}(Y) \to \mathcal{F}(X)$, and to each pair $(f,g)$ of composable morphisms $f:X \to Y$ and $g:Y \to Z$ a natural equivalence $\eta_{f,g}: (g \circ f)^{*} \Rightarrow f^{*} \circ g^{*}$, such that the natural equivalences respect the associativity of the composition. 

\begin{remark}
\label{re:examplesofpresheaves}
In \cref{sec:locallytrivialbundles} we have encountered the following presheaves of categories over $\Frech$:
\begin{itemize}

\item
For a Fr\'echet Lie group $\mathcal{G}$, the presheaf  of principal $\mathcal{G}$-bundles and bundle morphisms.

\item 
For a  rigged Hilbert space $E$, the presheaf  of rigged Hilbert space bundles with typical fibre $E$, and with isometric morphisms (\cref{def:HilbertBundle}). 

\item
For a  rigged \cstar-algebra $A$, the presheaf  of rigged \cstar-algebra bundles with typical fibre $A$, and  with isometric morphisms (\cref{def:cstarBundle}). 

\item 
For rigged \cstar-algebras $A_1$ and $A_2$, and a rigged $A_1$-$A_2$-bimodule $E$, the presheaf  of triples $(\mathcal{A}_1,\mathcal{A}_2,\mathcal{E})$ of  rigged \cstar-algebra bundles $\mathcal{A}_1$ and $\mathcal{A}_2$ with typical fibres $A_1$ and $A_2$, respectively, and a rigged $\mathcal{A}_1$-$\mathcal{A}_2$-bimodule bundle $\mathcal{E}$ with typical fibre $E$ as defined in \cref{def:cstarbimodulebundle}, with unitary intertwiners as defined in \cref{def:cstarbimodintertwiner} as morphisms. 

\item
For a  rigged von Neumann algebra $N$, the  presheaf  of rigged von Neumann algebra bundles with typical fibre $N$, and with spatial morphisms  (\cref{def:rvnab,re:spatialmorphismbundles}).

\item 
For rigged von Neumann algebras $N_1$ and $N_2$, and a rigged von Neumann $N_1$-$N_2$-bimodule $F$, the presheaf  of triples $(\mathcal{N}_1,\mathcal{N}_2,\mathcal{E})$ of  rigged von Neumann algebra bundles $\mathcal{N}_1$ and $\mathcal{N}_2$ with typical fibres $N_1$ and $N_2$, respectively, and a rigged von Neumann $\mathcal{N}_1$-$\mathcal{N}_2$-bimodule bundle $\mathcal{F}$ with typical fibre $F$ as defined in \cref{def:vonNeumannBimoduleBundle}, with spatial intertwiners as defined in \cref{def:morphvNbimodbun} as morphisms. 
        
\end{itemize}
\end{remark}

Our aim is to assign to any presheaf $\mathcal{F}$  of categories over $\Frech$ a presheaf $\mathcal{F}^{\Diff}$ of categories over $\Diff$.
There is an abstract canonical procedure how to do this, which we explain later in \cref{re:presheaves}. In the following definition we spell out directly the result of this procedure, emphasising the
concrete perspective.

\begin{definition}
\label{def:F(X)}
Let $\mathcal{F}$ be a presheaf of categories over $\Frech$, and let $X$ be a diffeological space. We define a category $\mathcal{F}^{\Diff}(X)$ in the following way: 
\begin{enumerate}[(a)]

\item 
An object of the category $\mathcal{F}^{\Diff}(X)$ is a pair $\mathcal{E}=((\mathcal{E}_c),(\phi_{c_1,c_2,f}))$ consisting of
\begin{itemize}

\item 
a   family $(\mathcal{E}_c)$, indexed by the  plots $c:U \to X$ of $X$, with 
 $\mathcal{E}_c$ an object of $\mathcal{F}(U)$.
 
 \item
 a family $(\phi_{c_1,c_2,f})$, indexed by triples $(c_1,c_2,f)$ consisting of two plots $c_1:U_1 \to X$ and $c_2:U_2 \to X$ and of a smooth map $f: U_1 \to U_2$ such that $c_2 \circ f=c_1$, with $\phi_{c_1,c_2,f}$ a morphisms from $\mathcal{E}_{c_1}$ to $f^{*}\mathcal{E}_{c_2}$ in the category $\mathcal{F}(U_1)$. 
\end{itemize}

This structure is subject to the condition that whenever $(c_1,c_2,f_{12})$ and $(c_2,c_3,f_{23})$ are triples as above, the diagram
\begin{equation*}
\xymatrix@C=5em{\mathcal{E}_{c_1}\ar[r]^-{\phi_{f_{23} \circ f_{12}}} \ar[d]_{\phi_{f_{12}}} & (f_{23}\circ f_{12})^{*}\mathcal{E}_{c_3} \ar[d]^{\eta_{f_{12},f_{23}}} \\ f_{12}^{*}\mathcal{E}_{c_2} \ar[r]_-{f_{12}^{*}\phi_{f_{23}}} & f_{12}^{*}f_{23}^{*}\mathcal{E}_{c_3}}
\end{equation*}   
of morphisms in $\mathcal{F}(U_1)$ is commutative.

\item
A morphism from an object $\mathcal{E}=((\mathcal{E}_c),(\phi_{c_1,c_2,f}))$ to an object $\mathcal{E}'=((\mathcal{E}'_c),(\phi'_{c_1,c_2,f}))$ in  $\mathcal{F}^{\Diff}(X)$ is a family $\psi=(\psi_c)$, indexed by the plots $c:U \to X$ of $X$, with morphisms $\psi_c: \mathcal{E}_c \to \mathcal{E}_c'$ in $\mathcal{F}(U)$, such that the  diagram 
\begin{equation*}
\xymatrix@C=4em{\mathcal{E}_{c_1} \ar[r]^-{\psi_{c_1}} \ar[d]_{\phi_{f}} & \mathcal{E}'_{c_1}   \ar[d]^{\phi'_f} \\ f^{*}\mathcal{E}_{c_2} \ar[r]_{f^{*}\psi_{c_2}} & f^{*}\mathcal{E}_{c_1}}
\end{equation*}
is commutative for all triples $(c_1,c_2,f)$.
The composition of morphisms is plot-wise.

\end{enumerate}
\end{definition}

It is straightforward to complete the assignment $X \mapsto \mathcal{F}^{\Diff}(X)$ given in \cref{def:F(X)} to a presheaf of categories on $\Diff$. This is how we extend presheaves of categories from $\Frech$ to $\Diff$. 
If $\mathcal{M}$ is a Fr\'echet manifold, which we may consider as a diffeological space, then there is a faithful functor
\begin{equation}
\label{eq:Yembedding}
\mathcal{F}(\mathcal{M})\to \mathcal{F}^{\Diff}(\mathcal{M})\text{,}
\end{equation}
which takes an object $\mathcal{E}$ of $\mathcal{F}(\mathcal{M})$ to the pair $((\mathcal{E}_c),(\phi_{c_1,c_2,f}))$, where $\mathcal{E}_c \defeq  c^{*}\mathcal{E}$
is just the pullback in $\mathcal{F}$ (recall that any plot $c$ is a smooth map between Fr\'echet manifolds here) and  $\phi_{c_1,c_2,f} : c_1^{*}\mathcal{E} \to f^{*}c_2^{*}\mathcal{E}$ is  the  isomorphism $\eta_{f,c_2}$ provided by $\mathcal{F}$. A morphism $\psi:\mathcal{E} \to \mathcal{E}'$ is sent to the family with $\psi_c \defeq  c^{*}\psi$; it is obvious that this preserves composition and is faithful. 
 
\begin{remark}
\label{re:presheaves}
The extension of presheaves of categories from $\Frech$ to $\Diff$ that we defined in \cref{def:F(X)} fits  into a more conceptual perspective, which we want to outline in this remark.   To this end, we infer that any diffeological space $X$ can be viewed as a presheaf $\underline{X}$ (of sets) on the category $\Open$ of open subsets of cartesian spaces,  see \cite{baez6}.
Namely, the presheaf $\underline{X}$  assigns to an object $U$ in $\Open$ the set $\underline{X}(U)$ of all plots $c:U \to X$ with domain $U$.
In the following we regard $\underline{X}$ even as a presheaf of \emph{categories} on $\Open$, and we do this simply by regarding the sets $\underline{X}(U)$ as categories with only identity morphisms. Now, let $\mathcal{F}$ be a presheaf of categories on $\Frech$, which we may restrict to $\Open \subset \Frech$.
Presheaves of categories on $\Open$ form a bicategory $\mathrm{PSh}(\Open)$, the bicategory of weak functors $\Open^{\opp} \to \Cat$. Now, we consider the functor represented by $\mathcal{F}|_{\Open}$ and restricted along $\Diff \to \mathrm{PSh}(\Open)$,  
\begin{equation*}
\mathrm{Hom}_{\mathrm{PSh}(\Open)}(\,\underline{\,\cdot\,}\,,\mathcal{F}|_{\Open}):\Diff^{\opp} \to \Cat\text{.}
\end{equation*}
Explicitly, it assigns to a diffeological space $X$ the category $\mathrm{Hom}_{\mathrm{PSh}(\Open)}(\underline{X},\mathcal{F}|_{\Open})$ of morphisms between the objects $\underline{X}$ and $\mathcal{F}|_{\Open}$ of  $\mathrm{PSh}(\Open)$. 
It is  a straightforward exercise to show that 
\begin{equation*}
\mathrm{Hom}_{\mathrm{PSh}(\Open)}(\underline{X},\mathcal{F}|_{\Open})=\mathcal{F}^{\Diff}(X)\text{;}
\end{equation*}
this embeds our \cref{def:F(X)} into a proper topos-theoretical framework. The bicategorical version of the Yoneda lemma, see e.g. \cite{Johnson2021}, implies now that $\mathcal{F}^{\Diff}(U) \cong \mathcal{F}(U)$ for every object $U$ in $\Open$.  
This exhibits $\mathcal{F}^{\Diff}$ as the (left) Kan extension of $\mathcal{F}|_{\Open}$ along $\Open^{\opp} \to \Diff^{\opp}$, which is precisely the standard way to extend presheaves from dense subsites to sites \cite{Johnstone2002}. This is our main justification for \cref{def:F(X)}.  
\end{remark}

We apply \cref{def:F(X)} to the presheaves listed in \cref{re:examplesofpresheaves}, obtaining neat definitions of  rigged Hilbert space bundles, rigged \cstar-algebra bundles, rigged bimodule bundles, rigged von Neumann algebra bundles, and rigged von Neumann bimodule bundles over diffeological spaces. 
Almost all examples of such bundles that appear in this article are obtained via the functor \cref{eq:Yembedding}, in the following way: consider a diffeological space $X$, a Fr\'echet manifold $\mathcal{M}$, and a smooth map $f: X \to \mathcal{M}$. Then, the composite
\begin{equation*}
\xymatrix{\mathcal{F}(\mathcal{M}) \ar[r]^-{\cref{eq:Yembedding}} &  \mathcal{F}^{\Diff}(\mathcal{M}) \ar[r]^-{f^{*}} & \mathcal{F}^{\Diff}(X)}\text{,}
\end{equation*}
produces objects in $\mathcal{F}^{\Diff}(X)$ from objects in $\mathcal{F}(\mathcal{M})$. 
\begin{remark}
It might  be worth to point out that bundles over a diffeological space $X$ have  fibres over points $x\in X$, just like ordinary bundles. Indeed, the axioms of diffeology imply that $c_x: \R^0 \to X: 0 \mapsto x$ is always a plot. Thus, if $\mathcal{N}$ is, say, a rigged von Neumann algebra bundle over $X$, then  $\mathcal{N}_x \defeq  \mathcal{N}_{c_x}$ is a rigged von Neumann algebra bundle over the point $\R^{0}$, i.e., a rigged von Neumann algebra, the fibre of $\mathcal{N}$ at $x$. 
If $\mathcal{N}=f^{*}\mathcal{N}'$ is obtained by pullback of a rigged von Neumann algebra bundle $\mathcal{N}'$ over $\mathcal{M}$ along a smooth map $f:X \to \mathcal{M}$, then $\mathcal{N}_x = \mathcal{N}'_{f(x)}$ is just the ordinary fibre of $\mathcal{N}'$ over $f(x)\in \mathcal{M}$.  
\end{remark}

\section{The free fermions on the circle}\label{sec:FreeFermions}

\label{sec:freefermions}

In this section we consider a bimodule for certain von Neumann algebras, known as the free fermions on the circle. It plays the role of the typical fibre of the spinor bundle of the loop space.
In the first three subsections we recall and then extend our earlier work \cite{Kristel2019,Kristel2020} about the free fermions. In particular,  we upgrade Clifford algebras and Fock spaces into an appropriate rigged von Neumann-theoretical setting (\cref{prop:FockRiggedBimodule}), and throughout explore  the effects of  splitting circles into two halves.  In \cref{sec:FusionImpThroughFock} we define a novel Connes fusion     product of certain implementers on Fock space; its relation to loop fusion proved later in \cref{thm:FusionProductsAgree} is a cornerstone
of our construction of a fusion product on the spinor bundle.      
     
\subsection{Lagrangians and Fock spaces}
\label{sec:CliffFockAndSpin}
\label{sec:CliffFockAndImp}

We recall some aspects of infinite-dimensional Clifford algebras and their representations on Fock spaces, for which the book \cite{PR95} is an excellent reference.
The results we require on implementers are more spread out across the literature, see for instance \cite{Ara85, Ott95, Ne09, Kristel2019}.
In  \cref{sec:CliffFockAndSpin,sec:SmoothRepresentations} we consider in some generality a complex Hilbert space $V$ equipped with a real structure $\alpha$, i.e., an anti-unitary involution $\alpha : V \rightarrow V$.
From \cref{sec:ReflectionFreeFermions} on, we restrict to a specific example, described at the beginning of \cref{sec:ReflectionFreeFermions}. 

Given a unital \cstar-algebra $A$, we say that a map $f:V \rightarrow A$ is a \emph{Clifford map} if the following equations are satisfied, for all $v,w \in V$:
\begin{align}\label{eq:CliffordMap}
 f(v)f(w) + f(w)f(v) &= 2 \langle v, \alpha(w) \rangle \mathds{1}, \quad f(v)^{*} = f(\alpha(v)).
\end{align}
The Clifford \cstar-algebra $\Cl(V)$ is the unique (up to unique isomorphism) unital \cstar-algebra equipped with a Clifford map $\iota: V \rightarrow \Cl(V)$, such that for each unital \cstar-algebra $A$ and each Clifford map $f: V \rightarrow A$, there exists a unique unital \cstar-algebra homomorphism $\Cl(f): \Cl(V) \rightarrow A$ with the property that $\Cl(f) \circ \iota = f$.
An explicit construction of $\Cl(V)$ is given in \cite[Section 1.2]{PR95}.

We define the orthogonal group of $V$, denoted $\O(V)$, to consist of those unitary transformations of $V$ that commute with the real structure.
If $g \in \O(V)$, then $\iota g: V \rightarrow \Cl(V)$ is a Clifford map.
We write $\theta_{g} \defeq \Cl( \iota g): \Cl(V) \rightarrow \Cl(V)$ for its extension to $\Cl(V)$.
The map $\theta_{g}$ is called the \emph{Bogoliubov automorphism} associated to $g$.
The map $\theta: g \mapsto \theta_{g}$ is  a continuous homomorphism from $\O(V)$ to $\Aut(\Cl(V))$, where $\O(V)$ is equipped with the operator norm topology, and $\Aut(\Cl(V))$ is equipped with the strong operator topology, see, e.g. \cite[Proposition 4.35]{Ambler2012}.

A \emph{Lagrangian} in $V$ is a subspace $L \subset V$ such that $V$ splits as the orthogonal direct sum $V = L \oplus \alpha(L)$.
If $L$ is a Lagrangian, then the \emph{Fock space} $\mc{F}_{L}$ is the Hilbert completion of the exterior algebra, $\Lambda L$, of $L$.
We identify $\alpha(L)$ with the dual of $L$ by identifying $w \in \alpha(L)$ with the linear map $L \ni v \mapsto \langle v, \alpha(w) \rangle$.
If $v \in L$ and $w \in \alpha(L) \simeq L^{*}$, then we write $c(v): \mc{F}_{L} \rightarrow \mc{F}_{L}$ for left multiplication with $v$, and $a(w) : \mc{F}_{L} \rightarrow \mc{F}_{L}$ for contraction with $w$.
The maps $c(v)$ and $a(v)$ are bounded operators on $\mc{F}_{L}$, and the map 
\begin{align*}
 \rho_{L}:V = L \oplus \alpha(L) &\rightarrow \mc{B}(\mc{F}_{L}), \quad (v,w) \mapsto \sqrt{2}(c(v) + a(w))
\end{align*}
is a Clifford map. 
This means that $\Cl(\rho_{L}): \Cl(V) \rightarrow \mc{B}(\mc{F}_{L})$ is a unital \cstar-algebra homomorphism; i.e., a representation of $\Cl(V)$ on $\mc{F}_{L}$.
This representation is irreducible \cite[Theorem 2.4.2]{PR95} and faithful; hence, we may identify $\Cl(V)$ with its image in $\mc{B}(\mc{F}_{L})$.
Whenever convenient, we adopt the notation $a \lact v = \Cl(\rho_{L})(a)(v)$ for $a \in \Cl(V)$ and $v \in \mc{F}_{L}$.

If $g \in \O(V)$, then we say that $g$ is \emph{implementable}, if there exists  $U \in \U(\mc{F}_{L})$ with the property that
\begin{equation}\label{eq:Implementer}
 \theta_{g}(a) = UaU^{*}
\end{equation}
for all $a \in \Cl(V)$;
the operator $U$ is said to \emph{implement} $g$. The problem to decide which $g\in \O(V)$ are implementable is called the \quot{implementability problem}.
 It is completely solved: an element $g \in \O(V)$ is implementable if and only if the operator $P_{L} g P_{L}^{\perp}$ is Hilbert-Schmidt, where $P_L$ is the orthogonal projection to $L$, see \cite[Theorem 3.3.5]{PR95} or \cite[Theorem 6.3]{Ara85}.
We write $\O_{\res}(V)$ for the set consisting of those $g \in \O(V)$ which are implementable; the set $\O_{\res}(V)$ is in fact a subgroup of $\O(V)$.

The group $\O(V)$ can be equipped with the structure of Banach Lie group in the standard way, with underlying topology the operator norm topology. The Lie algebra of $\O(V)$ is
\begin{equation*}
 \lie{o}(V) = \{ X \in \mc{B}(V) \mid [X,\alpha] = 0, X^{*} = -X \}.
\end{equation*}
The subgroup $\O_L(V)$ can also be equipped with the structure of a Banach Lie group,  whose underlying topology is given by the norm $\| g\|_{\mc{J}} = \| g\| + \| P_{L} g P_{L}^{\perp} \|_{2}$, where $\|g\|$ is the operator norm of $g$, and where $\| \cdot \|_{2}$ is the Hilbert-Schmidt norm, \cite[Section 3.4]{Kristel2019}. The inclusion $\O_L(V) \to \O(V)$ is smooth.
The Lie algebra of $\O_{\res}(V)$ is 
\begin{equation*}
 \lie{o}_{\res}(V) = \{ X \in \lie{o}(V) \mid \| P_{L} X P_{L}^{\perp} \|_{2} < \infty \}.
\end{equation*}

The \emph{group of implementers}, $\Imp_{\res}(V)$, is defined to be the subgroup of $\U(\mc{F}_{L})$ consisting of those operators $U \in \U(\mc{F}_{L})$, for which there exists a $g \in \O_{\res}(V)$ such that  \cref{eq:Implementer} holds.
If $U \in \Imp_{\res}(V)$, then the element $g \in \O_{\res}(V)$ that it implements is determined uniquely, and we obtain a group homomorphism $q:\Imp_{\res}(V) \rightarrow \O_{\res}(V)$.
Using the irreducibility of the representation of $\Cl(V)$ on $\mc{F}_{L}$ together with Schur's Lemma, we see that, for each $g \in \O_{\res}(V)$, the fibre $q^{-1}\{g\}$ is a $\U(1)$-torsor.
We equip the group $\Imp_{\res}(V)$ with the structure of Banach Lie group as in \cite[Theorem 3.15]{Kristel2019}, see also \cite{Wurzbacher2001}.
We then have that the exact sequence
\begin{equation*}
 \U(1) \rightarrow \Imp_{\res}(V) \xrightarrow{q} \O_{\res}(V) ,
\end{equation*}
is a central extension of Banach Lie groups.
It is important to note that the topology underlying the Banach Lie group structure on $\Imp_{\res}(V)$ is not the operator norm topology, and that the inclusion map $\Imp_{\res}(V) \rightarrow \U(\mc{F}_{L})$ is not continuous, let alone smooth.

Let us assume that we are given a further orthogonal decomposition $V=V_{+} \oplus V_{-}$ of $V$ into two Hilbert spaces $V_{\pm}$ which are preserved under $\alpha$.
In the sequel, such a splitting will implement the splitting of a loop into two paths.
Given such a splitting, we are interested in the operators on $V$ which preserve this splitting.
We write $P_{\pm}$ for the projection onto $V_{\pm}$ and define the following Banach spaces:
\begin{align*}
 \lie{o}^{\theta}(V) &\defeq \{ X \in \lie{o}(V) \mid P_{\pm}X P_{\mp} = 0 \}, &
 \lie{o}^{\theta}_{\res}(V) &\defeq \{ X \in \lie{o}_{\res}(V) \mid P_{\pm}X P_{\mp} = 0 \}.
\end{align*}
The spaces $\lie{o}^{\theta}(V)$ and $\lie{o}_{\res}^{\theta}(V)$ are  Lie subalgebras of $\lie{o}(V)$ and $\lie{o}_{\res}(V)$ respectively.
Next, we define the metric groups
\begin{align*}
 \O^{\theta}(V) &\defeq \{ g \in \O(V) \mid P_{\pm}gP_{\mp} = 0 \}, &
 \O^{\theta}_{\res}(V) &\defeq \{ g \in \O_{\res}(V) \mid P_{\pm}gP_{\mp} = 0 \}.
\end{align*}
Using standard techniques one may then show that the exponential maps $\lie{o}^{\theta}(V) \rightarrow \O^{\theta}(V)$ and $\lie{o}_{\res}^{\theta}(V) \rightarrow \O_{\res}^{\theta}(V)$ are local homeomorphisms, which allows us to equip $\O^{\theta}(V)$ and $\O_{\res}^{\theta}(V)$ with the structure of Banach Lie groups, with Lie algebras $\lie{o}^{\theta}(V)$ and $\lie{o}^{\theta}_{\res}(V)$ respectively.
It is then clear from the construction that $\O^{\theta}(V)$ and $\O_{\res}^{\theta}(V)$ are closed submanifolds of $\O(V)$ and $\O_{\res}(V)$ respectively.
If $g \in \O^{\theta}(V)$, then we set $g_{\pm} \defeq P_{\pm}gP_{\pm} \in \O(V_{\pm})$.
We observe that the maps $g \mapsto g_{\pm}$ are smooth group homomorphisms.
Finally, we define $\Imp_{\res}^{\theta}(V) \defeq \Imp_{\res}(V)|_{\O_{\res}^{\theta}(V)}$.

Next, we consider the Clifford algebras $\Cl(V_{\pm})$.
We observe that extending by zero gives isometries $V_{\pm} \rightarrow V$ which moreover intertwine the real structures, and thus induce isometric $*$-homomorphisms $\iota_{\pm}: \Cl(V_{\pm}) \rightarrow \Cl(V)$.
The algebra product 
\begin{equation*}
\Cl(V_{-}) \times \Cl(V_{+}) \subset \Cl(V) \times \Cl(V) \to \Cl(V)
\end{equation*}
induces a unital isomorphism $\Cl(V_{-}) \otimes \Cl(V_{+}) \cong \Cl(V)$ of \cstar-algebras. (Here, $\otimes$ stands for any choice of tensor product of \cstar-algebras. The choice is immaterial, because the Clifford \cstar-algebra is uniformly hyperfinite, and hence in particular nuclear.) Under these identifications, we have $\iota_{-}(a_{-}) = a_{-} \otimes \mathds{1}$ and $\iota_{+}(a_{+}) = \mathds{1} \otimes a_{+}$, for $a_{\pm} \in \Cl(V_{\pm})$. The following result expresses that the Bogoliubov automorphisms are compatible with the splitting; this will be used later in the proofs of  \cref{lem:bogminus,lem:HalfCliffmult}.

\begin{lemma}
\label{lem:Bogolsplit}
 For all $g_{-} \oplus g_{+} \in \O^{\theta}(V)$ and all $a_{\pm} \in \Cl(V_{\pm})$ we have
 \begin{equation*}
  \theta_{g_{-} \oplus g_{+}}(a_{-} \otimes a_{+}) = \theta_{g_{-}}(a_{-}) \otimes \theta_{g_{+}}(a_{+}),
 \end{equation*}
 where $\theta_{g_{\pm}}$ are the Bogoliubov automorphisms of the Clifford algebras $\Cl(V_{\pm})$.
\end{lemma}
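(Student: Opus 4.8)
The plan is to reduce the statement to the universal property of the Clifford \cstar-algebra $\Cl(V)$, which characterizes a $*$-homomorphism out of $\Cl(V)$ by its restriction to a Clifford map on $V$. Both sides of the claimed identity,
\begin{equation*}
a_- \otimes a_+ \mapsto \theta_{g_- \oplus g_+}(a_- \otimes a_+) \qquad \text{and} \qquad a_- \otimes a_+ \mapsto \theta_{g_-}(a_-) \otimes \theta_{g_+}(a_+),
\end{equation*}
are unital \cstar-algebra endomorphisms of $\Cl(V) \cong \Cl(V_-) \otimes \Cl(V_+)$. The first is a Bogoliubov automorphism by definition; the second is a tensor product of \cstar-algebra homomorphisms, hence a \cstar-algebra homomorphism on the (nuclear) tensor product. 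So it suffices to check that they agree on the image $\iota(V) \subset \Cl(V)$, and by bilinearity and continuity, on the generators $\iota(v)$ for $v \in V_-$ and $v \in V_+$ separately.

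First I would record that, under the identification $\Cl(V_-) \otimes \Cl(V_+) \cong \Cl(V)$, the image of $V_-$ corresponds to elements $\iota_-(a_-) = a_- \otimes \id$ and the image of $V_+$ to $\iota_+(a_+) = \id \otimes a_+$, as stated in the paragraph preceding the lemma. Next, take $v \in V_-$. Since $g := g_- \oplus g_+$ lies in $\O^\theta(V)$, it preserves $V_-$, and $g v = g_- v$ (viewing $V_-\subset V$). Thus $\theta_{g}(\iota(v)) = \iota(g v) = \iota(g_- v) = \iota_-(g_- v) = \theta_{g_-}(\iota_{V_-}(v)) \otimes \id$, where in the last step I use that the Bogoliubov automorphism $\theta_{g_-}$ of $\Cl(V_-)$ satisfies $\theta_{g_-}(\iota_{V_-}(v)) = \iota_{V_-}(g_- v)$ on generators. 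This matches $\theta_{g_-}(a_-)\otimes\theta_{g_+}(a_+)$ evaluated at $a_-\otimes a_+ = \iota_{V_-}(v)\otimes\id$, since $\theta_{g_+}(\id) = \id$. The case $v \in V_+$ is symmetric: $\theta_{g}(\iota(v)) = \iota(g_+ v) = \id \otimes \theta_{g_+}(\iota_{V_+}(v))$.

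Having verified equality on the \cstar-algebra generators $\iota(V_-) \cup \iota(V_+)$, which generate $\Cl(V)$ as a \cstar-algebra, I conclude that the two \cstar-algebra homomorphisms agree everywhere, which is exactly the asserted identity for all $a_\pm \in \Cl(V_\pm)$. I do not expect a serious obstacle here; the only point requiring minor care is to be explicit that both maps really are well-defined \cstar-algebra homomorphisms on the tensor product (so that "agree on generators $\Rightarrow$ agree everywhere" applies) — for the second map this uses that tensoring two $*$-homomorphisms of \cstar-algebras yields a $*$-homomorphism of the spatial (equivalently, here, the unique) tensor product, which is legitimate because $\Cl$ is nuclear as noted in the excerpt. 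Everything else is a direct computation on generators via $\theta_h(\iota(w)) = \iota(hw)$.
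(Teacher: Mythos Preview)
Your proof is correct and follows essentially the same approach as the paper: the paper's one-line argument observes that $(g_- \oplus g_+) \circ \iota_\pm = \iota_\pm \circ g_\pm$, which is exactly the generator computation you spell out, and then (implicitly) invokes the universal property of $\Cl(V)$ to conclude. You have simply made explicit the steps that the paper compresses into a single sentence.
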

\begin{proof}
 This follows from the fact that $(g_{-} \oplus g_{+}) \circ \iota_{\pm} = \iota_{\pm} \circ g_{\pm}$ for all $g_{-} \oplus g_{+} \in \O^{\theta}(V)$.
\end{proof}

\subsection{Smooth Fock spaces}
\label{sec:SmoothRepresentations}

In \cref{sec:fusion,sec:SpinorBundleOnLoopSpaceI} we construct bundles of  rigged Hilbert spaces and rigged \cstar-algebras over Fr\'echet manifolds. This requires a detailed study of smoothness properties of the  representations obtained in \cref{sec:CliffFockAndSpin}; this is the goal of this section. We continue working with a complex Hilbert space $V$ with a real structure $\alpha$, additionally equipped with an orthogonal decomposition into two Hilbert spaces $V_{\pm}$ which are preserved under $\alpha$.
Our first goal is to equip the Fock space $\mathcal{F}_L$ and the Clifford \cstar-algebra $\Cl(V)$ with the structure of a rigged Hilbert space and a rigged \cstar-algebra, respectively. We have done this already in our earlier paper \cite{Kristel2020}; however, there we have not taken the splitting $V=V_{-}\oplus V_{+}$ into account. For the purpose of this article, the splitting is essential, and it leads to a finer rigging (we compare them in \cref{rem:compareriggings} below). Therefore, we describe the important steps again.

As a  subgroup of $\U(\mc{F}_{L})$, the group $\Imp^{\theta}_{\res}(V)$ comes equipped with a unitary representation on $\mc{F}_{L}$.
As is typical for infinite dimensional representations, the action map 
$\Imp_{\res}^{\theta}(V) \times \mc{F}_{L} \rightarrow \mc{F}_{L}$
is not smooth.
The \emph{subspace of smooth vectors} in $\mc{F}_{L}$ is defined as usual to be
\begin{equation*}
 \mc{F}^{\smooth}_{L} \defeq \{ v \in \mc{F}_{L} \mid \Imp^{\theta}_{\res}(V) \rightarrow \mc{F}_{L}, U \mapsto Uv \text{ is smooth} \}.
\end{equation*}
The following result follows directly from \cite[Proposition 3.17]{Kristel2019}, see also \cite[Section 10.1]{Ne09}.
\begin{lemma}
 The set of smooth vectors $\mc{F}_{L}^{\smooth}$ contains the exterior algebra $\Lambda L$, and is hence a dense subspace of $\mc{F}_{L}$.
\end{lemma}

By definition of smooth vectors, the Lie algebra $\lie{imp}^{\theta}(V)$ of $\Imp_L^{\theta}(V)$ acts infinitesimally on $\mathcal{F}_L^{\smooth}$; i.e., for $X \in \lie{imp}^{\theta}(V)$ and  $v \in \mc{F}^{\smooth}_{L}$, we may define
\begin{equation*}
 Xv \defeq \der{}{t} \bigg|_{t=0} \exp(t X)(v).
\end{equation*}
Let $\mc{P}(\mc{F}_{L})$ be the set of all continuous semi-norms on $\mc{F}_{L}$.
We define a topology on $\mc{F}_{L}^{\smooth}$ by the following family of semi-norms, \cite[Section 4]{neebdiffvect}:
\begin{equation*}
 p_{n}(v) = \sup \{ p( X_{1} \dots X_{n} v) \mid X_{i} \in \lie{imp}^{\theta}(V), \|X_{i}\| \leqslant 1 \}, \quad p \in \mc{P}(\mc{F}_{L}), n \in \mathbb{N}_{0}.
\end{equation*}
The following result is proved completely analogously to \cite[Proposition 3.2.4]{Kristel2020}.
\begin{proposition}
\label{prop:smoothFockSpace}
The space of smooth vectors $\mathcal{F}^{s}_L$ is a rigged Hilbert space, and it carries a smooth representation of the Banach Lie group $\Imp_{\res}^{\theta}(V)$.
\end{proposition}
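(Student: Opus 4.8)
\textbf{Proof plan for \cref{prop:smoothFockSpace}.} The statement asserts two things: (i) that $\mathcal{F}_L^{\smooth}$, equipped with the locally convex topology defined by the semi-norms $p_n$, is a rigged Hilbert space in the sense of \cref{def:rhs}, i.e.\ a Fr\'echet space carrying a continuous inner product; and (ii) that the action map $\Imp_{\res}^{\theta}(V) \times \mathcal{F}_L^{\smooth} \to \mathcal{F}_L^{\smooth}$ is a smooth representation by unitary morphisms of rigged Hilbert spaces. The excerpt already tells us exactly how to proceed: the result is ``proved completely analogously to \cite[Proposition 3.2.4]{Kristel2020}'', so the plan is to transcribe that argument, checking that nothing in it used the absence of the splitting $V = V_- \oplus V_+$ in an essential way --- indeed the only change is that $\Imp_{\res}(V)$ is replaced throughout by its closed subgroup $\Imp_{\res}^{\theta}(V)$, and $\lie{imp}(V)$ by $\lie{imp}^{\theta}(V)$, which affects nothing structurally.

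First I would establish that $\mathcal{F}_L^{\smooth}$ is complete, hence Fr\'echet: the topology is metrizable because it is generated by a countable family of semi-norms (the $p_n$ for $n \in \mathbb{N}_0$, once one notes $\mc{P}(\mc{F}_L)$ can be replaced by the single Hilbert norm since $\mc{F}_L$ is a Hilbert space, so really $p_n(v) = \sup\{\|X_1\cdots X_n v\| : \|X_i\|\leqslant 1\}$), and completeness follows from the standard fact (as in \cite{neebdiffvect}, \cite[Section 10.1]{Ne09}) that the space of smooth vectors of a Banach-Lie-group representation is complete in the smooth topology --- a Cauchy sequence converges in $\mc{F}_L$ by the $n=0$ semi-norm, and the higher semi-norms force the limit to remain smooth with the derivatives converging. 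The inner product of $\mc{F}_L$ restricts to $\mathcal{F}_L^{\smooth}$ and is continuous there because $|\langle v,w\rangle| \leqslant \|v\|\|w\| = p_0(v)p_0(w)$. This gives (i). That $\Lambda L \subset \mathcal{F}_L^{\smooth}$ densely is the preceding \namecref{prop:smoothFockSpace}'s content, so $(\mathcal{F}_L^{\smooth})^{\langle\cdot,\cdot\rangle} = \mc{F}_L$.

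For (ii), the unitarity of each $U \in \Imp_{\res}^{\theta}(V)$ as a morphism of rigged Hilbert spaces has two parts: $U$ preserves $\mathcal{F}_L^{\smooth}$ (clear, since $\mathcal{F}_L^{\smooth}$ is a $\Imp_{\res}^{\theta}(V)$-invariant set by its very definition together with the group law), and $U$ is isometric for the $p_n$ --- this follows from the $\Ad$-invariance of the norm-ball $\{\|X\|\leqslant 1\}$ in $\lie{imp}^{\theta}(V)$ under conjugation by $U$, more precisely from $U X_1 \cdots X_n v = (\Ad_U X_1)\cdots(\Ad_U X_n)(Uv)$ and the fact that $\Ad_U$ is an isometry of $\lie{imp}^{\theta}(V)$ (it is the restriction of conjugation on $\mc{B}(\mc{F}_L)$, which preserves operator norm, and $\Ad_U$ preserves $\lie{imp}^{\theta}(V)$ because the splitting-compatibility condition $P_\pm X P_\mp = 0$ is stable under conjugation by implementers of elements of $\O^\theta_{\res}(V)$). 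Smoothness of $\Imp_{\res}^{\theta}(V) \times \mathcal{F}_L^{\smooth} \to \mathcal{F}_L^{\smooth}$ is the real work: one shows the map is smooth into $\mc{F}_L$ (this is essentially the defining property of smooth vectors, upgraded from the orbit maps $U \mapsto Uv$ to the joint map using that $\Imp_{\res}^{\theta}(V)$ is a Banach Lie group and a standard argument with difference quotients and the group multiplication), and then that all iterated derivatives $X_1\cdots X_n$ applied to it again land continuously in $\mc{F}_L$, which by definition of the $p_n$-topology means the map is smooth into $\mathcal{F}_L^{\smooth}$; here one invokes \cite[Proposition 3.17]{Kristel2019} and the smooth-vector machinery of \cite{neebdiffvect,Ne09}.

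The main obstacle is the smoothness of the joint action map into the Fr\'echet space $\mathcal{F}_L^{\smooth}$ rather than merely into $\mc{F}_L$ --- one must control all mixed partials, i.e.\ differentiating in the group variable $U$ and simultaneously applying Lie-algebra elements in the Fock-space variable, and verify these are jointly continuous with values controlled by the semi-norms $p_n$. However, since this is precisely the content of \cite[Proposition 3.2.4]{Kristel2020} and the passage from $\Imp_{\res}(V)$ to the closed Banach-Lie subgroup $\Imp_{\res}^{\theta}(V)$ introduces no new analytic difficulty (closed subgroups of Banach Lie groups inherit all the relevant smoothness and the inclusion of Lie algebras is a closed embedding), the proof reduces to citing that proposition with the evident substitutions; I would state this explicitly and not reproduce the calculation.
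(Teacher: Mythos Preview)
Your proposal is correct and matches the paper's approach exactly: the paper's entire proof is the single sentence ``proved completely analogously to \cite[Proposition 3.2.4]{Kristel2020}'', and you have correctly identified that the passage from $\Imp_{\res}(V)$ to the closed Banach Lie subgroup $\Imp_{\res}^{\theta}(V)$ is the only modification and introduces no new analytic difficulty. One small point of care: your justification that $\Ad_U$ is an isometry of $\lie{imp}^{\theta}(V)$ via the operator norm on $\mc{B}(\mc{F}_L)$ is not quite the right norm (recall the Banach Lie group topology on $\Imp_{\res}(V)$ is \emph{not} the operator norm topology), but the conclusion is correct and is handled in the cited reference.
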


Just like the action map $\Imp^{\theta}_{\res}(V) \times \mc{F}_{L} \rightarrow \mc{F}_{L}$ is not smooth, the  map $\O^{\theta}(V) \times \Cl(V) \rightarrow \Cl(V)$ for the action by Bogoliubov automorphisms is not smooth either, an issue that we handle in a similar way.
We write $\Clsm$ for the subspace of smooth vectors in $\Cl(V)$, i.e.~
\begin{equation*}
 \Cl(V)^{\smooth} \defeq \{ a \in \Cl(V) \mid \O^{\theta}(V) \rightarrow \Cl(V), g \mapsto \theta_{g}(a) \text{ is smooth} \}.
\end{equation*}
\begin{lemma}
 The set of smooth vectors $\Clsm$ contains the algebraic Clifford algebra of $V$, and is hence dense in the \cstar-Clifford algebra $\Cl(V)$.
\end{lemma}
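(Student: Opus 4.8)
The plan is to reduce the statement to two elementary observations: that $\Clsm$ is a unital subalgebra of $\Cl(V)$, and that each $\iota(v)$, $v \in V$, is a smooth vector. Since the algebraic Clifford algebra is by definition the unital subalgebra of $\Cl(V)$ generated by $\iota(V)$, these two facts give the containment, and density is then immediate because the algebraic Clifford algebra is norm-dense in $\Cl(V)$ by construction (cf. \cite[Section 1.2]{PR95}).

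First I would verify that $\Clsm$ is a unital subalgebra. The smooth vectors of any representation form a linear subspace, and $\mathds{1} \in \Clsm$ because $\theta_g(\mathds{1}) = \mathds{1}$ for all $g \in \O^{\theta}(V)$, so its orbit map is constant, hence smooth. If $a,b \in \Clsm$, then $g \mapsto \theta_g(ab) = \theta_g(a)\theta_g(b)$ is the composition of the smooth map $g \mapsto (\theta_g(a),\theta_g(b))$ into $\Cl(V) \times \Cl(V)$ with the multiplication $\Cl(V) \times \Cl(V) \to \Cl(V)$, which is continuous and bilinear, hence smooth; therefore $ab \in \Clsm$.

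Next I would show $\iota(v) \in \Clsm$ for every $v \in V$. By definition of the Bogoliubov automorphism, $\theta_g \circ \iota = \iota \circ g$, so $\theta_g(\iota(v)) = \iota(gv)$, and thus the orbit map of $\iota(v)$ is the composition $g \mapsto gv \mapsto \iota(gv)$. The first map $g \mapsto gv$ from $\O^{\theta}(V)$ to $V$ is smooth: $\O^{\theta}(V)$ is a closed submanifold of the Banach Lie group $\O(V)$, whose inclusion into $\mc{B}(V)$ is smooth, so $g \mapsto gv$ is the restriction to $\O^{\theta}(V)$ of the bounded linear — hence smooth — evaluation map $\mc{B}(V) \to V$, $T \mapsto Tv$. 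Composing with the isometric linear, hence smooth, map $\iota : V \to \Cl(V)$ shows that $g \mapsto \theta_g(\iota(v))$ is smooth. Note that we do not need the joint action map $\O^{\theta}(V) \times \Cl(V) \to \Cl(V)$ to be smooth (it is not); only the orbit maps at the algebra generators. Combining the two observations, $\Clsm$ contains the unital subalgebra generated by $\iota(V)$, i.e.\ the algebraic Clifford algebra, which is norm-dense in $\Cl(V)$; hence so is $\Clsm$. There is no serious obstacle here — the only point requiring a little care is the smoothness of the orbit maps relative to the Banach Lie group structure on $\O^{\theta}(V)$, which is handled by its realization as a closed submanifold of $\O(V)$ with the operator norm topology.
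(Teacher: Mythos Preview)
Your proof is correct. The paper states this lemma without proof, presumably treating it as standard or analogous to the preceding lemma on $\mc{F}_L^{\smooth}$; your argument supplies exactly the expected details---that smooth vectors form a unital subalgebra and that each generator $\iota(v)$ has a smooth orbit map via the factorization $\theta_g(\iota(v)) = \iota(gv)$.
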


The Lie algebra $\lie{o}_{res}^{\theta}(V)$ acts on $\Clsm$, which allows us to proceed as follows.
Let $\mc{R}(\Cl(V))$ be the set of continuous semi-norms on $\Cl(V)$.
The topology on $\Clsm$ is then defined by the family of semi-norms
\begin{equation*}
 r_{n}(a) = \sup \{ r( Y_{1} \dots Y_{n} a) \mid Y_{i} \in \lie{o}^{\theta}(V), \|Y_{i}\| \leqslant 1 \}, \quad r \in \mc{R}(\Cl(V)), n \in \mathbb{N}_{0}.
\end{equation*}
In analogy with \cite[Proposition 3.2.7]{Kristel2020} we then have the following proposition.
\begin{proposition}
\label{prop:CliffordFrechetAlgebra}
The algebra of smooth vectors $\Clsm$ is a rigged \cstar-algebra, and it carries a smooth representation of the Banach Lie group $\mathrm{O}^{\theta}(V)$.
\end{proposition}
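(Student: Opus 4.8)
The statement to prove (\cref{prop:CliffordFrechetAlgebra}) is that $\Clsm$, equipped with the topology given by the seminorms $r_n$, is a rigged \cstar-algebra, and that $\O^\theta(V)$ acts smoothly on it. The plan is to follow exactly the template of \cite[Proposition 3.2.7]{Kristel2020}, since the only difference is that the acting group is now $\O^\theta(V)$ (the splitting-preserving subgroup) rather than the full group $\O(V)$, which changes the Lie algebra appearing in the seminorm definition to $\lie{o}^\theta(V)$ but leaves the structural arguments intact. First I would verify the rigged \cstar-algebra axioms of \cref{def:riggedcstaralgebra}: (i) $(\Clsm, r_\bullet)$ is a Fr\'echet space — completeness is the standard fact that a common smooth-vector space for a Banach Lie group action, topologised by the $r_n$ seminorms built from the infinitesimal action, is complete (cf.\ \cite[Section 4]{neebdiffvect}); separability and metrizability are inherited from $\Cl(V)$ together with countability of the seminorm family; (ii) the multiplication $\Clsm \times \Clsm \to \Clsm$ is continuous — this follows because each $\theta_g$ is an algebra homomorphism, so $\Clsm$ is a subalgebra, and the Leibniz rule for the derived action $Y \cdot (ab) = (Y\cdot a)b + a(Y\cdot b)$ together with submultiplicativity of the \cstar-norm lets one estimate $r_n(ab)$ by a finite sum of products $r_i(a)\,r_{n-i}(b)$; (iii) the norm $\|\cdot\|$ and the involution $*$ restrict to continuous maps on $\Clsm$ — continuity of $\|\cdot\|$ is clear since $\|\cdot\| = r_0$-type estimate $\|a\| \le r_0(a)$, and one checks $\Clsm$ is $*$-invariant because $\theta_g$ commutes with $*$ (as $\theta_g$ is a \cstar-automorphism), and then $r_n(a^*) = r_n(a)$ up to sign bookkeeping on the skew-adjoint generators $Y_i$; (iv) the completion of $\Clsm$ in the norm $\|\cdot\|$ is $\Cl(V)$ itself, which is a \cstar-algebra — this uses the preceding lemma that $\Clsm$ contains the algebraic Clifford algebra and is therefore norm-dense in $\Cl(V)$.

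For the smoothness of the representation $\O^\theta(V) \times \Clsm \to \Clsm$, I would argue exactly as in the Fock-space case \cref{prop:smoothFockSpace} / \cite[Prop.\ 3.2.4]{Kristel2020}: one shows that the orbit map $g \mapsto \theta_g(a)$ is smooth into $\Clsm$ for each fixed smooth vector $a$, that the joint map is continuous, and then upgrades to joint smoothness using the general machinery for Banach-Lie-group actions on spaces of smooth vectors — the key inputs being that $\lie{o}^\theta(V)$ acts by continuous operators on $\Clsm$ (which is how the topology was defined), and that $\theta_g$ preserves $\Clsm$ and acts by isometric $*$-automorphisms, so each $\theta_g$ is a morphism of rigged \cstar-algebras in the sense of \cref{def:riggedcstaralgebra}.

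The main obstacle — though it is more bookkeeping than genuine difficulty — is verifying that the derived action of $\lie{o}^\theta(V)$ genuinely maps $\Clsm$ into itself and behaves well enough with respect to the seminorms $r_n$ to close the completeness and multiplication-continuity arguments; this is the point where one must be slightly careful that restricting from $\lie{o}(V)$ to the subalgebra $\lie{o}^\theta(V)$ does not break the estimates. Since $\lie{o}^\theta(V) \subset \lie{o}(V)$ is a closed Lie subalgebra and $\O^\theta(V) \subset \O(V)$ is a closed submanifold (as noted just before \cref{lem:Bogolsplit}), the smooth-vector space for the $\O^\theta(V)$-action \emph{contains} the one for the $\O(V)$-action, and in particular contains the algebraic Clifford algebra; the seminorm estimates then go through verbatim because they only ever involve applying finitely many unit-norm generators, and the bound $\|Y_i\|\le 1$ is simply imposed over the smaller set $\lie{o}^\theta(V)$. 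Hence the proof is a direct adaptation, and I would present it as such, citing \cite[Prop.\ 3.2.7]{Kristel2020} for the details of the analytic estimates and only spelling out the points where the splitting enters.
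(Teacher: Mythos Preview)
Your proposal is correct and takes essentially the same approach as the paper: the paper gives no proof at all beyond the sentence ``In analogy with \cite[Proposition 3.2.7]{Kristel2020} we then have the following proposition,'' so your outline---adapting that earlier argument verbatim with $\O^{\theta}(V)$ and $\lie{o}^{\theta}(V)$ in place of $\O(V)$ and $\lie{o}(V)$---is exactly what is intended, and you have in fact supplied more detail than the paper does.
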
   

Finally, we adapt  to this situation, and obtain the following result.

\begin{proposition}\label{prop:SmoothCliffordActionOnF}
The Fock space representation $\Cl(V) \times \mc{F}_{L} \rightarrow \mc{F}_{L}$ restricts to a map $\Cl(V)^{\smooth} \times \mathcal{F}_L^{\smooth} \to \mathcal{F}_L^{\smooth}$ and exhibits  $\mathcal{F}_L^{\smooth}$ as a rigged $\Cl(V)^{\smooth}$-module. Moreover, $\ClvN(V)^\smooth\defeq (\Cl(V)^{\smooth},\mathcal{F}_L^{\smooth})$ is a rigged von Neumann algebra.
\end{proposition}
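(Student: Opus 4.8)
The plan is to transport the whole structure down from the $\cstar$-algebraic level, at which the Fock representation $\Cl(\rho_L):\Cl(V)\to\mc{B}(\mc{F}_L)$ is already completely understood, to the rigged (Fréchet) level, in the spirit of the analogous statement in \cite{Kristel2020}; the only genuinely new bookkeeping is that one works throughout with the $\theta$-versions $\O^{\theta}(V)$, $\O^{\theta}_{\res}(V)$, $\Imp^{\theta}_{\res}(V)$ adapted to the splitting $V=V_{+}\oplus V_{-}$. Faithfulness and the two algebraic conditions will come for free; the analytic content is entirely in the smoothness/continuity statement.

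First I would check that Clifford multiplication lands in smooth vectors, i.e.\ $a\lact v\in\mc{F}_L^{\smooth}$ whenever $a\in\Clsm$ and $v\in\mc{F}_L^{\smooth}$. The key is the covariance relation $U(a\lact v)=\theta_{q(U)}(a)\lact(Uv)$ for $U\in\Imp^{\theta}_{\res}(V)$, which exhibits the orbit map $U\mapsto U(a\lact v)$ as the composite of: the smooth homomorphism $q:\Imp^{\theta}_{\res}(V)\to\O^{\theta}_{\res}(V)$ followed by the smooth inclusion $\O^{\theta}_{\res}(V)\hookrightarrow\O^{\theta}(V)$ and the orbit map $g\mapsto\theta_g(a)$, which is smooth into $\Clsm$ by \cref{prop:CliffordFrechetAlgebra}; the orbit map $U\mapsto Uv$, which is smooth into $\mc{F}_L^{\smooth}$ by \cref{prop:smoothFockSpace}; and the Clifford multiplication $\Clsm\times\mc{F}_L^{\smooth}\to\mc{F}_L$, which is continuous (hence smooth, being bilinear) as the restriction of the bounded bilinear map $\Cl(V)\times\mc{F}_L\to\mc{F}_L$, $\|a\lact v\|\leqslant\|a\|\,\|v\|$, along the continuous inclusions $\Clsm\hookrightarrow\Cl(V)$ and $\mc{F}_L^{\smooth}\hookrightarrow\mc{F}_L$. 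So $U\mapsto U(a\lact v)$ is smooth, as required.

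Next I would establish that the resulting map $\Clsm\times\mc{F}_L^{\smooth}\to\mc{F}_L^{\smooth}$ is smooth; being bilinear, it suffices to check continuity for the defining semi-norms $p_n$ of $\mc{F}_L^{\smooth}$. Differentiating the covariance relation gives the Leibniz rule $X(a\lact v)=(\bar Xa)\lact v+a\lact(Xv)$ for $X\in\lie{imp}^{\theta}(V)$ with image $\bar X\in\lie{o}^{\theta}_{\res}(V)\subset\lie{o}^{\theta}(V)$, and iterating expresses $X_1\cdots X_n(a\lact v)$ as a finite sum of terms $(\bar X_{i_1}\cdots\bar X_{i_k}a)\lact(X_{j_1}\cdots X_{j_{n-k}}v)$. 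Applying a continuous semi-norm $p$ on $\mc{F}_L$, dominating it by a multiple of $\|\cdot\|$, and using $\|b\lact w\|\leqslant\|b\|\,\|w\|$ then bounds $p_n(a\lact v)$ by a finite sum of products of semi-norms $r_k(a)$ of $\Clsm$ and semi-norms of $\mc{F}_L^{\smooth}$ evaluated at $v$, which is precisely the required continuity. The two conditions in \cref{eq:BoundednessCriteria} are inherited verbatim from the $\cstar$-level, since $\Cl(\rho_L)$ is a $\ast$-representation: $\langle a\lact v,a\lact v\rangle=\|a\lact v\|^{2}\leqslant\|a\|^{2}\|v\|^{2}=\|a\|^{2}\langle v,v\rangle$ and $\langle a\lact v,w\rangle=\langle v,a^{*}\lact w\rangle$. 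This shows that $\mc{F}_L^{\smooth}$ is a rigged $\Clsm$-module.

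Finally, to conclude that $\ClvN(V)^{\smooth}=(\Clsm,\mc{F}_L^{\smooth})$ is a rigged von Neumann algebra, by \cref{def:riggedvonneumannbundle} I only need the induced $\cstar$-representation to be faithful. Here the norm completion of $\Clsm$ is $\Cl(V)$ and the Hilbert completion of $\mc{F}_L^{\smooth}$ is $\mc{F}_L$ (the algebraic Clifford algebra, respectively $\Lambda L$, being dense), and the induced $\ast$-homomorphism is precisely the Fock representation $\Cl(\rho_L):\Cl(V)\to\mc{B}(\mc{F}_L)$, which is faithful; moreover its image generates $\mc{B}(\mc{F}_L)$ by irreducibility, so the associated von Neumann algebra is $\mc{B}(\mc{F}_L)$. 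The only step that requires real care is the one in the previous paragraph — verifying that Clifford multiplication actually takes values in, and is continuous for, the Fréchet topology on $\mc{F}_L^{\smooth}$ — and there I would adapt the semi-norm estimates of \cite{Kristel2020} rather than reproduce them in detail.
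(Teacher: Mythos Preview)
Your proposal is correct and follows essentially the same approach as the paper: the paper's own proof simply refers to \cite[Proposition 3.2.8]{Kristel2020} for the restriction and continuity, invokes \cite[Remark 2.2.9]{Kristel2020} for the module conditions, and observes faithfulness of the Fock representation for the von Neumann part. You have unpacked precisely the covariance/Leibniz argument that underlies that reference, adapted to the $\theta$-subgroups, so there is no substantive difference in strategy.
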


\begin{proof}
Showing that the Fock space representation restricts and that the map $\rho:\Cl(V)^{\smooth} \times \mathcal{F}_L^{\smooth} \to \mathcal{F}_L^{\smooth}$ is continuous is completely analogous to the proof we gave in \cite[Proposition 3.2.8]{Kristel2020} for the slightly coarser riggings. By \cite[Remark 2.2.9]{Kristel2020} this implies that   $\mathcal{F}_L^{\smooth}$ as a rigged $\Cl(V)^{\smooth}$-module. In order to show that $\ClvN(V)^\smooth$ is a rigged von Neumann algebra, we only have to notice that the \cstar-representation induced by $\rho$ is the Fock space representation, which is faithful.  
\end{proof}

We remark that the rigged von Neumann algebra $\ClvN(V)^\smooth$ determines an ordinary von Neumann algebra $\ClvN(V)\defeq (\ClvN(V)^\smooth)''$ as the completion of $\Cl(V)$ acting on $\mathcal{F}_L$ (\cref{re:vnacl}). It is well-known that $\ClvN(V)=\mathcal{B}(\mathcal{F}_L)$.

\begin{remark}
\label{rem:compareriggings}
In our earlier paper \cite[Section 3.2]{Kristel2020} we considered different Fr\'echet spaces as the riggings on $\mathcal{F}_L$ and $\Cl(V)$, obtained without assuming a splitting of $V$:
 \begin{align*}
  \mc{F}^{\infty}_{L} &\defeq \{ v \in \mc{F}_{L} \mid \Imp_{\res}(V) \rightarrow \mc{F}_{L}, U \mapsto Uv \text{ is smooth} \}\\
  \Cl(V)^{\infty} &\defeq \{ a \in \Cl(V) \mid \O(V) \rightarrow \Cl(V), g \mapsto \theta_{g}(a) \text{ is smooth} \}
 \end{align*}
Because of the fact that $\Imp_{\res}^{\theta}(V)$ is contained in $\Imp_{\res}(V)$ we have that $\mc{F}_L^{\infty} \subset \mc{F}_{\res}^{\smooth}$.
 This means that we improve the rigging by passing to the subgroup $\Imp_L^{\theta}(V)$.
 The analogous statement is true for $\Cl(V)^{\infty}$ and $\Clsm$.
 The riggings $\mathcal{F}_L^{\infty}$ and $\Cl(V)^{\infty}$ have been appropriate for the construction of the spinor bundle on loop space and its Clifford action. For the construction of the fusion product we need the finer riggings $\mathcal{F}_L^{\smooth}$ and $\Cl(V)^{\smooth}$; in particular, we need these in \cref{lem:SmoothHalfInclusion}, see \cref{rem:finerriggingessential}. \end{remark}

We now move beyond the analogy with \cite[Section 3.2]{Kristel2020} and lift the separate Clifford \cstar-algebras $\Cl(V_{\pm})$ to the setting of rigged \cstar-algebras, by defining
\begin{equation*}
 \Cl(V_{\pm})^{\smooth} \defeq \{ a \in \Cl(V_{\pm}) \mid \O(V_{\pm}) \rightarrow \Cl(V_{\pm}),\; g \mapsto \theta_{g}(a) \text{ is smooth} \}.
\end{equation*}
Just like $\Cl(V)^{\smooth}$ these are rigged \cstar-algebras.

\begin{lemma}\label{lem:SmoothHalfInclusion}
 The inclusion maps $\iota_{\pm}$ restrict to isometric morphisms $\iota_{\pm}:\Cl(V_{\pm})^{\smooth} \rightarrow \Cl(V)^{\smooth}$ of rigged \cstar-algebras.
\end{lemma}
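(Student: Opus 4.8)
The plan is to check the two non-trivial assertions — that $\iota_{\pm}$ maps smooth vectors to smooth vectors, so that it restricts to a map $\Cl(V_{\pm})^{\smooth}\to\Cl(V)^{\smooth}$ at all, and that this restriction is continuous for the respective Fréchet topologies — since the remaining claims (that $\iota_{\pm}$ is a morphism of $\ast$-algebras and an isometry for the \cstar-norms) are inherited verbatim from the isometric $\ast$-homomorphism $\iota_{\pm}:\Cl(V_{\pm})\to\Cl(V)$ already available.

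The key input is \cref{lem:Bogolsplit}. First I would record, for every $g\in\O^{\theta}(V)$ with $g_{\pm}\defeq P_{\pm}gP_{\pm}$, the identity $\theta_{g}\circ\iota_{\pm}=\iota_{\pm}\circ\theta_{g_{\pm}}$ of maps $\Cl(V_{\pm})\to\Cl(V)$: under $\Cl(V_{-})\otimes\Cl(V_{+})\cong\Cl(V)$ we have $\iota_{+}(a)=\id\otimes a$, and \cref{lem:Bogolsplit} applied with the first tensor factor equal to $\id$ gives $\theta_{g}(\id\otimes a)=\id\otimes\theta_{g_{+}}(a)$, and symmetrically for $\iota_{-}$. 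Now $g\mapsto g_{\pm}$ is a \emph{smooth} group homomorphism $\O^{\theta}(V)\to\O(V_{\pm})$, and for $a\in\Cl(V_{\pm})^{\smooth}$ the map $h\mapsto\theta_{h}(a)$ is smooth $\O(V_{\pm})\to\Cl(V_{\pm})$ by definition; composing these with the bounded (hence smooth) linear map $\iota_{\pm}$ exhibits $g\mapsto\theta_{g}(\iota_{\pm}(a))$ as a composite of smooth maps, so $\iota_{\pm}(a)\in\Cl(V)^{\smooth}$.

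For continuity I would differentiate the same identity at $g=\id$: for $Y$ in the Lie algebra $\lie{o}^{\theta}(V)$ of $\O^{\theta}(V)$ and $a\in\Cl(V_{\pm})^{\smooth}$ one gets $Y\cdot\iota_{\pm}(a)=\iota_{\pm}(Y_{\pm}\cdot a)$ with $Y_{\pm}\defeq P_{\pm}YP_{\pm}\in\lie{o}(V_{\pm})$ and $\|Y_{\pm}\|\le\|Y\|$, and iterating, $Y_{1}\cdots Y_{n}\cdot\iota_{\pm}(a)=\iota_{\pm}\bigl((Y_{1})_{\pm}\cdots(Y_{n})_{\pm}\cdot a\bigr)$. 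Since the operators $(Y_{i})_{\pm}$ with $\|Y_{i}\|\le 1$ all lie in the unit ball of $\lie{o}(V_{\pm})$, this yields $r_{n}\bigl(\iota_{\pm}(a)\bigr)\le(r\circ\iota_{\pm})_{n}(a)$ for every continuous semi-norm $r$ on $\Cl(V)$, where $(r\circ\iota_{\pm})_{n}$ is the defining semi-norm of $\Cl(V_{\pm})^{\smooth}$ attached to the continuous semi-norm $r\circ\iota_{\pm}$ on $\Cl(V_{\pm})$. Hence $\iota_{\pm}$ is continuous, and altogether it is an isometric morphism of rigged \cstar-algebras.

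This is not really delicate. The book-keeping to be careful about is the elementary verification that $Y_{\pm}=P_{\pm}YP_{\pm}$ genuinely lies in $\lie{o}(V_{\pm})$ with $\|Y_{\pm}\|\le\|Y\|$ — which rests on $\alpha$ preserving each $V_{\pm}$, so that $P_{\pm}$ commutes with $\alpha$ — together with the resulting identification $\lie{o}^{\theta}(V)\cong\lie{o}(V_{-})\oplus\lie{o}(V_{+})$ that reconciles the two families of semi-norms. If one insists on a single main obstacle, it is exactly this point: making the finer rigging $\Cl(V_{\pm})^{\smooth}$ (which, unlike $\Cl(V)^{\smooth}$, carries no splitting of its own) fit the block-diagonal part of $\Cl(V)^{\smooth}$, for which the estimate $\|Y_{\pm}\|\le\|Y\|$ is precisely what is needed.
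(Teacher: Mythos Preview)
Your proof is correct and follows essentially the same approach as the paper: both use \cref{lem:Bogolsplit} to factor $g\mapsto\theta_{g}(\iota_{\pm}(a))$ through the smooth projection $\O^{\theta}(V)\to\O(V_{\pm})$, and both compare the seminorms $r_{n}$ via the compression $Y\mapsto Y_{\pm}$. The only cosmetic difference is that the paper records the \emph{equality} $r_{n}(\iota_{\pm}(a))=(r\circ\iota_{\pm})_{n}(a)$ (using surjectivity of $\lie{o}^{\theta}(V)\to\lie{o}(V_{\pm})$, which you also note), whereas you write the inequality that already suffices for continuity.
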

\begin{proof}
 Let $a \in \Cl(V_{-})$ be arbitrary.
 We then see from \cref{lem:Bogolsplit} that the map
 \begin{equation}
 \label{eq:mapsmooth}
  \O^{\theta}(V) \rightarrow \Cl(V)\;, \;\;g \mapsto \theta_{g}(a \otimes \mathds{1})
 \end{equation}
is the composition of the  Lie group homomorphism  $\O^{\theta}(V) \to \O(V_{-}),\;g \mapsto g_{-}$ with the action map  $\O(V_{-}) \to \Cl(V_{-}),\; g_{-}\mapsto \theta_{g_{-}}(a)$, followed by the smooth map $\iota_{-}:\Cl(V_{-})\to \Cl(V)$.
This proves that if $a \in \Cl(V_{-})^{\smooth}$, then \cref{eq:mapsmooth} is smooth, and we have $a \otimes \mathds{1} \in \Cl(V)^{\smooth}$; and thus $\iota_{-}$ restricts to a map $\iota_{-}:\Cl(V_{-})^{\smooth} \rightarrow \Cl(V)^{\smooth}$.
 What remains to be shown is that this restriction $\iota_{-}: \Cl(V_{-})^{\smooth} \rightarrow \Cl(V)^{\smooth}$ is continuous w.r.t. the Fr\'echet space structures.
  Because $\iota_{-}$ is linear, it suffices to show continuity at $0$.
 First, observe that if $r \in \mc{R}(\Cl(V))$, then $r \circ \iota_{-} \in \mc{R}(\Cl(V_{-}))$.
 Moreover, we claim that
 \begin{equation}
 \label{eq:diffOtheta}
  r_{n}(a \otimes \mathds{1}) = (r \circ \iota_{-})_{n}(a),
 \end{equation}
 indeed
 \begin{align*}
  r_{n}(a \otimes \mathds{1}) &= \sup \{ r(Y_{1} \dots Y_{n}(a\otimes \mathds{1})) \mid Y_{i} \in \lie{o}^{\theta}(V), \| Y_{i} \| \leqslant 1 \} \\
  &= \sup \{ r(Y_{1}|_{V_{-}} \dots Y_{n}|_{V_{-}}(a)\otimes \mathds{1}) \mid Y_{i} \in \lie{o}^{\theta}(V), \| Y_{i} \| \leqslant 1 \} \\
  &=\sup \{ r \circ \iota_{-}(Y_{-,1} \dots Y_{-,n}(a)) \mid Y_{-,i} \in \lie{o}(V_{-}), \| Y_{i} \| \leqslant 1 \} \\
  &= (r \circ \iota_{-})_{n}(a).
 \end{align*}
 From Equation \cref{eq:diffOtheta} it follows that the preimage of the subbasis open neighbourhood of zero given by $\{ x \in \Cl(V) \mid r_{n}(x) < \eps \} \subset \Cl(V)$ is the subbasis open neighbourhood of zero given by $\{ a \in \Cl(V_{-}) \mid (r \circ \iota_{-})_{n}(a) < \eps \}$.
The discussion of $\Cl(V_{+})$ is analogous.
\end{proof}

\begin{remark}
\label{rem:finerriggingessential}
 For \cref{lem:SmoothHalfInclusion} it is essential that we work with $\O^{\theta}(V)$ and not with $\O(V)$, because $\O(V)$ does not come equipped with a map to $\O(V_{-})$, and moreover, because Equation \cref{eq:diffOtheta} only holds because we use $\O^{\theta}(V)$. Then, in turn, the Fock space representation of $\Cl(V)$ on $\mathcal{F}_L$ does not restrict to a representation of $\Cl(V)^{\smooth}$ on $\mathcal{F}_L^{\infty}$; this is why we have to use the bigger rigging $\mathcal{F}_L^{\smooth}$ on $\mathcal{F}_L$ a well.
\end{remark}

We obtain the following result about the rigged \cstar-algebras $\Cl(V_{\pm})^{\smooth}$. 

\begin{proposition}\label{cor:SmoothHalfAction}
The isometric morphisms $\iota_{\pm}$ induce  on  $\mathcal{F}_L^{\smooth}$ the structure of a rigged $\Cl(V_{\pm})^{\smooth}$-module. Moreover,   $N_{\pm}(V) \defeq (\Cl(V_{\pm})^{\smooth},\mc{F}_{L}^{\smooth})$ are rigged von Neumann algebras, and the pairs $(\iota_{\pm},\id)$ are spatial morphisms $N_{\pm}(V) \to \ClvN(V)^\smooth$.
\end{proposition}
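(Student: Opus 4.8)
The plan is to derive all three assertions by combining results that are already established, with essentially no new analysis required. First I would invoke \cref{lem:SmoothHalfInclusion}, which provides the morphisms of rigged \cstar-algebras $\iota_{\pm}:\Cl(V_{\pm})^{\smooth}\to\Cl(V)^{\smooth}$, together with \cref{prop:SmoothCliffordActionOnF}, which exhibits $\mathcal{F}_L^{\smooth}$ as a rigged $\Cl(V)^{\smooth}$-module. Applying the induced-representation construction of \cref{ex:OppositeRep} with $\phi=\iota_{\pm}$ then equips $\mathcal{F}_L^{\smooth}$ with the structure of a rigged $\Cl(V_{\pm})^{\smooth}$-module, whose representation is $(a,v)\mapsto\iota_{\pm}(a)\lact v$: the boundedness conditions \cref{eq:BoundednessCriteria} hold because $\iota_{\pm}$ is isometric and intertwines the involutions, and smoothness of the action map follows since it factors as $\iota_{\pm}\times\id$ followed by the smooth Fock action of \cref{prop:SmoothCliffordActionOnF}. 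The same construction moreover shows that $(\iota_{\pm},\id)$ is an isometric intertwiner of rigged modules in the sense of \cref{def:RiggedRepresentation}.

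To see that $N_{\pm}(V)=(\Cl(V_{\pm})^{\smooth},\mathcal{F}_L^{\smooth})$ is a rigged von Neumann algebra in the sense of \cref{def:riggedvonneumannbundle}, I would check that the induced \cstar-representation $(\Cl(V_{\pm})^{\smooth})^{\|\cdot\|}=\Cl(V_{\pm})\to\mathcal{B}(\mathcal{F}_L)$ is faithful. By the explicit form of the induced representation this map is the composition of the norm completion of $\iota_{\pm}$, which is an isometric, hence injective, $\ast$-homomorphism $\Cl(V_{\pm})\hookrightarrow\Cl(V)$, with the Fock space representation $\Cl(V)\to\mathcal{B}(\mathcal{F}_L)$, which is faithful as recalled in \cref{sec:CliffFockAndSpin}. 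A composition of injective maps is injective, so the faithfulness condition holds and $N_{\pm}(V)$ is a rigged von Neumann algebra.

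For the last assertion, the pair $(\iota_{\pm},\id)$ constructed above is a unitary intertwiner between the rigged modules underlying $N_{\pm}(V)$ and $\ClvN(V)^{\smooth}$: the morphism $\id:\mathcal{F}_L^{\smooth}\to\mathcal{F}_L^{\smooth}$ is trivially a unitary isomorphism of rigged Hilbert spaces, and it intertwines the $\Cl(V_{\pm})^{\smooth}$-action with the $\Cl(V)^{\smooth}$-action along $\iota_{\pm}$ by construction. This is precisely what it means for $(\iota_{\pm},\id)$ to be a spatial morphism $N_{\pm}(V)\to\ClvN(V)^{\smooth}$ in the sense following \cref{lem:SpatialIsomorphisms}; that lemma then also confirms that $\iota_{\pm}$ extends to a normal $\ast$-homomorphism $N_{\pm}(V)''\to\ClvN(V)$, so that $\iota_{\pm}$ is in particular a morphism of rigged von Neumann algebras.

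I do not expect a genuine obstacle here; the only point that warrants a moment of care is that the \cstar-completion of the pulled-back rigged module structure is literally the composite $\Cl(V_{\pm})\hookrightarrow\Cl(V)\to\mathcal{B}(\mathcal{F}_L)$ rather than something a priori larger or different. This is immediate from the explicit description of the induced representation in \cref{ex:OppositeRep} together with the compatibility of norm completion with composition, and it is exactly the input needed to identify $N_{\pm}(V)''$ inside $\ClvN(V)=\mathcal{B}(\mathcal{F}_L)$.
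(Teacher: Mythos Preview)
Your proposal is correct and follows essentially the same approach as the paper: the first claim via \cref{lem:SmoothHalfInclusion} and the induced-representation construction of \cref{ex:OppositeRep}\cref{re:inducedcstarrep}, and the second claim via faithfulness of the Fock representation of $\Cl(V_{\pm})$. You are somewhat more explicit than the paper, in particular in spelling out why $(\iota_{\pm},\id)$ is a spatial morphism, which the paper leaves implicit.
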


\begin{proof}
The first claim follows from \cref{lem:SmoothHalfInclusion} and  \cref{ex:OppositeRep}\cref{re:inducedcstarrep}. The second claim  (see \cref{def:riggedvonneumannbundle}) follows since the representation of $\Cl(V_{\pm})^{\smooth}$ on $\mc{F}_{L}^{\smooth}$ extends to the representation of $\Cl(V_{\pm})$ on $\mc{F}_{L}$, which is faithful.
\end{proof}

The rigged von Neumann algebra $N_{\pm}(V) = (\Cl(V_{\pm})^{\smooth},\mathcal{F}_{L}^{\smooth})$ determines a von Neumann algebra $N_{\pm}(V)''$ in the classical sense, as the von Neumann closure of the \cstar-algebra $\Cl(V_{\pm})$ acting on $\mathcal{F}_{L}$, see \cref{re:vnacl}. It is well known that these von Neumann algebras are III$_{1}$-factors, \cite[Example 4.3.2]{ST04}, \cite[Section 16]{wassermann98}.

\subsection{Free fermions as a rigged bimodule}\label{sec:ReflectionFreeFermions}

We now fix a concrete Hilbert space $V$, a real structure $\alpha$, a Lagrangian $L$, and a splitting $V=V_{-}\oplus V_{+}$, which will remain the same for the rest of the paper.
Let $\mathbb{S} \rightarrow S^{1}$ be the odd spinor bundle on the circle, i.e., the one associated  to the odd (i.e., the connected) spin structure on the circle.
We set $V \defeq L^{2}(S^{1}, \mathbb{S} \otimes \C^{d})$, where $d$ is a natural number (interpreted as the spacetime dimension).
Pointwise complex conjugation gives a real structure  $\alpha: V \rightarrow V$.
In \cite[Section 2]{Kristel2019} it is explained how the space of smooth 2$\pi$-antiperiodic functions on the real line can be identified with the dense subspace $\Gamma(S^{1},\mathbb{S} \otimes \C^{d}) \subset V$ of smooth sections.
Under this identification, $V$ has an orthonormal basis $\{ \xi_{n,j} \}_{n \in \mathbb{N}, j =1,...,d}$ by setting
\begin{equation*}
 \xi_{n,j}(t) = e^{-i \left(n+ \frac{1}{2} \right) t} e_{j},
\end{equation*}
where $\{ e_{j} \}_{j=1,...,d}$ is the standard basis of $\C^{d}$.
It is further shown that $\alpha(\xi_{n,j}) = \xi_{-n-1,j}$ for all $n$ and all $j$.
It follows that the closed linear span
\begin{equation*}
 L \defeq \operatorname{span} \{ \xi_{n,j} \mid n \geqslant 0, \, j=1,...,d \}
\end{equation*}
is a Lagrangian in $V$.
The corresponding Fock space $\mathcal{F}\defeq \mc{F}_L$ is called the \emph{free fermions}. Let us write $I_{+} \subset S^{1}$ for the open upper semicircle and $I_{-} \subset S^{1}$ for the open lower semicircle.
If $f \in V$, then we write $\operatorname{Supp}(f)$ for the support of $f$. We consider the subspaces
\begin{equation*}
        V_{\pm} \defeq  \{ f\in V \mid \operatorname{Supp}(f) \subseteq I_{\pm} \};
\end{equation*}
they yield a decomposition $V = V_{-} \oplus V_{+}$, and  $\alpha$ restricts to real structures on $V_{\pm}$.
This puts us in the setting of \cref{sec:SmoothRepresentations}, and we thus obtain rigged von Neumann algebras $N_{\pm} \defeq  N_{\pm}(V) \defeq  (\Cl(V_{\pm})^{\smooth},\mc{F}^{\smooth})$.

We denote by $\tau$ the complex-linear extension of the map $\xi_{n,j}\mapsto \xi_{-n-1,j}$.
The map $\tau$ orthogonal, exchanges $L$ with $\alpha(L)$, and exchanges $V_{+}$ with $V_{-}$.
We remark that while $\tau\in\O(V)$, it is not implementable.
Since $\alpha$ interchanges $L$ with $\alpha(L)$ as well, the anti-unitary isomorphism $\alpha \tau$ preserves both $L$ and $\alpha(L)$. In particular, it induces an anti-unitary operator $\Lambda_{\alpha \tau}: \mc{F} \rightarrow \mc{F}$.

If $g \in \O(V)$, then we write $\tau(g) \defeq  \tau \circ g \circ \tau \in \O(V)$.
With this notation we have, for all $f \in V$ and all $g \in \O(V)$, that $\tau(g(f)) = \tau(g) (\tau(f))$.
Because $\tau$ interchanges $L$ with $\alpha(L)$ we have that conjugation by $\tau$ yields an isometric (hence smooth) group homomorphism from $\O_{\res}(V)$ into $\O_{\res}(V)$.
Finally, because $\tau$ exchanges $V_{+}$ with $V_{-}$ we have that  $\tau$ preserves $\O_{\res}^{\theta}(V)$, and moreover exchanges $\O(V_{-})$ with $\O(V_{+})$.

\begin{remark}
 In the following discussion of the Clifford algebras $\Cl(V_{\pm})$ we will focus on $\Cl(V_{-})$ instead of $\Cl(V_{+})$, and in the remainder of this work we will continue to do so.
The rigged von Neumann algebra $N_{-}=(\Cl(V_{-})^{\smooth},\mathcal{F}^{\smooth})$ will be denoted by just $N$ in the following.  Nothing is lost by this choice, the theory for $\Cl(V_{+})$ is completely parallel.
\end{remark}

We recall that the Fock space $\mc{F}$ is equipped with the left action of $\Cl(V_{-})$ induced by the inclusion $\iota_{-}:\Cl(V_{-}) \to \Cl(V)$.
Next, we equip it with a compatible right action of $\Cl(V_{-})$.
We note that  $\mathcal{F}$ is naturally graded, as it is the completion of an exterior algebra. 
We let $\operatorname{k}: \mc{F} \rightarrow \mc{F}$ be the \quot{Klein transformation}, that is, $\operatorname{k}$ acts on $\mc{F}$ as the  identity on the even part, and as multiplication by $i$ on the odd part. Note that $\operatorname{k}$ is  unitary. Then we define the operator
\begin{equation}
\label{eq:Jdef}
J \defeq \operatorname{k}^{-1} \Lambda_{\alpha \tau}
\end{equation}
which is an anti-unitary operator on $\mathcal{F}$ with $J^2=1$.
Now, we define a right action as
\begin{align}
\label{eq:rightaction}
\mathcal{F} \times \Cl(V_{-})  \rightarrow \mathcal{F}, \quad
 (v,a) \mapsto Ja^{*}Jv.
\end{align}
We shall then write $v \ract a$ for the right action of $a$ on $v$. 

\begin{lemma}
Right and left actions of $\Cl(V_{-})$ on $\mathcal{F}$ commute.
\end{lemma}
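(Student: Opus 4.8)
The plan is to rephrase the statement as a commutation relation inside $\mc B(\mc F)$ and then reduce it to generators. Identifying $\Cl(V)$ with its image in $\mc B(\mc F)$ under the Fock representation, the left action of $a\in\Cl(V_-)$ on $\mc F$ is the bounded operator $\iota_-(a)$, and the right action of $b\in\Cl(V_-)$ is $v\mapsto J\iota_-(b)^{*}Jv$. Hence the lemma is equivalent to the assertion that the $C^{*}$-subalgebra $\mc C:=\iota_-(\Cl(V_-))\subseteq\mc B(\mc F)$ commutes elementwise with the set $J\mc C J=\{JcJ:c\in\mc C\}$ of right-action operators (here $b\mapsto b^{*}$ is a bijection of $\Cl(V_-)$, and $JcJ=JcJ^{-1}$ since $J^{2}=1$). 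Now $\mc C$ is generated, as a $C^{*}$-algebra, by $\{\rho_L(f):f\in V_-\}$, and since $c\mapsto JcJ$ is an isometric conjugate-linear $*$-automorphism of $\mc B(\mc F)$ (as $J$ is anti-unitary with $J^{2}=1$), $J\mc C J$ is generated by $T:=\{J\rho_L(f)J:f\in V_-\}$. Because $\rho_L(f)^{*}=\rho_L(\alpha f)$ and $\alpha$ preserves $V_-$, the set $T$ is self-adjoint, so its commutant $T'$ is a von Neumann algebra; it therefore suffices to show $\rho_L(h)\in T'$ for all $h\in V_-$, i.e.\ to prove
\[
[\,J\rho_L(f)J,\ \rho_L(h)\,]=0\qquad\text{for all }f,h\in V_-.
\]

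Next I would compute $J\rho_L(f)J$ explicitly. Using $J=\operatorname{k}^{-1}\Lambda_{\alpha\tau}$ and $J^{-1}=J$ one has $J\rho_L(f)J=\operatorname{k}^{-1}\bigl(\Lambda_{\alpha\tau}\rho_L(f)\Lambda_{\alpha\tau}^{-1}\bigr)\operatorname{k}$. I would first establish the intertwining identity $\Lambda_{\alpha\tau}\,\rho_L(f)\,\Lambda_{\alpha\tau}^{-1}=\rho_L(\alpha\tau f)$ for every $f\in V$: decompose $f=v+w$ with $v\in L$, $w\in\alpha(L)$, so $\rho_L(f)=\sqrt2(c(v)+a(w))$; since $\alpha\tau$ is anti-unitary and preserves both $L$ and $\alpha(L)$, it induces the functorial anti-unitary $\Lambda_{\alpha\tau}$ on $\mc F=\widehat{\Lambda L}$, and one checks directly that $\Lambda_{\alpha\tau}c(v)\Lambda_{\alpha\tau}^{-1}=c(\alpha\tau v)$ (left multiplication transforms covariantly) and $\Lambda_{\alpha\tau}a(w)\Lambda_{\alpha\tau}^{-1}=a(\alpha\tau w)$ (here one uses anti-unitarity of $\alpha\tau$, $(\alpha\tau)^{2}=\mathrm{id}$, and $\tau\alpha=\alpha\tau$ to identify the contracted covector), then recombines via $\rho_L=\sqrt2(c+a)$. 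Next, since $\mc F$ is $\Z/2$-graded (being the completion of an exterior algebra) and $\rho_L(g)$ is odd, while $\operatorname{k}$ is the identity on the even part and $i$ times the identity on the odd part, a one-line computation gives $\operatorname{k}^{-1}\rho_L(g)\operatorname{k}=-i\,\rho_L(g)\,\Gamma$, where $\Gamma$ is the grading operator. Putting these together yields $J\rho_L(f)J=-i\,\rho_L(\alpha\tau f)\,\Gamma$, and crucially $\alpha\tau f\in V_+$ because $\tau$ — hence $\alpha\tau$ — interchanges $V_+$ and $V_-$.

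Finally I would verify the commutation. For $h\in V_-$ and $g:=\alpha\tau f\in V_+$, the Clifford relation gives $\rho_L(g)\rho_L(h)+\rho_L(h)\rho_L(g)=2\langle g,\alpha h\rangle\,\id=0$, since $\alpha h\in V_-$ is orthogonal to $g\in V_+$; and $\Gamma$ anticommutes with the odd operator $\rho_L(h)$. Hence moving $\rho_L(h)$ from the right of $-i\,\rho_L(g)\,\Gamma$ to its left introduces two sign changes that cancel, so $[J\rho_L(f)J,\rho_L(h)]=[-i\rho_L(g)\Gamma,\rho_L(h)]=0$, which is what was needed. The only step requiring genuine care is the intertwining identity $\Lambda_{\alpha\tau}\rho_L(f)\Lambda_{\alpha\tau}^{-1}=\rho_L(\alpha\tau f)$, in particular the (anti-)linearity bookkeeping in the annihilation-operator part; everything else is routine sign-chasing. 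Conceptually, the role of the Klein transformation $\operatorname{k}$ is precisely to convert the graded commutation of $\Cl(V_-)$ and $\Cl(V_+)$ inside $\Cl(V)$ into an honest commutation.
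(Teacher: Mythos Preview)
Your proof is correct and genuinely different from the paper's. The paper does not compute at all: it quotes from \cite{Kristel2019} that $\Omega$ is cyclic and separating for $\Cl(V_-)''$ acting on $\mc F$ and that $J$ coincides with the modular conjugation of the triple $(\Cl(V_-)'',\mc F,\Omega)$, and then invokes the Tomita--Takesaki theorem (recalled as \cref{th:tota}) to conclude that $a\mapsto Ja^{*}J$ maps $\Cl(V_-)''$ into its commutant. Your argument instead reduces to generators and computes $J\rho_L(f)J=-i\,\rho_L(\alpha\tau f)\,\Gamma$ directly from $J=\operatorname{k}^{-1}\Lambda_{\alpha\tau}$, using the implementation identity for $\Lambda_{\alpha\tau}$ (which is exactly \cref{lem:kJImplementsKappa}) together with the odd/even bookkeeping for $\operatorname{k}$; the commutation then follows from $V_+\perp V_-$ and the anticommutation of $\Gamma$ with odd operators.

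What each approach buys: yours is elementary and self-contained, and it makes transparent the role of the Klein transformation --- it converts the \emph{graded} commutation of $\Cl(V_-)$ with $\Cl(V_+)$ into honest commutation. The paper's route, on the other hand, yields a strictly stronger conclusion with no extra work: not only $J\Cl(V_-)J\subset\Cl(V_-)'$ but in fact $J\Cl(V_-)''J=(\Cl(V_-)'')'$, so the right action generates the \emph{entire} commutant. That stronger statement is what the paper needs shortly afterwards to identify $\mc F$ as a standard form (\cref{prop:standardform}) and to compute with Connes fusion, so for their purposes the Tomita--Takesaki invocation is the more economical path. One minor remark: your intertwining identity $\Lambda_{\alpha\tau}\rho_L(f)\Lambda_{\alpha\tau}^{-1}=\rho_L(\alpha\tau f)$ is precisely the content of \cref{lem:kJImplementsKappa} specialised to generators, so you could cite that rather than redo the annihilation-operator bookkeeping.
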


\begin{proof}
It is convenient to use Tomita-Takesaki theory to see this.
The vector $\Omega \defeq 1 \in \Lambda^{0}L \subset \mc{F}$ is cyclic and separating for the von Neumann algebra $\Cl(V_{-})''\subset \mathcal{B}(\mathcal{F})$, see \cite[Section 4.2]{Kristel2019}.
In Tomita-Takesaki theory, one considers the triple $(\Cl(V_{-})'',\mathcal{F},\Omega)$ and associates to it a so-called modular conjugation operator.
In our case, this is precisely the operator $J$, see \cite[Section 4.2]{Kristel2019} and  other references listed there.
A main result of Tomita-Takesaki theory (recalled as \cref{th:tota}) is that $a \mapsto Ja^{*}J$ is an anti-isomorphism of von Neumann algebras from $\Cl(V_{-})''$ onto its commutant.
This shows that the action of $a\in \Cl(V_{-})$ on $\mathcal{F}$ commutes with the one of $Ja^{*}J$.  
\end{proof}

Next, we pass to the rigged setting.
In \cref{prop:SmoothCliffordActionOnF,cor:SmoothHalfAction} we have seen that $\mathcal{F}^{\smooth}$ is a rigged $\Cl(V_{-})^{\smooth}$-module under the left action. The analog is true for the right action of \eqref{eq:rightaction}, as the following result shows.

\begin{lemma}
\label{lem:Friggedcstarbundle}
The right action $(v,a) \mapsto v\ract a$ restricts to an action of $\Cl(V_{-})^{\smooth}$ on $\mathcal{F}^{\smooth}$, and exhibits $\mathcal{F}^{s}$ as a rigged $(\Cl(V_{-})^{\smooth})^{\opp}$-module. In particular, $\mathcal{F}^{\smooth}$ is a rigged $\Cl(V_{-})^{\smooth}$-$\Cl(V_{-})^{\smooth}$-bimodule.
\end{lemma}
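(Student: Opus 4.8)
The plan is to reduce everything to one new fact, namely that the anti-unitary $J$ of \cref{eq:Jdef} restricts to a continuous ($\R$-linear) operator $J\colon\mathcal{F}^{\smooth}\to\mathcal{F}^{\smooth}$; granting this, the rest is formal, building on \cref{cor:SmoothHalfAction}, which already exhibits the left action of $\Cl(V_{-})^{\smooth}$ on $\mathcal{F}^{\smooth}$ as a rigged module structure. Rewriting the right action \cref{eq:rightaction} as $v\ract a=Ja^{\ast}Jv=J\bigl(a^{\ast}\lact(Jv)\bigr)$ exhibits $\ract$ as the composite of the three maps $(v,a)\mapsto(a^{\ast},Jv)$, the left action $\lact\colon\Cl(V_{-})^{\smooth}\times\mathcal{F}^{\smooth}\to\mathcal{F}^{\smooth}$, and $J$: the first is continuous because the involution of the rigged \cstar-algebra $\Cl(V_{-})^{\smooth}$ is continuous and $J$ is (by the claim), the second by \cref{cor:SmoothHalfAction}, the third by the claim. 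Hence $\ract$ takes values in $\mathcal{F}^{\smooth}$ and is continuous, so, being bilinear, smooth. (This is a special case of a general fact: for any rigged $A$-module $(E,\rho)$ and any continuous anti-unitary anti-linear involution $J$ of $E$, the assignment $(a,v)\mapsto J\rho(a^{\ast},Jv)$ is a rigged $A^{\opp}$-module structure on $E$, the conditions \cref{eq:BoundednessCriteria} being inherited from those of $\rho$.)

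With the claim in hand, I would verify the two conditions \cref{eq:BoundednessCriteria} for the representation $\rho^{\ract}(a,v)\defeq v\ract a$ of $(\Cl(V_{-})^{\smooth})^{\opp}$, whose norm and involution are those of $\Cl(V_{-})^{\smooth}$. Because $J$ is anti-unitary, and the \cstar-norm of $\Cl(V_{-})^{\smooth}$ is the operator norm under the faithful Fock representation, $\langle v\ract a,\,v\ract a\rangle=\langle a^{\ast}(Jv),\,a^{\ast}(Jv)\rangle\leqslant\|a^{\ast}\|^{2}\langle Jv,Jv\rangle=\|a\|^{2}\langle v,v\rangle$; and from $(Ja^{\ast}J)^{\ast}=JaJ$ (valid for an anti-unitary involution $J$) one gets $\langle v\ract a,\,w\rangle=\langle v,\,JaJw\rangle=\langle v,\,w\ract a^{\ast}\rangle$. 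This establishes that $\mathcal{F}^{\smooth}$ is a rigged $(\Cl(V_{-})^{\smooth})^{\opp}$-module.

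The main obstacle is the claim itself, that $J=\operatorname{k}^{-1}\Lambda_{\alpha\tau}$ preserves $\mathcal{F}^{\smooth}$. I would treat the two factors separately. The Klein transformation $\operatorname{k}$ is a bounded function of the grading operator and therefore commutes with the identity component of $\Imp_{\res}^{\theta}(V)$, since $\lie{imp}^{\theta}(V)$ consists of even operators; as smoothness of orbit maps is a local condition, $\operatorname{k}$ (and $\operatorname{k}^{-1}$) preserves $\mathcal{F}^{\smooth}$. For $\Lambda_{\alpha\tau}$, the anti-linear second quantization of the real structure $\alpha\tau=\tau\alpha$ — which fixes the Lagrangian $L$ and exchanges $V_{+}$ with $V_{-}$ — one checks that conjugation by $\Lambda_{\alpha\tau}$ sends an implementer of $g\in\O_{\res}^{\theta}(V)$ to an implementer of $\tau g\tau\in\O_{\res}^{\theta}(V)$ (using that $\tau$-conjugation preserves $\O_{\res}^{\theta}(V)$, recalled above); hence conjugation by $\Lambda_{\alpha\tau}$ restricts to an automorphism of $\Imp_{\res}^{\theta}(V)$ and of its Lie algebra $\lie{imp}^{\theta}(V)$, from which $\Lambda_{\alpha\tau}\mathcal{F}^{\smooth}\subseteq\mathcal{F}^{\smooth}$ follows by the usual description of smooth vectors in terms of the iterated action of $\lie{imp}^{\theta}(V)$. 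Continuity of $J\colon\mathcal{F}^{\smooth}\to\mathcal{F}^{\smooth}$ is then automatic from the closed graph theorem, both spaces being Fr\'echet and $J$ being continuous on $\mathcal{F}$ (alternatively, from the fact that conjugation by $J$ changes the defining semi-norms $p_{n}$ only by composition with bounded operators and isometric substitution inside $\lie{imp}^{\theta}(V)$, as in the computation in \cref{lem:SmoothHalfInclusion}). This step is where I expect the most care is needed, and where a citation to the Tomita-Takesaki analysis of \cite{Kristel2019} may shorten the argument.

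Finally, the last assertion follows from \cref{def:rbim}: $\mathcal{F}^{\smooth}$ is a rigged $\Cl(V_{-})^{\smooth}$-module via the left action (\cref{cor:SmoothHalfAction}) and a rigged $(\Cl(V_{-})^{\smooth})^{\opp}$-module via the right action just constructed, and the two representations commute because the commutation of left and right $\Cl(V_{-})$-actions established in the preceding lemma holds already in $\mathcal{B}(\mathcal{F})$, hence a fortiori on the subspace $\mathcal{F}^{\smooth}$.
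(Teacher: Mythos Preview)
Your approach is correct and genuinely different from the paper's. The paper does not isolate the fact that $J$ preserves $\mathcal{F}^{\smooth}$; instead it works with the right action directly, using the identity $J\theta_{\tau(g)}(a)J=\theta_{g}(JaJ)$ from \cite[Lemma~4.8]{Kristel2019} to derive a Leibniz rule $X(v\ract a)=X(v)\ract a + v\ract \tau(X)(a)$ for $X\in\lie{o}_{\res}(V)$, and then runs the same non-commutative binomial expansion as in the proof of \cref{prop:SmoothCliffordActionOnF} to control the Fr\'echet semi-norms of $v\ract a$.

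Your route trades that computation for the structural claim that $J$ preserves $\mathcal{F}^{\smooth}$ continuously, after which the result is a formal corollary of \cref{cor:SmoothHalfAction}. The ingredients you need are in fact available in the paper: the smooth automorphism $\kappa=\Lambda_{\alpha\tau}(\cdot)\Lambda_{\alpha\tau}$ of $\Imp_{\res}(V)$ covering $\tau$ is exactly \cref{lem:PropertiesOfKappa}, and since $\tau$ preserves $\O_{\res}^{\theta}(V)$ the restriction to $\Imp_{\res}^{\theta}(V)$ is immediate; your treatment of the Klein factor via locality of smoothness is also sound. The closed-graph argument for continuity of $J$ on $\mathcal{F}^{\smooth}$ is clean and avoids recomputing semi-norms. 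What your approach buys is a reusable lemma ($J\colon\mathcal{F}^{\smooth}\to\mathcal{F}^{\smooth}$ is a continuous anti-unitary involution) and a shorter final step; what the paper's approach buys is that it never leaves the pattern of \cref{prop:SmoothCliffordActionOnF}, so no new smoothness statement about $J$ needs to be checked separately. The two arguments are essentially dual unpackings of the same identity \cref{eq:JCommutesWithTheta}.
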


\begin{proof}
The proof is mostly standard, and analogous to that of \cite[Proposition 3.2.8]{Kristel2020}.
To carry out the necessary computations one must make use of the formulas
\begin{equation}\label{eq:JCommutesWithTheta}
 J\theta_{\tau(g)}(a)J = \theta_{g}(JaJ), \quad \quad (a \in \Cl(V), \; g \in \O(V)),
\end{equation}
which is  \cite[Lemma 4.8]{Kristel2019}, and
\begin{equation*}
 X(v \ract a) = X(v) \ract a + v \ract \tau(X)(a), \quad \quad (X \in \lie{o}_{\res}(V)),
\end{equation*}
where $\tau(X) = \tau \circ X \circ \tau \in \lie{o}_{\res}(V)$, which follows from Equation \cref{eq:JCommutesWithTheta}.
From this, one obtains a non-commutative binomial expansion, c.f.~\cite[Equation 12]{Kristel2020}, at which point the proof is completely analogous to that of \cref{prop:SmoothCliffordActionOnF}.
\end{proof}

We recall from \cref{cor:SmoothHalfAction} that $\Cl(V_{-})^{\smooth}$ is not just a rigged \cstar-algebra but also forms a rigged von Neumann algebra $N^{\smooth}=N^{\smooth}_{-}=(\Cl(V_{-}^{\smooth}),\mathcal{F}^{\smooth})$, whose defining representation is the left action.

\begin{proposition}\label{prop:FockRiggedBimodule}
$\mathcal{F}^{\smooth}$ is a rigged von Neumann $N^{\smooth}$-$N^{\smooth}$-bimodule.
\end{proposition}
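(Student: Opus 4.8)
The plan is to verify the three conditions of \cref{def:riggedvonNeumannbimodule} for $F=\mathcal{F}^{\smooth}$ with $N_{1}=N_{2}=N^{\smooth}$: that $\mathcal{F}^{\smooth}$ is (i) a rigged $\Cl(V_{-})^{\smooth}$-$\Cl(V_{-})^{\smooth}$-bimodule, (ii) a rigged von Neumann $N^{\smooth}$-module, and (iii) a rigged von Neumann $(N^{\smooth})^{\opp}$-module. Condition (i) is precisely \cref{lem:Friggedcstarbundle}, which supplies the commuting left and right actions of $\Cl(V_{-})^{\smooth}$ on $\mathcal{F}^{\smooth}$. Condition (ii) is immediate: the left action is by construction the defining representation of the rigged von Neumann algebra $N^{\smooth}$ (see \cref{cor:SmoothHalfAction}), and by \cref{re:rvNmodule} the defining representation of any rigged von Neumann algebra is automatically a rigged von Neumann module---here the required normal extension $(N^{\smooth})''\to\mathcal{B}(\mathcal{F})$ is just the tautological inclusion $\Cl(V_{-})''\hookrightarrow\mathcal{B}(\mathcal{F})$.

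The substance of the argument is condition (iii). By \cref{lem:Friggedcstarbundle} the right action $(v,a)\mapsto v\ract a=Ja^{*}Jv$ makes $\mathcal{F}^{\smooth}$ a rigged $(\Cl(V_{-})^{\smooth})^{\opp}$-module, and unwinding \cref{ex:OppositeRep}(a) its induced \cstar-representation $(\Cl(V_{-})^{\|\cdot\|})^{\opp}\to\mathcal{B}(\mathcal{F})$ is the map $a\mapsto Ja^{*}J$; a direct computation, using that $J$ is an anti-unitary involution, shows that this is a $\ast$-anti-homomorphism of $\Cl(V_{-})^{\|\cdot\|}$ landing in the commutant $\Cl(V_{-})'$. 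I would then invoke the Tomita--Takesaki input already used in the proof preceding \cref{lem:Friggedcstarbundle}: the vector $\Omega=1\in\Lambda^{0}L\subset\mathcal{F}$ is cyclic and separating for $\Cl(V_{-})''\subset\mathcal{B}(\mathcal{F})$, and the operator $J$ of \cref{eq:Jdef} is the modular conjugation of the triple $(\Cl(V_{-})'',\mathcal{F},\Omega)$. Hence, by \cref{th:tota}, the map $j\colon x\mapsto Jx^{*}J$ is a \emph{normal} $\ast$-anti-isomorphism $\Cl(V_{-})''\to\Cl(V_{-})'$, whose restriction to $\Cl(V_{-})^{\|\cdot\|}$ is exactly the completed right-action representation $a\mapsto Ja^{*}J$.

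It remains to recognise $j$ as the required normal extension to the von Neumann algebra $((N^{\smooth})^{\opp})''$. By \cref{re:oppositevNA} the pair $(N^{\smooth})^{\opp}=((\Cl(V_{-})^{\smooth})^{\opp},(\mathcal{F}^{\smooth})^{\sharp})$ is a rigged von Neumann algebra, and by \cref{ex:OppositeRep}(c) its defining representation on $(\mathcal{F}^{\smooth})^{\sharp}$ becomes, after transport along the canonical anti-unitary from $\mathcal{F}$ to the Hilbert completion of $(\mathcal{F}^{\smooth})^{\sharp}$, the map $a\mapsto(v\mapsto a^{*}\lact v)$; taking commutants then identifies $((N^{\smooth})^{\opp})''$ with $(\Cl(V_{-})'')^{\opp}$ compatibly with the $\sigma$-weakly dense subalgebra $(\Cl(V_{-})^{\|\cdot\|})^{\opp}$. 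Under this identification $j$ becomes a $\ast$-isomorphism of von Neumann algebras $((N^{\smooth})^{\opp})''\xrightarrow{\sim}\Cl(V_{-})'$---in particular normal---which, composed with the inclusion $\Cl(V_{-})'\hookrightarrow\mathcal{B}(\mathcal{F})$, extends the right-action representation. By uniqueness of such extensions (\cref{re:rvNmodule}) this is the normal extension demanded by \cref{def:smoothrepriggedvN}, so $\mathcal{F}^{\smooth}$ is a rigged von Neumann $(N^{\smooth})^{\opp}$-module, and the proposition follows.

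I expect the main obstacle to be essentially bookkeeping: matching up the $\opp$- and $\sharp$-constructions so that the concrete operator $Ja^{*}J$ is recognised as the restriction of the abstract modular anti-isomorphism. Once that identification is in place, normality comes for free from Tomita--Takesaki (\cref{th:tota}) and need not be verified by hand as was done for unitary conjugations in \cref{lem:SpatialIsomorphisms}; and all rigged-level continuity and smoothness has already been handled in \cref{lem:Friggedcstarbundle}.
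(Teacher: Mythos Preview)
Your proposal is correct and follows the same overall strategy as the paper: conditions (i) and (ii) are dispatched exactly as you say, and the real work is showing that the right action extends normally to $((N^{\smooth})^{\opp})''$. The paper's execution of this last step is, however, slightly more economical than yours. Rather than invoking normality of the Tomita--Takesaki anti-isomorphism and then carefully identifying $((N^{\smooth})^{\opp})''$ with $(\Cl(V_{-})'')^{\opp}$, the paper simply writes down the map $\sigma:\mathcal{B}(\mathcal{F}^{\sharp})\to\mathcal{B}(\mathcal{F})$, $\sigma(T)=Jc\,T\,c^{-1}J$, checks in one line that $\sigma\circ\rho_{0}^{\vee}=\rho^{\vee}$, and observes that $Jc:\mathcal{F}^{\sharp}\to\mathcal{F}$ is a \emph{linear} unitary (a composition of two anti-unitaries), so that $\sigma$ is conjugation by a unitary and hence normal---exactly the mechanism of \cref{lem:SpatialIsomorphisms}. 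This sidesteps the $\opp$/$\sharp$ bookkeeping you anticipated as the main obstacle; in effect your decomposition ``identification $+$ Tomita map'' collapses to the single conjugation $T\mapsto (Jc)T(Jc)^{-1}$, and normality comes for free without appealing to \cref{th:tota}.
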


\begin{proof}
It remains to show that the right action, $(\Cl(V_{-})^{\smooth})^{\opp} \times \mathcal{F}^{\smooth} \to \mathcal{F}^{\smooth}$, makes $\mathcal{F}^{\smooth}$ a rigged von Neumann $(N^{\smooth})^{\mathrm{opp}}$-module, where $(N^{\smooth})^{\opp} = ((\Cl(V_{-})^{\smooth})^{\mathrm{opp}},(\mc{F}^{\smooth})^{\sharp})$, see \cref{def:smoothrepriggedvN}.
 For this, all that needs to be shown is that the induced map
 \begin{equation*}
  \rho^{\vee}: ((\Cl(V_{-})^{\smooth})^{\mathrm{opp}})^{\| \cdot \|} \rightarrow \mc{B}(\mc{F}), a \mapsto Ja^{*}J
 \end{equation*}
 extends to a normal $*$-homomorphism
 \begin{equation*}
  (((\Cl(V_{-})^{\smooth})^{\mathrm{opp}})^{\| \cdot \|})'' \rightarrow \mc{B}(\mc{F}).
 \end{equation*}
 Now, let $\rho_{0}$ be the defining representation of $(N^{\smooth})^{\mathrm{opp}}$,  let $c: \mc{F}^{\sharp} \rightarrow \mc{F}$ be the canonical anti-linear isomorphism, and let $\sigma: \mc{B}(\mc{F}^{\sharp}) \rightarrow \mc{B}(\mc{F})$ be the $*$-homomorphism
 \begin{equation*}
  \sigma: T \mapsto Jc T c^{-1}J\text{,}
 \end{equation*}
 then one checks that the following diagram commutes
 \begin{equation*}
  \xymatrix{
  ((\Cl(V_{-})^{\smooth})^{\mathrm{opp}})^{\| \cdot \|} \ar[r]^(.65){\rho_{0}^{\vee}} \ar[rd]_{\rho^{\vee}} & \mc{B}(\mc{F}^{\sharp}) \ar[d]^{\sigma} \\
  & \mc{B}(\mc{F})
  }
 \end{equation*}
 Thus, $\sigma$ extends $\rho^{\vee}$.
 Moreover, since $Jc:\mc{F}^{\sharp} \rightarrow \mc{F}$ is an isometric isomorphism, it follows that $\sigma$ is a normal $*$-homomorphism.
\end{proof}

\label{sec:FockSpaceAsStandardForm}

We recall that the rigged von Neumann algebra $N^{\smooth} =N^{\smooth}_{-}= (\Cl(V_{-})^{\smooth},\mathcal{F}^{\smooth})$ determines a von Neumann algebra $N:=(N^{\smooth})''$, see \cref{re:vnacl}. Moreover, the statement that  $\mathcal{F}^{\smooth}$ is a rigged von Neumann $N^{\smooth}$-$N^{\smooth}$-bimodule implies that its completion, the Fock space $\mathcal{F}$, is an $N$-$N$-bimodule in the classical sense, see \cref{rem:vonNeumanBimoduleCompletion}. 

As we recall in \cref{sec:StandardForms}, a cyclic and separating vector for a representation of a von Neumann algebra equips that representation with the structure of a so-called \emph{standard form} of the von Neumann algebra. 
Using this in our case for $\Omega\in \mathcal{F}$ (see \cite[Section 4.2]{Kristel2019}), we note the following result.

\begin{proposition}
\label{prop:standardform}
$\mathcal{F}$ is a standard form of $N$.
\end{proposition}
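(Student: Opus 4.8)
The plan is to deduce the statement from the general principle, collected in \cref{sec:StandardForms}, that a von Neumann algebra acting on a Hilbert space and admitting a cyclic and separating vector is canonically in standard form, the relevant antiunitary involution being the Tomita--Takesaki modular conjugation and the positive cone the associated natural cone. Recall (see \cref{re:vnacl}) that $N=(N^{\smooth})''$ is precisely the von Neumann algebra $\Cl(V_{-})''\subset\mathcal{B}(\mathcal{F})$ generated by the left Clifford action on the Fock space. As was already invoked in the proof that the left and right actions of $\Cl(V_{-})$ commute, the vacuum $\Omega=1\in\Lambda^{0}L\subset\mathcal{F}$ is cyclic and separating for $\Cl(V_{-})''$, and the modular conjugation attached to the triple $(\Cl(V_{-})'',\mathcal{F},\Omega)$ is exactly the operator $J=\operatorname{k}^{-1}\Lambda_{\alpha\tau}$ of \cref{eq:Jdef}; both facts are established in \cite[Section 4.2]{Kristel2019}.

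Granting this, I would simply apply the relevant result of \cref{sec:StandardForms}: since $\Omega$ is cyclic and separating, the quadruple $(N,\mathcal{F},J,\mathfrak{P})$ --- with $J$ as above and $\mathfrak{P}\subset\mathcal{F}$ the natural positive cone determined by $(N,\mathcal{F},\Omega)$ --- satisfies Haagerup's axioms, namely $JNJ=N'$, $JzJ=z^{*}$ for every $z$ in the centre of $N$, $J\xi=\xi$ for all $\xi\in\mathfrak{P}$, and $aJaJ\,\mathfrak{P}\subseteq\mathfrak{P}$ for all $a\in N$. This is exactly the assertion that $\mathcal{F}$ is a standard form of $N$.

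I do not expect a genuine obstacle here: all the analytic input --- cyclicity and separability of $\Omega$, and the identification of $J$ with the modular conjugation --- was carried out in \cite{Kristel2019}, and the implication \quot{cyclic separating vector $\Rightarrow$ standard form} is the classical theorem recorded in \cref{sec:StandardForms}. The only point worth a word of emphasis is that the involution $J$ appearing in the standard form is the very operator used to define the right action in \cref{eq:rightaction}; consequently the $N$-$N$-bimodule structure on $\mathcal{F}$ furnished by \cref{prop:FockRiggedBimodule} is the canonical one carried by a standard form, which is what makes \cref{prop:standardform} the statement needed for the later construction of Connes fusion.
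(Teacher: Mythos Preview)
Your proposal is correct and follows exactly the paper's approach: the paper does not give a separate proof but simply observes, in the sentence preceding the proposition, that the vacuum $\Omega$ is cyclic and separating for $\Cl(V_{-})''$ (citing \cite[Section 4.2]{Kristel2019}) and then invokes the general result recorded in \cref{sec:StandardForms} (specifically \cref{lem:TTstandardform}) that such a vector equips the representation with a standard form structure. Your additional remark that the modular conjugation coincides with the operator $J$ of \cref{eq:Jdef}, so that the bimodule structure is the canonical one, is also made explicitly by the paper in \cref{re:canonicalstandardform}.
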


\begin{remark}
\label{re:canonicalstandardform}
\begin{enumerate}[(a)]

\item 
From the uniqueness of standard forms
(see \cref{thm:StandardFormIsUnique}) it follows  that $\mc{F}$ is isomorphic to the canonical standard form $L^{2}_{\Omega}(N)$ constructed from the faithful and normal state $N \rightarrow \C, a \mapsto \langle a \lact \Omega, \Omega \rangle$. We recall that  $L^{2}_{\Omega}(N)$ is defined as the completion of $N$ with respect to the bilinear form $(a,b) \mapsto \langle a \lact \Omega, b\lact \Omega \rangle$. 
An explicit isomorphism $u: \mc{F} \to L^{2}_{\Omega} (N)$ is given by the extension of the densely defined map $\mc{F} \to N,  a \lact \Omega\mapsto a$, see \cref{lem:PutInStandardForm}.

\item
A standard form  of a von Neumann algebra  is in a natural way a bimodule for this von Neumann algebra, see \cref{re:bimodulefromstandardform}. The left action is the given representation, while the right action is given by the formula $v \otimes a \mapsto Ja^{*}Jv$, where $J$ is the modular conjugation for the triple  $(\Cl(V_{-})'',\mathcal{F},\Omega)$. In our case, this is precisely the right action we have defined in \cref{eq:rightaction}. In other words, the $N$-$N$-bimodule structure on $\mathcal{F}$ defined above coincides with the bimodule structure obtained from the theory of standard forms; moreover, the isomorphism $u:\mc{F} \to L^{2}_{\Omega} (N)$ is an intertwiner. 

\end{enumerate}
\end{remark}

A standard form of a von Neumann algebra, viewed as a bimodule, is neutral with respect to Connes fusion, as we recall in \cref{lem:L2IsConnesNeutral,co:standardformneutral}. More precisely, there are left and right unitors $\lambda_K: \mathcal{F} \boxtimes K \to K$ and $\rho_K: K \boxtimes \mathcal{F} \to K$ for any $N$-$N$-bimodule $K$, and for $K=\mathcal{F}$ we have coincidence $\lambda_{\mathcal{F}}=\rho_{\mathcal{F}}$. We will denote this unitary intertwiner       
by
\begin{equation*}
 \chi: \mc{F} \boxtimes \mc{F} \rightarrow \mc{F}.
\end{equation*}
We shall need a more explicit description of  $\chi$ to prove one of our  key results, \cref{thm:FusionProductsAgree}.
To make sense of this explicit description, we first recall the basic definition of the Connes fusion product, see \cref{sec:VNABimodules}, and in particular \cref{def:ConnesFusionProduct}, for more details.
We start by writing $\mc{D}(\mc{F},\Omega) \defeq \Hom_{-,N}(L^{2}_{\Omega}(N), \mc{F})$ for the space of bounded linear maps that intertwine the right $N$ actions.
We define a map 
\begin{equation*}
p_{\Omega}: \Hom_{-,N}(L^{2}_{\Omega}(N),L^{2}_{\Omega}(N)) \rightarrow N
\end{equation*}
by requiring
$p_{\Omega}(x) \lact v = x(v)$
for all $v \in L^{2}_{\Omega}(N)$.
Next, we consider the space $\mc{D}(\mc{F},\Omega) \otimes \mc{F}$ equipped with the degenerate inner product
\begin{equation*}
 \langle x \otimes v, y \otimes w \rangle_{\Omega} \defeq \langle v, p_{\Omega}( x^{*} y) \lact w \rangle.
\end{equation*}
The Connes fusion $\mc{F}\boxtimes \mc{F}$ is now  the Hilbert completion of $\mc{D}(\mc{F},\Omega) \otimes \mc{F}/ \ker \langle \cdot, \cdot \rangle_{\Omega}$.
On the space $\mc{D}(\mc{F}, \Omega) \otimes \mc{F}$ the map $\chi:\mathcal{F} \boxtimes \mathcal{F} \to \mathcal{F}$ is given by (see \cref{lem:L2IsConnesNeutral,co:standardformneutral})
\begin{equation*}
 x \otimes v \mapsto p_{\Omega}(u\circ x) \lact v,
\end{equation*}
where $u$ is the invertible intertwiner $u: \mc{F} \to L^{2}_{\Omega} (N)$ of \cref{re:canonicalstandardform}.

\begin{remark}
 It is interesting to note that the map $\chi: \mc{D}(\mc{F},\Omega) \otimes \mc{F} \rightarrow \mc{F}$ is surjective, which implies that the space $\mc{D}(\mc{F},\Omega) \otimes \mc{F}/ \ker \langle \cdot, \cdot \rangle_{\Omega}$, is already complete.

\end{remark}

We remark that Connes fusion is coherently associative (\cref{lem:ConnesFusionAssociative}), which allows us to omit bracketing of multiple Connes fusions.  
The following lemma  follows then directly from \cref{co:standardformneutral}.

\begin{lemma}\label{lem:FockFusionAssociative}
 The isomorphism $\chi$ is associative in the sense that the following diagram commutes:
 \begin{equation*}
 \xymatrix@C4em{\mathcal{F} \boxtimes \mathcal{F} \boxtimes \mathcal{F} \ar[r]^-{\id \boxtimes \chi} \ar[d]_{\chi\boxtimes \id} & \mathcal{F} \boxtimes \mathcal{F}\ar[d]^{\chi} \\ \mathcal{F} \boxtimes \mathcal{F} \ar[r]_-{\chi} & \mathcal{F}\text{.}}
 \end{equation*}
\end{lemma}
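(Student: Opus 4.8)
The plan is to derive the square not from the explicit formula for $\chi$ in terms of $p_{\Omega}$ and $u$, but purely from the coherence of Connes fusion assembled in \cref{sec:ConnesFusion}. Recall that by \cref{prop:standardform} the Fock space $\mathcal{F}$ is a standard form of $N$, hence by \cref{re:canonicalstandardform} isomorphic, as an $N$-$N$-bimodule, to $L^{2}_{\Omega}(N)$, which is the identity $1$-morphism on $N$ in the bicategory of von Neumann algebras with Connes fusion as composition. By \cref{lem:L2IsConnesNeutral} and \cref{co:standardformneutral} this object is \emph{neutral}: for every $N$-$N$-bimodule $K$ there are unitary intertwiners $\lambda_{K}\colon \mathcal{F}\boxtimes K\to K$ and $\rho_{K}\colon K\boxtimes \mathcal{F}\to K$, natural in $K$ and coherent with the associator $\alpha$ of \cref{lem:ConnesFusionAssociative}, and $\lambda_{\mathcal{F}}=\rho_{\mathcal{F}}=\chi$.

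First I would invoke naturality of $\rho$ with respect to the $N$-$N$-bimodule intertwiner $\chi\colon \mathcal{F}\boxtimes \mathcal{F}\to \mathcal{F}$: the naturality square reads $\rho_{\mathcal{F}}\circ(\chi\boxtimes \id)=\chi\circ\rho_{\mathcal{F}\boxtimes\mathcal{F}}$, i.e., since $\rho_{\mathcal{F}}=\chi$, it gives $\chi\circ(\chi\boxtimes\id)=\chi\circ\rho_{\mathcal{F}\boxtimes\mathcal{F}}$. Symmetrically, naturality of $\lambda$ with respect to $\chi$ gives $\chi\circ(\id\boxtimes\chi)=\chi\circ\lambda_{\mathcal{F}\boxtimes\mathcal{F}}$. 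Hence the commutativity of the square in the statement reduces to the equality $\lambda_{\mathcal{F}\boxtimes\mathcal{F}}=\rho_{\mathcal{F}\boxtimes\mathcal{F}}$ of the two unitors on the bimodule $\mathcal{F}\boxtimes\mathcal{F}$, where the (suppressed) associator $\alpha_{\mathcal{F},\mathcal{F},\mathcal{F}}$ is inserted to put both of them on the same domain $\mathcal{F}\boxtimes\mathcal{F}\boxtimes\mathcal{F}$.

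The only real content is this last equality, and it is precisely the triangle identity of the bicategory of von Neumann algebras specialised to the triple of identity $1$-morphisms $\mathcal{F},\mathcal{F},\mathcal{F}$: namely $(\id\boxtimes\lambda_{\mathcal{F}})\circ\alpha_{\mathcal{F},\mathcal{F},\mathcal{F}}=\rho_{\mathcal{F}}\boxtimes\id$, which, upon suppressing $\alpha$ and using $\lambda_{\mathcal{F}}=\rho_{\mathcal{F}}=\chi$, says exactly $\id\boxtimes\chi=\chi\boxtimes\id$ as maps $\mathcal{F}\boxtimes\mathcal{F}\boxtimes\mathcal{F}\to\mathcal{F}\boxtimes\mathcal{F}$; postcomposing with $\chi$ then closes the square. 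I expect this triangle identity to be available directly as part of \cref{co:standardformneutral}; should it be recorded there only in the weaker form that a standard form is a neutral object of the Connes-fusion bicategory, I would instead recall the standard coherence fact that the left and right unitors agree on the unit object, which, together with the naturality used above, again reduces to the displayed equality. No manipulation of $\mc{D}(\mathcal{F},\Omega)$, $p_{\Omega}$, or $u$ is needed; everything is formal once \cref{co:standardformneutral} is in hand, so I anticipate no genuine obstacle here.
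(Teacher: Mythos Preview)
Your argument is correct and lands on exactly the paper's approach: the paper simply says the lemma follows directly from \cref{co:standardformneutral}, which contains the triangle identity $(\id\boxtimes\lambda_{\mathcal{F}})\circ\alpha_{\mathcal{F},\mathcal{F},\mathcal{F}}=\rho_{\mathcal{F}}\boxtimes\id$ together with $\lambda_{\mathcal{F}}=\rho_{\mathcal{F}}=\chi$; postcomposing with $\chi$ is the whole proof. Your naturality detour in the middle paragraph (reducing to $\lambda_{\mathcal{F}\boxtimes\mathcal{F}}=\rho_{\mathcal{F}\boxtimes\mathcal{F}}$) is unnecessary and, strictly speaking, that identity is a \emph{derived} coherence law rather than the triangle identity itself, but you immediately recover with the direct triangle argument, so no harm done.
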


\subsection{Connes fusion of implementers}\label{sec:FusionImpThroughFock}

We return to the orthogonal operator $\alpha\tau:V \to V$ discussed in \cref{sec:ReflectionFreeFermions}, which is not implementable in the classical sense, but will turn out to be implementable by an \emph{anti-unitary} operator.
The following summarizes our results about $\alpha\tau$ from our earlier work \cite[Lemmas 4.4 and 4.6, Propositions 4.9 and 4.11]{Kristel2019}.

\begin{proposition}
\label{lem:kJImplementsKappa}
\label{lem:PropertiesOfKappa}
        The map $\alpha \tau: V \rightarrow V$ extends uniquely to a complex anti-linear \cstar-automorphism $\kappa : \Cl(V) \rightarrow \Cl(V)$ of the Clifford algebra.
Moreover, the anti-unitary operator $\Lambda_{\alpha \tau}: \mc{F} \rightarrow \mc{F}$ \quot{implements}  $\kappa$, i.e., for all $a \in \Cl(V)$ we have
        \begin{equation*}
                \kappa(a) = \Lambda_{\alpha \tau} a \Lambda_{\alpha \tau}
        \end{equation*}
as elements of $\mathcal{B}(\mathcal{F})$. In particular,  $\kappa: \Cl(V) \rightarrow \Cl(V)$ extends uniquely to an anti-unitary automorphism of  $\mc{B}(\mc{F})$.
From there, it restricts to a (smooth) Lie group homomorphism $\kappa: \Imp_{\res}(V) \rightarrow \Imp_{\res}(V)$ covering $\tau: \O_{\res}(V) \rightarrow \O_{\res}(V)$.

\end{proposition}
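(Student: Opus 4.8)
The plan is to establish the statements in order, each reducing to the previous one. First, for the extension of $\alpha\tau$ to an anti-linear $*$-automorphism $\kappa$ of $\Cl(V)$: since $\alpha\tau$ is anti-unitary and preserves the real structure, I would check that $v \mapsto \iota(\alpha\tau(v))$, while not a Clifford map in the sense of \cref{eq:CliffordMap}, satisfies the ``anti-linear'' Clifford relations; concretely, post-composing the universal property of $\Cl(V)$ with complex conjugation on the target (or equivalently using $\Cl(\bar V) \cong \overline{\Cl(V)}$) produces a unique anti-linear $*$-homomorphism $\kappa$ with $\kappa\circ\iota = \iota\circ\alpha\tau$; applying the same to $(\alpha\tau)^{-1} = \tau\alpha$ gives a two-sided inverse, so $\kappa$ is an automorphism. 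This is the content cited from \cite[Lemma 4.4]{Kristel2019}, so I would simply invoke it.

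Next, that $\Lambda_{\alpha\tau}$ implements $\kappa$: the operator $\Lambda_{\alpha\tau}$ is the second-quantization of $\alpha\tau$ on $\mc{F} = \overline{\Lambda L}$, which exists precisely because $\alpha\tau$ preserves both $L$ and $\alpha(L)$ (noted in \cref{sec:ReflectionFreeFermions}). On the generators $c(v)$ and $a(w)$ for $v\in L$, $w\in\alpha(L)$ one computes directly that $\Lambda_{\alpha\tau}\, c(v)\, \Lambda_{\alpha\tau} = c(\alpha\tau v)$ and $\Lambda_{\alpha\tau}\, a(w)\, \Lambda_{\alpha\tau} = a(\alpha\tau w)$ (the anti-unitarity of $\Lambda_{\alpha\tau}$ converting the conjugation $\Lambda_{\alpha\tau}(-)\Lambda_{\alpha\tau}$ into an anti-linear map), so that $\Lambda_{\alpha\tau}\,\rho_L(f)\,\Lambda_{\alpha\tau} = \rho_L(\alpha\tau f)$ for all $f\in V$; since the $\rho_L(f)$ generate $\Cl(V)$ and both sides are anti-linear $*$-maps, $\Lambda_{\alpha\tau}\, a\, \Lambda_{\alpha\tau} = \kappa(a)$ for all $a\in\Cl(V)$. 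Because the Fock representation is faithful (so we identify $\Cl(V)\subset\mc{B}(\mc{F})$) and $\Lambda_{\alpha\tau}$ is anti-unitary with $\Lambda_{\alpha\tau}^2 = 1$, conjugation by $\Lambda_{\alpha\tau}$ is an anti-unitary automorphism of all of $\mc{B}(\mc{F})$ extending $\kappa$; its uniqueness is clear since $\Cl(V)$ is weakly dense in $\mc{B}(\mc{F}) = \ClvN(V)$ and conjugation by a fixed anti-unitary is automatically $\sigma$-weakly continuous. Again this matches \cite[Lemma 4.6, Prop.~4.9]{Kristel2019}.

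Finally, that $\kappa$ restricts to a smooth Lie group homomorphism $\Imp_{\res}(V)\to\Imp_{\res}(V)$ covering $\tau$: if $U\in\Imp_{\res}(V)$ implements $g\in\O_{\res}(V)$, then for $a\in\Cl(V)$,
\begin{equation*}
\kappa(U) a \kappa(U)^{*} = \Lambda_{\alpha\tau} U \Lambda_{\alpha\tau}\, a\, \Lambda_{\alpha\tau} U^{*}\Lambda_{\alpha\tau} = \kappa\bigl(U \kappa(a) U^{*}\bigr) = \kappa\bigl(\theta_g(\kappa(a))\bigr) = \theta_{\tau(g)}(a),
\end{equation*}
where the last equality uses $\kappa\circ\theta_g\circ\kappa = \theta_{(\alpha\tau)g(\alpha\tau)} = \theta_{\tau(g)}$ on generators (note $(\alpha\tau)g(\alpha\tau) = \tau g \tau = \tau(g)$ since $\alpha$ commutes with $g\in\O(V)$ up to the fact that $\alpha$ is the real structure fixed by $\O(V)$; more precisely one checks $(\alpha\tau)\iota g(\alpha\tau)^{-1}=\iota\tau(g)$ directly). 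Hence $\kappa(U)\in\Imp_{\res}(V)$ and it implements $\tau(g)$, so $\kappa$ is a group homomorphism covering $\tau$, and it is a restriction of the anti-unitary automorphism of $\mc{B}(\mc{F})$ to the unitary subgroup, landing in $\Imp_{\res}(V)$. Smoothness follows because $\tau$ is already known to be a smooth (isometric) homomorphism of $\O_{\res}(V)$, $\kappa$ covers it compatibly with the $\U(1)$-central extension structure, and a continuous lift of a smooth homomorphism of Banach Lie groups along a central extension is smooth; invoking \cite[Prop.~4.11]{Kristel2019} packages this. The main obstacle — really the only nontrivial point — is verifying that conjugation by the anti-unitary $\Lambda_{\alpha\tau}$ carries the Banach Lie group structure on $\Imp_{\res}(V)$ (which, as stressed in the text, is \emph{not} the operator-norm topology and for which the inclusion into $\U(\mc{F})$ is not even continuous) smoothly to itself; this is handled by the cited lemma rather than by a direct estimate, and I would rely on it.
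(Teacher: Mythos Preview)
Your proposal is correct and matches the paper's approach: the paper gives no proof at all but simply summarizes and cites \cite[Lemmas 4.4 and 4.6, Propositions 4.9 and 4.11]{Kristel2019}, and you invoke precisely these references while supplying correct sketches of the underlying arguments (universal property for $\kappa$, second-quantization computation on generators for the implementation, and the conjugation identity $\kappa\circ\theta_g\circ\kappa=\theta_{\tau(g)}$ for the covering). Your honest flagging of the smoothness issue---that the Banach Lie topology on $\Imp_{\res}(V)$ is not the operator topology, so continuity of $U\mapsto\Lambda_{\alpha\tau}U\Lambda_{\alpha\tau}$ is not automatic---is exactly the point deferred to \cite[Prop.~4.11]{Kristel2019}.
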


\begin{remark}
The group $\O_{\res}(V)$ has two connected components, thus $\Imp_{\res}(V)$ has two connected components as well.
All the elements of the identity component of $\Imp_{\res}(V)$ act on $\mc{F}$ by \emph{even} operators and all the remaining elements act by \emph{odd} operators, \cite[Theorem 6.7]{Ara85}.
The following discussion is restricted to even implementers. The reason for this restriction is, basically, that Connes fusion is only available for ungraded bimodules.
As $\tau:\O_L(V) \to \O_L(V)$ and $\kappa$ are Lie group homomorphisms, they restrict to identity components.
\end{remark}

\begin{definition} \label{def:compfusion}
Two even implementers $U,U'\in \Imp_{\res}^{\theta}(V)$ are called \emph{fusable}, if the elements $g_{-} \oplus g_{+}$ and $g'_{-} \oplus g_{+}'$ of $\O_L^{\theta}(V)$ implemented by $U$ and $U'$, respectively, satisfy $g'_{-} = \tau g_{+} \tau$.
\end{definition}

In the remainder of this section we define a product of  fusable implementers using Connes fusion. Let $U \in \Imp_{\res}^{\theta}(V)$ be even and implement an element $g = g_{-} \oplus g_{+} \in \O_L^{\theta}(V)$.
\begin{lemma}
\label{lem:bogminus}
        The Bogoliubov automorphism $\theta_{g_{\pm}}: \Cl(V_{\pm}) \rightarrow \Cl(V_{\pm})$ extends uniquely to an automorphism of $N_{\pm}$, induced by $a \mapsto UaU^{*}$.
\end{lemma}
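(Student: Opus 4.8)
The plan is to work fibrewise on the level of von Neumann algebras and then transfer the statement back to the rigged setting. First I would recall from \cref{cor:SmoothHalfAction} that $N_{\pm}=(\Cl(V_{\pm})^{\smooth},\mathcal{F}^{\smooth})$ determines the classical von Neumann algebra $N_{\pm}''$, which is the weak closure of $\Cl(V_{\pm})$ acting on $\mathcal{F}_L$. The key observation is that since $g=g_{-}\oplus g_{+}\in\O_{L}^{\theta}(V)$ satisfies $P_{\pm}gP_{\mp}=0$, the Bogoliubov automorphism $\theta_{g}$ restricted to the subalgebra $\Cl(V_{\pm})\subset\Cl(V)$ equals $\iota_{\pm}\circ\theta_{g_{\pm}}$ by \cref{lem:Bogolsplit}. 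Since $U$ implements $\theta_{g}$ on all of $\Cl(V)$ (\cref{eq:Implementer}), we have $UaU^{*}=\theta_{g}(a)=\theta_{g_{\pm}}(a)$ for all $a\in\Cl(V_{\pm})$ (under the identification $\iota_{\pm}$). In particular, conjugation $\Ad(U):T\mapsto UTU^{*}$ is a normal automorphism of $\mathcal{B}(\mathcal{F}_L)$ that maps $\Cl(V_{\pm})$ into itself, hence maps the weak closure $N_{\pm}''$ onto itself, and on $\Cl(V_{\pm})$ it agrees with $\theta_{g_{\pm}}$; by $\sigma$-weak density of $\Cl(V_{\pm})$ in $N_{\pm}''$ this exhibits $\Ad(U)|_{N_{\pm}''}$ as the unique normal extension of $\theta_{g_{\pm}}$ to $N_{\pm}''$.

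Next I would verify that $\theta_{g_{\pm}}$ already defines a morphism of the rigged \cstar-algebra $\Cl(V_{\pm})^{\smooth}$ to itself: since $g_{\pm}\in\O(V_{\pm})$, \cref{prop:CliffordFrechetAlgebra} (applied with $V_{\pm}$ in place of $V$) gives that $\Cl(V_{\pm})^{\smooth}$ carries a smooth representation of $\O(V_{\pm})$ by Bogoliubov automorphisms, so $\theta_{g_{\pm}}$ is in particular an isometric isomorphism of $\Cl(V_{\pm})^{\smooth}$. Combined with the previous paragraph — namely that $\theta_{g_{\pm}}$ extends to the normal $\ast$-automorphism $\Ad(U)|_{N_{\pm}''}$ of $N_{\pm}''=N_{\pm}''$ — this is exactly the content of \cref{def:vNaMorphism}: $\theta_{g_{\pm}}$ is a morphism of rigged von Neumann algebras $N_{\pm}\to N_{\pm}$. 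Invertibility is immediate: $U^{*}$ implements $g^{-1}=g_{-}^{-1}\oplus g_{+}^{-1}$, so $\theta_{g_{\pm}^{-1}}$ is a two-sided inverse, both as a rigged \cstar-algebra morphism and as a normal automorphism.

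The one point that needs a little care is the claim that the extension is \emph{unique} and is \emph{the one} induced by $a\mapsto UaU^{*}$; this is the potential obstacle, but it dissolves quickly. Uniqueness of the normal extension of $\theta_{g_{\pm}}:\Cl(V_{\pm})^{\|\cdot\|}\to\Cl(V_{\pm})^{\|\cdot\|}$ to $N_{\pm}''$ is the general fact already recorded after \cref{def:vNaMorphism} (a normal map on $N''$ is determined by its restriction to a $\sigma$-weakly dense \cstar-subalgebra). Since $\Ad(U)|_{N_{\pm}''}$ is a normal $\ast$-homomorphism restricting to $\theta_{g_{\pm}}$ on $\Cl(V_{\pm})$, it must coincide with that unique extension. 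Thus $\theta_{g_{\pm}}$ extends to an automorphism of $N_{\pm}$ — in the sense that $N_{\pm}''$ gets the normal automorphism $\Ad(U)|_{N_{\pm}''}$ — and this is precisely the statement of the lemma. I would write this up in a few lines: invoke \cref{lem:Bogolsplit} for the restriction formula, \cref{eq:Implementer} for the conjugation identity on $\Cl(V_{\pm})$, normality of $\Ad(U)$ together with $\sigma$-weak density for the extension and its uniqueness, and \cref{prop:CliffordFrechetAlgebra} for the rigged-algebra part; the invertibility follows by applying the same reasoning to $U^{*}$.
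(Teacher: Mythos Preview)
Your proposal is correct and follows essentially the same strategy as the paper: both establish that conjugation by $U$ preserves the von Neumann closure of $\Cl(V_{\pm})$ in $\mc{B}(\mc{F})$, after noting via \cref{lem:Bogolsplit} that $\Ad(U)$ restricts to $\theta_{g_{\pm}}$ on $\Cl(V_{\pm})$. The only difference is in how that invariance is justified: you invoke $\sigma$-weak continuity of $\Ad(U)$ to pass from $\Cl(V_{\pm})$ to its weak closure, whereas the paper does an explicit commutant computation (first $U^{*}\,\Cl(V_{-})'\,U\subset\Cl(V_{-})'$, then $U\,\Cl(V_{-})''\,U^{*}\subset\Cl(V_{-})''$). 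Both are standard and interchangeable.

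One remark: in this section the paper uses $N_{\pm}$ for the \emph{classical} von Neumann algebra $\Cl(V_{\pm})''$ (cf.\ the opening line of the paper's proof and the usage in \cref{lem:IntertwinersAndImplementers}), so your second paragraph about $\theta_{g_{\pm}}$ being a morphism of the \emph{rigged} \cstar-algebra $\Cl(V_{\pm})^{\smooth}$, while true, is not needed for the lemma as stated.
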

\begin{proof}
       We discuss $\theta_{g_{-}}$, the proof for $\theta_{g_{+}}$ is analogous. Because $\Cl(V_{-})$ is dense in $N=N_{-}$, it is sufficient to prove existence. We know that $\theta_{g}: \Cl(V) \rightarrow \Cl(V)$ restricts to $\theta_{g_{-}}$ (by \cref{lem:Bogolsplit}) and at the same time extends to an automorphism of $\Cl(V)'' = \mc{B}(\mc{F})$, namely $a \mapsto UaU^{*}$.
       It is thus sufficient to prove that conjugation by $U$ preserves $N\subset \Cl(V)''$. Let $c \in \Cl(V_{-})$ and let $b \in \Cl(V_{-})'$, then, because $UcU^{*} = \theta_{g_{-}}c \in \Cl(V_{-})$, we have
       \begin{equation*}
        U^{*}bUc = U^{*}bUcU^{*}U = U^{*}UcU^{*}bU = cU^{*}bU,
       \end{equation*}
       and hence $U^{*}bU \in \Cl(V_{-})'$. Now let $a \in N$, then we have
       \begin{equation*}
        UaU^{*}b = UaU^{*}bUU^{*} = bU aU^{*},
       \end{equation*}
       and hence $UaU^{*} \in N$.
\end{proof}
We emphasize that the resulting automorphism $\theta_{g_{-}}: N \rightarrow N$ depends on neither $g_{+}$ nor $U$.
It follows from \cref{lem:PropertiesOfKappa,lem:bogminus} that $\theta_{\tau(g_{+})}: \Cl(V_{-}) \rightarrow \Cl(V_{-})$ extends uniquely to an automorphism of $N$, induced by $a \mapsto \kappa(U) a \kappa(U)^{*}$.

\begin{lemma}\label{lem:IntertwinersAndImplementers}
 Let $U: \mc{F} \rightarrow \mc{F}$ be an even unitary map.
 Then, the following are equivalent:
 \begin{enumerate}[(a)]
  \item $U$ implements $g_{-} \oplus g_{+} \in \O^{\theta}(V)_{0}$.
  \item The triple $(\theta_{g_{-}}, \theta_{\tau(g_{+})}, U)$ is a bimodule intertwiner.
 \end{enumerate}
\end{lemma}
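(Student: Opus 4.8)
The plan is to unwind condition (b) into two operator identities on $\mc{F}$ and compare them with the implementation condition (a), using \cref{lem:Bogolsplit} and \cref{eq:JCommutesWithTheta} as the only real inputs. Write $\alpha_U$ for conjugation $a \mapsto UaU^{*}$ on $\mc{B}(\mc{F})$. The left‑module half of (b) is literally the identity $\alpha_U(a) = \theta_{g_-}(a)$ for $a \in N$; the right‑module half, once one substitutes the explicit right action $v \ract b = Jb^{*}Jv$ and replaces $b$ by $b^{*}$, becomes $\alpha_U(JaJ) = J\theta_{\tau(g_+)}(a)J$ for $a \in N$. Both sides of each identity are $\sigma$‑weakly continuous in $a$, and $\Cl(V_-)$ is $\sigma$‑weakly dense in $N$, so (b) is equivalent to these identities restricted to $a \in \Cl(V_-)$, together with the statement — furnished a priori by \cref{lem:bogminus} and the remark after it — that $\theta_{g_-}$ and $\theta_{\tau(g_+)}$ are honest normal automorphisms of $N$ (this is where evenness of $U$ is implicitly used).

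For (a) $\Rightarrow$ (b) I would assume $\alpha_U = \theta_g$ with $g = g_- \oplus g_+$, hence also on all of $\mc{B}(\mc{F})$. On $\Cl(V_-)$, \cref{lem:Bogolsplit} gives $\theta_g(a) = \theta_{g_-}(a)$, which is the left identity. For the right identity I first note that the $V_{-}$‑component of $\tau(g) = \tau g \tau \in \O^{\theta}(V)$ is $\tau g_+ \tau$, so \cref{lem:Bogolsplit} yields $\theta_{\tau(g)}(a) = \theta_{\tau(g_+)}(a)$ for $a \in \Cl(V_-)$; then \cref{eq:JCommutesWithTheta} gives $\alpha_U(JaJ) = \theta_g(JaJ) = J\theta_{\tau(g)}(a)J = J\theta_{\tau(g_+)}(a)J$ for $a \in \Cl(V_-)$. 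Both identities then extend from $\Cl(V_-)$ to $N$ by normality, which is (b).

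For (a) $\Leftarrow$ (b) I would run the same two computations in reverse. The left identity of (b) plus \cref{lem:Bogolsplit} gives $\alpha_U = \theta_g$ on $\Cl(V_-)$, hence on $N = \Cl(V_-)''$ by normality. The right identity of (b), combined with \cref{eq:JCommutesWithTheta} and \cref{lem:Bogolsplit} as above, gives $\alpha_U(JaJ) = \theta_g(JaJ)$ for $a \in \Cl(V_-)$; since $\mc{F}$ is a standard form of $N$ (\cref{prop:standardform}) the modular conjugation $J$ satisfies $JNJ = N'$, and $\{JaJ : a \in \Cl(V_-)\}$ generates $N'$, so normality upgrades this to $\alpha_U = \theta_g$ on $N'$. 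Finally, since $\Cl(V_-) \subseteq N$, $\Cl(V_+) \subseteq N'$, and $\Cl(V) = \Cl(V_-) \otimes \Cl(V_+)$ is the norm closure of the span of products $a_- a_+$ — on which $\alpha_U$ and $\theta_g$ are $*$‑homomorphisms agreeing on each tensor factor — I conclude $\alpha_U = \theta_g$ on all of $\Cl(V)$, i.e.\ $U$ implements $g$.

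The genuinely substantive step, and the one I expect to require the most care, is the one in the converse direction where an identity known only on the small $C^{*}$‑algebra $\Cl(V_-)$ is propagated to the entire commutant $N'$; this is exactly where the standard‑form/Tomita–Takesaki structure of $\mc{F}$ (the relation $JNJ = N'$) is essential, and where the normality and $\sigma$‑weak‑density bookkeeping must be airtight. Everything else — the use of \cref{lem:Bogolsplit} to split Bogoliubov automorphisms along $V = V_-\oplus V_+$, the invocation of \cref{eq:JCommutesWithTheta} to conjugate them by $J$, and the substitution unwinding the right action — is routine.
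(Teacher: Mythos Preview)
Your forward direction (a) $\Rightarrow$ (b) is fine and is essentially a repackaging of the paper's argument: where the paper uses the operator identity $\kappa(U)=JUJ$ (valid for even $U$) together with the fact that $\kappa(U)$ implements $\tau(g)$, you use the equivalent automorphism-level formula \cref{eq:JCommutesWithTheta}.

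The converse (b) $\Rightarrow$ (a), however, has a circularity. You invoke \cref{eq:JCommutesWithTheta} in the form $J\theta_{\tau(g)}(a)J=\theta_g(JaJ)$ to rewrite the right-module identity as $\alpha_U(JaJ)=\theta_g(JaJ)$. But for $a\in\Cl(V_-)$ the operator $JaJ$ lies in $N'$, not in $\Cl(V)$, so the right-hand side $\theta_g(JaJ)$ only makes sense once $\theta_g$ has been extended to $\mc{B}(\mc{F})$ --- and that extension exists precisely when $g$ is implementable, which is the conclusion you are trying to reach. The subsequent passage to all of $N'$ via $JNJ=N'$ inherits the same problem: the object ``$\theta_g$ on $N'$'' is not defined without an implementer for $g$.

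The paper avoids this by staying at the operator level. From evenness of $U$ and the definition $J=\mathrm{k}^{-1}\Lambda_{\alpha\tau}$ one has $\kappa(U)=JUJ$ for \emph{any} even unitary $U$, independently of whether $U$ implements anything. The right-module identity of (b) then reads $J\kappa(U)\,a\,\kappa(U)^*J=J\theta_{\tau(g_+)}(a)J$, i.e.\ $\kappa(U)\,a\,\kappa(U)^*=\theta_{\tau(g_+)}(a)$ on $\Cl(V_-)$. Applying the antilinear automorphism $\kappa$ of $\Cl(V)$, which exchanges $\Cl(V_-)$ with $\Cl(V_+)$ and satisfies $\kappa\circ\theta_{\tau h\tau}=\theta_h\circ\kappa$, yields $Ua_+U^*=\theta_{g_+}(a_+)$ for all $a_+\in\Cl(V_+)$ directly. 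Combined with the left identity on $\Cl(V_-)$ and the density of $\Cl(V_-)\otimes\Cl(V_+)$ in $\Cl(V)$, this gives (a). No appeal to $JNJ=N'$ or to normality on $N'$ is needed; the substantive input is $\kappa(U)=JUJ$, not Tomita--Takesaki theory.
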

\begin{proof}
Suppose that (a) holds.
It then follows from \cref{lem:bogminus} that we have
\begin{equation*}
        U(a \lact v) = \theta_{g_{-}}(a) \lact Uv,
       \quad (a \in N, v \in \mc{F}).
\end{equation*}
Because  $U$ is  even, we have that $U$ commutes with the Klein operator $\operatorname{k}$, and thus, by \cref{lem:kJImplementsKappa} and the definition of the operator $J$ in Equation \cref{eq:Jdef}, we have  $\kappa(U) = JUJ$.
Using this, we compute, again for $a \in N$ and $v \in \mc{F}$
\begin{align*}
 (Uv) \ract \theta_{\tau(g_{+})}(a) &= J \theta_{\tau(g_{+})}(a^{*}) J \lact U v 
 = J \kappa(U) a^{*} \lact  \kappa(U)^{*}J Uv 
 = UJ a^{*} \lact J v 
 = U(v \ract a).
\end{align*}
In other words, the triple $(\theta_{g_{-}},\theta_{\tau (g_{+}) },U)$ is a bimodule intertwiner.

Now, assume (b).
Running the above arguments in reverse, we obtain that, for all $a_{-} \in \Cl(V_{-})$, and all $a_{+} \in \Cl(V_{+})$ we have
\begin{align*}
 U a_{-} U^{*} = \theta_{g_{-}}(a_{-}), \qquad Ua_{+}U^{*} = \theta_{g_{+}}(a_{+}),
\end{align*}
and thus we have  that for all elements $a$ of the algebraic tensor product of $\Cl(V_{-})$ with $\Cl(V_{+})$ the equation
\begin{equation*}
 U a U^{*} = \theta_{g_{-} \oplus g_{+}}(a)
\end{equation*}
holds.
Because the algebraic tensor product is dense in the Clifford \cstar-algebra $\Cl(V_{-} \oplus V_{+}) = \Cl(V)$ this equation holds for all $a \in \Cl(V)$, and this completes the proof.
\end{proof}


Now, let $U' \in \Imp_{\res}^{\theta}(V)$ be even, and suppose that it implements $g'= g_{-}' \oplus g_{+}' \in \O_L^{\theta}(V)$ with $g'_{-}=\tau g_{+}\tau$, so that $U$ and $U'$ are fusable in the sense of \cref{def:compfusion}.
We now have three $\ast$-automorphisms of $N$, namely $\theta_{g_{-}}$, $\theta_{\tau(g_{+})}=\theta_{g_{-}'}$, and $\theta_{\tau(g'_{+})}$, and
additionally, we have the bimodule intertwiners $(\theta_{g_{-}},\theta_{\tau(g_{+})},U)$ and $(\theta_{g_{-}'},\theta_{\tau(g_{+}')},U')$  according to \cref{lem:IntertwinersAndImplementers}.
Since Connes fusion is a functor (\cref{lem:ConnesFusionOfMaps}) we obtain a bimodule intertwiner
$(\theta_{g_{-}},\theta_{\tau(g_{+}')},U \boxtimes U'): \mathcal{F} \boxtimes \mathcal{F} \to \mathcal{F}\boxtimes \mathcal{F}$. 

\begin{definition}
\label{eq:ImpFusionDiagram}
The \emph{Connes fusion of fusable implementers  $U$ and $U'$} is the unitary  $\hat\mu(U,U')\in \U(\mathcal{F})$ defined as the composite
\begin{equation*}
        \xymatrix@C=3em{\mc{F} \ar[r]^-{\chi^{-1}} & \mc{F} \boxtimes \mc{F} \ar[r]^{U \boxtimes U'} & \mc{F} \boxtimes \mc{F} \ar[r]^-{\chi} &  \mc{F}.}
\end{equation*}
\end{definition}

By \cref{lem:IntertwinersAndImplementers}, we obtain that the Connes fusion of implementers results into an even implementer $\hat\mu(U,U')\in \Imp_L^{\theta}(V)$, which implements $g_{-}\oplus g'_{+}\in \O_L^{\theta}(V)$.  
\begin{remark}
Note that, in particular, this shows that if $g_{-} \oplus g_{+} \in \O_L^{\theta}(V)$ and $\tau g_{+} \tau \oplus g_{+}' \in \O_L^{\theta}(V)$ are in the identity component of $\O_{\res}(V)$, then $g_{-} \oplus g_{+}' \in \O_L^{\theta}(V)$. \end{remark}

The following result follows directly from the functoriality of Connes fusion (see \cref{lem:ConnesFusionOfMaps}).

\begin{proposition}\label{lem:multiplicativitymuhat}
         Connes fusion $\hat{\mu}$ of implementers  is multiplicative in the following sense. Let $U,U',V,V' \in \Imp_L^{\theta}(V)$ be even implementers, of which $U$ and $U'$ are fusable, and $V$ and $V'$ are fusable. Then, $UV$ and $U'V'$ are fusable, and we have 
        \begin{equation*}
        \hat{\mu}(U,U') \hat{\mu}(V,V') = \hat{\mu}(UV,U'V').
        \end{equation*}
\end{proposition}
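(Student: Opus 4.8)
The plan is to obtain the identity essentially formally from the functoriality of Connes fusion, \cref{lem:ConnesFusionOfMaps}, once the orthogonal transformations being implemented have been tracked through the products. First I would check that $UV$ and $U'V'$ are fusable. Write $g=g_-\oplus g_+$, $h=h_-\oplus h_+$, $g'=g'_-\oplus g'_+$, $h'=h'_-\oplus h'_+$ for the elements of $\O^\theta_{\res}(V)$ implemented by $U$, $V$, $U'$, $V'$, with $g'_-=\tau g_+\tau$ and $h'_-=\tau h_+\tau$ the two fusability hypotheses. Since $\theta$ is a group homomorphism, $UV$ implements $gh$ and $U'V'$ implements $g'h'$; since $g\mapsto g_\pm$ is a group homomorphism, $(gh)_\pm=g_\pm h_\pm$ and $(g'h')_\pm=g'_\pm h'_\pm$; and since $\tau^2=\id$,
\[
(g'h')_-=g'_-h'_-=(\tau g_+\tau)(\tau h_+\tau)=\tau(g_+h_+)\tau=\tau (gh)_+\tau,
\]
which is precisely fusability of $UV$ and $U'V'$ in the sense of \cref{def:compfusion}. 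They are even, being products of even operators, and lie in $\Imp^\theta_{\res}(V)$ since that is a group; hence $\hat\mu(UV,U'V')$ is defined.

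Next I would unwind the definition $\hat\mu(U,U')=\chi\circ(U\boxtimes U')\circ\chi^{-1}$. In the product $\hat\mu(U,U')\circ\hat\mu(V,V')$ the middle factor $\chi^{-1}\circ\chi$ is the identity, so the product equals $\chi\circ(U\boxtimes U')\circ(V\boxtimes V')\circ\chi^{-1}$, and it remains to identify $(U\boxtimes U')\circ(V\boxtimes V')$ with $(UV)\boxtimes(U'V')$. By \cref{lem:IntertwinersAndImplementers}, $U$, $V$, $U'$, $V'$ carry bimodule-intertwiner structures $(\theta_{g_-},\theta_{\tau(g_+)},U)$, $(\theta_{h_-},\theta_{\tau(h_+)},V)$, $(\theta_{g'_-},\theta_{\tau(g'_+)},U')$, $(\theta_{h'_-},\theta_{\tau(h'_+)},V')$. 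Fusability of $U,U'$ gives $\theta_{\tau(g_+)}=\theta_{g'_-}$, so $U\boxtimes U'$ is the Connes fusion of the first and third of these; similarly $V\boxtimes V'$ is the Connes fusion of the second and fourth, using fusability of $V,V'$. Since $\theta$ and $\tau$-conjugation are homomorphisms, the intertwiner attached by \cref{lem:IntertwinersAndImplementers} to $UV$ (which implements $g_-h_-\oplus g_+h_+$) is $(\theta_{g_-h_-},\theta_{\tau(g_+h_+)},UV)=(\theta_{g_-}\theta_{h_-},\theta_{\tau(g_+)}\theta_{\tau(h_+)},U\circ V)$, i.e.\ the composite of the intertwiners of $U$ and $V$; likewise for $U'V'$. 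Functoriality of Connes fusion then yields $(U\boxtimes U')\circ(V\boxtimes V')=(UV)\boxtimes(U'V')$, and substituting back gives $\hat\mu(U,U')\circ\hat\mu(V,V')=\chi\circ\bigl((UV)\boxtimes(U'V')\bigr)\circ\chi^{-1}=\hat\mu(UV,U'V')$.

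The one point that needs care is the very last identification: one must make sure the source and target bimodule structures of $U\boxtimes U'$ and $V\boxtimes V'$ match up so that their composite is genuinely the Connes fusion of the composites $U\circ V$ and $U'\circ V'$ — that the ``middle'' automorphism $\theta_{\tau(h_+)}=\theta_{h'_-}$ of $V\boxtimes V'$ is propagated correctly through the fusion, and that $UV$ is assigned exactly the composed intertwiner structure rather than some other one. This is pure bookkeeping, but it is where all three ingredients are used at once — fusability of $U,U'$, fusability of $V,V'$, and the homomorphism properties of $\theta$, of $g\mapsto g_\pm$, and of $\tau$-conjugation — so it is the step most prone to a slip. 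Everything else is formal manipulation of the diagram in \cref{eq:ImpFusionDiagram}.
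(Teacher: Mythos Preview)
Your proposal is correct and follows essentially the same approach as the paper: the paper's proof is a single sentence stating that the result follows directly from the functoriality of Connes fusion (\cref{lem:ConnesFusionOfMaps}), and you have simply spelled out the bookkeeping that makes this work, including the verification that $UV$ and $U'V'$ are fusable.
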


In \cref{sec:operatormodel} we further study the Connes fusion of implementers.
In particular, we relate it to the fusion product on the universal central extension of $L\Spin(d)$, see \cref{thm:FusionProductsAgree}. This relation will be crucial for the proof of our main result, \cref{th:fusionfibrewise}.

\section{Fusive spin structures on loop space}

In this section, we recall and summarize the notions of spin structures on loop space,  fusion products, and string structures, and we relate the Connes fusion of implementers to loop fusion (\cref{thm:FusionProductsAgree}).     

\label{sec:fusion}

\subsection{The spinor bundle on loop space}

\label{sec:spinstructuresandspinorbundles}

At the basis of the construction of the spinor bundle on loop space is the notion of a spin structure on loop space according to Killingback \cite{killingback1}.
We suppose $M$ is a spin manifold of dimension $d$, and consider its spin structure as a principal $\Spin(d)$-bundle $\Spin(M)$ over $M$ that lifts the structure group of the oriented orthonormal frame bundle $\SO(M)$ along the central extension  $\Z_2 \to\Spin(d) \to \SO(d)$.

By definition of the Fr\'echet manifold structure on $LM$, a tangent vector at a loop $\gamma\in LM$  is a smooth section of $TM$ along $\gamma$. 
This is the motivation to consider $L\Spin(M)$, the Fr\'echet manifold of smooth free loops in the total space of $\Spin(M)$, as (a version of) the frame bundle of $LM$. We note that  $L\Spin(M)$ is  a Fr\'echet principal $L\Spin(d)$-bundle over $LM$, see \cite[Proposition 1.8]{SW}, where $L\Spin(d)$ is the loop group of $\Spin(d)$.
The following definition is due to Killingback \cite{killingback1}.

\begin{definition}
\label{def:spinstructureLM}
A \emph{spin structure on $LM$} is a lift of  the structure group of  the frame bundle $L\Spin(M)$ of $LM$ along the basic central extension
\begin{equation*}
 \U(1) \rightarrow \BCE \rightarrow L\Spin(d).
\end{equation*}
\end{definition}

Thus, a spin structure on $LM$ is a principal $\BCE$-bundle $\SStruct$ over $LM$ together with a bundle map
$\sigma: \SStruct \to L\Spin(M)$
that intertwines the group actions along the projection $\smash{\widetilde{L\Spin}}(d) \to L\Spin(d)$. 
We remark that the map $\sigma:\SStruct \to L\Spin(M)$ has itself the structure of a principal $\U(1)$-bundle, where the $\U(1)$-action is induced along the inclusion $\U(1)\subset \smash{\widetilde{L\Spin}}(d)$. Concerning the  existence of spin structures on loop spaces, the following result was proved by McLaughlin \cite{mclaughlin1}. 

\begin{proposition}
\label{prop:classspinLM}
The loop space $LM$ of a spin manifold $M$ admits a spin structure if $\frac{1}{2}p_1(M)=0$.
\end{proposition}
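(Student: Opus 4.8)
The plan is to reduce the existence of a spin structure on $LM$ to a single cohomological obstruction, and then to identify that obstruction with the transgression of $\tfrac12 p_1(M)$. By \cref{def:spinstructureLM}, a spin structure on $LM$ is a lift of the structure group of the principal $L\Spin(d)$-bundle $L\Spin(M)\to LM$ along the central extension $\U(1)\to\BCE\to L\Spin(d)$. For any central extension $\U(1)\to\hat G\to G$ of topological groups and any principal $G$-bundle $P\to X$, such a lift exists if and only if one obstruction class $o(P)\in H^{3}(X;\Z)$ vanishes — it is the Dixmier--Douady class of the lifting bundle gerbe of $P$ and $\hat G$ — and $o$ is natural in $P$. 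Writing $T\colon H^{4}(M;\Z)\to H^{3}(LM;\Z)$ for the transgression, i.e.\ fibre integration along $S^1$ of the pullback along the evaluation map $\mathrm{ev}\colon S^1\times LM\to M$, I would prove that $o(L\Spin(M))=\pm\,T\bigl(\tfrac12 p_1(M)\bigr)$. Since $T$ is a homomorphism, $\tfrac12 p_1(M)=0$ then forces $o(L\Spin(M))=0$, hence a lift, i.e.\ a spin structure on $LM$, exists.

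To identify $o(L\Spin(M))$ I would proceed in three steps. First, reduce to the universal case: if $\phi\colon M\to B\Spin(d)$ classifies $\Spin(M)$, then $L\Spin(M)$ is pulled back along $L\phi\colon LM\to LB\Spin(d)$ from the principal $L\Spin(d)$-bundle $LE\Spin(d)\to LB\Spin(d)$, which (as $LE\Spin(d)$ is contractible) is a model for the universal $L\Spin(d)$-bundle; by naturality $o(L\Spin(M))=(L\phi)^{*}o_{\mathrm{univ}}$ for some $o_{\mathrm{univ}}\in H^{3}(LB\Spin(d);\Z)$. Second, invoke the description of $\BCE$ as the transgression of the basic multiplicative bundle gerbe $\mathcal{G}_{\mathrm{bas}}$ on $\Spin(d)$ (\cite{waldorf11,Waldorfc}), whose Dixmier--Douady class generates $H^{3}(\Spin(d);\Z)\cong\Z$ and corresponds, under the canonical isomorphism $H^{3}(\Spin(d);\Z)\cong H^{4}(B\Spin(d);\Z)$, to the generator $\tfrac12 p_1\in H^{4}(B\Spin(d);\Z)$. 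Third, show that forming the lifting bundle gerbe commutes with transgression to loop space: applying the loop functor and fibre integration to the lifting-gerbe construction attached to $\mathcal{G}_{\mathrm{bas}}$ and the universal $\Spin(d)$-bundle identifies $o_{\mathrm{univ}}$ with the transgression of $\tfrac12 p_1$ to $LB\Spin(d)$. Pulling back along $L\phi$ and using naturality of transgression then yields the claimed formula.

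The hard part is the third step — the compatibility of the lifting-bundle-gerbe construction with transgression. I expect to verify this on the level of \v{C}ech--Deligne (or simplicial) cocycles, tracking how the multiplicative structure on $\mathcal{G}_{\mathrm{bas}}$ is exactly what promotes its transgressed $\U(1)$-bundle on $L\Spin(d)$ to a central extension of the \emph{group}, and how that same structure assembles the transgressed cocycle into the lifting gerbe of $L\Spin(M)$; alternatively this can be extracted from the fusion/regression correspondence between gerbes on $M$ and fusion bundles on $LM$ of \cite{waldorf11}. A homotopy-theoretic variant I would keep in reserve avoids gerbes entirely: the fibration $\Spin(d)\to LB\Spin(d)\xrightarrow{\mathrm{ev}_0}B\Spin(d)$ has a section (constant loops), so its Serre spectral sequence gives $H^{3}(LB\Spin(d);\Z)\cong\Z$ with generator restricting to that of $H^{3}(\Spin(d);\Z)$ on the fibre, and one identifies $o_{\mathrm{univ}}$ with this generator by observing that the basic central extension is at level one; the spectral-sequence transgression then pins $o_{\mathrm{univ}}$ to $\tfrac12 p_1$ as before. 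Either route, once carried out, reproduces McLaughlin's theorem \cite{mclaughlin1}, and the proposition follows at once.
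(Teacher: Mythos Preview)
The paper does not give its own proof of this proposition; it attributes the result to McLaughlin \cite{mclaughlin1} and states it without argument. So there is no in-paper proof to compare against. Your sketch is a reasonable outline of how the result is established: the obstruction to lifting along a $\U(1)$-central extension is a class in $H^{3}(LM;\Z)$, and one identifies it with the transgression of $\tfrac12 p_1(M)$. Your homotopy-theoretic variant using the Serre spectral sequence of $\Spin(d)\to LB\Spin(d)\to B\Spin(d)$ is closer in spirit to McLaughlin's original argument; the bundle-gerbe/transgression route you describe first is a more modern repackaging using the machinery of \cite{waldorf11,Waldorfc} that the present paper also relies on elsewhere. Either is adequate, and you correctly flag the compatibility of the lifting gerbe with transgression as the substantive step.
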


The group $L\SO(d)$ acts in $V=L^{2}(S^{1}, \mathbb{S} \otimes \C^{d})$ by pointwise multiplication; under the pointwise projection map $L\Spin(d) \rightarrow L\SO(d)$, this defines an action of $L\Spin(d)$ in $V$.
It is clear that $L\Spin(d)$ acts on $V$ through $\O^{\theta}(V)$, and a standard result that it acts through $\O_{\res}(V)$, see \cite[Lemma 3.21]{Kristel2019} or \cite[Proposition 6.3.1]{PS86}. Moreover, the resulting map 
\begin{equation*}
\omega:L\Spin(d) \rightarrow \O^{\theta}_{\res}(V)
\end{equation*}
is smooth, \cite[Lemma 3.22]{Kristel2019}.
The following result is \cite[Theorem 3.26]{Kristel2019}.
\begin{theorem}
\label{basicce}
    If $1<d\neq 4$, then the pullback of the central extension $\U(1) \to\Imp_{\res}^{\theta}(V) \xrightarrow{q} \O_{\res}^{\theta}(V)$ along $\omega:L\Spin(d) \to \O_L^{\theta}(V)$ is the basic central extension of $L\Spin(d)$.
\end{theorem}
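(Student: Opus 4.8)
The plan is to identify the pulled-back central extension $\omega^*\Imp_{\res}^{\theta}(V)$ of $L\Spin(d)$ by appealing to the classification of central extensions of loop groups, as in Pressley--Segal \cite{PS86}. Recall that for $G$ a simple, simply connected compact Lie group, the central extensions of $LG$ by $\U(1)$ (in the appropriate topological category) are classified by their \emph{level}, an integer which can be read off from the cocycle/commutator pairing on the Lie algebra $L\lie{g}$, and the \emph{basic} central extension is by definition the one of level $1$ (the generator of $H^3(G;\Z)\cong\Z$). For $d > 2$, $\Spin(d)$ is simple and simply connected (for $d = 3$ it is $SU(2)$; the exclusion $d\neq 4$ is precisely because $\Spin(4)\cong SU(2)\times SU(2)$ is not simple, and $d > 1$ rules out the degenerate low-dimensional cases). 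So the strategy reduces to: (i) show $\omega^*\Imp_{\res}^{\theta}(V)$ is a smooth central extension of $L\Spin(d)$ by $\U(1)$, and (ii) compute its level and check it equals $1$.

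For step (i): by Theorem \ref{basicce}'s hypotheses and the discussion preceding it, $\omega:L\Spin(d)\to \O_{\res}^{\theta}(V)$ is a smooth homomorphism of Banach Lie groups, and $\U(1)\to\Imp_{\res}^{\theta}(V)\xrightarrow{q}\O_{\res}^{\theta}(V)$ is a central extension of Banach Lie groups. Pulling back a central extension along a smooth homomorphism yields again a central extension of Banach Lie groups; since $L\Spin(d)$ is a Fréchet (indeed Banach, for the $H^s$ or smooth completion as used here) Lie group, one checks that the pullback $\omega^*\Imp_{\res}^{\theta}(V) = \{(g,U)\in L\Spin(d)\times\Imp_{\res}^{\theta}(V)\mid \omega(g)=q(U)\}$ is a smooth Lie group and the sequence $\U(1)\to\omega^*\Imp_{\res}^{\theta}(V)\to L\Spin(d)$ is a central extension. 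This is essentially formal given the smoothness of $\omega$ and $q$.

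For step (ii): the level is computed from the Lie algebra cocycle. The cocycle of the implementer extension restricted to $\lie{o}_{\res}^{\theta}(V)$ is the standard Schwinger/Lundberg cocycle $S(X,Y) = \tfrac14\operatorname{tr}(\epsilon[\epsilon,X][\epsilon,Y])$ (or an equivalent normalization), where $\epsilon = P_L - P_L^\perp$; see \cite{PR95, Ara85}. Pulling back along $d\omega: L\lie{spin}(d)\to\lie{o}_{\res}^{\theta}(V)$, one computes $\omega^*S$ on loops $X,Y\in L\lie{spin}(d) = C^\infty(S^1,\lie{spin}(d))$ and must show it equals $\frac{1}{2\pi i}\int_{S^1}\langle X(t), Y'(t)\rangle_{\mathrm{basic}}\,dt$ for the basic inner product on $\lie{spin}(d)$ (the one normalized so that the highest coroot has length$^2$ equal to $2$). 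Concretely, $\omega$ is pointwise multiplication on $L^2(S^1,\mathbb{S}\otimes\C^d)$ through the vector (spinor) representation of $\Spin(d)$ on $\C^d$ — more precisely the defining $\SO(d)$-representation pulled back — and the Dynkin index of this representation is $1$ for $d\neq 4$ (the vector representation of $\lie{so}(d)$ has index $2$ relative to the trace form but index $1$ relative to the basic form, which is the standard fact that makes $SO(d)$-bundles classified by $p_1$ rather than $p_1/2$; for $\Spin$ the relevant normalization gives level $1$). One can alternatively invoke the identification $V \cong V_{1/2}$ with the relevant positive-energy-type structure and cite \cite[Ch.~6]{PS86} or \cite[Sec.~16]{wassermann98} directly: the restriction of the fundamental $\O_{\res}$-extension to $L\Spin(d)$ via the vector representation is the level-one (basic) extension. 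I would present this by (a) reducing to the Lie-algebra cocycle via the fact that both the pullback extension and the basic extension are determined by their cocycles (both $L\Spin(d)$ being, for $d\neq 4$, such that $H^2$ and $H^1$ vanish appropriately so cocycles determine the extension), and (b) matching cocycles by the index computation.

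The main obstacle is step (ii): the cocycle/level computation, i.e., correctly tracking normalization constants and confirming that the spinor-module structure on $V$ (built from the \emph{odd} spin structure on $S^1$, hence half-integer modes $\xi_{n,j}$) produces exactly level $1$ and not level $2$ or some other value. The half-integer moding matters for whether one is in the Neveu--Schwarz sector, but it does not change the level; still, this needs to be said carefully. In practice I would lean heavily on \cite[Theorem 3.26]{Kristel2019}'s own antecedents and on \cite{PS86, wassermann98}, citing the level-one computation rather than redoing it, and focus the written proof on assembling these inputs: smoothness of $\omega$ (from \cite[Lemma 3.22]{Kristel2019}), the central-extension property of the pullback, and the classification statement that a level-one central extension of $L\Spin(d)$ (for $1<d\neq 4$) is \emph{the} basic one.
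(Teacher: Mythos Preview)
The paper does not prove this theorem; it simply records it as \cite[Theorem 3.26]{Kristel2019} with no argument given. Your outline---pull back the implementer extension along the smooth homomorphism $\omega$, then identify the resulting central extension by computing its Lie-algebra cocycle and matching the level against the Pressley--Segal classification---is the standard route and is almost certainly what is carried out in \cite{Kristel2019}. So at the level of strategy there is nothing to compare: your plan is the plan.

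Two remarks on the execution. First, your justification for level~$1$ via ``the Dynkin index of the vector representation is $1$'' is not quite right as stated: for $\mathfrak{so}(d)$ with $d\geq 5$ the vector representation has Dynkin index~$2$. The reason the pullback nonetheless lands at level~$1$ is that the implementer extension here is the \emph{real} (CAR/Majorana) fermion extension of $\O_{\res}$, whose Schwinger cocycle carries an extra factor of $\tfrac12$ relative to the complex case; equivalently, $d$ real free fermions realize $\widehat{\mathfrak{so}}(d)_1$. You flag the normalization as the obstacle, and it is---but the resolution is this real-vs-complex point, not a renormalization of the Dynkin index. Second, your reduction to ``$G$ simple simply connected'' does not cover $d=2$, where $\Spin(2)\cong\U(1)$ is abelian and the classification argument must be run separately (or one must say what ``basic'' means there); the theorem as stated includes $d=2$, so this case needs a sentence.
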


From now on we will use this fixed model for the basic central extension.  Most importantly, it comes with a smooth map $\tilde \omega: \smash{\BCE} \to \Imp_L^{\theta}(V)$, from which we obtain a smooth representation of the basic central extension on the Fock space $\mathcal{F}^{\smooth}$. The spinor bundle on loop space is obtained using the associated bundle construction of \cref{lem:AssociatedFrechetBundle}.

\begin{definition}
\label{def:SpinorBundle}
Let $M$ be a spin manifold equipped with a spin structure $\SStruct$ on its loop space.  The \emph{spinor bundle on loop space} is the associated rigged Hilbert space bundle
 \begin{equation*}
    \mc{F}^{\smooth}(LM) \defeq \left( \SStruct \times \mc{F}^{\smooth} \right) / \BCE\text{.}
 \end{equation*}
\end{definition}

The Clifford bundle is obtained in a similar way using \cref{lem:cstarbundleass} and the smooth representation of $L\Spin(d)$ on $\Cl(V)^{\smooth}$, induced via $\omega:L\Spin(d) \to \O(V)$  from the action of $\O(V)$ on $\Cl(V)^{\smooth}$ by Bogoliubov automorphisms.
\begin{definition}\label{def:CliffordBundle}
 The \emph{Clifford bundle on loop space} is the associated rigged \cstar-algebra bundle
 \begin{equation*}
    \Cl^{\smooth}(LM) \defeq (L \Spin(M) \times \Clsm)/ L\Spin(d).
 \end{equation*}
\end{definition}

\begin{remark}
\label{re:cliffordbundlewithoutspin}
We remark that the Clifford algebra bundle does not make use of a spin structure on $LM$, and can even be defined without assuming a spin structure on $M$, since $\omega$ factors through $L\SO(d)$. 
\end{remark}

The Clifford bundle $\Cl^{\smooth}(LM)$ acts on the spinor bundle $\mc{F}^{\smooth}(LM)$ by \quot{Clifford multiplication}. To make this precise, we consider the rigged $\Cl(V)^{\smooth}$-module $\mathcal{F}^{\smooth}$ of \cref{prop:SmoothCliffordActionOnF}, with representation $(a,v) \mapsto a\lact v$.
We have proved in \cite[Proposition 2.2.19]{Kristel2020} that this representation extends from the typical fibres to all fibres, resulting in the following statement.

\begin{proposition}
\label{prop:cliffmult}
There is a unique bundle map
\begin{equation*}
 \Cl^{\smooth}(LM) \times_{LM} \mc{F}^{\smooth}(LM) \rightarrow \mc{F}^{\smooth}(LM)
\end{equation*}
such that
\begin{equation*}
 ([\ph,a],[\tilde \varphi,v]) \mapsto [\tilde \varphi, a \lact v],
\end{equation*}
for all $\tilde \varphi \in \widetilde{L\Spin}(M)$ lifting $\ph \in L\Spin(M)$, and all $a \in \Clsm$ and $v \in \mc{F}^{\smooth}$. Moreover, this map equips the spinor bundle $\mc{F}^{\smooth}(LM)$ with the structure of a rigged $\Cl^{\smooth}(LM)$-module bundle with typical fibre $\mathcal{F}^{\smooth}$. 
\end{proposition}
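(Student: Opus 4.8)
The plan is to present both $\Cl^{\smooth}(LM)$ and $\mc{F}^{\smooth}(LM)$ as bundles associated to the single principal $\BCE$-bundle $\SStruct$, and to reduce the whole statement to one equivariance identity. First I would note that the $\BCE$-action on $\Clsm$ obtained by composing $\BCE \to L\Spin(d)$, the smooth map $\omega: L\Spin(d) \to \O^{\theta}_{\res}(V)$, and the smooth Bogoliubov representation of $\O^{\theta}(V)$ on $\Clsm$ (\cref{prop:CliffordFrechetAlgebra}) factors through $L\Spin(d)$, because the central $\U(1)\subset\BCE$ maps to the identity and hence acts by the trivial Bogoliubov automorphism. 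Combined with the equivariance of the bundle map $\sigma:\SStruct \to L\Spin(M)$, this gives a canonical isomorphism $\Cl^{\smooth}(LM) \cong (\SStruct \times \Clsm)/\BCE$ of rigged \cstar-algebra bundles. So both bundles are associated to $\SStruct$, and for classes over a loop $\gamma$ one may always pick a common representative $\tilde\varphi\in\SStruct_\gamma$, writing them as $[\tilde\varphi,a]$ and $[\tilde\varphi,v]$ with $a\in\Clsm$, $v\in\mc{F}^{\smooth}$ uniquely determined; the proposed map is then $([\tilde\varphi,a],[\tilde\varphi,v])\mapsto[\tilde\varphi,a\lact v]$, and since every element of $\Cl^{\smooth}(LM)\times_{LM}\mc{F}^{\smooth}(LM)$ has this form, the uniqueness clause is automatic.

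Second I would establish well-definedness by showing that the Fock-space Clifford representation $\Clsm\times\mc{F}^{\smooth}\to\mc{F}^{\smooth}$, $(a,v)\mapsto a\lact v$ of \cref{prop:SmoothCliffordActionOnF}, is $\BCE$-equivariant for the diagonal action. For $\tilde g\in\BCE$ with image $\bar g\in L\Spin(d)$, the operator $\tilde\omega(\tilde g)\in\Imp_{\res}^{\theta}(V)$ implements $\omega(\bar g)$ by the very construction of the basic central extension (\cref{basicce}), i.e.\ $\theta_{\omega(\bar g)}(a)=\tilde\omega(\tilde g)\,a\,\tilde\omega(\tilde g)^{-1}$ as in \cref{eq:Implementer}, whence $\theta_{\omega(\bar g)}(a)\lact\bigl(\tilde\omega(\tilde g)v\bigr)=\tilde\omega(\tilde g)\bigl(a\lact v\bigr)$. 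Since the $\BCE$-action on $\Clsm\times\mc{F}^{\smooth}$ is diagonal, this equivariance shows that $[\tilde\varphi,a\lact v]$ is independent of the chosen common representative, so the map descends; $\C$-bilinearity in each fibre is clear. Here one uses that $a\lact v\in\mc{F}^{\smooth}$ for $a\in\Clsm$, $v\in\mc{F}^{\smooth}$ (\cref{prop:SmoothCliffordActionOnF}) and that $\tilde\omega(\tilde g)$ preserves $\mc{F}^{\smooth}$, which holds because $\tilde\omega(\tilde g)\in\Imp_{\res}^{\theta}(V)$ and this group acts on the rigged Hilbert space $\mc{F}^{\smooth}$ by \cref{prop:smoothFockSpace}.

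Third, for the bundle-map property and the rigged module bundle structure with typical fibre $\mc{F}^{\smooth}$, I would argue in local trivializations. A local section of $\SStruct$ over an open $U\subseteq LM$ induces, via \cref{lem:cstarbundleass}, a local trivialization of $\Cl^{\smooth}(LM)$, via \cref{lem:AssociatedFrechetBundle}, a local trivialization of $\mc{F}^{\smooth}(LM)$, and — being built from the same section — in these trivializations the module map takes the form $U\times\Clsm\times\mc{F}^{\smooth}\to U\times\mc{F}^{\smooth}$, $(u,a,v)\mapsto(u,a\lact v)$. This is smooth because $(a,v)\mapsto a\lact v$ is the action map of a rigged module (\cref{prop:SmoothCliffordActionOnF}, \cref{def:RiggedRepresentation}); consequently the globally defined map is a morphism of Fr\'echet vector bundles, and fibrewise it intertwines the fibre action with the typical-fibre representation on the nose. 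Thus the chosen pair of trivializations is compatible in the sense of \cref{def:repcstarbundle}, which is exactly what is required for $\mc{F}^{\smooth}(LM)$ to be a rigged $\Cl^{\smooth}(LM)$-module bundle with typical fibre $\mc{F}^{\smooth}$; each fibre $\mc{F}^{\smooth}(LM)_\gamma$ then acquires the structure of a rigged $\Cl^{\smooth}(LM)_\gamma$-module, as recorded after \cref{def:repcstarbundle}.

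I do not expect a genuine obstacle — this is, up to the change of riggings, the content of \cite[Proposition 2.2.19]{Kristel2020}. The one point deserving care is that all smoothness inputs used above ($\tilde\omega$ landing in $\Imp_{\res}^{\theta}(V)$ and this group acting smoothly on $\mc{F}^{\smooth}$, $\omega$ landing in $\O^{\theta}_{\res}(V)$, and $\O^{\theta}(V)$ acting smoothly on $\Clsm$) rely on the finer rigged structures built in \cref{sec:SmoothRepresentations}; with the coarser riggings of \cite{Kristel2020} the implementers $\tilde\omega(\tilde g)$ need not preserve the rigging, which is precisely why \cref{def:SpinorBundle,def:CliffordBundle} are set up with $\mc{F}^{\smooth}$ and $\Clsm$. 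Beyond that, the only bookkeeping is the identification $\Cl^{\smooth}(LM)\cong(\SStruct\times\Clsm)/\BCE$, which rests on the central $\U(1)$ acting trivially by Bogoliubov automorphisms.
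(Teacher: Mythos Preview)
Your argument is correct and is essentially the approach of \cite[Proposition~2.2.19]{Kristel2020}, which the paper simply cites: realize both bundles as associated to $\SStruct$, verify the single equivariance identity $\theta_{\omega(\bar g)}(a)\lact\tilde\omega(\tilde g)v=\tilde\omega(\tilde g)(a\lact v)$ coming from the implementer relation, and read off compatible local trivializations from a local section of $\SStruct$.

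One small correction to your closing remark: the coarser riggings $\mc{F}^{\infty}$ and $\Cl(V)^{\infty}$ of \cite{Kristel2020} \emph{are} preserved by $\tilde\omega(\tilde g)$ and by the Bogoliubov action of $\omega(\bar g)$, since $\Imp_{\res}^{\theta}(V)\subset\Imp_{\res}(V)$ and $\O^{\theta}(V)\subset\O(V)$ and the coarser riggings are the smooth vectors for the larger groups. So \cref{prop:cliffmult} holds verbatim with either rigging; the finer riggings are needed not here but for \cref{lem:SmoothHalfInclusion} (see \cref{rem:finerriggingessential}), where one must pass to the half-Clifford algebras $\Cl(V_{\pm})^{\smooth}$.
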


We write $\mc{F}(LM)$ for the continuous Hilbert space bundle whose fibres are the completions of the fibres of $\mc{F}^{\smooth}(LM)$ (see \cref{sec:locallytrivialbundles}).
Similarly, we write $\Cl(LM)$ for the continuous \cstar-algebra bundle whose fibres are the completions of the fibres of $\Cl^{\smooth}(LM)$.
The smooth representation $\rho$ of $\Cl^{\smooth}(LM)$ on $\mc{F}^{\smooth}(LM)$ then extends to a continuous representation of $\Cl(LM)$ of $\mc{F}(LM)$.

As the typical fibre of the rigged $\Cl^{\smooth}(LM)$-module bundle  $\mc{F}^{\smooth}(LM)$ is in fact a rigged von Neumann algebra, $\ClvN(V)^\smooth=(\Cl(V)^{\smooth},\mathcal{F}^{s})$ (\cref{prop:SmoothCliffordActionOnF}), we  note immediately the following consequence of \cref{prop:cliffmult}.  

\begin{corollary}
The pair $\ClvN(LM)^\smooth\defeq (\Cl^{\smooth}(LM),\mathcal{F}^{\smooth}(LM))$ is a rigged von Neumann algebra bundle over $LM$ with typical fibre $\ClvN(V)^\smooth$.
\end{corollary}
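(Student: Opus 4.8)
The plan is to unwind \cref{def:rvnab} and observe that every ingredient has already been supplied by the preceding results, so that the corollary becomes a matter of bookkeeping. Recall that a rigged von Neumann algebra bundle over a Fr\'echet manifold with typical fibre a rigged von Neumann algebra $N=(A,E)$ is, by \cref{def:rvnab}, exactly a pair $(\mathcal{A},\mathcal{E})$ in which $\mathcal{A}$ is a rigged \cstar-algebra bundle with typical fibre $A$ and $\mathcal{E}$ is a rigged $\mathcal{A}$-module bundle with typical fibre $E$. Hence the task reduces to matching the three pieces $\Cl^{\smooth}(LM)$, $\mathcal{F}^{\smooth}(LM)$, and $\ClvN(V)^{\smooth}$ against this definition.

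First I would record that $\Cl^{\smooth}(LM)$ is, by \cref{def:CliffordBundle} together with \cref{lem:cstarbundleass}, a rigged \cstar-algebra bundle over $LM$ with typical fibre $\Cl(V)^{\smooth}$. Next, \cref{prop:cliffmult} exhibits $\mathcal{F}^{\smooth}(LM)$ as a rigged $\Cl^{\smooth}(LM)$-module bundle with typical fibre $\mathcal{F}^{\smooth}$, the structure map being the Clifford multiplication bundle map constructed there. Finally, \cref{prop:SmoothCliffordActionOnF} asserts precisely that the typical fibre $(\Cl(V)^{\smooth},\mathcal{F}^{\smooth})$ is a rigged von Neumann algebra, that is, that $\ClvN(V)^{\smooth}$ is a rigged von Neumann algebra.

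Combining these three observations, the pair $(\Cl^{\smooth}(LM),\mathcal{F}^{\smooth}(LM))$ meets all the requirements of \cref{def:rvnab} with $N=\ClvN(V)^{\smooth}$, and is therefore a rigged von Neumann algebra bundle over $LM$ with typical fibre $\ClvN(V)^{\smooth}$. I do not expect any genuine obstacle: the corollary is a direct consequence of \cref{prop:cliffmult} and \cref{prop:SmoothCliffordActionOnF}. The only point that might seem to call for comment — that the fibrewise faithfulness demanded of a rigged von Neumann algebra propagates to every fibre $\mathcal{N}_x$ via compatible local trivializations and \cref{lem:SpatialIsomorphisms} — has already been handled in the general remarks following \cref{def:rvnab}, so it needs only be invoked rather than reproved.
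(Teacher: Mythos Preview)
Your proposal is correct and follows essentially the same approach as the paper: the corollary is presented there as an immediate consequence of \cref{prop:cliffmult} (which gives $\mathcal{F}^{\smooth}(LM)$ as a rigged $\Cl^{\smooth}(LM)$-module bundle with typical fibre $\mathcal{F}^{\smooth}$) together with \cref{prop:SmoothCliffordActionOnF} (which shows that the typical fibre $(\Cl(V)^{\smooth},\mathcal{F}^{\smooth})$ is a rigged von Neumann algebra), so that \cref{def:rvnab} is satisfied verbatim.
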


\begin{remark}
 Let us comment on the relation between the  spinor bundle on loop space defined in our earlier work using coarser riggings $\mathcal{F}^{\infty}$ of Fock spaces and $\Cl(V)^{\infty}$ of Clifford algebras, \cite[Section 4]{Kristel2020}.
 As remarked in \cref{rem:compareriggings}, we have $\mc{F}^{\infty} \subset \mc{F}^{\smooth}$, in other words, there is an isometric morphism of rigged Hilbert spaces $\mc{F}^{\infty} \rightarrow \mc{F}^{\smooth}$, which induces the identity on the completion $\mathcal{F}$; and similarly for the rigged Clifford algebras.
 Comparing \cref{def:SpinorBundle,def:CliffordBundle} with \cite[Definitions 4.4 and 4.3]{Kristel2020} respectively, we observe that the above-mentioned isometric morphisms induce isometric morphisms on the level of the spinor bundle and the Clifford algebra bundle on loop space:
 \begin{align*}
  \mc{F}^{\infty}(LM) &\rightarrow \mc{F}^{\smooth}(LM) & \Cl^{\infty}(LM) \rightarrow \Cl^{\smooth}(LM).
 \end{align*}
 Moreover, this pair of isometric morphisms in fact is an isometric  intertwiner as in \cref{def:intertwinerriggedmodulebundles}. On the completions, these isometric intertwiners induce the identity maps.
\end{remark}

\subsection{Fusion on loop space}

\label{sec:genfusionproducts}
\label{sec:StringsGroupsFusion}

We review first the general notion of a fusion product for principal $\U(1)$-bundles over the loop space $LM$ of a smooth manifold $M$, following \cite{waldorf10}. In the subsequent subsections we apply it to two situations: central extensions of loop groups and spin structures on loop spaces.
We write $PM$ for the set of smooth paths in $M$ with sitting instants, i.e.,
\begin{equation}
\label{eq:pathsinG}
PM \defeq  \{\beta:[0,\pi] \to M \;|\; \beta\text{ is smooth and constant around $0$ and $\pi$} \}\text{.}
\end{equation} 
We use sitting instants so that we are able to concatenate arbitrary paths with a common end point: the usual path concatenation $\beta_2 \star \beta_1$ is again a smooth path whenever $\beta_1(\pi)=\beta_2(0)$. 
Unfortunately, with sitting instants, $PM$ is not any kind of manifold; however, we may regard it as a diffeological space, see \cref{sec:HilbertBundle:3} for a quick review.
The plots of $PM$ are all maps $c:U \to PM$ whose adjoint map $c^{\vee}: U \times [0,\pi] \to M$, with $c^{\vee}(u,t) \defeq c(u)(t)$ is smooth.  We remark that path concatenation $\star$ and path reversal $\beta \mapsto \bar\beta$ are smooth maps.
The evaluation map $\mathrm{ev}: PM \to M \times M, \beta \mapsto (\beta(0),\beta(\pi))$ is a smooth map, and since diffeological spaces admit arbitrary fibre products, the iterated fibre products $PM^{[k]} \defeq  PM \times_{M \times M} ... \times_{M \times M} PM$ are again diffeological spaces: their plots are  simply tuples $(c_1,...,c_k)$ of plots of $PM$, such that $\mathrm{ev} \circ c_1 = ... = \mathrm{ev} \circ c_k$.
In the following, we will use the smooth map
\begin{align*}
PM^{[2]} &\to LM\;,\; (\beta_1,\beta_2) \mapsto \beta_1 \cup \beta_2 \defeq  \bar\beta_2 \star \beta_1
\end{align*}
that combines two paths with a common initial point and a common end point to a loop.
If $(\beta_1,\beta_2,\beta_3)\in PM^{[3]}$, we will regard the loop $\beta_1\cup \beta_3$ as the \quot{fusion} of the loops $\beta_1 \cup \beta_2$ and $\beta_2 \cup \beta_3$. A fusion product on a principal $\U(1)$-bundle over $LM$ is now a lift of this fusion operation to the total space:

\begin{definition}
\label{def:fusion product}
Let $\pi:\mathcal{L} \to LM$ be a Fr\'echet principal $\U(1)$-bundle over $LM$.
A \emph{fusion product} on $\mathcal{L}$  assigns to each element $(\beta_1,\beta_2,\beta_3) \in PM^{[3]}$  a  $\U(1)$-bilinear map
\begin{equation*}
\lambda_{\beta_1,\beta_2,\beta_3}: \mathcal{L}_{\beta_1 \cup \beta_2}  \times \mathcal{L}_{\beta_2\cup\beta_3} \to \mathcal{L}_{\beta_1 \cup \beta_3}\text{,}
\end{equation*} 
such that the following two conditions are satisfied: 
\begin{enumerate}[(i)]

\item 
Associativity: for all $(\beta_1,\beta_2,\beta_3,\beta_4) \in PM^{[4]}$ and all $q_{ij} \in \mathcal{L}_{\beta_i \cup \beta_j}$, 
\begin{equation*}
\lambda_{\beta_1,\beta_3,\beta_4}(\lambda_{\beta_1,\beta_2,\beta_3}(q_{12} , q_{23}) , q_{34})= \lambda_{\beta_1,\beta_2,\beta_4}(q_{12} , \lambda_{\beta_2,\beta_3,\beta_4}(q_{23} , q_{34}) )\text{.}
\end{equation*}

\item
Smoothness: for every plot $(c_1,c_2,c_3):U \to PM^{[3]}$ and all smooth maps $c_{12},c_{23}:U \to \mathcal{L}$ such that $(\pi\circ c_{ij})(x)=c_i(x) \cup c_j(x)$ for all $x\in U$ and $ij\in\{12,23\}$, 
the map
\begin{equation*}
U \to \mathcal{L} : x \mapsto \lambda_{c_1(x),c_2(x),c_3(x)}(c_{12}(x),c_{23}(x))
\end{equation*}
is smooth.

\end{enumerate}
\end{definition}

Early versions of fusion products have been studied in \cite{brylinski1} and in \cite{stolz3}. For a more complete treatment of this topic we refer to \cite{waldorf10}. The smoothness condition (ii) has several equivalent formulations; for instance, the ones given in  \cite{waldorfa,waldorf10,Kristel2019}. 
Fusion products are a characteristic  feature of structure in the image of transgression. The main result of \cite{waldorf10} is that principal $\U(1)$-bundles over $LM$ equipped with a fusion product and a thin homotopy equivariant structure form a category that is equivalent to the category of bundle gerbes over $M$, with the equivalence established by transgression and regression functors. In particular, every principal $\U(1)$-bundle over $LM$ obtained by transgression of a bundle gerbe over $M$ comes equipped with a fusion product.

\subsection{Fusion in the basic central extension}

The first occurrence of a fusion product in this paper is when the smooth manifold $M$ is a Lie group $G$, and the principal $\U(1)$-bundle $\mathcal{L}$ is a central extension
$\U(1) \to \mathcal{L} \to LG$
of the loop group. The diffeological space $PG$ of paths with sitting instants becomes now a diffeological group, i.e., (pointwise) multiplication and (pointwise) inversion are smooth maps; further, path concatenation and path reversal are smooth group homomorphisms. We require compatibility between fusion products and these group structures in the following sense.

\begin{definition}
A fusion product $\mu$ on a central extension $\U(1)\to\mathcal{L}\to LG$ is called \emph{multiplicative}, if it is a group homomorphism; i.e., \begin{equation*}
\mu_{\beta_1,\beta_2,\beta_3}(q_{12} , q_{23}) \cdot \mu_{\beta_1',\beta_2',\beta_3'}(q_{12}' , q_{23}') = \mu_{\beta_1\beta_1',\beta_2\beta_2',\beta_3\beta_3'}(q^{}_{12}q_{12}' , q^{}_{23}q_{23}')
\end{equation*}
for all $(\beta_1,\beta_2,\beta_3),(\beta_1',\beta_2',\beta_3') \in PG^{[3]}$,   $q_{ij}\in \mathcal{L}_{\beta_i\cup \beta_j}$, and $q'_{ij}\in \mathcal{L}_{\beta_i' \cup \beta_j'}$. 
\end{definition}

The basic central extension of any compact simple Lie group can be obtained by transgression, and thus comes equipped with a multiplicative fusion product, which is unique up to isomorphism  \cite{Waldorfc}. In this paper, we need a precise formula for this fusion product $\mu$ on the basic central extension of $L\Spin(d)$. Within the operator-theoretic model for $\widetilde{L\Spin}(d)$ described above, such a formula has been obtained  in  \cite{Kristel2019}, and we shall recall this. 
\label{sec:operatormodel}
A key ingredient to this work will be a  relation between that fusion product $\mu$ and the Connes fusion of implementers defined in \cref{sec:FusionImpThroughFock}.

We first note that the map $\omega: L\Spin(d) \to \O_L^{\theta}(V)$ that defines our operator-theoretic model via pullback, is compatible with all relations between loops and paths in $\Spin(d)$.
To see this, let $p_{\pm}:\O^{\theta}_L(V) \to \O(V_{\pm})$ denote the projection. Further, let $\Delta: P \Spin(d) \rightarrow L\Spin(d), \beta \mapsto \beta \cup \beta$ be the doubling map.
We define  smooth maps $\omega_{\pm}: P\Spin(d) \to \O(V_{\pm})$
by $\omega_{\pm}(\beta) \defeq p_{\pm}(\omega(\Delta(\beta)))$.
Due to the pointwise definition of the action of $L\Spin(d)$ on $V$, it is then clear that the following diagram is commutative:
\begin{equation}
\label{eq:diagramomega}
\xymatrix@R=2em{P\Spin(d) \ar[dd]_{\omega_-} & P\Spin(d)^{[2]}\ar[l]_{\pr_1} \ar[r]^{\pr_2} \ar[d]^{\cup} & P\Spin(d) \ar[dd]^{\omega_{+}} \\ & L\Spin(d) \ar[d]^{\omega} \\ \O(V_{-}) & \O_L^{\theta}(V)\ar[r]_{p_{+}}\ar[l]^{p_{-}} & \O(V_{+})}
\end{equation}

Next, we recall from \cite[Definition 5.5]{Kristel2019} that a \emph{fusion factorization}  $\rho: P\Spin(d) \rightarrow \BCE$ is a smooth group homomorphism with the property that the following diagram commutes:
        \begin{equation*}
        \begin{tikzpicture}
                \node (B) at (2,1) {$\BCE$};
                \node (C) at (-.5,0) {$P\Spin(d)$};
                \node (D) at (2,0) {$L\Spin(d)$.};
                \path[->,font=\scriptsize]
                (C) edge node[above]{$\rho$} (B)
                (B) edge (D)
                (C) edge node[below]{$\Delta$} (D);
        \end{tikzpicture}
        \end{equation*}
Such a fusion factorization was constructed in \cite[Section 5.3]{Kristel2019};
it is uniquely characterized by the property that $\rho(\beta)$ satisfies $\tilde{\omega}(\rho(\beta))J=J\tilde{\omega}(\rho(\beta))$ and $P_{\Omega}=P_{\tilde{\omega}(\rho(\beta))\Omega}$, where $\tilde\omega:\BCE \to \Imp_L^{\theta}(V)$ is the map from \cref{sec:spinstructuresandspinorbundles}, $J$ is the modular conjugation (see Eq. \cref{eq:Jdef}) and $P_{\xi}$ is the closed self-dual cone corresponding to $\xi$ (see \cref{sec:StandardForms}).
The fusion product $\mu$ on $\BCE$ is then defined by 
\begin{equation*}
 \mu_{\beta_1,\beta_2,\beta_3}(g_{1} , g_{2}) \defeq g_{1} \rho(\beta_{2})^{-1} g_{2}\text{,}
\end{equation*}
where $(\beta_1,\beta_2,\beta_3) \in P \Spin(d)^{[3]}$,  $g_{1}  \in \BCE_{\beta_1\cup\beta_2}$, and $g_2\in  \BCE_{\beta_2\cup\beta_3}$, see \cite[Eq. (13)]{Kristel2019}.

The following  key result now tells us that the fusion product $\mu$ and the Connes fusion map  $\hat{\mu}$ of \cref{sec:FusionImpThroughFock} coincide under the group homomorphism $\tilde\omega$. 
We believe that this result is the first place where  a relation between loop space fusion and Connes fusion is established. 

\begin{theorem}\label{thm:FusionProductsAgree}
        Let $\beta=(\beta_1,\beta_2,\beta_3) \in P \Spin(d)^{[3]}$,          
          $g_{1}  \in \BCE_{\beta_1\cup\beta_2}$ and $g_2\in  \BCE_{\beta_2\cup\beta_3}$.
Then, the implementers $ \tilde \omega(g_1)$ and $ \tilde \omega(g_2)$ are  fusable in the sense of \cref{def:compfusion}. Moreover, $\tilde\omega$ exchanges fusion on the basic central extension with the Connes fusion of implementers, i.e., we have        
\begin{equation*}
\tilde \omega(\mu_{\beta_1,\beta_2,\beta_3}(g_{1} , g_{2})) = \hat{\mu}(\tilde \omega(g_{1}),\tilde \omega(g_{2})).
\end{equation*}
\end{theorem}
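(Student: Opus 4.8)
The plan is to reduce the claimed equality of two operators in $\U(\mathcal{F})$ to a statement about how the canonical unitor $\chi:\mathcal{F}\boxtimes\mathcal{F}\to\mathcal{F}$ of the standard form interacts with the intertwiners coming from implementers, and then to pin down $\hat\mu(\tilde\omega(g_1),\tilde\omega(g_2))$ using the characterizing properties of the fusion factorization $\rho$. First I would unwind the definitions: write $U=\tilde\omega(g_1)$ and $U'=\tilde\omega(g_2)$, let $g_i$ project to $h_i\in L\Spin(d)$ and set $\omega(h_1)=g^-_1\oplus g^+_1$, $\omega(h_2)=g^-_2\oplus g^+_2$ in $\O^\theta_L(V)$. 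Using the commutative diagram \eqref{eq:diagramomega} and the fact that $(\beta_1,\beta_2,\beta_3)\in P\Spin(d)^{[3]}$ means $\beta_1\cup\beta_2$ and $\beta_2\cup\beta_3$ share the middle path $\beta_2$, one gets $g^+_1=\omega_+(\beta_2)$ and $g^-_2=\omega_-(\beta_2)$. Since $\tau$ exchanges $V_+$ with $V_-$ and (by the pointwise definition of the action together with $\tau$-equivariance, cf.\ \cite[Section 5]{Kristel2019}) $\omega_-(\beta_2)=\tau\,\omega_+(\beta_2)\,\tau$, we obtain $g^-_2=\tau g^+_1\tau$, which is exactly the fusability condition of \cref{def:compfusion}. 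This establishes the first assertion.

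For the main equality, I would first compute $\hat\mu(U,U')=\chi\circ(U\boxtimes U')\circ\chi^{-1}$ more explicitly. By \cref{lem:IntertwinersAndImplementers}, $(\theta_{g^-_1},\theta_{\tau(g^+_1)},U)$ and $(\theta_{g^-_2},\theta_{\tau(g^+_2)},U')$ are bimodule intertwiners, and functoriality of Connes fusion (\cref{lem:ConnesFusionOfMaps}) together with the neutrality description of $\chi$ (the formula $x\otimes v\mapsto p_\Omega(u\circ x)\lact v$ on $\mathcal{D}(\mathcal{F},\Omega)\otimes\mathcal{F}$ recorded before \cref{lem:FockFusionAssociative}) lets one express $\hat\mu(U,U')$ through $U,U'$ and the operator $p_\Omega$ — in particular $\hat\mu(U,U')\Omega$ can be read off, since $\Omega$ is the distinguished vector of the standard form and $U'\Omega$, $U\Omega$ are controlled by the relation $P_\Omega=P_{\tilde\omega(\rho(\beta))\Omega}$ built into $\rho$. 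On the other side, by definition $\mu_{\beta_1,\beta_2,\beta_3}(g_1,g_2)=g_1\,\rho(\beta_2)^{-1}\,g_2$ in $\BCE$, so $\tilde\omega(\mu_{\beta_1,\beta_2,\beta_3}(g_1,g_2))=U\,\tilde\omega(\rho(\beta_2))^{-1}\,U'$, an honest product of three implementers on $\mathcal{F}$. So the theorem reduces to the operator identity
\[
U\,\tilde\omega(\rho(\beta_2))^{-1}\,U' \;=\; \chi\circ(U\boxtimes U')\circ\chi^{-1}.
\]

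The cleanest route to this identity is to use multiplicativity on both sides to reduce to two special cases. By \cref{lem:multiplicativitymuhat}, $\hat\mu$ is multiplicative; the fusion product $\mu$ on $\BCE$ is multiplicative (it is \emph{the} multiplicative fusion product on the basic central extension), and $\tilde\omega$ is a group homomorphism — so it suffices to check the equality on a generating pair of factorized elements. Writing $g_1 = \rho(\beta_1)\cdot(\rho(\beta_2)^{-1}g_1)$ and similarly for $g_2$, the problem splits into (a) the case $g_1=g_2=\tilde\omega(\rho(\beta_2))$ "diagonal" elements coming from the fusion factorization, where both sides should reduce to $\tilde\omega(\rho(\beta_2))$ using $\rho(\beta_2)^{-1}\rho(\beta_2)=\id$ and the defining property $\tilde\omega(\rho(\beta))J=J\tilde\omega(\rho(\beta))$ together with the fact (from $P_\Omega=P_{\tilde\omega(\rho(\beta))\Omega}$) that $\tilde\omega(\rho(\beta_2))$ fixes the closed self-dual cone; and (b) the case where one of $g_1,g_2$ lies "over $V_-$ only" and the other "over $V_+$ only", i.e.\ implementers of $g^-\oplus\id$ and $\id\oplus g^+$ — here Connes fusion with the standard form is genuinely neutral and $U\boxtimes U'$ unwinds directly through $\chi$ to the product in $\mathcal{B}(\mathcal{F})$. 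Assembling (a) and (b) via multiplicativity gives the general formula.

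The main obstacle I anticipate is case (a), i.e.\ correctly matching the normalization constant in $\U(1)$: $\chi$ is defined via the canonical standard-form unitor, which is rigidly normalized by the cyclic vector $\Omega$ and the self-dual cone $P_\Omega$, whereas $\rho$ is normalized by exactly the same data (that is the content of its characterization $\tilde\omega(\rho(\beta))J=J\tilde\omega(\rho(\beta))$, $P_\Omega=P_{\tilde\omega(\rho(\beta))\Omega}$). Verifying that these two normalizations agree — rather than differ by a phase — is the crux, and it is precisely why the fusion factorization was constructed with those Tomita–Takesaki properties in \cite{Kristel2019}. Concretely, I expect to use that an even implementer $W$ with $WJ=JW$ and $P_\Omega=P_{W\Omega}$ must act as the identity on $\Omega$ up to a positive scalar, hence (being unitary) fix $\Omega$; applying this to $W=U\,\tilde\omega(\rho(\beta_2))^{-1}\,U'\cdot\big(\hat\mu(U,U')\big)^{-1}$ after reducing to the diagonal case forces $W=\id$. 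The remaining steps — functoriality of Connes fusion, the explicit formula for $\chi$, and the bookkeeping of which half each $g^\pm_i$ lives on — are routine given the results already established in the excerpt.
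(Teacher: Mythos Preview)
Your overall plan coincides with the paper's: use multiplicativity of $\hat\mu$ (\cref{lem:multiplicativitymuhat}) to split into a ``one-sided'' pair and a ``diagonal'' pair, then compute each separately using the Tomita--Takesaki properties of the fusion factorization $\rho$. The paper writes $K\defeq\tilde\omega(\rho(\beta_2))^{-1}$ and factors $(U_1,U_2)=(U_1K,U_2K)\cdot(K^{-1},K^{-1})$; then it shows $\hat\mu(U_1K,U_2K)=U_1KU_2K$ (your case (b)) and $\hat\mu(K^{-1},K^{-1})=K^{-1}$ (your case (a)).

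Two concrete issues. First, the factorization you actually wrote, $g_1=\rho(\beta_1)\cdot(\rho(\beta_2)^{-1}g_1)$, is not an identity, and the pair $(\rho(\beta_1),\rho(\beta_2))$ is \emph{not} fusable unless $\beta_1=\beta_2$ (fusability needs $\omega_-(\beta_2)=\tau\,\omega_+(\beta_1)\,\tau=\omega_-(\beta_1)$). The factorization that works is the paper's: right-multiply \emph{both} $g_i$ by $\rho(\beta_2)^{-1}$, so that $U_1K$ implements $g_-\oplus\id$ and $U_2K$ implements $\id\oplus g_+'$; these are fusable with each other, and $(K^{-1},K^{-1})$ is fusable because $\omega_-(\beta_2)=\tau\,\omega_+(\beta_2)\,\tau$.

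Second, your case (a) argument via $W=K^{-1}\cdot\hat\mu(K^{-1},K^{-1})^{-1}$ is circular as stated: to conclude $P_{W\Omega}=P_\Omega$ you would need $P_{\hat\mu(K^{-1},K^{-1})\Omega}=P_\Omega$, and there is no shortcut to that without unpacking $K^{-1}\boxtimes K^{-1}$. The paper instead uses the properties $JK^{-1}=K^{-1}J$ and $P_{K^{-1}\Omega}=P_\Omega$ to identify, via the uniqueness clause of \cref{thm:StandardFormIsUnique}, the auxiliary unitary $\bar\nu_2$ appearing in the Connes fusion formula of \cref{lem:ConnesFusionOfMaps} as $u\,K^{-1}\,u^{*}$; with this in hand, a two-line computation on $f\otimes v$ gives $(K^{-1}\boxtimes K^{-1})(f\otimes v)=f\otimes K^{-1}v$ and hence $\hat\mu(K^{-1},K^{-1})=K^{-1}$. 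That is where the cone condition is actually used, and it does not reduce to a phase argument on $\Omega$.
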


\begin{proof}
Since  the map $\omega:L\Spin(d) \rightarrow \O_{\res}(V)$ factors through $L\SO(d)$, its image is contained in the connected component of the identity of $\osplit$. Thus, all implementers in the image of $\tilde\omega$ are even, which is a prerequisite for being fusable (\cref{def:compfusion}). For simplicity, we set $U_i\defeq\tilde\omega(g_i)$ for $i=1,2$. 
Let $K \defeq \tilde\omega(\rho(\beta_{2}))^{-1}$, so that $\tilde\omega(\mu_{\beta_1,\beta_2,\beta_3}) (g_{1} , g_{2}) = U_{1}KU_{2}$.
        We compute, using the multiplicativity of $\hat{\mu}$ (\cref{lem:multiplicativitymuhat}),
        \begin{equation*}
                \hat{\mu}(U_{1},U_{2}) = \hat{\mu}(U_{1}KK^{-1},U_{2}KK^{-1}) = \hat{\mu}(U_{1}K, U_{2}K) \hat{\mu}(K^{-1},K^{-1}).
        \end{equation*}
        We claim that
        \begin{align*}
                \hat{\mu}(U_{1}K, U_{2}K)= U_{1}KU_{2}K 
        \quad\text{ and }  \quad
                \hat{\mu}(K^{-1},K^{-1}) = K^{-1};
        \end{align*}
        this proves the theorem. 
        The element $K$ has the property that $U_{1}K$ implements an operator of the form $g_{-} \oplus \mathds{1} \in \O^{\theta}_L(V)$, see \cite[Eq.~(13)]{Kristel2019}, which implies that $U_{1}K \in (\Cl(V_{+})_{0}'')'$, see \cref{lem:IntertwinersAndImplementers}.
        Then,  the unitary $\overline{\nu}_{2}$ used in the construction of $U_{1}K\boxtimes U_{2}K$ (see \cref{lem:ConnesFusionOfMaps}) is  the identity.
        Let $f: L^{2}_{\Omega}(\Cl(V_{-})'') \rightarrow \mc{F}$ be the inverse of the isomorphism $u$ defined in \cref{re:canonicalstandardform}, given by $a \mapsto a\lact \Omega$.
        Let $v \in \mc{F}$, then to determine $\hat{\mu}(U_{1}K,U_{2}K)$ we compute
        \begin{align*}
            \chi( (U_{1}K \boxtimes U_{2}K) \chi^{-1} v) &= \chi (U_{1}K f \otimes U_{2}Kv) \\
            &= \chi (U_{1}K f \otimes v \ract JK^{*}U_{2}^{*}J) \\
            &= U_{1}K v \ract JK^{*}U_{2}^{*}J \\
            &= U_{1}K U_{2}K v;
         \end{align*}
         this proves the first equation.
         Because of the fact that $K^{-1}$ has the property that $JK^{-1} = K^{-1}J$ and $P_{K^{-1}\Omega} = P_{\Omega}$, \cite[Lemma 5.16]{Kristel2019}, we see that the corresponding map $L^{2}_{\Omega}(\Cl(V_{-})'') \rightarrow L^{2}_{\Omega}(\Cl(V_{-})'')$ is $f^{-1}K^{-1}f$. We compute:
         \begin{align*}
            (K^{-1}\boxtimes K^{-1})(f \otimes v) &= K^{-1} f f^{-1} K f \otimes K^{-1}v = f \otimes K^{-1} v;
         \end{align*}
         this proves the second equation.
\end{proof}

\subsection{Fusive spin structures  and string structures}

We return to the discussion of spin structures on the loop space of a spin manifold $M$. We recall that a spin structure $\SStruct$ on $LM$ can be viewed as a principal $\U(1)$-bundle over $L\Spin(M)$, the total space of the frame bundle of $LM$, and as such, may host fusion products. The following definition is \cite[Definition 3.6]{waldorfa}.

\begin{definition} 
\label{def:FusiveSpinStructure}
 A \emph{fusion product} on a spin structure $\SStruct$ on $LM$ is 
a fusion product $\lambda$ on the principal $\U(1)$-bundle $\SStruct$ over $L\Spin(M)$ that is equivariant with respect to the fusion product $\mu$ on $\BCE$ under the principal action, i.e., 
\begin{equation}\label{eq:FusionIsEquivariant}
  \lambda_{\beta_1\gamma_1,\beta_2\gamma_2,\beta_3\gamma_3}( \tilde\varphi_{12} \cdot g_{12} \otimes \tilde\varphi_{23} \cdot g_{23}) = \lambda_{\beta_1,\beta_2,\beta_3}(\tilde\varphi_{12} \otimes \tilde\varphi_{23}) \cdot \mu_{\gamma_1,\gamma_2,\gamma_3}(g_{12} \otimes g_{23})
 \end{equation}
 for all $(\beta_1,\beta_2,\beta_3)\in P\Spin(M)^{[3]}$, $(\gamma_1,\gamma_2,\gamma_3) \in P\Spin(d)^{[3]}$, $\tilde\varphi_{ij}\in \SStruct_{\beta_i\cup \beta_j}$, and $g_{ij}\in \widetilde{L\Spin(d)}_{\gamma_i \cup \gamma_j}$.
A spin structure on $LM$ with a fusion product is called a \emph{fusive spin structure}.
\end{definition}

Fusive spin structures bring us one step forward on the way from spin structures on $LM$ to string structures on $M$. They already fix the missing \quot{only if}-part of \cref{prop:classspinLM}, as shown in \cite[Theorem 1.4]{waldorfa}:
 $LM$ admits a fusive spin structure if and only if $\frac{1}{2}p_1(M)=0$. A full string structure yet contains more information, a so-called \emph{thin homotopy equivariant structure}, however, this information is not needed for the construction of a fusion product on the spinor bundle on loop space, which we perform in \cref{sec:SpinorBundleOnLoopSpaceI}. There, we only need a fusive spin structure.

\label{sec:stringstructures}

In the following we want to explain how a fusive spin structure on $LM$, the main ingredient of our construction in \cref{sec:SpinorBundleOnLoopSpaceI}, can be obtained from a (geometric) string structure on $M$.
 There are essentially four different -- but equivalent -- ways to say what a string structure on a spin manifold $M$ is. All four versions have in common that their existence is obstructed by $\frac{1}{2}p_1(M) \in H^4(M,\Z)$, and that -- under  appropriate notions of equivalence -- they form  a torsor over the group $H^3(M,\Z)$. 

\begin{enumerate}[(1)]

\item 
In purely topological terms, a string structure is a lift of the structure group  of the spin frame bundle $\Spin(M)$ to the 3-connected covering group of
$\Spin(d)$ \cite{ST04}. That covering group, the \emph{topological string group}, does not allow (finite-dimensional) Lie  group structures (it has cohomology in infinitely many degrees).

\item
In terms of higher-categorical structures, a string structure is a  lift of the structure group of the spin frame bundle $\Spin(M)$ along the central extension of  Lie 2-groups, 
\begin{equation*}
B\U(1) \to \String(d) \to \Spin(d)\text{,}
\end{equation*}
where $B\U(1)$ denotes the Lie 2-group with a single object, $\Spin(d)$ is considered as a Lie 2-group with only identity morphisms, and $\String(d)$ is, for instance, the \emph{String Lie 2-group} constructed in \cite{baez9} (this is strict, but infinite-dimensional) or in \cite{pries2} (this is finite-dimensional, but not strict), or in \cite{Waldorf} (again strict, and with diffeological spaces). Geometric realization establishes the relation with (1), see \cite{baez8,Nikolaus}.

\item
In terms of bundle gerbes, a string structure is a trivialization of the \emph{Chern-Simons 2-gerbe} over $M$ \cite{waldorf8}. We explain below some more details about this approach. Its equivalence with (2) was established in \cite[Theorem 7.9]{Nikolausa}.

\item
In terms of loop space geometry, a string structure  is a thin homotopy equivariant fusive spin structure on $LM$; this definition and its equivalence with (3) is in  \cite{waldorfb}; we will recall it below. 

\end{enumerate}

Now we want to describe some details about (3) in condensed form. A trivialization of the Chern-Simons 2-gerbe (i.e., a string structure) is a triple $(\mathcal{S},\mathcal{A},\sigma)$ consisting of the following structure. \begin{itemize}
\item 
A bundle gerbe $\mathcal{S}$ in the sense of Murray \cite{Murray1996} over  $\Spin(M)$.

\item
A bundle gerbe isomorphism 
\begin{equation*}
\mathcal{A}: \delta^{*}\mathcal{G}_{bas} \otimes \pr_2^{*}\mathcal{S} \to \pr_1^{*}\mathcal{S}
\end{equation*}
over the double fibre product $\Spin(M)^{[2]}$, where $\mathcal{G}_{bas}$ is the \emph{basic gerbe} over $\Spin(d)$ constructed by Meinrenken \cite{meinrenken1} and Gawedzki-Reis \cite{gawedzki1}, and $\delta(\varphi,\varphi')\in \Spin(d)$ is defined by $\varphi\delta(p,p')=\varphi'$ for frames $\varphi,\varphi'\in L\Spin(M)$ at the same point.

\item
A certain 2-isomorphism $\sigma$ over the triple fibre product, expressing a compatibility condition between $\mathcal{A}$ and the multiplicativity of $\mathcal{G}_{bas}$. For the details we refer to \cite{waldorf8}.

\end{itemize}
A major advantage of  this notion of a string structure is that it allows  a differential refinement  by \emph{string connections} \cite{waldorf8}. A string connection consists of a connection on the bundle gerbe $\mathcal{S}$, such that the bundle gerbe morphism $\mathcal{A}$ is connection-preserving (the basic gerbe $\mathcal{G}_{bas}$ has a canonical connection); again, there are additional compatibility conditions related to the multiplicativity of the connection on the basic gerbe. In the present context, string connections are useful for establishing the relation between  (3) and (4), which we explain in the following.

The key technique is a transgression functor
\begin{equation*}
\mathscr{T}: \mathrm{h}_1\grbcon{\relax}{M} \to \ufusbun {LM} 
\end{equation*}
from the homotopy 1-category of the bicategory of bundle gerbes with connection over $M$ to the category of principal $\U(1)$-bundles over $LM$ equipped with fusion products, in the sense of \cref{def:fusion product}. Versions of this functor  have been described in \cite{gawedzki3,brylinski1}, the complete  construction is in \cite{waldorf10}. 
As a prerequisite, we apply the transgression functor to the basic gerbe $\mathcal{G}_{bas}$ over $\Spin(d)$, and obtain a principal $\U(1)$-bundle $\mathscr{T}(\mathcal{G}_{bas})$ with fusion product over the loop group $L\Spin(d)$. Functoriality allows to transgress multiplicativity; thus, what we really obtain is a central extension with a multiplicative fusion product. In fact, it is the basic central extension \cite{waldorf5}, and we have proved in \cite[Theorem 6.4]{Kristel2019} that there is even a canonical, fusion-preserving isomorphism 
\begin{equation}
\label{eq:isoce}
\mathscr{T}(\mathcal{G}_{bas}) \cong \BCE
\end{equation}
to our operator-theoretic model for the basic central extension of \cref{basicce}, equipped with the fusion product $\mu$ of \cref{sec:operatormodel}. 
Next, we apply transgression  to a string structure $(\mathcal{S},\mathcal{A},\sigma)$ with a connection. The bundle gerbe $\mathcal{S}$ transgresses to a principal $\U(1)$-bundle $S\defeq \mathscr{T}(\mathcal{S})$ over $L\Spin(M)$ equipped with a fusion product. Taking the isomorphism \cref{eq:isoce} into account, the bundle gerbe morphism $\mathcal{A}$ transgresses to a fusion-preserving bundle morphism 
\begin{equation*}
\mathscr{T}(\mathcal{A}):\delta^{*}\smash{\widetilde{L\Spin}}(d) \otimes \pr_2^{*}S \to \pr_1^{*}S\text{,}
\end{equation*}
from which one can extract an $\BCE$-action on $S$ turning $S$ into a principal $\BCE$-bundle over $M$. One can then show that $S$ is a spin structure on $LM$, and further, that the fusion product on $S$ turns it into a fusive spin structure, for the details, see \cite{waldorfa}.

Summarizing, a   geometric string structure on $M$ (i.e., a string structure with a string connection) induces in a canonical way a fusive spin structure on $LM$. Thus, our construction of the Connes fusion product on the spinor bundle on loop space, which we describe in the subsequent \cref{sec:SpinorBundleOnLoopSpaceI}, applies, in particular, to spin manifolds equipped with a geometric string structure.

\section{Fusion in the spinor bundle on loop space}

\label{sec:SpinorBundleOnLoopSpaceI}

This section contains our main result: we first exhibit the spinor bundle $\mathcal{F}^{\smooth}(LM)$ on loop space as a rigged von Neumann bimodule bundle, and then equip it with a fusion product with respect to Connes fusion.
Throughout this section, $M$ is a spin manifold equipped with a  spin structure $\SStruct$ on its loop space.
In \cref{sec:Fusion} we will then require a \emph{fusive} spin structure.

\subsection{The von Neumann algebra bundle over path space}

\label{sec:algebrabundlepathspace}

In this section we construct a rigged von Neumann algebra bundle $\mathcal{N}^{\smooth}$ on the path space $PM$ of $M$, which, heuristically, has the property that the algebra $\mathcal{N}^{\smooth}_{\beta_{1}}$ sits in the Clifford algebra $\ClvN(LM)^\smooth_{\beta_{1} \cup \beta_{2}}$ as $\Cl(V_{-})^{\smooth}$ sits in $\Clsm$, for paths $(\beta_1,\beta_2)\in PM^{[2]}$. In order to construct $\mathcal{N}^{\smooth}$, we start by constructing the underlying rigged \cstar-algebra bundle $\mathcal{A}^{\smooth}$.  
We recall from \cref{sec:operatormodel} that we have a Fr\'{e}chet Lie group homomorphism $L\Spin(d) \rightarrow \O^{\theta}_L(V)$, which is induced by the pointwise multiplication of $L\SO(d)$ on $V$.
From \cref{sec:CliffFockAndSpin} we recall that we have a Fr\'{e}chet Lie group homomorphism $\O^{\theta}_L(V) \rightarrow \O(V_{-})$.
Finally, $\O(V_{-})$ acts smoothly on $\Cl(V_{-})^{\smooth}$, and we thus obtain an induced smooth representation  $L\Spin(d) \times \Cl(V_{-})^{\smooth} \rightarrow \Cl(V_{-})^{\smooth}$.
This allows us to define via \cref{lem:cstarbundleass} an associated rigged \cstar-algebra bundle
\begin{equation*}
 \Cl_{-}^{\smooth}(LM) \defeq L\Spin(M) \times_{L\Spin(d)} \Cl^{\smooth}(V_{-})
\end{equation*}
over $LM$ with typical fibre $\Cl^{\smooth}(V_{-})$. Next we consider the diffeological space $PM$ of paths in $M$, together with the doubling map
$\Delta : PM \rightarrow LM, \; \beta \mapsto \beta \cup \beta$, which
is well-defined and smooth (see \cref{sec:genfusionproducts}).
Then, we  define the rigged \cstar-algebra bundle 
\begin{equation*}
\mathcal{A}^{\smooth} \defeq  \Delta^{*}\Cl_{-}^{\smooth}(LM)
\end{equation*}
with typical fibre $\Cl^{\smooth}(V_{-})$ over  $PM$, in the sense of \cref{sec:HilbertBundle:3}.

\begin{remark}
Analogous to \cref{re:cliffordbundlewithoutspin}, our construction of the rigged \cstar-algebra bundle $\mathcal{A}^{\smooth}$ holds if $M$ is merely oriented, it neither needs a spin structure on $M$ nor on $LM$. The following discussion requires both, however.
\end{remark}

We have proved in \cref{lem:SmoothHalfInclusion} that the inclusion $\iota_{-}: \Cl(V_{-})^{\smooth} \rightarrow \Clsm,a\mapsto a \otimes \id$ is smooth, and since $\iota_{-}$ obviously intertwines the $L\Spin(d)$-actions, we obtain an induced isometric morphism
\begin{equation}
\label{eq:inclusionhalf}
\Cl_{-}^{\smooth}(LM) \rightarrow \Cl^{\smooth}(LM), [\ph, a] \mapsto [\ph, a \otimes \mathds{1}]
\end{equation}
of rigged \cstar-algebra bundles over $LM$. Further, we have considered the induced representation of $\Cl(V_{-})^{\smooth}$ on Fock space $\mathcal{F}^{\smooth}$, turning $\mathcal{F}^{\smooth}$ into a rigged $\Cl(V_{-})^{\smooth}$-module. On the level of rigged module \emph{bundles}, no general induction  procedure exists; however, in this case, the following result shows that it is yet possible.

\begin{proposition}
\label{lem:indrepminus}
\label{re:locatrivN}
The restriction of  Clifford multiplication $\Cl^{\smooth}(LM) \times \mathcal{F}^{\smooth}(LM) \to \mathcal{F}^{\smooth}(LM)$ along \cref{eq:inclusionhalf} equips the spinor bundle $\mathcal{F}^{\smooth}(LM)$ with the structure of a rigged  $\Cl^{\smooth}_{-}(LM)$-module bundle with typical fibre the rigged $\Cl(V_{-})^{\smooth}$-module $\mathcal{F}^{\smooth}$. 
\end{proposition}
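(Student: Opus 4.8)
The plan is to verify the conditions of Definition~\ref{def:repcstarbundle} for the map
$\Cl^{\smooth}_{-}(LM)\times_{LM}\mathcal{F}^{\smooth}(LM)\to\mathcal{F}^{\smooth}(LM)$
obtained by restricting the already-constructed Clifford multiplication (\cref{prop:cliffmult}) along the isometric morphism \cref{eq:inclusionhalf}. The key point is that everything in sight is an \emph{associated bundle}: $\Cl^{\smooth}_{-}(LM)=L\Spin(M)\times_{L\Spin(d)}\Cl(V_{-})^{\smooth}$, $\Cl^{\smooth}(LM)=L\Spin(M)\times_{L\Spin(d)}\Cl(V)^{\smooth}$, and $\mathcal{F}^{\smooth}(LM)=(\SStruct\times\mathcal{F}^{\smooth})/\BCE$, all over the \emph{same} $\SStruct$ (using $\sigma:\SStruct\to L\Spin(M)$). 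So a local section $\tilde\varphi$ of $\SStruct$ over $U\subset LM$ simultaneously trivializes all three bundles, and in this trivialization the candidate module map is simply
$([\tilde\varphi,a],[\tilde\varphi,v])\mapsto[\tilde\varphi,a\lact v]$
for $a\in\Cl(V_{-})^{\smooth}$, $v\in\mathcal{F}^{\smooth}$, where $a\lact v$ means $\iota_{-}(a)\lact v$, i.e. the restricted Fock representation. This is well-defined precisely because $\iota_{-}$ intertwines the $L\Spin(d)$-actions (as noted just before \cref{eq:inclusionhalf}) and because Clifford multiplication in \cref{prop:cliffmult} is equivariant; concretely, if $\tilde\varphi\cdot g$ is another local section ($g:U\to\BCE$), the transformation $a\mapsto\theta_{q(\tilde\omega(g))}^{-1}(a)$ lands back in $\Cl(V_{-})^{\smooth}$ because $q(\tilde\omega(g))\in\omega(L\Spin(d))\subset\O^{\theta}_L(V)$ preserves $V_{-}$, and the two transformations match under $\iota_{-}$ by \cref{lem:Bogolsplit}.

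Concretely I would proceed as follows. First, fix an open cover of $LM$ by opens $U$ over which $\SStruct$ admits local sections $\tilde\varphi$; this yields compatible local trivializations $\Phi^{-}$ of $\Cl^{\smooth}_{-}(LM)$, $\Phi$ of $\Cl^{\smooth}(LM)$, and $\Psi$ of $\mathcal{F}^{\smooth}(LM)$ in the sense of \cref{lem:AssociatedFrechetBundle} and \cref{lem:cstarbundleass}. Second, observe that by the very construction of the Clifford action in \cref{prop:cliffmult} together with the isometric bundle morphism \cref{eq:inclusionhalf} (which in the trivialization $\Phi^{-},\Phi$ is just $a\mapsto a\otimes\id=\iota_{-}(a)$), the restricted action reads, fibrewise over $U$,
\begin{equation*}
\Psi_x\bigl(\rho(a,v)\bigr)=\rho_0\bigl(\iota_{-}(\Phi^{-}_x(a)),\Psi_x(v)\bigr)=\bigl(\Phi^{-}_x(a)\bigr)\lact\Psi_x(v),
\end{equation*}
where $\rho_0$ denotes the rigged $\Cl(V_{-})^{\smooth}$-module structure on $\mathcal{F}^{\smooth}$ from \cref{cor:SmoothHalfAction}. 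Third, this shows the pairs $(\Phi^{-},\Psi)$ are compatible, hence the required local trivializations exist and $\mathcal{F}^{\smooth}(LM)$ is a rigged $\Cl^{\smooth}_{-}(LM)$-module bundle with typical fibre the rigged $\Cl(V_{-})^{\smooth}$-module $\mathcal{F}^{\smooth}$. Finally, one should note the fibre-preserving map $\Cl^{\smooth}_{-}(LM)\times_{LM}\mathcal{F}^{\smooth}(LM)\to\mathcal{F}^{\smooth}(LM)$ is a morphism of Fr\'echet vector bundles — this is automatic as remarked after \cref{def:repcstarbundle}, or follows from smoothness of $\iota_{-}$ (\cref{lem:SmoothHalfInclusion}) composed with the smooth Clifford multiplication.

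The only genuine subtlety — and the step I would write out most carefully — is the \emph{well-definedness of the restricted action independent of the chosen local section}, i.e. checking that the transition functions of $\Cl^{\smooth}_{-}(LM)$ are compatible with those of $\mathcal{F}^{\smooth}(LM)$ under $\iota_{-}$. This is exactly where one needs that the structure group $L\Spin(d)$ acts through $\O^{\theta}_L(V)$ (so that the transition automorphisms preserve the splitting $V=V_{-}\oplus V_{+}$) and where \cref{lem:Bogolsplit} is invoked to commute the Bogoliubov automorphism past $\iota_{-}$. Everything else — the boundedness inequalities \cref{eq:BoundednessCriteria} and the $\ast$-compatibility in the fibres, the density/smoothness of the action — is inherited fibrewise from \cref{cor:SmoothHalfAction} via the compatible trivializations, and the continuity of the action map is the same argument as in \cref{prop:cliffmult} (equivalently \cite[Prop.~2.2.19]{Kristel2020}) now pre-composed with the smooth inclusion $\iota_{-}$ of \cref{lem:SmoothHalfInclusion}. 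I do not anticipate any hard analysis here; the content is entirely the bookkeeping of associated bundles plus the splitting-compatibility of the Bogoliubov action.
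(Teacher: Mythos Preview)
Your proposal is correct and follows essentially the same approach as the paper: pick a local section $\tilde\varphi$ of $\SStruct$, let $\varphi$ be its projection to $L\Spin(M)$, and use the induced local trivializations $\Psi$ of $\mc{F}^{\smooth}(LM)$ and $\Phi$ of $\Cl^{\smooth}_{-}(LM)$ to verify compatibility in the sense of \cref{def:repcstarbundle}. One minor remark: the well-definedness step you flag as the ``genuine subtlety'' is actually already absorbed into the construction of the bundle morphism \cref{eq:inclusionhalf} itself (which was established just before the proposition using exactly the $\O^{\theta}$-equivariance you mention), so once that morphism exists, the restricted action is automatically a well-defined fibre-preserving map and no further check is needed.
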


\begin{proof}
In order to meet the assumptions of \cref{def:repcstarbundle}, we have to find \emph{compatible} local trivializations $\Psi$ of $\mathcal{F}^{\smooth}(LM)$ and $\Phi$ of $\Cl^{\smooth}_{-}(LM)$. As associated bundles, local trivializations can be induced from local trivializations of the underlying principal bundles (\cref{lem:AssociatedFrechetBundle,lem:cstarbundleass}); here, $\SStruct$ and $L\Spin(M)$. Let $\tilde\varphi: U \rightarrow \widetilde{L\Spin}(M)$ be any local section, and let $\ph : U\rightarrow L\Spin(M)$ be the local section obtained by composing $\tilde\varphi$ with the projection $\SStruct \rightarrow L\Spin(M)$.
The corresponding local trivializations of principal bundles induce the following local trivializations of associated bundles:
\begin{align*}
&\Psi:\mc{F}^{\smooth}(LM)|_{U} \rightarrow \mc{F}^{\smooth} \times U,&  \Psi([\tilde\varphi(x),v]) = (v,x)
 \\  
&\Phi:\Cl^{\smooth}_{-}(LM)|_U \rightarrow \Cl(V_{-})^{\smooth} \times U,&  \Phi([\varphi(x),a]) = (a,x)
\end{align*}
It is obvious that these exchange the rigged module structure on the fibres with the one on the typical fibre, and hence are compatible in the sense of \cref{def:repcstarbundle}.
\end{proof}

Again via pullback along the doubling map $\Delta$, we obtain the rigged $\mathcal{A}^{\smooth}$-module bundle  $\Delta^{*}\mathcal{F}^{\smooth}(LM)$, with typical fibre the rigged $\Cl(V_{-})^{\smooth}$-module $\mathcal{F}^{\smooth}$. Since the latter is a rigged von Neumann algebra, $N^{\smooth}=(\Cl(V_{-})^{\smooth},\mc{F}^{\smooth})$, we have  that 
\begin{equation*}
\mathcal{N}^{\smooth} \defeq (\mathcal{A}^{\smooth},\Delta^{*}\mc{F}^{\smooth}(LM))
\end{equation*}
is a \emph{rigged von Neumann algebra bundle} over $PM$ with typical fibre $N^{\smooth}$ in the sense of \cref{def:rvnab}.
This implies that for each $\beta \in PM$ the fibre $\mathcal{N}^{\smooth}_{\beta}=(\mathcal{A}^{\smooth}_{\beta},\mc{F}^{\smooth}(LM)_{\Delta(\beta)})$, is a rigged von Neumann algebra, and thus induces an ordinary von Neumann algebra  $N_{\beta}$, see \cref{re:vnacl}. Any choice of compatible local trivializations as constructed in \cref{lem:indrepminus} establishes  a normal $\ast$-isomorphism $u:\mathcal{N}_{\beta} \to N$ of von Neumann algebras, see \cref{re:loctrivmorph}. In particular, the von Neumann algebras $\mathcal{N}_{\beta}$ are type III$_1$-factors. 

\subsection{The spinor bundle as a bundle of bimodules}

\label{sec:bimodule}
\label{sec:BimoduleBundle}

The goal of this section is to exhibit the spinor bundle $\mathcal{F}^{\smooth}(LM)$ as a rigged von Neumann $\mathcal{N}^{\smooth}$-$\mathcal{N}^{\smooth}$-bimodule bundle. 
We start fibrewise, and shall, 
for each loop of the form $\gamma = \beta_{1} \cup \beta_{2}$, where $(\beta_1,\beta_2)\in PM^{[2]}$, equip the Fock spaces $\mc{F}^{\smooth}(LM)_{\gamma}$ with  representations of the rigged \cstar-algebras $\mathcal{A}^{\smooth}_{\beta_1}$ and $(\mathcal{A}^{\smooth}_{\beta_2})^{\opp}$.
We  recall that $\mathcal{F}^{\smooth}$ is a rigged  $\Cl(V_{-})^{\smooth}$-$\Cl(V_{-})^{\smooth}$-bimodule, with  representations denoted by $a_1 \lact v \ract a_2$ (\cref{lem:Friggedcstarbundle}). 

\begin{lemma}
\label{lem:HalfCliffmult}
 For each $(\beta_1,\beta_2) \in PM^{[2]}$ there exist unique maps
 \begin{align*}
  (\rho_{1})_{\beta_1,\beta_2}: \mathcal{A}^{\smooth}_{\beta_1} \times \mc{F}^{\smooth}(LM)_{\gamma} \rightarrow \mc{F}^{\smooth}(LM)_{\gamma} \quad\text{ and }\quad  (\rho_{2})_{\beta_1,\beta_2}: (\mathcal{A}^{\smooth})^{\opp}_{\beta_2} \times \mc{F}^{\smooth}(LM)_{\gamma} \rightarrow \mc{F}^{\smooth}(LM)_{\gamma}\text{,}
 \end{align*}
where $\gamma=\beta_1\cup\beta_2$,
 with the property that
the equations \begin{align*}
  (\rho_{1})_{\beta_1,\beta_2} ([\Delta(\ph_{1}),a],[\tilde\varphi,v]) = [\tilde\varphi, a \lact v]\quad\text{ and }\quad (\rho_{2})_{\beta_1,\beta_2} ([\Delta(\ph_{2}),a],[\tilde\varphi,v]) = [\tilde\varphi, v \ract a]
 \end{align*}
 hold for all $(\ph_1,\ph_2) \in P\Spin(M)^{[2]}$ lifting $(\beta_1,\beta_2)$, all  $\tilde\varphi \in \SStruct$ lifting $\ph_1\cup\ph_2$, all $a \in \Cl(V_{-})^{\smooth}$ and all $v \in \mc{F}^{\smooth}_{L}$. 
\end{lemma}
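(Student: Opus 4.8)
The statement is a fibrewise gluing/descent claim: we want to define two representations on each fibre $\mathcal{F}^{\smooth}(LM)_{\gamma}$ by a formula given on representatives, and the content is that the formula is well-defined, i.e.\ independent of the choices of lifts $\ph_1,\ph_2,\tilde\varphi$. Uniqueness is immediate once existence is established, since the elements $[\tilde\varphi,v]$ with $v\in\mathcal{F}^{\smooth}_L$ span a dense subspace of $\mathcal{F}^{\smooth}(LM)_\gamma$ (indeed all of it, as every point of the fibre has such a representative), and both sides are continuous. So the plan is: fix $(\beta_1,\beta_2)\in PM^{[2]}$, write $\gamma=\beta_1\cup\beta_2$, and check well-definedness of $(\rho_1)_{\beta_1,\beta_2}$; the argument for $(\rho_2)_{\beta_1,\beta_2}$ is parallel, using the right action $v\ract a=J(a^*\otimes\mathds{1})J\lact v$ in place of the left action.

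\textbf{Key steps.} First I would recall how the relevant bundles are built as associated bundles. The fibre $\mathcal{A}^{\smooth}_{\beta_1}=(\Delta^*\Cl^{\smooth}_-(LM))_{\beta_1}$ is $\Cl^{\smooth}_-(LM)_{\Delta(\beta_1)}=(L\Spin(M)\times_{L\Spin(d)}\Cl^{\smooth}(V_-))_{\Delta(\beta_1)}$, whose elements are classes $[\Delta(\ph_1),a]$ for $\ph_1\in L\Spin(M)$ lifting $\Delta(\beta_1)$ — equivalently, via $\Delta$ on paths, for $\ph_1\in P\Spin(M)$ lifting $\beta_1$. Likewise $\mathcal{F}^{\smooth}(LM)_\gamma=(\SStruct\times_{\BCE}\mathcal{F}^{\smooth})_{\gamma}$ has elements $[\tilde\varphi,v]$ with $\tilde\varphi$ lifting $\ph_1\cup\ph_2$ for some lift $(\ph_1,\ph_2)$ of $(\beta_1,\beta_2)$. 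Now suppose we change the lift $\ph_1\rightsquigarrow\ph_1 h$ for $h\in L\Spin(d)$ and $\tilde\varphi\rightsquigarrow\tilde\varphi g$ for $g\in\BCE$; the two classes change as $[\Delta(\ph_1 h),\theta_{\omega_-(h)}^{-1}(a)]$ and $[\tilde\varphi g,\tilde\omega(g)^{-1}v]$ respectively (using the $L\Spin(d)$-action on $\Cl^{\smooth}(V_-)$ via $\omega_-$ of \cref{sec:operatormodel}, and the $\BCE$-action on $\mathcal{F}^{\smooth}$ via $\tilde\omega$). For the proposed formula to be consistent I must check that
\begin{equation*}
[\tilde\varphi, a\lact v] = [\tilde\varphi g,\ \theta_{\omega_-(h)}^{-1}(a)\lact \tilde\omega(g)^{-1}v]
\end{equation*}
whenever $g$ projects to the loop element of $L\Spin(d)$ determined by $h$ (namely $\Delta(h)$ or rather the part of the transition acting on the $\gamma$-loop). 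Unwinding the class relation on the right, this reduces to the identity $\tilde\omega(g)\big(\theta_{\omega_-(h)}^{-1}(a)\lact \tilde\omega(g)^{-1}v\big)=a\lact v$, i.e.\ $\tilde\omega(g)\,\theta_{\omega_-(h)}^{-1}(a)\,\tilde\omega(g)^{-1}=a$ as operators on $\mathcal{F}_L$. This is exactly the statement that the implementer $\tilde\omega(g)$ implements the Bogoliubov automorphism $\theta_{\omega(\gamma\text{-part of }h)}$, combined with \cref{lem:Bogolsplit} (which identifies $\theta$ on $\Cl(V_-)\subset\Cl(V)$ with $\theta$ on $\Cl(V_-)$ via the minus-component), and with the fact that the loop $\ph_1\cup\ph_2$ transforms under $(h,h')$ so that its $V_-$-half sees precisely $\omega_-(h)$ — this is the commutativity of diagram \cref{eq:diagramomega}. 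So the core computation is: plug in the definitions, use \cref{lem:Bogolsplit} to move between $\Cl(V)$ and $\Cl(V_-)$, use diagram \cref{eq:diagramomega} to match the loop-group element acting on the $\gamma$-side with $\omega_-(h)$ on the path side, and use the defining implementer relation \cref{eq:Implementer}. For $(\rho_2)_{\beta_1,\beta_2}$ one does the same but now the relevant half is $V_+$, the relevant map is $\tau$-conjugated (the right action involves $J$), and one invokes \cref{eq:JCommutesWithTheta} together with \cref{lem:kJImplementsKappa} to handle the interaction of the implementer with $J$; the sitting-instants / doubling structure guarantees that $\beta_2$ contributes via $\omega_+$ on the upper semicircle, again by \cref{eq:diagramomega}.

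\textbf{Expected main obstacle.} The routine part is the algebraic manipulation with $\theta$, $J$, and implementers. The genuinely delicate point is bookkeeping the \emph{two} independent lift-ambiguities (one for each path $\beta_i$, with transition functions $h_i\in L\Spin(d)$) against the \emph{single} lift-ambiguity of $\tilde\varphi$ over the loop $\gamma=\beta_1\cup\beta_2$, and checking that the loop-space transition function over $\gamma$ decomposes compatibly as ``$h_1$ on the lower half, $h_2$ on the upper half'' — this is where \cref{eq:diagramomega} and the pointwise nature of the $L\Spin(d)$-action on $V$ are essential, and where one must be careful that the definition of $(\rho_1)_{\beta_1,\beta_2}$ uses only $\ph_1$ (not $\ph_2$) while the representative $[\tilde\varphi,v]$ mixes both. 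I would organize this by first reducing, via a choice of local sections as in the proof of \cref{lem:indrepminus}, to a statement about the typical-fibre bimodule $\mathcal{F}^{\smooth}$ over $\Cl(V_-)^{\smooth}$, where everything is a concrete operator identity; the well-definedness then follows because any two local trivializations differ by a transition function that, by \cref{eq:diagramomega}, acts through $\omega_-$ on the $\mathcal{A}^{\smooth}_{\beta_1}$-factor and through $\tilde\omega$ compatibly on $\mathcal{F}^{\smooth}(LM)_\gamma$, and these intertwine the Clifford action by \cref{prop:cliffmult}/\cref{lem:indrepminus}.
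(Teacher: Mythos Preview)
Your approach is essentially the same as the paper's: check well-definedness of the proposed formula by comparing two choices of lifts, reduce via the associated-bundle relations to an operator identity on $\mathcal{F}^{\smooth}$, and verify that identity using \cref{lem:Bogolsplit}, diagram \cref{eq:diagramomega}, the implementer relation \cref{eq:Implementer}, and (for $\rho_2$) \cref{eq:JCommutesWithTheta} together with the observation $\tau(\omega(g_1\cup g_2))=\omega(g_2\cup g_1)$.

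Two small points to tighten. First, your transition elements $h_i$ live in $P\Spin(d)$, not $L\Spin(d)$; the loop-group element that the central-extension element $\tilde g$ must project to is $h_1\cup h_2$, and you should state this constraint explicitly rather than leaving it as ``the loop element determined by $h$''. The paper parameterizes the computation by writing $\varphi_i'=\varphi_i g_i$ with $g_i\in P\Spin(d)$ and $\tilde\varphi'=\tilde\varphi\,\tilde g$ with $\tilde g$ projecting to $g_1\cup g_2$, then computes directly with $U=\tilde\omega(\tilde g)$; this bookkeeping is exactly your ``delicate point'', and keeping both $g_1$ and $g_2$ in play from the start avoids the muddle. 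Second, uniqueness is not quite ``immediate once existence is established'': you need that for every $(\beta_1,\beta_2)$ there \emph{exist} lifts $(\varphi_1,\varphi_2)\in P\Spin(M)^{[2]}$ and $\tilde\varphi$ satisfying the hypotheses. The paper handles this by first lifting $\gamma=\beta_1\cup\beta_2$ to $L\Spin(M)$, then using the sitting instants of $\gamma$ to arrange constancy near $0$ and $\pi$ so that the lift splits into two paths; this short argument should be included.
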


\begin{proof}

It is clear that the maps are determined uniquely by the given equations, provided that choices of $(\varphi_1,\varphi_2)$ and $\tilde\varphi$ exist for arbitrary $(\beta_1,\beta_2)$. To see this, we first note that by surjectivity of the bundle projection $L\Spin(M) \to LM$, there exists $\varphi\in L\Spin(M)$ projecting to $\gamma=\beta_1\cup\beta_2$. Since the loop $\gamma$ is constant around $0$ and $\pi$ (due to the sitting instants of the paths) a standard argument shows that $\varphi$ can be chosen with the same
constancy conditions; then, $\varphi_1(t) \defeq \varphi(t)$ and $\varphi_2(t)\defeq\varphi(2\pi-t)$ define the pair $(\varphi_1,\varphi_2)\in P\Spin(M)^{[2]}$ as required. 
Now, $\varphi_1\cup\varphi_2=\varphi\in L\Spin(M)$, and since $\SStruct\to L\Spin(M)$ is surjective, a lift $\tilde\varphi$ exists.

For existence, we have to check that the given equations can be used as definitions, i.e., that they are independent of the involved choices. To see this, we suppose $(\varphi_1',\varphi_2')\in P\Spin(M)^{[2]}$ lifts the same pair $(\beta_1,\beta_2)$, and $\tilde\varphi'$ lifts $\varphi_1'\cup\varphi_2'$. Then, using the principal actions, we have $\varphi_i'=\varphi_ig_i$ for $g_i\in P\Spin(d)$ and $\tilde\varphi'=\tilde\varphi \tilde g$ for $\tilde g\in \widetilde{L\Spin}(d)$. By definition of $\mathcal{A}^{\smooth}$ and $\mathcal{F}^{\smooth}(LM)$ as associated bundles, we have $[\Delta(\ph_{1}'),a]=[\Delta(\varphi_1),\theta_{\omega_{-}(g_1)}(a)]$ and $[\tilde\varphi',v]=[\tilde\varphi ,Uv]$, with $\omega_{-}:P\Spin(d) \to \O(V_{-})$ defined in \cref{sec:operatormodel}, and $U\defeq \tilde\omega(\tilde g)\in \Imp_L^{\theta}(V)$. 
Thus, we get
\begin{align*}
(\rho_1)_{\beta_1,\beta_2}([\Delta(\ph_{1}'),a],[\tilde\varphi',v]) &= [\tilde\varphi, (\theta_{\omega_{-}(g_1)}(a) \otimes \mathds{1}) \lact Uv]
\\&= [\tilde\varphi, (\theta_{\omega(g_1\cup g_2)_{-}}(a) \otimes \theta_{\omega(g_1\cup g_2)_{+}}(\mathds{1})) \lact Uv]
\\&=[\tilde\varphi ,\theta_{\omega(g_1\cup g_2)}(a \otimes \mathds{1}) \lact Uv]
\\&=[\tilde\varphi ,U ((a \otimes \mathds{1})\lact v)]
\\&=[\tilde\varphi',a\lact v]\text{,}
\end{align*}
as intended.
In the second step we have used the commutativity of diagram \cref{eq:diagramomega}, together with the fact that Bogoliubov automorphisms are unital.
The third step  uses \cref{lem:Bogolsplit}, and the fourth step uses the fact that $U$ implements $\omega(g_1\cup g_2)$.
For $\rho_2$, we  have $[\Delta(\ph_{2}'),a] = [\Delta(\ph_{2}), \theta_{\omega_{-}(g_{1})}(a)]$, and compute
 \begin{align*}
(\rho_2)_{\beta_1,\beta_2}([\Delta(\ph_{2}'),a],[\tilde\varphi',v]) &= [\tilde\varphi, J (\theta_{\omega_{-}(g_{2})}(a) \otimes \mathds{1})^{*}J \lact Uv]
\\&=[\tilde\varphi ,J \theta_{\omega(g_2\cup g_1)}(a \otimes \mathds{1})^{*}J \lact Uv]
\\&=[\tilde\varphi , \theta_{\omega(g_1\cup g_2)}(J(a \otimes \mathds{1})^{*}J) \lact Uv]
\\&=[\tilde\varphi ,U (J(a \otimes \mathds{1})^{*}J\lact v)]
\\&=[\tilde\varphi', v \ract a]\text{,}
\end{align*}
where, in the third step, we have used Equation \cref{eq:JCommutesWithTheta} from \cite[Lemma 4.8]{Kristel2019} together with the obvious identity $\tau(\omega(g_{1}\cup g_{2})) = \omega(g_{2} \cup g_{1})$.
\end{proof}

It is clear from the formulas in  \Cref{lem:HalfCliffmult} that the maps  $(\rho_1)_{\beta_1,\beta_2}$ and $(\rho_2)_{\beta_1,\beta_2}$ are indeed left actions of $\mathcal{A}^{\smooth}_{\beta_1}$  and $(\mathcal{A}^{\smooth}_{\beta_2})^{\opp}$, respectively, and that they commute.
We remark that it is further true (this follows later from \cref{prop:spinorbundlecstarbimodule}) that $\mathcal{F}^{\smooth}(LM)_{\beta_1\cup\beta_2}$ is a rigged $\mathcal{A}^{\smooth}_{\beta_1}$-$\mathcal{A}^{\smooth}_{\beta_2}$-bimodule in the sense of \cref{def:rbim}.

Trying to assemble the maps $(\rho_1)_{\beta_1,\beta_2}$ and $(\rho_2)_{\beta_1,\beta_2}$ into bundle morphisms, we face the problem that the bundle $\mc{F}^{\smooth}(LM)$ lives over $LM$, while the algebra bundle $\mathcal{A}^{\smooth}$ lives over $PM$. Therefore, we work over the  space $PM^{[2]}$ of pairs of paths with common end points, and work with the pullbacks of bundles to this space, along the maps
\begin{equation*}
\xymatrix{
 LM & & PM^{[2]} \ar[ll]_{\cup} \ar@<-.7ex>[rr]_{p_{2}} \ar@<.7ex>[rr]^{p_{1}} & & PM\text{.}}
\end{equation*}
We recall from \cref{sec:HilbertBundle:3} that over the
diffeological space $PM^{[2]}$, bundles consist of plot-wise defined bundles, and bundle isomorphisms for the transition of plots. Let $c:U \to PM^{[2]}$ be a plot. We write $c_{i} \defeq \Delta \circ p_i \circ c$ for $i=1,2$ and $\tilde c\defeq \cup \circ c$. By definition of the involved diffeologies, the maps $c_1,c_2,\tilde c: U \to LM$ are smooth maps between (Fr\'echet) manifolds. The  rigged \cstar-algebra bundles $p_1^{*}\mathcal{A}^{\smooth}$ and $p_2^{*}\mathcal{A}^{\smooth}$, consist of the plot-wise defined rigged \cstar-algebra bundles $(p_1^{*}\mathcal{A}^{\smooth})_c=c_1^{*}\Cl^{\smooth}_-(LM)$ and $(p_2^{*}\mathcal{A}^{\smooth})_{c} = c_2^{*}\Cl^{\smooth}_{-}(LM)$ over $U$, respectively, and the rigged Hilbert space  bundle   $\cup^{*}\mathcal{F}^{\smooth}(LM)$ consists of the plot-wise defined rigged Hilbert space bundles  $(\cup^{*}\mathcal{F}^{\smooth}(LM))_c = \tilde c^{*}\mathcal{F}^{\smooth}(LM)$ over $U$. 
Moreover, if $c':U' \to PM^{[2]}$ is another  plot,  and $f: U \to U'$  is a smooth map such that $c' \circ f=c$, then the bundle isomorphisms $(p_i^{*}\mathcal{A}^{\smooth})_c \to f^{*}(p_i^{*}\mathcal{A}^{\smooth})_{c'}$ are the canonical ones obtained from the equality $c_i=c'_i \circ f$. Similarly, the bundle isomorphism $(\cup^{*}\mathcal{F}^{\smooth}(LM))_c \to f^{*}(\cup^{*}\mathcal{F}^{\smooth}(LM))_{c'}$ is the canonical one obtained from the equality $\tilde c= \tilde c' \circ f$.

Our goal is to show that the maps of
 \Cref{lem:HalfCliffmult} equip  the rigged Hilbert space bundle   $\cup^{*}\mathcal{F}^{\smooth}(LM))$ over $PM^{[2]}$ with the structure of a rigged $p_1^{*}\mathcal{A}^{\smooth}$-$p_2^{*}\mathcal{A}^{\smooth}$-bimodule. We will first discuss the situation on a fixed plot $c:U \to PM^{[2]}$, and to this end, consider $x\in U$ and $(\beta_1,\beta_2) \defeq c(x)$. Then, for the fibres over $x$ we find 
\begin{equation*}
((p_1^{*}\mathcal{A}^{\smooth})_{c})_x=c_1^{*}\Cl^{\smooth}_-(LM)_x=\Cl^{\smooth}_{-}(LM)_{\Delta(\beta_1)}=\mathcal{A}^{\smooth}_{\beta_1}
\end{equation*}
and similarly,
\begin{equation*}
((p_2^{*}\mathcal{A}^{\smooth})_c)_x=\mathcal{A}^{\smooth}_{\beta_2}
\quad\text{ and }\quad
(\cup^{*}\mathcal{F}^{\smooth}(LM)_c)_x= \mathcal{F}^{\smooth}(LM)_{\beta_1\cup\beta_2}\text{.}
\end{equation*}
Now we see that the maps   $(\rho_1)_{\beta_1,\beta_2}$ and $(\rho_2)_{\beta_1,\beta_2}$ of \Cref{lem:HalfCliffmult} assemble into fibre-preserving maps
\begin{align*}
\rho_{1,c}&:(p_1^{*}\mathcal{A}^{\smooth})_{c} \times_U \cup^{*}\mathcal{F}^{\smooth}(LM)_c \to \cup^{*}\mathcal{F}^{\smooth}(LM)_c,
\\
\rho_{2,c}&:(p_2^{*}(\mathcal{A}^{\smooth})^{\opp})_{c} \times_U \cup^{*}\mathcal{F}^{\smooth}(LM)_c \to \cup^{*}\mathcal{F}^{\smooth}(LM)_c\text{.} 
\end{align*}

\begin{lemma}
\label{lem:spirbmpw}
Let $c:U \to PM^{[2]}$ be a plot.
The maps $\rho_{1,c}$ and $\rho_{2,c}$  equip the rigged Hilbert space bundle   $(\cup^{*}\mathcal{F}^{\smooth}(LM))_c$ over $U$ with the structure of a rigged $(p_1^{*}\mathcal{A}^{\smooth})_{c}$-$(p_2^{*}\mathcal{A}^{\smooth})_{c}$-bimodule bundle with typical fibre  the rigged $\Cl(V_{-})^{\smooth}$-$\Cl(V_{-})^{\smooth}$-bimodule $\mc{F}^{\smooth}$. 
\end{lemma}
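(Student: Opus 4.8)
The plan is to verify the defining conditions of \cref{def:cstarbimodulebundle} for the plot-wise bundle $(\cup^{*}\mathcal{F}^{\smooth}(LM))_c$ over the finite-dimensional manifold $U$. First I would recall that $U$ is an open subset of some $\R^{k}$, so that $(\cup^{*}\mathcal{F}^{\smooth}(LM))_c=\tilde c^{*}\mathcal{F}^{\smooth}(LM)$ is an honest rigged Hilbert space bundle over a manifold, and likewise $(p_i^{*}\mathcal{A}^{\smooth})_c=c_i^{*}\Cl^{\smooth}_{-}(LM)$ are honest rigged \cstar-algebra bundles over $U$; here $c_i=\Delta\circ p_i\circ c$ and $\tilde c=\cup\circ c$. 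What has to be shown is that $\rho_{1,c}$ and $\rho_{2,c}$ are smooth fibre-preserving maps making $(\cup^{*}\mathcal{F}^{\smooth}(LM))_c$ simultaneously a rigged $(p_1^{*}\mathcal{A}^{\smooth})_c$-module bundle and a rigged $(p_2^{*}\mathcal{A}^{\smooth})_c{}^{\opp}$-module bundle, with a common choice of compatible local trivializations witnessing both module structures in the sense of \cref{def:cstarbimodulebundle}.

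The key step is to produce the compatible local trivializations. I would proceed exactly as in the proof of \cref{lem:indrepminus}: fix $x\in U$, and since $\SStruct\to L\Spin(M)$ is surjective with local sections, choose (after shrinking $U$) a smooth section $\tilde\varphi:U\to\SStruct$ along $\tilde c:U\to LM$, and let $\varphi:U\to L\Spin(M)$ be its projection, which is a section along $\tilde c$. As in \cref{lem:HalfCliffmult} the constancy conditions of the loops $\tilde c(y)$ near $0$ and $\pi$ allow $\varphi(y)$ to be chosen with the same constancy, so that $\varphi_1(y)(t)\defeq\varphi(y)(t)$ and $\varphi_2(y)(t)\defeq\varphi(y)(2\pi-t)$ give smooth sections $\varphi_i:U\to P\Spin(M)$ along $p_i\circ c$, i.e.\ sections of $c_i^{*}L\Spin(M)$ up to the doubling identification. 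These induce local trivializations
\begin{align*}
\Phi^1&:(p_1^{*}\mathcal{A}^{\smooth})_c|_U\to\Cl(V_{-})^{\smooth}\times U,&\Phi^1([\Delta(\varphi_1(y)),a])&=(a,y),\\
\Phi^2&:(p_2^{*}\mathcal{A}^{\smooth})_c|_U\to\Cl(V_{-})^{\smooth}\times U,&\Phi^2([\Delta(\varphi_2(y)),a])&=(a,y),\\
\Psi&:(\cup^{*}\mathcal{F}^{\smooth}(LM))_c|_U\to\mathcal{F}^{\smooth}\times U,&\Psi([\tilde\varphi(y),v])&=(v,y),
\end{align*}
and by the defining formulas of $(\rho_1)_{\beta_1,\beta_2}$ and $(\rho_2)_{\beta_1,\beta_2}$ in \cref{lem:HalfCliffmult} we get $\Psi_y(\rho_{1,c}(a,w))=a\lact\Psi_y(w)$ and $\Psi_y(\rho_{2,c}(a,w))=\Psi_y(w)\ract a$ for all $y$, $a\in(p_i^{*}\mathcal{A}^{\smooth})_c)_y$, $w\in(\cup^{*}\mathcal{F}^{\smooth}(LM))_c)_y$. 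Thus $(\Phi^1,\Psi)$ is compatible for the $(p_1^{*}\mathcal{A}^{\smooth})_c$-action and $(\Phi^2,\Psi)$ is compatible for the $(p_2^{*}\mathcal{A}^{\smooth})_c{}^{\opp}$-action with respect to the typical fibre $\mathcal{F}^{\smooth}$, whose rigged $\Cl(V_{-})^{\smooth}$-$\Cl(V_{-})^{\smooth}$-bimodule structure was established in \cref{lem:Friggedcstarbundle}. Since the same $\Psi$ works for both, the triple $(\Phi^1,\Phi^2,\Psi)$ is compatible in the sense of \cref{def:cstarbimodulebundle}, and smoothness of $\rho_{1,c}$ and $\rho_{2,c}$ follows because in these trivializations they become the constant families $\rho$ and $\rho^{\sharp}$ of the typical fibre, which are smooth by \cref{lem:Friggedcstarbundle}; hence $\rho_{1,c},\rho_{2,c}$ are automatically morphisms of Fr\'echet vector bundles.

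The only genuine point requiring care — and hence the main obstacle — is checking that $\Phi^1$, $\Phi^2$, $\Psi$ are well-defined independently of the various choices, and in particular that $\rho_{1,c}$ and $\rho_{2,c}$ commute. For the former, the independence-of-choices computation is precisely the one carried out in the proof of \cref{lem:HalfCliffmult} (using \cref{lem:Bogolsplit}, the commutativity of diagram \cref{eq:diagramomega}, and Equation \cref{eq:JCommutesWithTheta}), so I would simply invoke it; for the latter, commutativity of the two actions is inherited from the commutativity of the left and right $\Cl(V_{-})^{\smooth}$-actions on $\mathcal{F}^{\smooth}$ (\cref{lem:Friggedcstarbundle}), transported fibrewise through $\Psi$. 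I would also note that the two sections $\varphi_1,\varphi_2$ are built from the \emph{single} section $\varphi=\varphi_1\cup\varphi_2$ of $L\Spin(M)$ along $\tilde c$ lifted to $\tilde\varphi$ of $\SStruct$, which is exactly what forces the same $\Psi$ to witness both compatibilities — this is the structural reason the bimodule statement holds rather than just two separate module statements. With these verifications the hypotheses of \cref{def:cstarbimodulebundle} are met, proving the lemma.
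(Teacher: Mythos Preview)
Your approach is essentially the paper's own: produce, around each point, sections $\varphi_1,\varphi_2$ into $P\Spin(M)$ and $\tilde\varphi$ into $\SStruct$ with $\tilde\varphi$ lifting $\varphi_1\cup\varphi_2$, read off the trivializations $\Phi^1,\Phi^2,\Psi$, and observe that the defining formulas of \cref{lem:HalfCliffmult} make $(\Phi^1,\Phi^2,\Psi)$ compatible. The paper isolates the construction of these sections as a separate lemma (\cref{lem:comptriv}), but the content is the same.

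One logical slip in the order of your construction deserves correction. You first choose $\tilde\varphi:U\to\SStruct$, then set $\varphi$ to be its projection, and only afterwards assert that ``$\varphi(y)$ can be chosen'' locally constant near $0$ and $\pi$. But once $\tilde\varphi$ is fixed, $\varphi$ is determined; you cannot retroactively impose constancy on it. The correct order, and the one the paper follows, is: first pick a section $\varphi:W\to L\Spin(M)$ along $\tilde c$; then modify $\varphi$ (possibly shrinking $W$, using compactness to get a uniform sitting-instant width) so that each $\varphi(y)$ is locally constant at $0$ and $\pi$; then split into $\varphi_1,\varphi_2$; and only \emph{then} lift $\varphi=\varphi_1\cup\varphi_2$ to $\tilde\varphi$ using that $\SStruct\to L\Spin(M)$ is locally trivial. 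With this reordering your argument goes through unchanged. Your remark that the well-definedness of $\rho_{1,c},\rho_{2,c}$ is ``the main obstacle'' is also slightly misplaced: that is exactly what \cref{lem:HalfCliffmult} already established, so at this stage it can simply be cited rather than re-verified.
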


For the proof of \cref{lem:spirbmpw} we require the following  lemma, which delivers us appropriate local trivializations that at each point meet the conditions of \cref{lem:HalfCliffmult}.

\begin{lemma}
\label{lem:comptriv}
Each $x\in U$ has an open neighborhood $W \subset U$ admitting  smooth maps $\varphi_1,\varphi_2: W \to P\Spin(M)$ and a smooth map $\tilde\varphi:W \to \widetilde{L\Spin}(M)$, such that $(\varphi_1(u),\varphi_2(u))\in P\Spin(M)^{[2]}$ for all $u\in W$, and  the diagram
\begin{equation*}
\xymatrix@C=4em{ && \widetilde{L\Spin}(M) \ar[d] \\  & P\Spin(M)^{[2]} \ar[d] \ar[r]_-{\cup} & L\Spin(M) \ar[d] \\ W \ar@/^2.4pc/[uurr]^-{\tilde\varphi}  \ar@/^0.6pc/[ru]_-{(\varphi_1,\varphi_2)} \ar[r]_-{c} & PM^{[2]} \ar[r]_-{\cup} & LM}
\end{equation*}
is commutative.
\end{lemma}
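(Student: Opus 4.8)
The goal of \cref{lem:comptriv} is to produce, locally around a chosen point $x\in U$, a smooth lift of the plot $c:U \to PM^{[2]}$ through the tower of principal bundles sitting over $LM$, compatible with the concatenation map $\cup$. The plan is to build the lift in two stages: first lift the loop $\tilde c = \cup \circ c: U \to LM$ to a section of the frame bundle $L\Spin(M) \to LM$, arranging enough constancy at the parameters $0$ and $\pi$ so that it decomposes as a concatenation of two paths; then lift further along the $\U(1)$-bundle $\SStruct \to L\Spin(M)$.

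Concretely, I would start by using local triviality of the Fr\'echet principal bundle $L\Spin(M) \to LM$ (see \cite[Proposition 1.8]{SW}): possibly after shrinking $U$ to an open neighborhood $W$ of $x$, there is a smooth section $\varphi: W \to L\Spin(M)$ along $\tilde c$. The key technical point is that $\varphi$ must be modified so that for each $u\in W$ the loop $\varphi(u)\in L\Spin(M)$ is constant near $t=0$ and $t=\pi$; this is possible because $\tilde c(u) = \beta_1(u)\cup\beta_2(u)$ already has sitting instants there, so $\varphi(u)(t)$ lies over a fixed point of $M$ for $t$ in a neighborhood of $0$ and of $\pi$, and one can multiply $\varphi$ by a suitable smooth, $W$-parametrised family of $\Spin(d)$-valued correction functions supported near $0$ and $\pi$ to make it locally constant there (this is exactly the \quot{standard argument} invoked in the proof of \cref{lem:HalfCliffmult}). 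Once $\varphi$ has this property, set $\varphi_1(u)(t) \defeq \varphi(u)(t)$ and $\varphi_2(u)(t) \defeq \varphi(u)(2\pi - t)$; the sitting instants guarantee $\varphi_i(u)\in P\Spin(M)$, and by construction $\varphi_1(u)(0)=\varphi_2(u)(0)$ and $\varphi_1(u)(\pi)=\varphi_2(u)(\pi)$ (both equal to the corresponding endpoints of $\varphi(u)$), so $(\varphi_1(u),\varphi_2(u))\in P\Spin(M)^{[2]}$. Smoothness of $(\varphi_1,\varphi_2):W \to P\Spin(M)^{[2]}$ follows from the definition of the diffeology on $PM^{[2]}$: one checks that the adjoint maps $W\times[0,\pi]\to \Spin(M)$ are smooth, which is immediate from smoothness of $\varphi$. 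Commutativity of the lower part of the diagram, i.e. $\cup\circ(\varphi_1,\varphi_2) = \varphi$ projecting to $c$ followed by $\cup$, is then automatic from the definition of $\cup$ as $\bar\varphi_2 \star \varphi_1$ and the relation $\varphi_2(u)(t)=\varphi(u)(2\pi-t)$.

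Finally, to obtain $\tilde\varphi$, I would use that $\SStruct \to L\Spin(M)$ is a principal $\U(1)$-bundle (hence locally trivial over the Fr\'echet manifold $L\Spin(M)$): after further shrinking $W$, the smooth map $\varphi_1\cup\varphi_2 = \varphi: W \to L\Spin(M)$ admits a smooth lift $\tilde\varphi: W \to \SStruct$, which makes the upper triangle of the diagram commute. This completes the construction.

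The main obstacle is the first stage: producing a section $\varphi$ of $L\Spin(M)$ that simultaneously covers $\tilde c$ \emph{and} is locally constant at $0$ and $\pi$ in a smoothly $W$-parametrised way. A raw local section need not have this constancy, and the correction argument must be carried out fibrewise over $W$ without destroying smoothness in $u$ — one needs to choose the $\Spin(d)$-valued bump functions depending smoothly on $u$, exploiting that the \quot{discrepancy} $g_u(t)$ defined by $\varphi(u)(\pi)=\varphi(u)(t)g_u(t)$ near $\pi$ depends smoothly on $(u,t)$ and can be interpolated to the identity by a smooth family of maps $S^1\to\Spin(d)$. Everything else (smoothness of the diffeological lifts, commutativity of the diagram) is routine bookkeeping with the definitions of associated bundles and the functional diffeology on path space.
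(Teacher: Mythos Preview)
Your proposal is correct and follows essentially the same approach as the paper: lift $\cup\circ c$ to a local section $\varphi$ of $L\Spin(M)\to LM$, adjust it to have sitting instants at $0$ and $\pi$ (the paper handles the $W$-uniformity by passing to a compact neighborhood where the constancy interval of the base loops has a positive minimum length, which is a slight variant of your smoothly-parametrized bump correction), split $\varphi$ into $\varphi_1,\varphi_2$ via $\varphi_2(u)(t)=\varphi(u)(2\pi-t)$, and then lift once more along the $\U(1)$-bundle $\SStruct\to L\Spin(M)$.
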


\begin{proof}
The map $c:U \to LM$ is smooth, and since $L\Spin(M) \to M$ is a locally trivial bundle, there exists $W$ on which $c$ lifts to a smooth map $\varphi: W \to L\Spin(M)$. We claim that -- probably by going to a smaller open subset -- $\varphi$ can be chosen such that $\varphi(u)\in L\Spin(M)$ is locally constant at $0$ and at $\pi$, for all $u\in W$. Then, $\varphi_1(u)(t)\defeq\varphi(u)(t)$ and $\varphi_2(u)(t) \defeq \varphi(u)(2\pi-t)$ define smooth maps $\varphi_1,\varphi_2:W \to P\Spin(M)$ as required. We have already seen in the proof of \cref{lem:HalfCliffmult} that the claim is true at each point $u\in W$ separately. Doing this simultaneously over $W$ may require to restrict to a compact neighborhood, where the interval on which $\varphi(u)$ is constant achieves a minimum length, and then to restrict further to an open subset thereof.    
Now, the map $W \to L\Spin(M):u \mapsto \varphi_1(u)\cup\varphi_2(u)$ is smooth, and since $\widetilde{L\Spin}(M) \to L\Spin(M)$ is a locally trivial bundle, a section $\tilde\varphi$ exists, possibly after a further shrinking of $W$.
\end{proof}

An immediate consequence of the definitions of the bundles $(p_i^{*}\mathcal{A}^{\smooth})_c$ and $(\cup^{*}\mathcal{F}^{\smooth}(LM))_c$ as (pullbacks of) associated bundles, the sections into the corresponding principal bundles obtained from \cref{lem:comptriv} induce   local trivializations 
 \begin{align}
\nonumber
  &\Phi^{1}: (p_{1}^{*}\mathcal{A}^{\smooth})_c|_W \rightarrow \Cl(V_{-})^{\smooth} \times W &&\Phi^{1}( [\Delta(\ph_{1}(x)),a] )= (a,x), \\\label{lem:loctrivcomp}
    &\Phi^{2}: (p_{2}^{*}\mathcal{A}^{\smooth})_c|_W \rightarrow \Cl(V_{-})^{\smooth} \times W && \Phi^{2} ([\Delta(\ph_{2}(x)),a] )= (a,x),\\\nonumber
\qquad &\Psi: (\cup^{*}\mc{F}^{\smooth}(LM))_c|_W \rightarrow \mc{F}^{\smooth} \times W && \Psi( [\tilde\varphi(x),v]) = (v,x)\text{,}
\end{align}
see \cref{lem:AssociatedFrechetBundle,lem:cstarbundleass}.

\begin{proof}[Proof of \cref{lem:spirbmpw}]
According to \cref{def:cstarbimodulebundle}, the statement can be proved locally in a neighborhood of any point $x\in U$. We may assume an open neighborhood $V \subset U$ that admits smooth maps $\varphi_1,\varphi_2: W \to P\Spin(M)$ and $\tilde\varphi:W \to \widetilde{L\Spin}(M)$ satisfying the conditions  in \cref{lem:comptriv}. Then, we let $\Phi_1$, $\Phi_2$, and $\Psi$ be the local trivializations of \cref{lem:loctrivcomp}. Inspecting  the definition of $\rho_{1,c}$ and $\rho_{2,c}$ in \cref{lem:HalfCliffmult}, 
we find that
\begin{equation*}
\Psi(\rho_{1,c}(a_1,v)) = \Phi^1(a_1) \lact \Psi(v)
\quad\text{ and }\quad
\Psi(\rho_{2,c}(a_2,v)) = \Psi(v)\ract \Phi^2(a_2)
\end{equation*}
for all appropriate $a_1\in (p_{1}^{*}\mathcal{A}^{\smooth})_c$, $a_2\in (p_{2}^{*}\mathcal{A}^{\smooth})_c$ and $v\in (\cup^{*} \mathcal{F}^{\smooth}(LM))_c$. 
This proves that $(\Phi^1,\Psi)$ and $(\Phi^2,\Psi)$ are compatible local trivializations. 
\end{proof} 

\cref{lem:spirbmpw} establishes the plot-wise definition of the rigged $p_1^{*}\mathcal{A}^{\smooth}$-$p_2^{*}\mathcal{A}^{\smooth}$-bimodule bundle  $\cup^{*}\mathcal{F}^{\smooth}(LM)$; now it remains to assure the correct gluing behaviour for the transition between plots. Thus, suppose $c:U \to PM^{[2]}$ and $c':U' \to PM^{[2]}$ are plots, and $f:U\to U'$ is a smooth map with $c' \circ f=c$. But the canonical bundle isomorphisms  $(p_i^{*}\mathcal{A}^{\smooth})_c \to f^{*}(p_i^{*}\mathcal{A}^{\smooth})_{c'}$ and $(\cup^{*}\mathcal{F}^{\smooth}(LM))_c \to f^{*}(\cup^{*}\mathcal{F}^{\smooth}(LM))_{c'}$ are under the fibre-wise identifications used in the definition of $\rho_{1,c}$ and $\rho_{2,c}$ over each point $x\in U$ the identity maps of $\Cl^{\smooth}_{-}(LM)_{\beta_i \cup\beta_i}$ and $\mathcal{F}^{\smooth}(LM)_{\beta_1\cup\beta_2}$, respectively, where $(\beta_1,\beta_2)\defeq  c(x)$. In particular, they are unitary intertwiner. 
Thus, we have the following result.

\begin{proposition}
\label{prop:spinorbundlecstarbimodule}
The plot-wise  bimodule structure of \cref{lem:spirbmpw} exhibits the spinor bundle $\cup^{*}\mathcal{F}^{\smooth}(LM)$ as a rigged $p_1^{*}\mathcal{A}^{\smooth}$-$p_2^{*}\mathcal{A}^{\smooth}$-bimodule bundle, with typical fibre the rigged $\Cl(V_{-})^{\smooth}$-$\Cl(V_{-})^{\smooth}$-bimodule $\mc{F}^{\smooth}$.
\end{proposition}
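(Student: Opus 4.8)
The plan is to assemble \cref{prop:spinorbundlecstarbimodule} directly from the plot-wise statement \cref{lem:spirbmpw} together with the description of $\cup^{*}\mathcal{F}^{\smooth}(LM)$ and $p_i^{*}\mathcal{A}^{\smooth}$ as objects of the diffeological category $\mathcal{F}^{\Diff}(PM^{[2]})$ from \cref{def:F(X)}. Recall that by \cref{sec:HilbertBundle:3}, a rigged $p_1^{*}\mathcal{A}^{\smooth}$-$p_2^{*}\mathcal{A}^{\smooth}$-bimodule bundle over the diffeological space $PM^{[2]}$ is by definition a compatible family of plot-wise such bundles together with gluing isomorphisms for the transition between plots. \cref{lem:spirbmpw} has already produced, for every plot $c:U \to PM^{[2]}$, the plot-wise bimodule bundle structure $(\rho_{1,c},\rho_{2,c})$ on $(\cup^{*}\mathcal{F}^{\smooth}(LM))_c$ over the Fr\'echet manifold $U$, with the correct typical fibre $\mathcal{F}^{\smooth}$. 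So the only thing left is to check that this family is compatible with the canonical transition isomorphisms of the three bundles $p_1^{*}\mathcal{A}^{\smooth}$, $p_2^{*}\mathcal{A}^{\smooth}$, and $\cup^{*}\mathcal{F}^{\smooth}(LM)$ — i.e.\ that the representation maps $\rho_{i,c}$ are intertwined, along these canonical isomorphisms, with the $\rho_{i,c'}$ for any $c' \circ f = c$.

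First I would unwind the relevant diffeological pullback data exactly as in the paragraph preceding the proposition: for plots $c:U\to PM^{[2]}$, $c':U'\to PM^{[2]}$ and a smooth $f:U\to U'$ with $c'\circ f=c$, the transition isomorphism $(p_i^{*}\mathcal{A}^{\smooth})_c \to f^{*}(p_i^{*}\mathcal{A}^{\smooth})_{c'}$ is the canonical one coming from the equality of smooth maps $c_i = c_i' \circ f: U \to LM$ (using $c_i = \Delta\circ p_i\circ c$), and likewise $(\cup^{*}\mathcal{F}^{\smooth}(LM))_c \to f^{*}(\cup^{*}\mathcal{F}^{\smooth}(LM))_{c'}$ comes from $\tilde c = \tilde c'\circ f$. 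The key observation — and this is really the heart of the matter — is that over each point $x\in U$, writing $(\beta_1,\beta_2)\defeq c(x)$, all three of these transition isomorphisms restrict in the fibre over $x$ to the literal identity map: on $\Cl^{\smooth}_{-}(LM)_{\Delta(\beta_i)} = \mathcal{A}^{\smooth}_{\beta_i}$ and on $\mathcal{F}^{\smooth}(LM)_{\beta_1\cup\beta_2}$ respectively. This is simply because pullback of an associated bundle along $c_i$ versus along $c_i'\circ f$ gives, in each fibre, the very same vector space $\Cl^{\smooth}_{-}(LM)_{c_i(x)}$, and the comparison map is the identity. Since the maps $\rho_{1,c}$ and $\rho_{2,c}$ were \emph{defined} fibrewise in \cref{lem:HalfCliffmult} through exactly these identifications, compatibility with the transition isomorphisms is automatic: the square relating $\rho_{i,c}$ and $f^{*}\rho_{i,c'}$ commutes because over each point it is a square of identity maps. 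In particular the transition isomorphisms are unitary intertwiners of rigged bimodule bundles.

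Having checked compatibility, I would then simply invoke the definition of a rigged $\mathcal{A}_1$-$\mathcal{A}_2$-bimodule bundle over a diffeological space (\cref{def:cstarbimodulebundle} applied in the setting of \cref{def:F(X)}): the plot-wise data from \cref{lem:spirbmpw} together with the verified compatibility constitutes precisely such a bundle, and by construction its typical fibre on each plot is $\mathcal{F}^{\smooth}$ as a rigged $\Cl(V_{-})^{\smooth}$-$\Cl(V_{-})^{\smooth}$-bimodule (\cref{lem:Friggedcstarbundle}). This yields the claim. I expect essentially no obstacle here: the real work was already done in \cref{lem:HalfCliffmult,lem:comptriv,lem:spirbmpw}; the present proposition is purely a matter of globalizing over $PM^{[2]}$, and the only thing to be careful about is bookkeeping of the diffeological pullback identifications so that "the transition maps are identities in each fibre" is made precise rather than hand-waved. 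If I wanted to be completely explicit I could phrase the whole argument by exhibiting $\cup^{*}\mathcal{F}^{\smooth}(LM)$ as the image under the composite functor $\mathcal{F}(LM)\xrightarrow{\cref{eq:Yembedding}}\mathcal{F}^{\Diff}(LM)\xrightarrow{\cup^{*}}\mathcal{F}^{\Diff}(PM^{[2]})$ of the $\Cl^{\smooth}_{-}(LM)$-$\Cl^{\smooth}_{-}(LM)$-bimodule bundle structure on $\mathcal{F}^{\smooth}(LM)$ over $LM$ — but that bimodule bundle over $LM$ is itself what \cref{lem:HalfCliffmult} is constructing in disguise, so the direct route above is cleaner.
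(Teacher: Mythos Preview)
Your proposal is correct and follows essentially the same approach as the paper: after invoking \cref{lem:spirbmpw} for the plot-wise structure, the paper also observes that the canonical transition isomorphisms between plots are, under the fibrewise identifications used to define $\rho_{1,c}$ and $\rho_{2,c}$, literally the identity maps on $\Cl^{\smooth}_{-}(LM)_{\Delta(\beta_i)}$ and $\mathcal{F}^{\smooth}(LM)_{\beta_1\cup\beta_2}$, hence unitary intertwiners. Your write-up is somewhat more explicit about the bookkeeping, but the argument is the same.
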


The bundle $\mathcal{A}^{\smooth}$ is a rigged \cstar-algebra bundle; however, in \cref{sec:algebrabundlepathspace} we discussed how to upgrade it to a rigged von Neumann algebra bundle $\mathcal{N}^{\smooth}=(\mathcal{A}^{\smooth}, \Delta^{*} \mc{F}^{\smooth}(LM))$ with typical fibre $N^{\smooth}$. Moreover, the typical fibre $\mathcal{F}^{\smooth}$ of the rigged bimodule bundle $\cup^{*}\mathcal{F}^{\smooth}(LM)$ is a rigged von Neumann $N^{\smooth}$-$N^{\smooth}$-bimodule. Our main result in this section is that these rigged von Neumann structures carry over to the spinor bundle.

\begin{theorem}
\label{lem:vonNeumannBimodule}
The spinor bundle on loop space $\cup^{*}\mc{F}^{\smooth}(LM)$ is a rigged von Neumann  $p_1^{*}\mathcal{N}^{\smooth}$-$p_2^{*}\mathcal{N}^{\smooth}$-bimodule bundle with typical fibre the rigged von Neumann  $N^{\smooth}$-$N^{\smooth}$-bimodule $\mc{F}^{\smooth}$.
\end{theorem}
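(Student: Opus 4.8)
The statement we need to establish is, unwinding the definitions in \cref{def:vonNeumannBimoduleBundle} and \cref{re:fibresofvonneumannbimodule}, that the rigged $p_1^{*}\mathcal{A}^{\smooth}$-$p_2^{*}\mathcal{A}^{\smooth}$-bimodule bundle $\cup^{*}\mathcal{F}^{\smooth}(LM)$ provided by \cref{prop:spinorbundlecstarbimodule} admits, around every point, compatible local trivializations that are simultaneously local trivializations of $p_1^{*}\mathcal{N}^{\smooth}$, of $p_2^{*}\mathcal{N}^{\smooth}$, and of the bimodule bundle itself, with the additional property that the typical fibre $\mathcal{F}^{\smooth}$ is a rigged von Neumann $N^{\smooth}$-$N^{\smooth}$-bimodule. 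The last assertion is precisely \cref{prop:FockRiggedBimodule}, so it is already at our disposal. Working plot-wise over the diffeological space $PM^{[2]}$ (as in \cref{sec:HilbertBundle:3}), the plan is therefore to fix a plot $c:U \to PM^{[2]}$ and a point $x\in U$, and to produce the required compatible local trivializations of all three bundles simultaneously.

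First I would invoke \cref{lem:comptriv} to obtain, on a neighborhood $W \subset U$ of $x$, smooth maps $\varphi_1,\varphi_2:W \to P\Spin(M)$ and $\tilde\varphi:W \to \widetilde{L\Spin}(M)$ with $(\varphi_1(u),\varphi_2(u))\in P\Spin(M)^{[2]}$ lifting $c(u)$ and $\tilde\varphi(u)$ lifting $\varphi_1(u)\cup\varphi_2(u)$. These sections of the underlying principal bundles induce, exactly as in \cref{lem:loctrivcomp}, local trivializations $\Phi^1$ of $(p_1^{*}\mathcal{A}^{\smooth})_c$, $\Phi^2$ of $(p_2^{*}\mathcal{A}^{\smooth})_c$, and $\Psi$ of $(\cup^{*}\mathcal{F}^{\smooth}(LM))_c$. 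The proof of \cref{lem:spirbmpw} already shows $(\Phi^1,\Phi^2,\Psi)$ is compatible as a triple of bimodule trivializations. What remains is to upgrade $\Phi^1$ and $\Phi^2$ to \emph{spatial} morphisms, i.e. to exhibit them as the algebra-part of unitary intertwiners $(\Phi^i,\Psi^i)$ of rigged module bundles, where $\Psi^i$ is a trivialization of the defining module $p_i^{*}\Delta^{*}\mathcal{F}^{\smooth}(LM)$. But the defining module of $\mathcal{N}^{\smooth}=(\mathcal{A}^{\smooth},\Delta^{*}\mathcal{F}^{\smooth}(LM))$ is itself an associated bundle, and the \emph{same} section $\tilde\varphi_i \defeq \tilde\varphi$ composed appropriately (over the loop $\Delta(\beta_i)=\varphi_i\cup\varphi_i$) — more precisely, one needs a section of $\SStruct$ over $\Delta \circ p_i \circ c$, which is obtained from $\tilde\varphi$ by restriction along $\varphi_i\hookrightarrow\varphi_1\cup\varphi_2$ together with surjectivity of $\SStruct \to L\Spin(M)$ — induces a local trivialization $\Psi^i$ of $(p_i^{*}\Delta^{*}\mathcal{F}^{\smooth}(LM))_c$ such that $(\Phi^i,\Psi^i)$ is an invertible unitary intertwiner of the defining rigged module bundles, hence a spatial morphism by \cref{re:spatialmorphismbundles} (which rests on \cref{lem:SpatialIsomorphisms}). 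This is exactly the structure demanded by \cref{def:vonNeumannBimoduleBundle}.

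Having these spatial local trivializations plot-wise, the gluing behaviour for the transition between plots is handled precisely as in the paragraph preceding \cref{prop:spinorbundlecstarbimodule}: the canonical bundle isomorphisms $(p_i^{*}\mathcal{A}^{\smooth})_c \to f^{*}(p_i^{*}\mathcal{A}^{\smooth})_{c'}$, those of the defining modules, and that of $\cup^{*}\mathcal{F}^{\smooth}(LM)$, are all fibre-wise identity maps under the identifications employed, hence unitary intertwiners / spatial isomorphisms, so the plot-wise rigged von Neumann bimodule bundle structures assemble into one over $PM^{[2]}$. Finally, by \cref{re:fibresofvonneumannbimodule}, each fibre $(\cup^{*}\mathcal{F}^{\smooth}(LM))_{(\beta_1,\beta_2)} = \mathcal{F}^{\smooth}(LM)_{\beta_1\cup\beta_2}$ is a rigged von Neumann $\mathcal{N}^{\smooth}_{\beta_1}$-$\mathcal{N}^{\smooth}_{\beta_2}$-bimodule, which is the fibrewise content promised in the introduction after \cref{lem:vonNeumannBimodule}.

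\textbf{Main obstacle.} The routine part is the existence and compatibility of the trivializations; the genuine point to get right is the \emph{spatiality}: one must verify that the single geometric datum $\tilde\varphi$ furnished by \cref{lem:comptriv} simultaneously trivializes $p_1^{*}\mathcal{N}^{\smooth}$, $p_2^{*}\mathcal{N}^{\smooth}$ \emph{and} the bimodule bundle in a way that makes $\Phi^1,\Phi^2$ into the algebra-parts of unitary intertwiners — i.e. that the module-trivialization $\Psi^i$ of the defining representation of $\mathcal{N}^{\smooth}$ and the trivialization $\Psi$ of the spinor bundle are compatible with the \emph{same} algebra trivialization $\Phi^i$. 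This is what ties the construction back through \cref{lem:HalfCliffmult}: the explicit formulas there (with the $\varphi_i$ arising from a single loop $\varphi=\varphi_1\cup\varphi_2$) are exactly what guarantees this coherence, so the argument reduces to bookkeeping of associated-bundle trivializations rather than any new analytic input.
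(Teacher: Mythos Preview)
Your approach is essentially the same as the paper's, and the overall structure is correct: you work plot-wise, produce compatible trivializations $(\Phi^1,\Phi^2,\Psi)$ from \cref{lem:comptriv}, and then upgrade $\Phi^i$ to spatial morphisms by supplying trivializations $\Psi^i$ of the defining modules $p_i^{*}\Delta^{*}\mathcal{F}^{\smooth}(LM)$. The transition between plots is handled correctly.

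There is, however, one point of confusion that you should clean up. You write that $\Psi^i$ is induced by ``the same section $\tilde\varphi_i \defeq \tilde\varphi$'' and that the needed section of $\SStruct$ over $\Delta\circ p_i\circ c$ is ``obtained from $\tilde\varphi$ by restriction along $\varphi_i\hookrightarrow\varphi_1\cup\varphi_2$''. This does not make sense: $\tilde\varphi$ is a section of $\SStruct$ along the loop $\varphi_1\cup\varphi_2$, whereas $\Psi^i$ requires a section of $\SStruct$ along the \emph{different} loop $\Delta(\varphi_i)=\varphi_i\cup\varphi_i$; elements of $\SStruct$ are points in a $\U(1)$-bundle over $L\Spin(M)$, not loops that can be restricted to subintervals. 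The correct statement---which you do mention, but only as an afterthought---is that one simply chooses \emph{new} local sections $\tilde\varphi_i:W\to\SStruct$ lifting $\Delta\circ\varphi_i$, which exist (after possibly shrinking $W$) because $\SStruct\to L\Spin(M)$ is a surjective submersion. These $\tilde\varphi_i$ are unrelated to $\tilde\varphi$. The paper isolates exactly this step as \cref{lem:comptriv2}. Once you have independent $\tilde\varphi_i$, the pair $(\Phi^i,\Psi^i)$ is compatible because $\tilde\varphi_i$ lifts $\Delta(\varphi_i)$, which is what defines $\Phi^i$; this is the pullback of the compatible trivializations constructed in \cref{re:locatrivN}. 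Your ``Main obstacle'' paragraph inherits the same confusion: the single datum $\tilde\varphi$ does \emph{not} trivialize $p_i^{*}\mathcal{N}^{\smooth}$; three separate lifts $\tilde\varphi,\tilde\varphi_1,\tilde\varphi_2$ are needed, and no coherence between them is required for the argument.
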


For the proof, we note the following improvement of \cref{lem:comptriv}.  

\begin{lemma}
\label{lem:comptriv2}
Let $c:U \to PM^{[2]}$ be a plot. Then, each point $x\in U$ has an open neighborhood $W \subset U$ admitting smooth maps $\varphi_1,\varphi_2:W \to P\Spin(M)$ and $\tilde\varphi_1,\tilde\varphi_2,\tilde\varphi: W \to \SStruct$ such that $\varphi_1$, $\varphi_2$, and $\tilde \varphi$ satisfy the conditions of \cref{lem:comptriv}, and $\tilde\varphi_i$ lifts $\Delta \circ \varphi_i:W \to L\Spin(M)$, for $i=1,2$.
\end{lemma}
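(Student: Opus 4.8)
The plan is to prove Lemma~\ref{lem:comptriv2} first, and then deduce Theorem~\ref{lem:vonNeumannBimodule} from it by checking the conditions of Definition~\ref{def:vonNeumannBimoduleBundle} plot-wise, exactly in the style of the proof of \cref{lem:spirbmpw}.

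For Lemma~\ref{lem:comptriv2}: start by invoking \cref{lem:comptriv} to obtain an open neighborhood $W_0 \subset U$ of $x$ and smooth maps $\varphi_1,\varphi_2:W_0 \to P\Spin(M)$ and $\tilde\varphi:W_0 \to \SStruct$ with $\varphi_1(u)\cup\varphi_2(u) = \pr(\tilde\varphi(u))$ and the commutative diagram of that lemma. It remains to produce the lifts $\tilde\varphi_1,\tilde\varphi_2$ of $\Delta\circ\varphi_i:W_0 \to L\Spin(M)$ through $\SStruct \to L\Spin(M)$. Since $\SStruct \to L\Spin(M)$ is a principal $\BCE$-bundle (in particular a locally trivial fibre bundle over the Fr\'echet manifold $L\Spin(M)$, see \cref{def:spinstructureLM} and the subsequent discussion), and $\Delta\circ\varphi_i$ is a smooth map from $W_0$, one may pull back the bundle along $\Delta\circ\varphi_i$ and choose local sections; shrinking $W_0$ to a smaller open neighborhood $W$ of $x$ on which all three pulled-back bundles (for $i=1$, $i=2$, and the already-chosen $\tilde\varphi$ over $\cup$) trivialize simultaneously, we obtain $\tilde\varphi_1,\tilde\varphi_2:W \to \SStruct$ with the required covering property. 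Restricting $\varphi_1,\varphi_2,\tilde\varphi$ to $W$ preserves the conditions of \cref{lem:comptriv}, so this completes the proof of the lemma.

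For Theorem~\ref{lem:vonNeumannBimodule}: by \cref{prop:spinorbundlecstarbimodule} we already know that $\cup^{*}\mathcal{F}^{\smooth}(LM)$ is a rigged $p_1^{*}\mathcal{A}^{\smooth}$-$p_2^{*}\mathcal{A}^{\smooth}$-bimodule bundle with typical fibre $\mathcal{F}^{\smooth}$, and by \cref{prop:FockRiggedBimodule} the typical fibre $\mathcal{F}^{\smooth}$ is a rigged von Neumann $N^{\smooth}$-$N^{\smooth}$-bimodule. To upgrade the bundle to a rigged von Neumann bimodule bundle in the sense of \cref{def:vonNeumannBimoduleBundle}, it suffices, at each point $x\in U$ on each plot $c:U \to PM^{[2]}$, to exhibit compatible local trivializations $(\Phi^1,\Psi^1)$ of $p_1^{*}\mathcal{N}^{\smooth}$ and $(\Phi^2,\Psi^2)$ of $p_2^{*}\mathcal{N}^{\smooth}$ together with a trivialization $\Psi$ of $\cup^{*}\mathcal{F}^{\smooth}(LM)$ such that the triple $(\Phi^1,\Phi^2,\Psi)$ is compatible. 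Here $\Phi^1,\Phi^2,\Psi$ are exactly the trivializations \cref{lem:loctrivcomp} built from $\varphi_1,\varphi_2,\tilde\varphi$ in \cref{lem:comptriv2}, and the additional data $\Psi^1,\Psi^2$ are the trivializations of the defining modules $(p_i^{*}\Delta^{*}\mathcal{F}^{\smooth}(LM))_c$ of $p_i^{*}\mathcal{N}^{\smooth}$ induced by the sections $\tilde\varphi_i:W \to \SStruct$ covering $\Delta\circ\varphi_i$; that is, $\Psi^i([\tilde\varphi_i(x),v]) = (v,x)$. By construction these are exactly the local trivializations of $\mathcal{N}^{\smooth}$ described in \cref{re:loctrivmorph} (pulled back along $p_i$), hence $(\Phi^i,\Psi^i)$ is a compatible pair making $p_i^{*}\mathcal{N}^{\smooth}$ locally trivial as a rigged von Neumann algebra bundle. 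The compatibility of $(\Phi^1,\Phi^2,\Psi)$ was already verified in the proof of \cref{lem:spirbmpw}. The gluing behaviour across plots is identical to that established before \cref{prop:spinorbundlecstarbimodule}: the canonical transition isomorphisms are fibrewise identity maps, hence spatial intertwiners. Finally, \cref{re:fibresofvonneumannbimodule} then guarantees that each fibre $\mathcal{F}^{\smooth}(LM)_{\beta_1\cup\beta_2}$ is a rigged von Neumann $\mathcal{N}^{\smooth}_{\beta_1}$-$\mathcal{N}^{\smooth}_{\beta_2}$-bimodule, as claimed.

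The main obstacle I expect is bookkeeping rather than conceptual: one must be careful that the single section $\tilde\varphi$ (covering $\varphi_1\cup\varphi_2$) used to trivialize the spinor bundle $\cup^{*}\mathcal{F}^{\smooth}(LM)$, and the separate sections $\tilde\varphi_1,\tilde\varphi_2$ (covering $\Delta\circ\varphi_1$, $\Delta\circ\varphi_2$) used to trivialize the two von Neumann algebra bundles $p_i^{*}\mathcal{N}^{\smooth}$, are chosen over a \emph{common} open neighborhood $W$, and that restricting to such a common $W$ does not destroy the local constancy (sitting-instant) conditions needed for \cref{lem:HalfCliffmult}; this is precisely why \cref{lem:comptriv2} is isolated as a separate lemma. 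Once all trivializations live over one $W$, the verification that $(\Phi^i,\Psi^i)$ are compatible and that $(\Phi^1,\Phi^2,\Psi)$ is compatible is immediate from the explicit formulas, as these are the very trivializations appearing in \cref{lem:indrepminus} and \cref{lem:spirbmpw}.
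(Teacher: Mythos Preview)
Your proof of the lemma is correct and matches the paper's: the paper's proof is a single sentence observing that the extra lifts $\tilde\varphi_i$ exist after possibly shrinking $W$, because $\SStruct \to L\Spin(M)$ is a surjective submersion; your version spells this out via local triviality of the principal $\BCE$-bundle, which is the same argument. Your subsequent derivation of \cref{lem:vonNeumannBimodule} also follows the paper's proof essentially verbatim, including the construction of $\Psi^i$ from $\tilde\varphi_i$, the appeal to the compatible trivializations of \cref{lem:indrepminus} (you cite \cref{re:loctrivmorph}, but the specific trivializations you describe are those of \cref{lem:indrepminus}, which the paper labels \texttt{re:locatrivN}), and the observation that nothing new is needed for plot transitions beyond what was done for \cref{prop:spinorbundlecstarbimodule}.
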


\begin{proof}
The additional maps $\tilde\varphi_i$ exist, possibly after a further shrinking of $W$, since $\SStruct \to L\Spin(M)$ is a surjective submersion.
\end{proof}

\begin{proof}[Proof of \cref{lem:vonNeumannBimodule}]
Using \cref{lem:comptriv2} We obtain the local trivializations $\Phi^1$ of $(p_{1}^{*}\mathcal{A}^{\smooth})_c$, $\Phi^2$ of $(p_{2}^{*}\mathcal{A}^{\smooth})_c$, and $\Psi$ of $(\cup^{*}\mc{F}^{\smooth}(LM))_c$ of \cref{lem:loctrivcomp}, and additionally from $\tilde\varphi_i$, since $\Delta\circ \varphi_i$ is a section along
\begin{equation*}
\xymatrix{U \ar[r]^-{c} & PM^{[2]} \ar[r]^-{p_i} & PM \ar[r]^{\Delta} & LM\text{,}}
\end{equation*}
 local trivializations 
\begin{equation*} 
 \Psi^i:(p_i^{*}\Delta^{*}\mathcal{F}^{\smooth}(LM))_c|_W \to \mathcal{F}^{\smooth} \times W,\quad\quad \Psi^i([\tilde\varphi_i(x),v])=(v,x)\text{.}
\end{equation*}
By inspection of the involved formulae, we see that $(\Phi^{i},\Psi^{i})$ is the pullback of the compatible local trivializations of \cref{re:locatrivN}, and hence is a compatible local trivialization of $(p_i^{*}\mathcal{N}^{\smooth})_c$. Consulting \cref{def:vonNeumannBimoduleBundle}, this proves that  $(\cup^{*}\mathcal{F}^{\smooth}(LM))_c$ is  a rigged von Neumann $(p_1^{*}\mathcal{N}^{\smooth})_c$-$(p_2^{*}\mathcal{N}^{\smooth})_c$-bimodule bundle. Concerning the transition between plots, there is nothing to add to the argument given in the proof of \cref{prop:spinorbundlecstarbimodule}. \end{proof}

The following result follows from \cref{lem:vonNeumannBimodule} via the theory of rigged von Neumann algebra bundles developed in \cref{sec:riggedvonNeumann}. 
First, for each $(\beta_1,\beta_2)\in PM^{[2]}$ we have by \cref{re:fibresofvonneumannbimodule} that the fibre $\mathcal{F}^{\smooth}(LM)_{\beta_1\cup\beta_2}$ is a rigged $\mathcal{N}^{\smooth}_{\beta_1}$-$\mathcal{N}^{\smooth}_{\beta_2}$-bimodule. \cref{rem:vonNeumanBimoduleCompletion} implies then the following result.

\begin{corollary}
The  completion  $\mathcal{F}(LM)_{\beta_1\cup\beta_2}$ of each fibre of the spinor bundle on loop space is an $\mathcal{N}_{\beta_1}$-$\mathcal{N}_{\beta_2}$-bimodule in the classical von Neumann-theoretical sense. 
\end{corollary}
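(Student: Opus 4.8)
The corollary is a direct fibrewise consequence of the main theorem \cref{lem:vonNeumannBimodule} together with the general completion statement \cref{rem:vonNeumanBimoduleCompletion}, so the proof is essentially a matter of chasing the definitions. First I would invoke \cref{lem:vonNeumannBimodule}, which says that $\cup^{*}\mc{F}^{\smooth}(LM)$ is a rigged von Neumann $p_1^{*}\mathcal{N}^{\smooth}$-$p_2^{*}\mathcal{N}^{\smooth}$-bimodule bundle over the diffeological space $PM^{[2]}$. Fixing the point $(\beta_1,\beta_2)\in PM^{[2]}$, I would then apply \cref{re:fibresofvonneumannbimodule} to conclude that the fibre of $\cup^{*}\mc{F}^{\smooth}(LM)$ over $(\beta_1,\beta_2)$ is a rigged von Neumann $(p_1^{*}\mathcal{N}^{\smooth})_{(\beta_1,\beta_2)}$-$(p_2^{*}\mathcal{N}^{\smooth})_{(\beta_1,\beta_2)}$-bimodule.

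Next I would identify these fibres explicitly. Since $\cup^{*}\mc{F}^{\smooth}(LM)$, $p_1^{*}\mathcal{N}^{\smooth}$, and $p_2^{*}\mathcal{N}^{\smooth}$ are pullbacks of bundles over Fr\'echet manifolds along the smooth maps $\cup$, $p_1$, $p_2$, the remark at the end of \cref{sec:HilbertBundle:3} on fibres of pulled-back bundles over diffeological spaces gives $(\cup^{*}\mc{F}^{\smooth}(LM))_{(\beta_1,\beta_2)}=\mc{F}^{\smooth}(LM)_{\beta_1\cup\beta_2}$ and $(p_i^{*}\mathcal{N}^{\smooth})_{(\beta_1,\beta_2)}=\mathcal{N}^{\smooth}_{\beta_i}$. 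Hence $\mc{F}^{\smooth}(LM)_{\beta_1\cup\beta_2}$ is a rigged von Neumann $\mathcal{N}^{\smooth}_{\beta_1}$-$\mathcal{N}^{\smooth}_{\beta_2}$-bimodule, which is precisely the intermediate statement the excerpt records just before the corollary.

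Finally I would apply \cref{rem:vonNeumanBimoduleCompletion} to this fibrewise rigged von Neumann bimodule: its completion is an $(\mathcal{N}^{\smooth}_{\beta_1})''$-$(\mathcal{N}^{\smooth}_{\beta_2})''$-bimodule in the ordinary von Neumann-theoretical sense. By the definition of $\mathcal{N}_{\beta}\defeq(\mathcal{N}^{\smooth}_{\beta})''$ adopted in \cref{sec:algebrabundlepathspace} (via \cref{re:vnacl}), and since the completion of $\mc{F}^{\smooth}(LM)_{\beta_1\cup\beta_2}$ is by definition $\mc{F}(LM)_{\beta_1\cup\beta_2}$, this is exactly the assertion. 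I do not expect any genuine obstacle here; the only point requiring a little care is the bookkeeping of fibres of pullback bundles over the diffeological spaces $PM$ and $PM^{[2]}$, and making sure the von Neumann algebras $\mathcal{N}_{\beta_i}$ appearing in the statement are the same as the completions $(\mathcal{N}^{\smooth}_{\beta_i})''$ produced by the completion functor — both of which are already settled in the preceding sections.
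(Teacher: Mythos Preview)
Your proposal is correct and matches the paper's argument essentially verbatim: the paper deduces the corollary from \cref{lem:vonNeumannBimodule} by applying \cref{re:fibresofvonneumannbimodule} fibrewise and then invoking \cref{rem:vonNeumanBimoduleCompletion}. Your additional remarks on identifying the pullback fibres over the diffeological space $PM^{[2]}$ just make explicit what the paper leaves implicit.
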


In the next section it will be important to identify the bimodule $\mathcal{F}(LM)_{\beta_1\cup\beta_2}$ with the typical fibre bimodule $\mathcal{F}$ in a precise way, using the methods developed in the results above, reduced to a single point. For later reference, we summarize this in the following remark.

\begin{remark}
\label{cor:BimoduleFrames}
For a pair $(\beta_1,\beta_2)\in PM^{[2]}$ of paths with common endpoints, consider a lift  $(\varphi_1,\varphi_2)\in P\Spin(M)^{[2]}$ and a lift $\tilde\varphi\in \SStruct$ of $\varphi_1\cup\varphi_2$. Then, there exist unique  isomorphisms $\phi^i: \mathcal{N}^{\smooth}_{\beta_i} \to N^{\smooth}$ of rigged von Neumann algebras and a unique isometric isomorphism $\psi: \mathcal{F}^{\smooth}(LM)_{\beta_1\cup\beta_2}\to \mathcal{F}^{\smooth}$ of rigged Hilbert spaces with $\phi^i([\varphi_i,a])=a$ and $\psi([\tilde\varphi,v])=v$.
Moreover, $(\phi^1,\phi^{2},\psi)$ is an invertible unitary intertwiner from the rigged von Neumann $p_1^{*}\mathcal{N}^{\smooth}$-$p_2^{*}\mathcal{N}^{\smooth}$-bimodule $\mathcal{F}^{\smooth}(LM)_{\beta_1\cup\beta_2}$ to the rigged von Neumann $N^{\smooth}$-$N^{\smooth}$-bimodule $\mathcal{F}^{\smooth}$ in the sense of \cref{def:riggedvonNeumannbimodule}. The isomorphisms $\phi^i$ induce normal $\ast$-isomorphisms $u_i: N_{\beta_i}\to N$ of  von Neumann algebras, and by \cref{rem:vonNeumanBimoduleCompletion}, the isometric isomorphism $\psi$ induces a unitary map $\nu: \mathcal{F}^{\smooth}(LM)_{\beta_1\cup\beta_2} \to \mathcal{F}$, such that the triple $(u_1,u_2,\nu)$ is an intertwiner between the $\mathcal{N}_{\beta_1}$-$\mathcal{N}_{\beta_2}$-bimodule $\mathcal{F}(LM)_{\beta_1\cup\beta_2}$ and the $N$-$N$-bimodule $\mathcal{F}$. 
\end{remark}

\subsection{Fusion of spinors}

\label{sec:Fusion}

The  goal of this section is to construct the Connes fusion of spinors on loop space. Now we require $\SStruct$ to be a fusive spin structure on $LM$ (see \cref{def:FusiveSpinStructure}). Its fusion product is the essential ingredient to our construction, and will be denoted by $\lambda$.
The main  result is the following.

\begin{theorem}
\label{th:fusionfibrewise}
\label{rem:conditionfusion}
Let $(\beta_1,\beta_2,\beta_3)\in PM^{[3]}$ be a triple of paths with common endpoints, let $\mathcal{N}_{\beta_i}$ be the von Neumann algebras over the paths $\beta_i$, and let $\mathcal{F}(LM)_{\beta_i\cup\beta_j}$ be the von Neumann $\mathcal{N}_{\beta_i}$-$\mathcal{N}_{\beta_j}$-bimodules over the loops $\beta_i\cup\beta_j$.
Then, there exists a unique unitary intertwiner 
\begin{equation*}
\chi_{\beta_1,\beta_2,\beta_3}: \mathcal{F}(LM)_{\beta_1\cup\beta_2} \boxtimes_{\mathcal{N}_{\beta_2}} \mathcal{F}(LM)_{\beta_2\cup\beta_3} \to \mathcal{F}(LM)_{\beta_1\cup\beta_3}
\end{equation*}
of $\mathcal{N}_{\beta_1}$-$\mathcal{N}_{\beta_3}$-bimodules, where $\boxtimes$ is Connes fusion, such that the following condition is satisfied:

\noindent
For all $(\varphi_1,\varphi_2,\varphi_3)\in P\Spin(M)^{[3]}$ such that $\varphi_i$ lifts $\beta_i$,   and all $\tilde\varphi_{12},\tilde\varphi_{23},\tilde\varphi_{13}\in\widetilde{L\Spin}(M)$ such that $\tilde\varphi_{ij}$ lifts $\varphi_i\cup\varphi_j$ and such that $\lambda(\tilde\varphi_{12},\tilde\varphi_{23})=\tilde\varphi_{13}$, the diagram
\begin{equation}
\label{diag:FusionProductOnF}
\xymatrix@C=6em{\mathcal{F}(LM)_{\beta_1\cup\beta_2} \boxtimes_{\mathcal{N}_{\beta_2}} \mathcal{F}(LM)_{\beta_2\cup\beta_3}\ar[d]_{(u_1,u_3,\nu_{12} \boxtimes \nu_{23})} \ar[r]^-{\chi_{\beta_1,\beta_2,\beta_3}} & \mathcal{F}(LM)_{\beta_1\cup\beta_3} \ar[d]^{(u_1,u_3,\nu_{13})} \\ \mathcal{F} \boxtimes_{N} \mathcal{F} \ar[r]_-{\chi} & \mathcal{F}}
\end{equation} 
is commutative, where $(u_i,u_j,\nu_{ij})$ are the unitary intertwiners determined by $\varphi_i$, $\varphi_j$ and $\tilde\varphi_{ij}$ as in \cref{cor:BimoduleFrames}, and $\chi$ is the Connes fusion of the free fermions of \cref{sec:FockSpaceAsStandardForm}.
\end{theorem}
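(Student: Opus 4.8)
The plan is to \emph{define} $\chi_{\beta_1,\beta_2,\beta_3}$ through the commutativity of \cref{diag:FusionProductOnF}: given one choice of lifts $(\varphi_1,\varphi_2,\varphi_3)$ and $\tilde\varphi_{12},\tilde\varphi_{23},\tilde\varphi_{13}$ satisfying $\lambda(\tilde\varphi_{12},\tilde\varphi_{23})=\tilde\varphi_{13}$, we simply set
\begin{equation*}
\chi_{\beta_1,\beta_2,\beta_3}\defeq (u_1,u_3,\nu_{13})^{-1}\circ \chi \circ (u_1,u_3,\nu_{12}\boxtimes\nu_{23})\text{,}
\end{equation*}
using the intertwiners of \cref{cor:BimoduleFrames} and the functoriality of Connes fusion to make sense of $\nu_{12}\boxtimes\nu_{23}$ (which requires $\nu_{12}$ to be right-$\mathcal{N}_{\beta_2}$-linear and $\nu_{23}$ to be left-$\mathcal{N}_{\beta_2}$-linear, both of which hold since each $(u_i,u_j,\nu_{ij})$ is a bimodule intertwiner and $u_2$ is the same for both). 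This manifestly produces a unitary intertwiner of $\mathcal{N}_{\beta_1}$-$\mathcal{N}_{\beta_3}$-bimodules, since $\chi$ is one over $N$ and the $u_i$ translate the $\mathcal{N}_{\beta_i}$-actions into $N$-actions. Uniqueness is then immediate: any $\chi_{\beta_1,\beta_2,\beta_3}$ making the diagram commute for \emph{some} admissible choice of lifts must equal this composite.

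First I would fix lifts. Existence of $(\varphi_1,\varphi_2,\varphi_3)\in P\Spin(M)^{[3]}$ over $(\beta_1,\beta_2,\beta_3)$ follows from surjectivity of $\Spin(M)\to M$ together with the sitting-instant argument already used in \cref{lem:HalfCliffmult,lem:comptriv}; existence of $\tilde\varphi_{12}$ and $\tilde\varphi_{23}$ over $\varphi_1\cup\varphi_2$ and $\varphi_2\cup\varphi_3$ follows from surjectivity of $\SStruct\to L\Spin(M)$; and then $\tilde\varphi_{13}\defeq\lambda_{\varphi_1,\varphi_2,\varphi_3}(\tilde\varphi_{12},\tilde\varphi_{23})$ is a lift of $\varphi_1\cup\varphi_3$ because the fusion product $\lambda$ covers loop fusion. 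So at least one admissible tuple of lifts exists, and the definition above makes sense.

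The substantive step—the main obstacle—is showing that $\chi_{\beta_1,\beta_2,\beta_3}$ is \emph{independent of the choice of lifts}. Suppose $(\varphi_1',\varphi_2',\varphi_3')$ and $\tilde\varphi_{12}',\tilde\varphi_{23}',\tilde\varphi_{13}'$ is a second admissible tuple. Write $\varphi_i'=\varphi_i g_i$ with $g_i\in P\Spin(d)$, and $\tilde\varphi_{ij}'=\tilde\varphi_{ij}\cdot h_{ij}$ with $h_{ij}\in\widetilde{L\Spin}(d)$ covering $g_i\cup g_j$. The equivariance of $\lambda$ over $\mu$ (\cref{eq:FusionIsEquivariant}) combined with $\lambda(\tilde\varphi_{12},\tilde\varphi_{23})=\tilde\varphi_{13}$ and $\lambda(\tilde\varphi_{12}',\tilde\varphi_{23}')=\tilde\varphi_{13}'$ forces $h_{13}=\mu_{g_1,g_2,g_3}(h_{12},h_{23})$. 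Now the two intertwiners relating $\mathcal{F}(LM)_{\beta_i\cup\beta_j}$ to $\mathcal{F}$ differ by conjugation with $\tilde\omega(h_{ij})$ (the change-of-trivialization formula built into \cref{cor:BimoduleFrames}), so proving choice-independence reduces to the identity
\begin{equation*}
\chi\circ\bigl(\tilde\omega(h_{12})\boxtimes\tilde\omega(h_{23})\bigr) = \tilde\omega(h_{13})\circ\chi \quad\text{on }\mathcal{F}\boxtimes\mathcal{F}\text{,}
\end{equation*}
which, unwinding \cref{eq:ImpFusionDiagram}, says exactly $\hat\mu(\tilde\omega(h_{12}),\tilde\omega(h_{23}))=\tilde\omega(h_{13})=\tilde\omega(\mu_{g_1,g_2,g_3}(h_{12},h_{23}))$. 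This is precisely \cref{thm:FusionProductsAgree}. One also has to check, via \cref{lem:IntertwinersAndImplementers} and the fusability assertion in \cref{thm:FusionProductsAgree}, that $\tilde\omega(h_{12})$ and $\tilde\omega(h_{23})$ are indeed fusable so that $\hat\mu$ applies, and that the bimodule-intertwiner identity $u_i\circ(\text{conj.\ by }\tilde\omega(h_{ij}))=(\text{action of }h_{ij}\text{ on }\mathcal{N}_{\beta_i})$ is compatible with the left/right $N$-structures, which is routine from \cref{lem:bogminus}.

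Finally I would record that $\chi_{\beta_1,\beta_2,\beta_3}$, being a composite of an intertwiner $(u_1,u_3,\nu_{12}\boxtimes\nu_{23})$, the intertwiner $\chi$ over $N$, and an inverse intertwiner, is itself a unitary $\mathcal{N}_{\beta_1}$-$\mathcal{N}_{\beta_3}$-bimodule intertwiner, and that by construction \cref{diag:FusionProductOnF} commutes for every admissible choice of lifts (not just the one used to define it), which is exactly the stated characterization. The associativity claim of \cref{th:fusionass} is deferred and is not part of this statement; here it suffices to note that the construction is entirely natural and fibrewise, so nothing further is needed. The one genuine input beyond bookkeeping is \cref{thm:FusionProductsAgree}, which is why that theorem was singled out as a cornerstone.
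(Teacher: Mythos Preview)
Your proposal is correct and follows essentially the same approach as the paper: define $\chi_{\beta_1,\beta_2,\beta_3}$ via the diagram for one choice of lifts, verify that admissible lifts exist, and prove choice-independence by translating a change of lifts into the identity $\hat\mu(\tilde\omega(h_{12}),\tilde\omega(h_{23}))=\tilde\omega(\mu(h_{12},h_{23}))$, which is exactly \cref{thm:FusionProductsAgree}. The paper's computation is written out slightly more explicitly (expanding $\hat\mu$ and using functoriality of $\boxtimes$ to cancel the $U_{ij}$), but the logical structure and the key input are identical.
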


\begin{proof}
For uniqueness, we have to prove that choices of $\varphi_i$ and $\tilde\varphi_{ij}$ exist; then,  diagram \cref{diag:FusionProductOnF} fixes $\chi_{\beta_1,\beta_2,\beta_3}$ uniquely as the maps $\nu_{ij}$ are bijective. We may use \cref{lem:HalfCliffmult}  separately for $(\beta_1,\beta_2)$ and then $(\beta_2,\beta_3)$ to obtain choices of $(\varphi_1,\varphi_2)\in P\Spin(M)^{[2]}$ lifting $(\beta_1,\beta_2)$, and $(\varphi_2',\varphi_3')\in P\Spin(M)^{[2]}$ lifting $(\beta_2,\beta_3)$. Since $\Spin(M)$ is a principal $\Spin(d)$-bundle, there exists a unique $g\in P\Spin(d)$ such that $\varphi_2=\varphi_2' g$. Then, $(\varphi_2' g,\varphi_3 'g)\in P\Spin(M)^{[2]}$ also lifts $(\beta_2,\beta_3)$, and with $\varphi_3 \defeq \varphi_3' g$ we have $(\varphi_1,\varphi_2,\varphi_3)\in P\Spin(M)^{[3]}$. Now, choices of $\tilde\varphi_{12}$ and $\tilde\varphi_{23}$ obviously exist, since $\widetilde{L\Spin}(M) \to L\Spin(M)$ is surjective, and we set $\tilde\varphi_{13}\defeq \lambda(\tilde\varphi_{12},\tilde\varphi_{23})$. This finishes the proof of existence of choices. 

Next we prove that diagram \cref{diag:FusionProductOnF} can be used to define $\chi_{\beta_1,\beta_2,\beta_3}$; to this end, we have to show that all choices of $\varphi_i$ and $\tilde\varphi_{ij}$ lead to the same definition. Indeed, suppose $(\varphi_1',\varphi_2',\varphi_3')\in P\Spin(M)^{[3]}$ such that $\varphi_i$ lifts $\beta_i$,  suppose  $\tilde\varphi_{ij}' \in \SStruct$ lift $\varphi_i'\cup\varphi_j'$ and $\tilde\varphi_{13}' = \lambda(\tilde\varphi_{12}',\tilde\varphi_{23}')$. Then, there are unique elements $\tilde g_{12},\tilde g_{23} \in \BCE$ such that $\tilde\varphi_{12}' = \tilde\varphi_{12} \cdot \tilde g_{12}$ and $\tilde\varphi_{23}' = \tilde\varphi_{23} \cdot \tilde g_{23}$. The compatibility between the fusion products $\mu$ on $\BCE$ and $\lambda$ on $\SStruct$ in
\Cref{def:FusiveSpinStructure} then implies that $\tilde\varphi_{13}' = \tilde\varphi_{13} \cdot \mu(\tilde g_{12} \otimes \tilde g_{23})$.
We write $(u_i',u_j',\nu_{ij}')$ for the unitary intertwiners  corresponding to $\tilde\varphi_{ij}'$ and $\varphi_i'$ according to  \cref{cor:BimoduleFrames}.
Using the definition of $\nu_{ij}$ and $\nu_{ij}'$ via $\tilde\varphi_{ij}$ and $\tilde\varphi_{ij}'$, respectively, we obtain $\nu_{12}' = U_{12} \circ \nu_{12}$ and $\nu_{23}' = U_{23} \circ \nu_{23}$, for the implementers $U_{12}\defeq  \tilde\omega(\tilde g_{12})$  and $U_{23}\defeq  \tilde\omega(\tilde g_{23})$ in $\Imp_L^{\theta}(V)$.
By \cref{thm:FusionProductsAgree}, we have then $\nu_{13}' = \hat{\mu}(U_{12},U_{23}) \circ \nu_{13}$, where $\hat\mu$ denotes the Connes fusion of implementers defined in \cref{sec:FusionImpThroughFock}. 
Using properties and the definition of $\hat\mu$, and the functoriality of Connes fusion, we compute
\begin{align*}
 (\nu_{13}')^{*} \circ \chi \circ (\nu_{12}' \boxtimes \nu_{23}') &= \nu_{13}^{*} \circ \hat{\mu}(U_{12}^{*},U_{23}^{*}) \circ \chi \circ (U_{12} \boxtimes U_{23}) \circ ( \nu_{12} \boxtimes \nu_{23}) \\
 &= \nu_{13}^{*} \circ \chi \circ (U_{12}^{*}\boxtimes U_{23}^{*}) \circ (U_{12} \boxtimes U_{23}) \circ ( \nu_{12} \boxtimes \nu_{23}) \\
 &= \nu_{13}^{*} \circ \chi \circ (\nu_{12} \boxtimes \nu_{23}).
\end{align*}
This proves that one can define $\chi_{\beta_1,\beta_2,\beta_3} \defeq \nu_{13}^{*} \circ \chi \circ (\nu_{12} \boxtimes \nu_{23})$ for arbitrary choices of $\varphi_i$ and $\tilde\varphi_{ij}$. 
It remains to notice that this definition indeed yields a unitary intertwiner of $\mathcal{N}_{\beta_1}$-$\mathcal{N}_{\beta_3}$-bimodules, as all  three components are unitary intertwiners. 
\end{proof}

The collection $\chi=(\chi_{\beta_1,\beta_2,\beta_3})$ with $(\beta_1,\beta_2,\beta_3)$ ranging over $PM^{[3]}$ will be called the \emph{Connes fusion product} on the spinor bundle  the loop space. Its construction proves the conjectured \quot{Theorem 1} of \cite{stolz3}. In the remainder of this section we derive  three fundamental  consequences and properties.

\begin{proposition}
For a path $\beta\in PM$, the   $\mathcal{N}_{\beta}$-$\mathcal{N}_{\beta}$-bimodule $\mathcal{F}(LM)_{\beta\cup\beta}$ is neutral with respect to Connes fusion. Moreover, if $(\beta_1,\beta_2)\in PM^{[2]}$,  the bimodules $\mathcal{F}(LM)_{\beta_1\cup\beta_2}$ and $\mathcal{F}(LM)_{\beta_2\cup\beta_1}$ are inverses of each other with respect to Connes fusion. In particular, $\mathcal{F}(LM)_{\beta_1\cup\beta_2}$ is a canonical Morita equivalence between the von Neumann algebras $\mathcal{N}_{\beta_1}$ and $\mathcal{N}_{\beta_2}$.
\end{proposition}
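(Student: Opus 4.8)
The plan is to deduce each assertion from the main construction, \cref{th:fusionfibrewise}, by specializing the triple $(\beta_1,\beta_2,\beta_3)$ and transporting known facts about the free fermions through the unitary intertwiners $(u_i,u_j,\nu_{ij})$ of \cref{cor:BimoduleFrames}. The key observation is that, via the commuting diagram \cref{diag:FusionProductOnF}, every statement about the Connes fusion of the bimodules $\mathcal{F}(LM)_{\beta_i\cup\beta_j}$ is equivalent to the corresponding statement about $\chi:\mathcal{F}\boxtimes_N\mathcal{F}\to\mathcal{F}$, which is already available from \cref{sec:FockSpaceAsStandardForm} (where we recorded that $\mathcal{F}$ is a standard form of $N$, hence neutral for Connes fusion, with unitors coinciding at $\chi$).

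First I would treat neutrality: apply \cref{th:fusionfibrewise} to the constant triple $(\beta,\beta,\beta)$. A choice of lifts $\varphi_1=\varphi_2=\varphi_3=:\varphi$ and $\tilde\varphi_{12}=\tilde\varphi_{23}=:\tilde\varphi$ with $\tilde\varphi_{13}=\lambda(\tilde\varphi,\tilde\varphi)$ yields intertwiners $(u,u,\nu_{12})$, $(u,u,\nu_{23})$ from \cref{cor:BimoduleFrames}; since $\mathcal{F}$ is a standard form of $N$, the intertwiner $\chi:\mathcal{F}\boxtimes_N\mathcal{F}\to\mathcal{F}$ is precisely the canonical unitor, exhibiting $\mathcal{F}$ as a neutral element. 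Transporting through \cref{diag:FusionProductOnF} shows that $\mathcal{F}(LM)_{\beta\cup\beta}$ together with $\chi_{\beta,\beta,\beta}$ is a neutral $\mathcal{N}_\beta$-$\mathcal{N}_\beta$-bimodule. Strictly, one should also check that the left and right unitors agree with the appropriate instances of $\chi_{\beta_1,\beta_2,\beta_3}$ with some $\beta_i$ repeated; this follows from the same diagram once one notes that $\chi_{\beta,\beta,\beta_3}$ (resp.\ $\chi_{\beta_1,\beta,\beta}$) corresponds under $\nu$ to the left (resp.\ right) unitor of $\mathcal{F}$, which again holds because $\mathcal{F}$ is a standard form.

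Next, for the inverse statement, apply \cref{th:fusionfibrewise} to the triples $(\beta_1,\beta_2,\beta_1)$ and $(\beta_2,\beta_1,\beta_2)$. The intertwiner
\begin{equation*}
\chi_{\beta_1,\beta_2,\beta_1}:\mathcal{F}(LM)_{\beta_1\cup\beta_2}\boxtimes_{\mathcal{N}_{\beta_2}}\mathcal{F}(LM)_{\beta_2\cup\beta_1}\to\mathcal{F}(LM)_{\beta_1\cup\beta_1}
\end{equation*}
followed by the neutrality isomorphism $\mathcal{F}(LM)_{\beta_1\cup\beta_1}\simeq$ (identity bimodule over $\mathcal{N}_{\beta_1}$) from the previous paragraph, exhibits $\mathcal{F}(LM)_{\beta_2\cup\beta_1}$ as a right inverse of $\mathcal{F}(LM)_{\beta_1\cup\beta_2}$; symmetrically $\chi_{\beta_2,\beta_1,\beta_2}$ gives a left inverse. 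To see these two one-sided inverses agree (so that one genuinely has a two-sided inverse), I would invoke associativity of Connes fusion, exactly as $\chi$ itself is associative by \cref{lem:FockFusionAssociative}; the compatibility of $\chi_{\beta_1,\beta_2,\beta_3}$ with associators is again transported fibrewise from $\chi$ via \cref{diag:FusionProductOnF}, using that the fusion product $\lambda$ on $\SStruct$ is associative (\cref{def:fusion product}) and that $\mu$ on $\BCE$ is associative, so the compatibility conditions of \cref{cor:BimoduleFrames} propagate. Once $\mathcal{F}(LM)_{\beta_1\cup\beta_2}$ has a two-sided inverse in the Connes fusion bicategory, it is by definition an invertible bimodule, i.e.\ a Morita equivalence between $\mathcal{N}_{\beta_1}$ and $\mathcal{N}_{\beta_2}$; its canonicity is the statement that it is uniquely pinned down by \cref{th:fusionfibrewise}, which was proved there.

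The main obstacle I anticipate is purely bookkeeping rather than conceptual: making the associativity/coherence argument precise. Specifically, one must verify that the isomorphism "$\mathcal{F}(LM)_{\beta\cup\beta}\simeq\text{identity}$" chosen via neutrality is compatible with the intertwiners $\chi_{\beta_1,\beta_2,\beta_1}$ and $\chi_{\beta_2,\beta_1,\beta_2}$ in the sense needed to conclude the two one-sided inverses coincide; this amounts to checking a pentagon-type diagram whose commutativity follows from \cref{lem:FockFusionAssociative} after transporting along the $\nu_{ij}$ and using the associativity of $\lambda$. None of the individual steps is hard, but assembling the compatible systems of lifts $(\varphi_i)$ and $(\tilde\varphi_{ij})$ over four-fold (and implicitly higher) fibre products — as in the proof of \cref{th:fusionfibrewise} — requires care, and is where I would spend most of the write-up.
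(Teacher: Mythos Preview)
Your proposal is correct and follows the paper's approach: neutrality via the unitary intertwiner of \cref{cor:BimoduleFrames} to the standard form $\mathcal{F}$ (the paper phrases this simply as $\mathcal{F}(LM)_{\beta\cup\beta}$ being itself a standard form of $\mathcal{N}_\beta$, without passing through the constant triple or diagram \cref{diag:FusionProductOnF}), and invertibility via the fusion maps $\chi_{\beta_1,\beta_2,\beta_1}$ and $\chi_{\beta_2,\beta_1,\beta_2}$. Your coherence worry about reconciling one-sided inverses is unnecessary, since the \emph{same} bimodule $\mathcal{F}(LM)_{\beta_2\cup\beta_1}$ serves on both sides; once both fusion maps land in the respective neutral bimodules you are done, and no pentagon argument is required for the bare invertibility statement.
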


\begin{proof}

The $\mathcal{N}_{\beta}$-$\mathcal{N}_{\beta}$-bimodule $\mathcal{F}(LM)_{\beta\cup\beta}$ is a standard form of $\mathcal{N}_{\beta}$, since it is isomorphic via a unitary intertwiner  to the $N$-$N$-bimodule $\mathcal{F}$, which is a standard form of $N$ (\cref{cor:BimoduleFrames,prop:standardform}). In particular, $\mathcal{F}(LM)_{\beta\cup\beta}$ is neutral with respect to Connes fusion. If $(\beta_1,\beta_2)\in PM^{[2]}$, then evaluating the Connes fusion product over the triples  $(\beta_1,\beta_2,\beta_1)$ and $(\beta_2,\beta_1,\beta_2)$ exhibits the bimodules $\mathcal{F}(LM)_{\beta_1\cup\beta_2}$ and $\mathcal{F}(LM)_{\beta_2\cup\beta_1}$ as inverses of each other.  
\end{proof}

For the second result, we recall that Connes fusion is coherently associative (\cref{lem:ConnesFusionAssociative}), which allows us to omit bracketing of multiple Connes fusions.

\begin{proposition}
\label{th:fusionass}
The Connes fusion product on the spinor bundle  is associative in the sense that the diagram
 \begin{equation*}
 \begin{gathered}
 \begin{tikzpicture}[scale=1.6]
    \node (A) at (0,1) {$\mathcal{F}(LM)_{\beta_1\cup\beta_2} \boxtimes_{\mathcal{N}_{\beta_2}} \mathcal{F}(LM)_{\beta_2\cup\beta_3} \boxtimes_{\mathcal{N}_{\beta_3}} \mathcal{F}(LM)_{\beta_3\cup\beta_4}$};
    \node (B) at (5.5,1) {$\mathcal{F}(LM)_{\beta_1\cup\beta_2} \boxtimes_{\mathcal{N}_{\beta_2}} \mathcal{F}(LM)_{\beta_2\cup\beta_4}$};
    \node (C) at (0,0) {$\mathcal{F}(LM)_{\beta_1\cup\beta_3} \boxtimes_{\mathcal{N}_{\beta_3}} \mathcal{F}(LM)_{\beta_3\cup\beta_4}$};
    \node (D) at (5.5,0) {$\mathcal{F}(LM)_{\beta_1\cup\beta_4}$};
    \path[->,font=\scriptsize]
    (A) edge node[above]{$\mathds{1} \boxtimes \chi_{\beta_2,\beta_3,\beta_4}$} (B)
    (A) edge node[left]{$\chi_{\beta_1,\beta_2,\beta_3} \boxtimes \mathds{1}$} (C)
    (B) edge node[right]{$\chi_{\beta_1,\beta_2,\beta_4}$} (D)
    (C) edge node[below]{$\chi_{\beta_1,\beta_3,\beta_4}$} (D);
 \end{tikzpicture}
 \end{gathered}
 \end{equation*}
 is commutative for all $(\beta_{1},\beta_{2},\beta_{3},\beta_{4}) \in PM^{[4]}$.
 \end{proposition}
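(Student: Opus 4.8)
The strategy is to reduce the associativity of the collection $\chi=(\chi_{\beta_1,\beta_2,\beta_3})$ to the already-established associativity of the Connes fusion $\chi$ of the free fermions (\cref{lem:FockFusionAssociative}), by transporting the square along suitable trivializing frames. First I would fix $(\beta_1,\beta_2,\beta_3,\beta_4)\in PM^{[4]}$ and choose, using the argument in the proof of \cref{th:fusionfibrewise}, a compatible quadruple $(\varphi_1,\varphi_2,\varphi_3,\varphi_4)\in P\Spin(M)^{[4]}$ lifting $(\beta_1,\beta_2,\beta_3,\beta_4)$: start from lifts over each pair and use the principal $\Spin(d)$-action on $\Spin(M)$ to align them over the common basepoints into a single quadruple. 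Then choose lifts $\tilde\varphi_{12},\tilde\varphi_{23},\tilde\varphi_{34}\in\SStruct$ of $\varphi_1\cup\varphi_2$, $\varphi_2\cup\varphi_3$, $\varphi_3\cup\varphi_4$ respectively, and \emph{define} the remaining frames using the fusion product $\lambda$ on the fusive spin structure:
\begin{equation*}
\tilde\varphi_{13}\defeq\lambda(\tilde\varphi_{12},\tilde\varphi_{23}),\qquad \tilde\varphi_{24}\defeq\lambda(\tilde\varphi_{23},\tilde\varphi_{34}),\qquad \tilde\varphi_{14}\defeq\lambda(\tilde\varphi_{12},\tilde\varphi_{24}).
\end{equation*}
By associativity of $\lambda$ (\cref{def:fusion product}(i), which holds for the fusion product of a fusive spin structure since it is a fusion product on the underlying $\U(1)$-bundle) we also have $\tilde\varphi_{14}=\lambda(\tilde\varphi_{13},\tilde\varphi_{34})$. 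This is the key point that makes all four sides of the square simultaneously ``defined by compatible frames''.

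Next I would write $(u_i,u_j,\nu_{ij})$ for the unitary intertwiner from $\mathcal{F}(LM)_{\beta_i\cup\beta_j}$ to $\mathcal{F}$ determined by $\varphi_i,\varphi_j,\tilde\varphi_{ij}$ as in \cref{cor:BimoduleFrames}. For each of the four arrows in the square, the defining property of $\chi_{\beta_a,\beta_b,\beta_c}$ from \cref{th:fusionfibrewise} (diagram \cref{diag:FusionProductOnF}) gives
\begin{equation*}
\chi_{\beta_a,\beta_b,\beta_c}=\nu_{ac}^{*}\circ\chi\circ(\nu_{ab}\boxtimes\nu_{bc}),
\end{equation*}
valid precisely because $\tilde\varphi_{ac}=\lambda(\tilde\varphi_{ab},\tilde\varphi_{bc})$ holds for each of $(1,2,3),(2,3,4),(1,3,4),(1,2,4)$. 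Substituting these four expressions into the two composites of the square, using functoriality/coherence of $\boxtimes$ to move the $\nu$'s through the tensor factors and the fact that $\nu_{ij}^{*}\circ\nu_{ij}=\id$, the outer square collapses to the statement
\begin{equation*}
\chi\circ(\chi\boxtimes\id)=\chi\circ(\id\boxtimes\chi)
\end{equation*}
on $\mathcal{F}\boxtimes_N\mathcal{F}\boxtimes_N\mathcal{F}$ (conjugated by $\nu_{14}^{*}$ on the outside and $\nu_{12}\boxtimes\nu_{23}\boxtimes\nu_{34}$ on the inside), which is exactly \cref{lem:FockFusionAssociative}. One has to be slightly careful that the middle Connes fusions are taken over the correct von Neumann algebra in each composite; but since all of $\mathcal{N}_{\beta_1},\dots,\mathcal{N}_{\beta_4}$ are identified with $N$ via the $u_i$, and the identifications agree on the shared indices, this is a matter of bookkeeping rather than substance.

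\textbf{Main obstacle.} The routine but genuinely fiddly part is the coherence bookkeeping: keeping track of which unitor/associator of Connes fusion is implicitly being used when one writes $\nu_{12}\boxtimes\nu_{23}\boxtimes\nu_{34}$ without brackets, and checking that the two ways of inserting the frames into the square produce literally the same map rather than maps differing by an associator. This is handled by invoking coherent associativity of Connes fusion (\cref{lem:ConnesFusionAssociative}) and functoriality of $\boxtimes$ on morphisms, but it requires care to present cleanly. The only conceptual input beyond \cref{th:fusionfibrewise} itself is the associativity of the spin-structure fusion product $\lambda$, which guarantees the simultaneous compatibility $\tilde\varphi_{14}=\lambda(\tilde\varphi_{12},\tilde\varphi_{24})=\lambda(\tilde\varphi_{13},\tilde\varphi_{34})$; without it, the four arrows could not all be normalized by the same system of frames at once, and the reduction to \cref{lem:FockFusionAssociative} would fail.
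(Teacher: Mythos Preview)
Your proposal is correct and follows essentially the same route as the paper's proof: choose a quadruple $(\varphi_1,\varphi_2,\varphi_3,\varphi_4)\in P\Spin(M)^{[4]}$, pick lifts $\tilde\varphi_{12},\tilde\varphi_{23},\tilde\varphi_{34}$, define the remaining $\tilde\varphi_{ij}$ via $\lambda$ so that associativity of $\lambda$ makes all four triples compatible, and then use the defining diagram~\cref{diag:FusionProductOnF} on each face to reduce the outer square to the inner square, which is \cref{lem:FockFusionAssociative}. The paper organizes this as a single big diagram with the inner $\mathcal{F}\boxtimes\mathcal{F}\boxtimes\mathcal{F}$ square and four outer trapezia, and also invokes functoriality of Connes fusion (\cref{lem:ConnesFusionOfMaps}) for the top and left faces---exactly the coherence bookkeeping you flag as the main obstacle.
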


\begin{proof}
Let $(\beta_{1},\beta_{2},\beta_{3},\beta_{4}) \in PM^{[4]}$.
Iterating the procedure described at the beginning of the proof of \cref{th:fusionfibrewise}, one can see that there exists $(\varphi_1,\varphi_2,\varphi_3,\varphi_4)\in P\Spin(M)^{[4]}$ such that $\varphi_i$ lifts $\beta_i$. Choose $\tilde\varphi_{ij} \in \SStruct$ lifting $\varphi_i\cup \varphi_j$, for $ij  \in \{12,23,34 \}$. Then, we set $\tilde\varphi_{13} \defeq \lambda(\tilde\varphi_{12} \otimes \tilde\varphi_{23})$, $\tilde\varphi_{24} \defeq \lambda(\tilde\varphi_{23} \otimes \tilde\varphi_{34})$, and $\tilde\varphi_{14} \defeq \lambda(\tilde\varphi_{13} \otimes \tilde\varphi_{34})$. Notice that by the associativity of the fusion product $\lambda$ (\cref{def:fusion product}), we have that $\tilde\varphi_{14} = \lambda(\tilde\varphi_{12} \otimes \tilde\varphi_{24})$.
We let $(u_i,u_j,\nu_{ij})$ be the unitary intertwiners induced by $\varphi_i$ and $\tilde\varphi_{ij}$ according to \cref{cor:BimoduleFrames}. We consider the following diagram of unitary isomorphisms: 
\begin{equation*}
 \begin{gathered}
 \begin{tikzpicture}[scale=1.6]
    \node (A) at (0,3) {$\mathcal{F}(LM)_{\beta_1\cup\beta_2} \boxtimes_{\mathcal{N}_{\beta_2}} \mathcal{F}(LM)_{\beta_2\cup\beta_3} \boxtimes_{\mathcal{N}_{\beta_3}} \mathcal{F}(LM)_{\beta_3\cup\beta_4}$};
    \node (A2) at (1.6,2) {$\mathcal{F} \boxtimes_{N} \mathcal{F} \boxtimes_{N} \mathcal{F}$};
    \node (B) at (5.5,3) {$\mathcal{F}(LM)_{\beta_1\cup\beta_2} \boxtimes_{\mathcal{N}_{\beta_2}} \mathcal{F}(LM)_{\beta_2\cup\beta_4}$};
    \node (B2) at (4.5,2) {$\mathcal{F} \boxtimes_{N} \mathcal{F}$};
    \node (C) at (0,0) {$\mathcal{F}(LM)_{\beta_1\cup\beta_3} \boxtimes_{\mathcal{N}_{\beta_3}} \mathcal{F}(LM)_{\beta_3\cup\beta_4}$};
    \node (C2) at (1.6,1) {$\mathcal{F} \boxtimes_{N} \mathcal{F}$};
    \node (D) at (5.5,0) {$\mathcal{F}(LM)_{\beta_1\cup\beta_4}$};
    \node (D2) at (4.5,1) {$\mathcal{F}$};
    \path[->,font=\scriptsize]
    (A) edge node[right]{$\nu_{12}\boxtimes\nu_{23}\boxtimes \nu_{34}$} (A2)
    (B) edge node[left]{$\nu_{12}\boxtimes\nu_{24}$} (B2)
    (C) edge node[right]{$\nu_{13}\boxtimes\nu_{34}$} (C2)
    (D) edge node[left]{$\nu_{14}$} (D2)
    (A) edge node[above]{$\mathds{1} \boxtimes \chi_{\beta_2,\beta_3,\beta_4}$} (B)
    (A2) edge node[above]{$\mathds{1} \boxtimes \chi$} (B2)
    (A) edge node[left]{$\chi_{\beta_1,\beta_2,\beta_3} \boxtimes \mathds{1}$} (C)
    (A2) edge node[left]{$\chi \boxtimes \mathds{1}$} (C2)
    (B) edge node[right]{$\chi_{\beta_1,\beta_2,\beta_4}$} (D)
    (B2) edge node[right]{$\chi$} (D2)
    (C) edge node[below]{$\chi_{\beta_1,\beta_3,\beta_4}$} (D)
    (C2) edge node[below]{$\chi$} (D2);
 \end{tikzpicture}
 \end{gathered}
 \end{equation*}
 The four outer diagrams are commutative by  \cref{th:fusionfibrewise}, using above relations between fusion products of the various $\tilde\varphi_{ij}$.
The ones on top and on the left additionally using the functoriality of Connes fusion (\cref{lem:ConnesFusionOfMaps}). The diagram in the middle is commutative by \cref{lem:FockFusionAssociative}; this completes the proof.
\end{proof}

The third result concerns the smoothness of  the Connes fusion product. Since we have not yet been able to lift Connes fusion to the setting of rigged Hilbert spaces, we cannot  claim that the fibre-wise maps $\chi_{\beta_1,\beta_2,\beta_3}$ assemble into a smooth (or only continuous) bundle homomorphism. Instead we will describe smoothness by showing that certain smooth sections are mapped to smooth sections.

We consider the (set-theoretical) bundle $p_{12}^{*}\mathcal{F}(LM) \boxtimes p_{23}^{*}\mathcal{F}(LM)$
over $PM^{[3]}$ , whose fibre over $(\beta_1,\beta_2,\beta_3)\in PM^{[3]}$ is the Hilbert space $\mathcal{F}(LM)_{\beta_1\cup\beta_2} \boxtimes_{\mathcal{N}_{\beta_2}} \mathcal{F}(LM)_{\beta_2\cup\beta_3}$. The Connes fusion product on the spinor bundle assembles then into a (set-theoretical) bundle morphism
\begin{equation*}
\chi: p_{12}^{*}\mathcal{F}(LM) \boxtimes p_{23}^{*}\mathcal{F}(LM) \to p_{23}^{*}\mathcal{F}(LM)
\end{equation*}
over $PM^{[3]}$. We will now specify a natural choice of smooth sections into $p_{12}^{*}\mathcal{F}(LM) \boxtimes p_{23}^{*}\mathcal{F}(LM)$. To start with, we recall that a smooth section $\tilde\varphi:U \to \SStruct$ and a smooth map $v:U \to \mathcal{F}^{\smooth}$ induce a smooth section $\sigma_{\tilde\varphi,v}:U \to \mathcal{F}^{\smooth}(LM)$ into the spinor bundle, with $\sigma_{\tilde\varphi,v}(x)=[\tilde\varphi(x),v(x)]$. Since $\mathcal{F}^{\smooth}\subset \mathcal{F}$ is dense,  the image of all such sections is dense in each fibre.
The following lemma guarantees that similar local sections exist in a situation appropriate for fusion.

\begin{lemma}
\label{lem:smoothlocalsectionsforfusion}
Let $(\beta_1,\beta_2,\beta_3):U \to PM^{[3]}$ be a plot. Then, for each $x\in U$ there exists an open neighborhood $W \subset U$ of $x$,  smooth maps $\varphi_1,\varphi_2,\varphi_3: W \to P\Spin(M)$, and smooth maps $\tilde\varphi_{12},\tilde\varphi_{23},\tilde\varphi_{13}: W \to \widetilde{L\Spin}(M)$ such that $(\varphi_1(x),\varphi_2(x),\varphi_3(x))\in P\Spin(M)^{[3]}$ for all $x\in W$,  $\varphi_i$ lifts $\beta_i$ for all $i\in\{1,2,3\}$,  $\tilde\varphi_{ij}$ lifts $\varphi_i\cup\varphi_j$ for all $ij\in\{12,23,13\}$, and $\tilde\varphi_{13}(x)=\lambda(\tilde\varphi_{12}(x),\tilde\varphi_{23}(x))$ for all $x\in W$. \end{lemma}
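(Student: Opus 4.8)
\textbf{Proof strategy for \cref{lem:smoothlocalsectionsforfusion}.}

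The plan is to combine the lifting techniques already established in \cref{lem:comptriv} and \cref{lem:comptriv2} with the coherence procedure used at the beginning of the proof of \cref{th:fusionfibrewise}. First I would apply \cref{lem:comptriv} twice: once to the plot $(\beta_1,\beta_2):U\to PM^{[2]}$ and once to the plot $(\beta_2,\beta_3):U\to PM^{[2]}$. Shrinking $U$ to a common open neighborhood $W$ of $x$, this produces smooth maps $\varphi_1,\varphi_2:W\to P\Spin(M)$ lifting $(\beta_1,\beta_2)$ (with $\varphi_i(u)$ locally constant at $0$ and $\pi$), as well as smooth maps $\varphi_2',\varphi_3':W\to P\Spin(M)$ lifting $(\beta_2,\beta_3)$. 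The issue, exactly as in the fibrewise proof of \cref{th:fusionfibrewise}, is that $\varphi_2$ and $\varphi_2'$ need not agree; but since $\Spin(M)\to M$ is a principal $\Spin(d)$-bundle, there is a \emph{unique} smooth map $g:W\to P\Spin(d)$ with $\varphi_2=\varphi_2'\cdot g$, and then $\varphi_3\defeq\varphi_3'\cdot g$ gives $(\varphi_1(u),\varphi_2(u),\varphi_3(u))\in P\Spin(M)^{[3]}$ for all $u\in W$, with each $\varphi_i$ lifting $\beta_i$. Here smoothness of $g$ follows because the principal action is free and smooth, so $g$ is obtained by composing $(\varphi_2',\varphi_2)$ with the (smooth) division map of the principal bundle.

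Next I would produce the twists $\tilde\varphi_{ij}$. The maps $W\to L\Spin(M)$, $u\mapsto\varphi_i(u)\cup\varphi_j(u)$, are smooth for $ij\in\{12,23\}$ (path concatenation and the map $P\Spin(M)^{[2]}\to L\Spin(M)$ are smooth, by \cref{sec:genfusionproducts}), and since $\SStruct\to L\Spin(M)$ is a locally trivial principal $\BCE$-bundle, after possibly shrinking $W$ there exist smooth lifts $\tilde\varphi_{12},\tilde\varphi_{23}:W\to\SStruct$ of $\varphi_1\cup\varphi_2$ and $\varphi_2\cup\varphi_3$. I then \emph{define} $\tilde\varphi_{13}\defeq\lambda(\tilde\varphi_{12},\tilde\varphi_{23})$ pointwise, i.e.\ $\tilde\varphi_{13}(u)\defeq\lambda_{\varphi_1(u),\varphi_2(u),\varphi_3(u)}(\tilde\varphi_{12}(u),\tilde\varphi_{23}(u))$. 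By the equivariance condition \cref{eq:FusionIsEquivariant} of a fusive spin structure (\cref{def:FusiveSpinStructure}), $\tilde\varphi_{13}(u)$ is a lift of $\varphi_1(u)\cup\varphi_3(u)$ in $\SStruct$, so the only remaining point is that $\tilde\varphi_{13}$ is \emph{smooth} as a map $W\to\SStruct$. This is precisely the smoothness axiom (ii) in \cref{def:fusion product} for the fusion product $\lambda$ on $\SStruct$, applied to the plot $(c_1,c_2,c_3)\defeq(\varphi_1,\varphi_2,\varphi_3):W\to P\Spin(M)^{[3]}$ and the smooth maps $c_{12}\defeq\tilde\varphi_{12}$, $c_{23}\defeq\tilde\varphi_{23}$ into $\SStruct$ (note $\pi\circ c_{ij}=c_i\cup c_j$ holds by construction). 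Hence $\tilde\varphi_{13}$ is smooth, and all the required properties hold on $W$.

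The main (and essentially only) obstacle is the smoothness of $\tilde\varphi_{13}$: one must be careful that the fusion product $\lambda$ on $\SStruct$ is an operation over the \emph{diffeological} space $P\Spin(M)^{[3]}$, so that ``smooth'' means ``smooth after composing with plots'', and then verify that $(\varphi_1,\varphi_2,\varphi_3)$ indeed is a plot of $P\Spin(M)^{[3]}$ — which follows because each $\varphi_i$ is a smooth map $W\to P\Spin(M)$ with $\mathrm{ev}\circ\varphi_1=\mathrm{ev}\circ\varphi_2=\mathrm{ev}\circ\varphi_3$ (they share common initial and end points since the $\beta_i$ do, and the $\varphi_i$ lift them). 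Everything else is a routine composition of lifting arguments for surjective submersions and locally trivial bundles, together with the uniqueness of the gluing element $g$ coming from the freeness of the principal $\Spin(d)$-action. I would organize the write-up as: (1) lift to $\varphi_1,\varphi_2$ and $\varphi_2',\varphi_3'$; (2) correct by $g$ to get $(\varphi_1,\varphi_2,\varphi_3)$; (3) lift $\varphi_1\cup\varphi_2$ and $\varphi_2\cup\varphi_3$ to $\tilde\varphi_{12},\tilde\varphi_{23}$; (4) set $\tilde\varphi_{13}\defeq\lambda(\tilde\varphi_{12},\tilde\varphi_{23})$ and invoke smoothness axiom (ii) of \cref{def:fusion product}; shrinking $W$ finitely often along the way.
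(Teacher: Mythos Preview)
Your proposal is correct and follows essentially the same route as the paper's proof: apply \cref{lem:comptriv} separately to $(\beta_1,\beta_2)$ and $(\beta_2,\beta_3)$, correct the mismatch between $\varphi_2$ and $\varphi_2'$ via the unique $g:W\to P\Spin(d)$, lift $\varphi_1\cup\varphi_2$ and $\varphi_2\cup\varphi_3$ to $\tilde\varphi_{12},\tilde\varphi_{23}$, and then define $\tilde\varphi_{13}$ pointwise via $\lambda$ with smoothness coming from axiom (ii) of \cref{def:fusion product}. One small remark: the fact that $\tilde\varphi_{13}(u)$ lifts $\varphi_1(u)\cup\varphi_3(u)$ follows directly from the target of the fusion map in \cref{def:fusion product}, not from the equivariance condition \cref{eq:FusionIsEquivariant}; otherwise your justifications (smoothness of $g$ via the division map, verification that $(\varphi_1,\varphi_2,\varphi_3)$ is a plot) are more detailed than the paper's and entirely on point.
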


\begin{proof}
As in the proof of \cref{th:fusionfibrewise}, we apply \cref{lem:comptriv} separately to the plots $(\beta_1,\beta_2)$ and $(\beta_2,\beta_3)$, obtaining smooth maps $(\varphi_1,\varphi_2)$ and $(\varphi_2',\varphi_3')$. The difference between $\varphi_2$ and $\varphi_2'$ is now a smooth map $g:W \to P\Spin(d)$ with $\varphi_2(x)=\varphi_2'(x)g(x)$ for all $x\in W$. With $\varphi_3(x)\defeq \varphi_3'(x)g(x)$ we obtain the desired triple $(\varphi_1,\varphi_2,\varphi_3):W \to P\Spin(M)^{[3]}$. As explained in the proof of \cref{lem:comptriv}, there exist $\tilde\varphi_{12}$ and $\tilde\varphi_{13}$ as claimed. Finally, since the fusion product $\lambda$ on $\SStruct$ is a smooth in the sense of \cref{def:fusion product}, the pointwise definition $\tilde\varphi_{13}(x)\defeq\lambda(\tilde\varphi_{12}(x),\tilde\varphi_{23}(x))$ yields a smooth map.
\end{proof}

In the situation of \cref{lem:smoothlocalsectionsforfusion},
at each point $x\in W$,  the elements $\tilde\varphi_{ij}(x)$ and $\varphi_i(x)$ induce unitary intertwiners $(u^1_x,u^2_x,\nu^{12}_x)$  and $(u^2_x,u^3_x,\nu^{23}_x)$ as in \cref{cor:BimoduleFrames}.
 For any smooth map $v:W \to \mathcal{F}^{\smooth}$, we then obtain a  section $\sigma$ into the bundle  $p_{12}^{*}\mathcal{F}(LM) \boxtimes p_{23}^{*}\mathcal{F}(LM)$ by setting 
\begin{equation}
\label{eq:localsmoothsection}
\sigma(x) \defeq (\nu^{12}_x \boxtimes \nu^{23}_x)^{*}\chi^{*}(v(x))\in (p_{12}^{*}\mathcal{F}(LM) \boxtimes p_{23}^{*}\mathcal{F}(LM))_{c(x)}
\end{equation}
for all $x\in W$.
We will call any  section that is locally of this form \emph{smooth}. Again, since each fibre of the bundle  $p_{12}^{*}\mathcal{F}(LM) \boxtimes p_{23}^{*}\mathcal{F}(LM)$ is isomorphic to $\mathcal{F}$, the image of all smooth sections is dense. We have the following result. 

\begin{proposition}
\label{prop:smoothness2}
The Connes fusion product $\chi$ on $\mathcal{F}(LM)$ is smooth in the sense that it sends smooth sections to smooth sections. 
\end{proposition}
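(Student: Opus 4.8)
The plan is to reduce the statement to the local model via the explicit formula for $\chi_{\beta_1,\beta_2,\beta_3}$ established in the proof of \cref{th:fusionfibrewise}, namely $\chi_{\beta_1,\beta_2,\beta_3}=\nu_{13}^{*}\circ\chi\circ(\nu_{12}\boxtimes\nu_{23})$, and then to track how this interacts with the local smooth sections defined in \cref{eq:localsmoothsection}. Concretely, I would fix a plot $(\beta_1,\beta_2,\beta_3):U\to PM^{[3]}$ and a point $x\in U$, and apply \cref{lem:smoothlocalsectionsforfusion} to obtain an open neighborhood $W\subseteq U$ together with smooth maps $\varphi_i:W\to P\Spin(M)$ and $\tilde\varphi_{ij}:W\to \widetilde{L\Spin}(M)$ with $\tilde\varphi_{13}=\lambda(\tilde\varphi_{12},\tilde\varphi_{23})$. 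These data give, pointwise, the unitary intertwiners $(u^1_x,u^2_x,\nu^{12}_x)$, $(u^2_x,u^3_x,\nu^{23}_x)$ and $(u^1_x,u^3_x,\nu^{13}_x)$ of \cref{cor:BimoduleFrames}, which are exactly the ingredients entering both \cref{diag:FusionProductOnF} and the definition of a smooth section.

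The core computation is then immediate: given a smooth map $v:W\to\mathcal{F}^{\smooth}$ and the associated smooth section $\sigma(x)=(\nu^{12}_x\boxtimes\nu^{23}_x)^{*}\chi^{*}(v(x))$ of $p_{12}^{*}\mathcal{F}(LM)\boxtimes p_{23}^{*}\mathcal{F}(LM)$, we apply $\chi$ fibrewise and use the commutativity of \cref{diag:FusionProductOnF} at each $x\in W$ to get
\begin{align*}
\chi_{\beta_1(x),\beta_2(x),\beta_3(x)}(\sigma(x)) &= (\nu^{13}_x)^{*}\circ\chi\circ(\nu^{12}_x\boxtimes\nu^{23}_x)\bigl((\nu^{12}_x\boxtimes\nu^{23}_x)^{*}\chi^{*}(v(x))\bigr)\\
&= (\nu^{13}_x)^{*}\circ\chi\circ\chi^{*}(v(x)) = (\nu^{13}_x)^{*}(v(x)).
\end{align*}
But $x\mapsto(\nu^{13}_x)^{*}(v(x))$ is precisely the section $\sigma_{\tilde\varphi_{13},v}$ into the spinor bundle $\mathcal{F}^{\smooth}(LM)$ over the loops $\beta_1\cup\beta_3$ (more precisely its pullback along the plot), because by \cref{cor:BimoduleFrames} the isometric isomorphism $\psi_{13}$ underlying $\nu_{13}$ satisfies $\psi_{13}([\tilde\varphi_{13},w])=w$, hence $(\nu^{13}_x)^{*}(v(x))=[\tilde\varphi_{13}(x),v(x)]$. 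Since $\tilde\varphi_{13}:W\to\widetilde{L\Spin}(M)$ and $v:W\to\mathcal{F}^{\smooth}$ are both smooth, this is a smooth section in the sense recalled just before \cref{lem:smoothlocalsectionsforfusion}; thus $\chi$ carries the chosen local smooth section to a smooth section of $p_{23}^{*}\mathcal{F}(LM)$.

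It remains to observe that this suffices: smoothness of a section was defined to be a local property, and every smooth section of $p_{12}^{*}\mathcal{F}(LM)\boxtimes p_{23}^{*}\mathcal{F}(LM)$ is by definition locally of the form \cref{eq:localsmoothsection}, so the computation above, performed on a neighborhood of each point, shows that its image under $\chi$ is locally of the required form. I do not expect any genuine obstacle here — the work has all been front-loaded into \cref{th:fusionfibrewise}, \cref{cor:BimoduleFrames} and \cref{lem:smoothlocalsectionsforfusion}, and the only mild subtlety is bookkeeping: making sure that the triple of lifts produced by \cref{lem:smoothlocalsectionsforfusion} is exactly the one governing the commuting square \cref{diag:FusionProductOnF} (which it is, by construction, since that square is precisely the defining property of $\chi_{\beta_1,\beta_2,\beta_3}$ relative to such lifts), and that on overlaps of two such neighborhoods the two descriptions agree — but agreement is automatic because both compute the same fibrewise map $\chi_{\beta_1,\beta_2,\beta_3}$.
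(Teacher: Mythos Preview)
Your proposal is correct and follows essentially the same argument as the paper: use the lifts from \cref{lem:smoothlocalsectionsforfusion} to invoke the commuting square \cref{diag:FusionProductOnF}, cancel the unitaries, and identify the result with the smooth section $\sigma_{\tilde\varphi_{13},v}$. The only difference is cosmetic --- you spell out the cancellations $(\nu^{12}_x\boxtimes\nu^{23}_x)(\nu^{12}_x\boxtimes\nu^{23}_x)^{*}=\id$ and $\chi\chi^{*}=\id$ explicitly, whereas the paper packages this into a single appeal to the diagram.
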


\begin{proof}
Let $\sigma$ be a smooth section. Smoothness of $\chi \circ \sigma$ can be checked locally on an open set $W$, on which sections as in  \cref{lem:smoothlocalsectionsforfusion} and a smooth map $v:W \to \mathcal{F}^{\smooth}$ exist such that \cref{eq:localsmoothsection} holds. The conditions in \cref{lem:smoothlocalsectionsforfusion} imply that at each $x\in W$,  diagram \cref{diag:FusionProductOnF} in \cref{rem:conditionfusion} is commutative, saying that 
\begin{equation*}
\sigma(x)=(\nu^{13}_x)^{-1}(v(x))\in (p_{13}^{*}\mathcal{F}(LM))_{c(x)}\text{.}
\end{equation*}
Thus, $\sigma|_W$ is (the completion of) the smooth section $\sigma_{\tilde\varphi_{13},v}:W \to p_{13}^{*}\mathcal{F}^{\smooth}(LM)$, and hence smooth.
\end{proof}

\subsection{The stringor bundle}
 
 \label{sec:stringorbundle}
 
 Given the spinor bundle $\mathcal{F}^{\smooth}(LM)$ on the loop space $LM$ together with its Connes fusion product, it is possible to assemble a structure that deserves to be called the \emph{stringor bundle} of the string manifold $M$ (this terminology is due to Stolz and Teichner \cite{stolz3}). While Stolz and Teichner understood the stringor bundle  as the pair of the spinor bundle on loop space and its Connes fusion product (both not yet constructed at that time), our aim is to exhibit the stringor bundle as a higher structure on the manifold $M$, rather than on its loop space.

Suitable for our stringor bundle  is the framework  of 2-vector bundles. Roughly speaking, a \emph{2-vector bundle} is meant to be a bundle version of a \emph{2-vector space}, and the bicategory of 2-vector spaces is by definition the bicategory of algebras, bimodules, and intertwiners \cite{ST04,Schreiber2009}. 2-vector bundles will be discussed in a comprehensive way in \cite{Kristel2021a,Kristel2021b}.
Let us first take all algebras and modules to be finite-dimensional and over $\C$. In this setting, 
a 2-vector bundle over a smooth manifold $M$ consists of a surjective submersion $\pi:Y \to M$, for instance, the disjoint union of open sets of a cover, and:
\begin{enumerate}[(a)]

\item 
an algebra bundle $\mathcal{A}$ over $Y$

\item
an invertible  $p_1^{*}\mathcal{A}$-$p_2^{*}\mathcal{A}$-bimodule bundle $\mathcal{M}$ over the double fibre product $Y^{[2]}$

\item
an invertible intertwiner $\mu: p_{12}^{*}\mathcal{M} \otimes_{p_2^{*}\mathcal{A}} p_{23}^{*}\mathcal{M} \to p_{13}^{*}\mathcal{M}$ of $p_1^{*}\mathcal{A}$-$p_3^{*}\mathcal{A}$-bimodule bundles over $Y^{[3]}$

\end{enumerate}
such that $\mu$ satisfies an associativity condition over $Y^{[4]}$.

An example of a 2-vector bundle is a line bundle gerbe; there, the algebra bundle $\mathcal{A}$ is trivial with fibre $\C$, and the invertible bimodule bundle $\mathcal{M}$ is just a complex line bundle. A result in bundle gerbe theory is
that (every) bundle gerbe can be obtained, via a procedure called \emph{regression}, from a line bundle $\mathcal{L}$ on loop space and a fusion product $\lambda$ in the sense of \cref{def:fusion product}, see \cite{waldorf10,waldorf11}, at least if one allows $Y$ to be a diffeological space. Namely, one chooses a base point $x\in M$, and considers $Y=P_xM$, the diffeological space of smooth paths with sitting instants starting at $x$, with $\pi\defeq ev_1$ the end-point evaluation. Then, we have the smooth map $\cup: P_xM^{[2]} \to LM$ and obtain the line bundle $\mathcal{M} \defeq  \cup^{*}\mathcal{L}$. The intertwiner $\mu$ is the restriction of the fusion product $\lambda$ to $P_xM^{[3]} \subset PM^{[3]}$.

We extend  regression to the spinor bundle and its Connes fusion product. Let $M$ be a string manifold with a fixed string structure and a fixed base point $x\in M$. Then,  the  \emph{stringor 2-vector bundle} on $M$ consists of the following structure:
\begin{enumerate}[(a)]

\item 
The diffeological space $P_xM$ together with the end-point evaluation $ev_1: P_xM \to M$.

\item
The rigged von Neumann algebra bundle $\mathcal{N}^{\smooth}$ over $P_xM$, obtained as the restriction of the rigged von Neumann algebra bundle constructed in \cref{sec:algebrabundlepathspace} to $P_xM \subset PM$. 

\item
The rigged von Neumann $p_1^{*}\mathcal{N}^{\smooth}$-$p_2^{*}\mathcal{N}^{\smooth}$-bimodule bundle $\cup^{*}\mathcal{F}^{\smooth}(LM)$ over $P_xM^{[2]}$, constructed in \cref{lem:vonNeumannBimodule} and restricted to $P_xM^{[2]} \subset PM^{[2]}$. 

\item
The Connes fusion product 
\begin{equation*}
\chi: p_{12}^{*}\cup^{*}\mathcal{F}(LM) \boxtimes_{p_2^{*}\mathcal{N}} p_{23}^{*}\cup^{*}\mathcal{F}(LM) \to p_{13}^{*}\cup^{*}\mathcal{F}(LM)
\end{equation*}
over $P_xM^{[3]}$, constructed fibrewise in \cref{th:fusionfibrewise}, which is associative (\cref{th:fusionass}).

\end{enumerate}
Unfortunately, we only have an incomplete understanding of a version of 2-vector bundles that could be a home for our  stringor bundle. There exists a bicategory of von Neumann algebras, bimodules, and intertwiners, with the composition by Connes fusion \cite{Brouwer2003,ST04}. However, as explained in \cref{sec:HilbertBundle}, a proper discussion of a bundle version requires \emph{rigged} von Neumann algebras and bimodules. So far we have not been able to lift Connes fusion to the setting of rigged von Neumann bimodules. Thus, we currently do not have a properly defined bicategory of rigged von Neumann algebra bundles, bimodule bundles, and intertwiners, and hence cannot establish the stringor bundle as an object in a corresponding bicategory of \quot{rigged von Neumann 2-vector bundles}.

Yet, our stringor bundle  realizes another claim of Stolz and Teichner \cite[Cor. 5.0.4]{ST04}, namely that a string structure on $M$ gives rise to a family of von Neumann algebras parameterized by the points of $M$. Namely, for  $y\in M$,  we obtain from our stringor bundle a family $(\mathcal{N}_{\beta})$ of von Neumann algebras, indexed by the set $P_{x,y}M$ of paths connecting the base point $x$ with $y$. Moreover, these von Neumann algebras are pairwise related by a canonical invertible $\mathcal{N}_{\beta_1}$-$\mathcal{N}_{\beta_2}$-bimodule $\mathcal{F}(LM)_{\beta_1\cup\beta_2}$, in a way compatible with triples and quadruples of indices. Such a structure is as good as a single von Neumann algebra; thus, our stringor bundle is a family of von Neumann algebras parameterized by the points of $M$, associated to a string structure.

On the other hand, it is clear that the described definition of the stringor bundle is not quite optimal. For example, it depends on the choice of a base point $x$, which is typical disadvantage of regression. Then, its construction depends rather indirectly on the string structure, and the string 2-group makes no direct appearance. We expect that there will be a more direct construction in the future that will avoid these problems. But even if such a construction is found, we believe that the results of the present article will remain useful, for instance to construct differential operators acting on spinors on loop spaces, whereas no theory of  operators acting on sections of 2-vector bundles is available.

\appendix

\section{Bimodules of von Neumann algebras}\label{sec:VNABimodules}

\subsection{Standard forms of a von Neumann algebra}

\label{sec:StandardForms}

In  this appendix we recall some facts about standard forms of von Neumann algebras. 
We first recall the notion of  modules of von Neumann algebras.
Let $\mc{A}_{1}$ and $\mc{A}_{2}$ be von Neumann algebras.
A \emph{left $\mc{A}_{1}$-module} is a Hilbert space $H$ equipped with a normal (i.e.~$\sigma$-weakly continuous) $*$-homomorphism $\mc{A}_{1} \rightarrow \mc{B}(H)$.

We adopt the notation $a \lact v$ for the element $a \in \mc{A}_{1}$ acting on the vector $v \in H$ from the left.
A \emph{right $\mc{A}_{2}$-module} is a Hilbert space $H$ equipped with a normal $*$-homomorphism $\mc{A}_{2}^{\text{opp}} \rightarrow \mc{B}(H)$, (where $\mc{A}^{\mathrm{opp}}_{2}$ is the opposite algebra of $\mc{A}_{2}$).
We adopt the notation $v \ract a$ for the right action of $a \in \mc{A}_{2}$ on $v \in H$.
An \emph{$\mc{A}_{1}$-$\mc{A}_{2}$-bimodule} is a Hilbert space $H$ which is a left $\mc{A}_{1}$-module and at the same time a right $\mc{A}_{2}$-module, such that the left and right actions commute
If $H$ is an $\mathcal{A}_1$-$\mathcal{A}_2$-bimodule, and $\tilde H$ is an $\widetilde{\mathcal{A}}_1$-$\widetilde{\mathcal{A}}_2$-bimodule, then an \emph{intertwiner} from $H$ to $\tilde H$ is a triple $(f_1,f_2,T)$ consisting of normal $*$-homomorphisms $f_1: \mathcal{A}_1\to\widetilde{\mathcal{A}}_1$ and $f_2: \mathcal{A}_2\to\widetilde{\mathcal{A}}_2$, and of a bounded linear operator $T: H \to \tilde H$ that intertwines both actions along $f_1$ and $f_2$, i.e.,
\begin{equation*}
T(a_1\lact v \ract a_2) = f_1(a_1) \lact T(v) \ract f_2(a_2)
\end{equation*}
for all $a_1\in \mathcal{A}_1$, $a_2\in \mathcal{A}_2$, and $v\in H$. If $f_1$ and $f_2$ are identities, we just say that $T$ is an intertwiner of $\mathcal{A}_1$-$\mathcal{A}_2$-bimodules. 

Next we recall the notion of a standard form, see \cite[Chapter IX, Definition 1.13]{Ta03} or \cite{Ha75}.
\begin{definition}\label{def:GeneralStandardForm}
        A \emph{standard form} of a von Neumann algebra $\mc{A}$ is a quadruple $(\mc{A}, H, J, P )$, where $H$ is a left $\mc{A}$-module, $J$ is an anti-linear  isometry with $J^{2} = 1$, and $P$ is a closed self-dual cone in $H$, subject to the following conditions:
        \begin{enumerate}
                \item $J\mc{A}J = \mc{A}'$
                \item $JaJ = a^{*}$ for all $a \in \mc{A} \cap \mc{A}'$
                \item $J v = v$ for all $v \in P$
                \item $aJaJP \subseteq P$ for all $a \in \mc{A}$
        \end{enumerate}
\end{definition}

The following result, proved in \cite[Theorem 2.3]{Ha75}, tells us that standard forms are unique up to unique isomorphism.
\begin{theorem}\label{thm:StandardFormIsUnique}
        Suppose that $(\mc{A}_{1}, H_{1}, J_{1}, P_{1} )$ and $(\mc{A}_{2}, H_{2}, J_{2}, P_{2} )$ are standard forms, and that $\pi$ is an isomorphism of $\mc{A}_{1}$ onto $\mc{A}_{2}$. Then, there exists a unique unitary operator $u$ from $H_{1}$ onto $H_{2}$ such that
        \begin{enumerate}
                \item $\pi(a) = uau^{*}$ for all $a \in \mc{A}_{1}$
                \item $J_{2} = u J_{1}u^{*}$
                \item $P_{2} = u P_{1}$
        \end{enumerate}
\end{theorem}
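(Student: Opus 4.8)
\emph{Setup and plan.} The plan is to reduce to the case $\pi=\mathrm{id}$, settle uniqueness by a direct argument involving the cone, and then obtain existence via Haagerup's correspondence between the self-dual cone and the normal positive functionals of $\mc{A}$. For the reduction: given a standard form $(\mc{A}_1,H_1,J_1,P_1)$ and an isomorphism $\pi:\mc{A}_1\to\mc{A}_2$, turn $H_1$ into an $\mc{A}_2$-module by $a\cdot v:=\pi^{-1}(a)v$. The operators acting on $H_1$ form literally the same subset of $\mc{B}(H_1)$, so $\mc{A}_2'=\mc{A}_1'$, the two algebras have the same center, and since conditions (1)--(4) of \cref{def:GeneralStandardForm} refer only to these operators together with $J_1$ and $P_1$, the quadruple $(\mc{A}_2,H_1,J_1,P_1)$ is again a standard form. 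Hence it suffices to compare two standard forms $(\mc{A},H_i,J_i,P_i)$, $i=1,2$, of one fixed von Neumann algebra $\mc{A}$ and to produce a unitary $u:H_1\to H_2$ with $uau^{*}=a$, $uJ_1u^{*}=J_2$, $uP_1=P_2$.

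\emph{Uniqueness.} If $u,u'$ both work, put $w:=(u')^{*}u\in\U(H_1)$. Then $waw^{*}=a$ for all $a\in\mc{A}$ (so $w\in\mc{A}'$), $wJ_1=J_1w$, and $wP_1=P_1$. Since $w$ commutes with $J_1$ we have $w=J_1wJ_1\in J_1\mc{A}'J_1=\mc{A}$ by condition (1), so $w$ lies in $\mc{A}\cap\mc{A}'$; condition (2) then gives $w=J_1wJ_1=w^{*}$, hence $w=2e-1$ for a central projection $e$. Applying condition (4) to $e$ and to $1-e$ (using $J_1eJ_1=e$) shows $eP_1\subseteq P_1$ and $(1-e)P_1\subseteq P_1$; applying $1-e$ to $wP_1=P_1$ yields $(1-e)P_1=-(1-e)P_1$, and pointedness of the self-dual cone $P_1$ forces $(1-e)P_1=\{0\}$. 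As $P_1$ is total in $H_1$, this gives $1-e=0$, so $w=1$ and $u=u'$.

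\emph{Existence.} This is the substantial part and follows \cite[Sec.~2]{Ha75}. One first establishes that in any standard form $(\mc{A},H,J,P)$ every normal positive functional $\omega$ on $\mc{A}$ has a unique vector representative $\xi(\omega)\in P$ with $\omega(a)=\langle a\,\xi(\omega),\xi(\omega)\rangle$, that $\omega\mapsto\xi(\omega)$ is a homeomorphism of $\mc{A}_{*}^{+}$ onto $P$, and that $P$ is total in $H$; this uses self-duality of $P$ and the existence of $J$-invariant vector representatives. The crucial point is that the numbers $\langle a\,\xi(\omega),\xi(\psi)\rangle$, for $a\in\mc{A}$ and $\omega,\psi\in\mc{A}_{*}^{+}$, are \emph{intrinsic}: when $\psi\leqslant\lambda\omega$ one has $\xi(\psi)=T\xi(\omega)$ for a unique $T\in\mc{A}'_{+}$ that is a canonical $L^{2}$-Radon--Nikodym object attached to $(\omega,\psi)$ (equivalently, built from the relative modular operator), so the inner product is determined by $\mc{A}$, $\omega$, $\psi$ (and $a$) alone, and the general case follows by density and polarization. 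Granting this, one defines $u$ on the total set $\{\xi_1(\omega):\omega\in\mc{A}_{*}^{+}\}=P_1$ by $u\,\xi_1(\omega):=\xi_2(\omega)$; the intrinsic inner-product identity makes $u$ a real-linear isometry of $H_1^{J_1}=\overline{P_1-P_1}$ onto $H_2^{J_2}$, which complexifies to a unitary $u:H_1\to H_2$. By construction $uP_1=P_2$, whence $uJ_1u^{*}=J_2$ since in a standard form $J$ is the conjugation fixing $\overline{P-P}$, and the same identity read with $a$ present gives $uau^{*}=a$ for all $a\in\mc{A}$.

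\emph{Main obstacle.} The genuinely hard step is the intrinsicness of the cone inner products; this is where relative modular theory must enter. In the $\sigma$-finite case (which is all that is used in this paper) one may instead invoke Tomita--Takesaki theory for a faithful normal state and the uniqueness of the associated natural cone, and pass to the general case by central decomposition and amplification. Everything else --- the reduction, the uniqueness argument, and the assembly of $u$ --- is formal once this input is available.
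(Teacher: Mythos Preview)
The paper does not supply its own proof of this theorem; it simply quotes the result and cites \cite[Theorem 2.3]{Ha75}. Your proposal is in fact a faithful outline of Haagerup's original argument from that reference, so in that sense you and the paper agree on the source, and your sketch is essentially correct.

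A few minor comments on the sketch itself. In the uniqueness part you use that $P_1$ is total in $H_1$; this is true but is not part of \cref{def:GeneralStandardForm} and is itself one of the preliminary facts Haagerup establishes (you acknowledge it in the existence paragraph). It would be cleaner to state totality of $P$ as a preliminary lemma before running either half. Also, in deducing $uJ_1u^{*}=J_2$ from $uP_1=P_2$ you appeal to the characterization of $J$ as the conjugation fixing $\overline{P-P}$ pointwise; this is correct but again is a consequence of the structure theory of standard forms rather than of the bare axioms, so it deserves a sentence of justification. Your honest flagging of the ``intrinsicness of cone inner products'' as the hard analytic input is exactly right: this is where the relative modular theory (or, in the $\sigma$-finite case, the Connes--Araki natural cone machinery) is genuinely needed, and the rest is indeed formal once that is in hand.
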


Let $H$ be a left $\mc{A}$-module.
A vector $\xi \in H$ is called \emph{cyclic} if $\mc{A} \lact \xi$ is dense in $H$, and it is called \emph{separating} if the map $\mc{A} \rightarrow H, a \mapsto a \lact \xi$ is injective.
A vector $\xi \in H$ is separating for $\mc{A}$ if and only if it is cyclic for the commutant $\mc{A}'$ of $\mathcal{A}$ in $\mathcal{B}(H)$. 
If a cyclic and separating vector $\xi \in H$ is given, then one can equip $H$ with the structure of a standard form of $\mc{A}$, a fact that we make use of in \cref{sec:FockSpaceAsStandardForm}.
We give the main points of the construction here.

First, we consider the densely defined Tomita operator $S: H \rightarrow H$. It is defined to be the closure of the operator
\begin{equation*}
 \mc{A} \lact \xi \rightarrow \mc{A} \lact \xi, \quad a \lact \xi \mapsto a^{*} \lact \xi.
\end{equation*}
We then write $S = J \Delta^{1/2}$ for the polar decomposition of $S$, where $J$ is an anti-unitary map called the \emph{modular conjugation} and $\Delta^{1/2}$ is a positive unbounded map called the \emph{modular operator}.
A fundamental result of Tomita-Takesaki theory is then the following, proven in, for example \cite[Chapter IX]{Ta03}.
\begin{theorem}\label{th:tota}
 The assignment $a \mapsto Ja^{*}J$ is an anti-isomorphism of von Neumann algebras from $\mc{A}$ onto its commutant $\mc{A}'$.
\end{theorem}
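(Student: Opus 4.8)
The plan is to run through the standard construction of the modular objects associated with $\xi$ and to establish the two pillars of Tomita--Takesaki theory: invariance of $\mc{A}$ under the modular flow and $J\mc{A}J=\mc{A}'$. First I would record the elementary structure. Besides the Tomita operator $S$ (the closure of $a\lact\xi\mapsto a^{*}\lact\xi$ on the dense domain $\mc{A}\lact\xi$), one introduces $F\defeq S^{*}$; a short computation using that $\xi$, being cyclic and separating for $\mc{A}$, is also separating and cyclic for $\mc{A}'$, shows that $F$ is the closure of $a'\lact\xi\mapsto (a')^{*}\lact\xi$ on $\mc{A}'\lact\xi$. Since $\xi$ is separating, $S$ is injective with dense range and $S^{-1}=S$. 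From the polar decompositions $S=J\Delta^{1/2}$ and $F=S^{*}=J\Delta^{-1/2}$ one extracts, by purely operator-theoretic manipulation of closed operators, the identities $J^{*}=J$, $J^{2}=1$, $J\Delta J=\Delta^{-1}$, $J\Delta^{1/2}J=\Delta^{-1/2}$, $J\Delta^{it}=\Delta^{it}J$, together with $\Delta^{it}\xi=\xi$ and $J\xi=\xi$. Everything up to this point is formal bookkeeping.

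The substance is \emph{Tomita's theorem}: $\sigma_{t}(\mc{A})\defeq\Delta^{it}\mc{A}\Delta^{-it}=\mc{A}$ for all $t\in\mathbb{R}$, and $Ja^{*}J\in\mc{A}'$ for $a\in\mc{A}$. I would prove the flow invariance by the classical complex-analytic route: for $a\in\mc{A}$ the vector-valued map $z\mapsto\Delta^{z}a\lact\xi$ is analytic on the strip $0<\mathrm{Re}\,z<1/2$ with boundary value $\Delta^{1/2}a\lact\xi=Ja^{*}\lact\xi$, and symmetrically $z\mapsto\Delta^{-z}a'\lact\xi$ for $a'\in\mc{A}'$; pairing these two families and applying a Phragm\'en--Lindel\"of / three-lines estimate produces a KMS-type relation forcing the sesquilinear form of $\Delta^{it}a\Delta^{-it}$ to be implemented by an element of $\mc{A}$. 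To avoid circularity (one cannot yet average $\sigma_{t}(a)$ over $t$, since membership in $\mc{A}$ is what is being proved) I would follow van Daele's streamlined device: for $\lambda>0$ build bounded operators from $a$ using the resolvents $(\lambda+\Delta)^{-1}$ applied to $a\lact\xi$ and $a^{*}\lact\xi$, show directly that these lie in $\mc{A}$, and then let $\lambda$ vary to recover $\sigma_{t}(a)\in\mc{A}$ for all real $t$; invariance of $\mc{A}'$ follows by taking commutants. Once $\sigma_{t}(\mc{A})=\mc{A}$ is available, $Ja^{*}J\in\mc{A}'$ for $a$ analytic for $\sigma$ is a direct computation on the dense domain $\mc{A}\lact\xi$ using $S=J\Delta^{1/2}$ and the fact that elements of $\mc{A}'$ pass through $S$; a density argument then gives $J\mc{A}J\subseteq\mc{A}'$. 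This is the paragraph that will carry all the difficulty, following, e.g., \cite[Chapter IX]{Ta03}.

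Finally I would conclude by symmetry and formal arguments. Applying the whole construction to the pair $(\mc{A}',\xi)$ — whose Tomita operator is $F=S^{*}=J\Delta^{-1/2}$, with the same modular conjugation $J$ and modular operator $\Delta^{-1}$ — yields $J\mc{A}'J\subseteq\mc{A}$, i.e.\ $\mc{A}'\subseteq J\mc{A}J$, so $J\mc{A}J=\mc{A}'$. Then the map $a\mapsto Ja^{*}J$ is linear, being the composite of the two anti-linear maps $a\mapsto a^{*}$ and $a\mapsto JaJ$; it carries $\mc{A}$ bijectively onto $J\mc{A}^{*}J=J\mc{A}J=\mc{A}'$; it reverses products, since $J(ab)^{*}J=Jb^{*}J\,Ja^{*}J$; it preserves adjoints, since $(Ja^{*}J)^{*}=JaJ=J(a^{*})^{*}J$; and it is $\sigma$-weakly continuous because conjugation by the fixed anti-unitary $J$ is. Hence $a\mapsto Ja^{*}J$ is a normal anti-$*$-isomorphism of $\mc{A}$ onto $\mc{A}'$, as claimed. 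The single genuine obstacle is the middle step — that the modular automorphism group preserves $\mc{A}$ — which is precisely the non-formal heart of Tomita--Takesaki theory; the surrounding steps are routine manipulations with unbounded closed operators and normality.
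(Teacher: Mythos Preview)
Your sketch is a reasonable outline of the standard Tomita--Takesaki argument (essentially van Daele's resolvent approach combined with the usual symmetry between $(\mc{A},\xi)$ and $(\mc{A}',\xi)$), and the formal wrap-up showing that $a\mapsto Ja^{*}J$ is a normal anti-$*$-isomorphism is correct. However, the paper does not prove this theorem at all: it is stated as a black box with the remark ``proven in, for example \cite[Chapter IX]{Ta03}'', and then used. So there is no ``paper's own proof'' to compare against---you have written far more than the paper does. If your goal is to match the paper, the appropriate move is simply to cite Takesaki and move on; if your goal is an independent self-contained proof, then be aware that the middle paragraph (modular invariance via resolvents) is the entire content and would need to be fully fleshed out, not sketched, to count as a proof.
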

It is proved in \cite[Lemma 3]{Ara74} that if $a \in \mc{A} \cap \mc{A}'$, then $JaJ = a^{*}$.
We define $P\subset H$ to be the closure of $\{J a J a \lact \xi \in H \mid a \in \mc{A} \}$. Then $P$ is a closed self-dual cone in $H$.
It is proved in \cite[Theorem 4]{Ara74} that $Jv = v$ for all $v \in P$ and that $aJaJP \subseteq P$ for all $a \in \mc{A}$. In conclusion, we have the following result.

\begin{proposition}\label{lem:TTstandardform}
 The quadruple $(\mc{A}, H, J, P)$ is a standard form of $\mc{A}$.
\end{proposition}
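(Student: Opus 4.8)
The statement is a compilation of classical facts from Tomita--Takesaki theory, so the plan is to verify, one by one, the requirements in \cref{def:GeneralStandardForm}, drawing on the results already assembled above and on \cite{Ara74}. First I would record that $H$ is a left $\mc{A}$-module by hypothesis, and that $J$ is an anti-linear isometry with $J^{2}=1$: since $\xi$ is cyclic and separating, the Tomita operator $S$ is densely defined, closed, injective, and has dense range, so the anti-linear partial isometry appearing in its polar decomposition $S=J\Delta^{1/2}$ is genuinely anti-unitary. The identity $J^{2}=1$ then follows from the standard computation using $S=S^{-1}$ on its domain (which holds because $S(a\lact\xi)=a^{*}\lact\xi$ implies $S^{2}=\id$ on the dense subspace $\mc{A}\lact\xi$); this can be cited from \cite[Chapter IX]{Ta03} or spelled out in two lines.

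Next I would dispatch the four numbered conditions. Condition (1), $J\mc{A}J=\mc{A}'$, is immediate from \cref{th:tota}: that result says $a\mapsto Ja^{*}J$ is an anti-isomorphism of $\mc{A}$ onto $\mc{A}'$, so $J\mc{A}J=J\mc{A}^{*}J=\mc{A}'$. Condition (2), $JaJ=a^{*}$ for $a\in\mc{A}\cap\mc{A}'$, is \cite[Lemma 3]{Ara74}. For conditions (3) and (4), together with the assertion that $P$ is a closed self-dual cone, I would invoke \cite[Theorem 4]{Ara74}, which proves precisely that the closure $P$ of $\{\,JaJa\lact\xi\mid a\in\mc{A}\,\}$ is a closed self-dual cone, that $Jv=v$ for all $v\in P$, and that $aJaJP\subseteq P$ for all $a\in\mc{A}$.

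There is no genuine obstacle here: the proposition is a bookkeeping step whose entire mathematical content --- the existence and properties of the modular conjugation and the self-duality of the natural cone --- is imported from Tomita--Takesaki theory and from \cite{Ara74}. The only thing requiring any care is matching our notation (the cyclic separating vector $\xi$, the operators $S$, $J$, $\Delta^{1/2}$, and the cone $P$) to the hypotheses of those references, which is routine.
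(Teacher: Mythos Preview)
Your proposal is correct and matches the paper's approach exactly: the paper does not give a separate proof but assembles the conditions in the text preceding the proposition, citing \cref{th:tota} for condition (1), \cite[Lemma 3]{Ara74} for condition (2), and \cite[Theorem 4]{Ara74} for the self-dual cone and conditions (3) and (4). Your additional remarks on the anti-unitarity of $J$ and $J^{2}=1$ are a welcome bit of extra care but change nothing substantive.
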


\begin{remark}
\label{re:bimodulefromstandardform}
We remark that any standard form $(\mc{A},H,J,P)$ of a von Neumann algebra $\mc{A}$ can be equipped with the structure of an $\mc{A}$-$\mc{A}$-bimodule by defining the right action as
\begin{align*}
 H \otimes \mc{A} \rightarrow H, \quad
 v \otimes a \mapsto Ja^{*}J \lact v;
\end{align*}
\cref{th:tota} readily shows that left and right action commute.
We shall then write $v \ract a$ for the right action of $a$ on $v$. Further, in \cref{thm:StandardFormIsUnique}, the triple $(\pi,\pi,u)$ is automatically a unitary  intertwiner of bimodules.
\end{remark}

It is well-known that, through the GNS construction, any normal state on a von Neumann algebra $\mc{A}$ produces a representation of $\mc{A}$ which has a cyclic vector.
If the state is faithful, then it has a cyclic and separating vector.
This construction will be useful later, so we review the main steps now.
Let $\phi: \mc{A} \rightarrow \C$ be a faithful and normal state on a von Neumann algebra $\mc{A}$. Then the assignment $\mc{A} \times \mc{A} \rightarrow \C, (a,b) \mapsto \phi(b^{*}a)$ is a non-degenerate sesquilinear form on $\mc{A}$.
We write $L^{2}_{\phi}(\mc{A})$ for the completion of $\mc{A}$ with respect to this sesquilinear form. The algebra $\mc{A}$ acts from the left on $L^{2}_{\phi}(\mc{A})$ by (the extension of) left multiplication. Clearly the identity $\mathds{1} \in \mc{A} \subseteq L^{2}_{\phi}(\mc{A})$ is a cyclic and separating vector for this left action. It follows that the quadruple $(\mc{A}, L^{2}_{\phi}(\mc{A}), J, P )$ is a standard form of $\mc{A}$.

\begin{lemma}\label{lem:PutInStandardForm}
    Let $H$ be a left $\mc{A}$-module, and let $\xi \in H$ be a cyclic and separating vector.
    Let $\phi: \mc{A}\rightarrow \C$ be the faithful and normal state $\phi(a) = \langle a \lact \xi, \xi \rangle$.
    The unitary map $u: H \rightarrow L^{2}_{\phi}(\mc{A})$ from \cref{thm:StandardFormIsUnique} with respect to $\pi=1$ is given by the closure of the map
     \begin{equation*}
      u: a \lact \xi \mapsto a.
     \end{equation*}
\end{lemma}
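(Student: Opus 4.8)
The statement to prove is \cref{lem:PutInStandardForm}: given a left $\mc{A}$-module $H$ with a cyclic and separating vector $\xi$, and the faithful normal state $\phi(a)=\langle a\lact\xi,\xi\rangle$, the unitary $u:H\to L^2_\phi(\mc{A})$ supplied by the uniqueness theorem \cref{thm:StandardFormIsUnique} (applied with $\pi=\id$) is the closure of the densely-defined map $a\lact\xi\mapsto a$. The idea is simply to \emph{verify} that the closure of the map $u_0: a\lact\xi\mapsto a$ is a well-defined unitary and that it satisfies the three characterizing properties from \cref{thm:StandardFormIsUnique}; uniqueness in that theorem then forces $u$ to equal it.

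\textbf{Step 1: $u_0$ is a well-defined densely-defined isometry.} The domain $\mc{A}\lact\xi$ is dense in $H$ since $\xi$ is cyclic, and the codomain $\mc{A}$ is dense in $L^2_\phi(\mc{A})$ by construction. Because $\xi$ is separating, the assignment $a\lact\xi\mapsto a$ is single-valued. For the isometry property, compute for $a,b\in\mc{A}$
\begin{equation*}
\langle u_0(a\lact\xi),u_0(b\lact\xi)\rangle_{L^2_\phi(\mc{A})} = \phi(b^*a) = \langle (b^*a)\lact\xi,\xi\rangle = \langle a\lact\xi, b\lact\xi\rangle_H,
\end{equation*}
using that $\phi(b^*a)=\langle a\lact\xi,b\lact\xi\rangle$ by definition of $\phi$ and the $*$-representation property. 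Hence $u_0$ extends by continuity to an isometry $u:H\to L^2_\phi(\mc{A})$, and since its range contains the dense subspace $\mc{A}$, it is surjective, i.e.\ unitary.

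\textbf{Step 2: $u$ intertwines the left actions, i.e.\ $u a u^* = a$ for all $a\in\mc{A}$.} On the dense subspace this is immediate: for $a,b\in\mc{A}$, $u(a\lact(b\lact\xi)) = u((ab)\lact\xi) = ab = a\lact u(b\lact\xi)$, where the last $\lact$ is the left multiplication action on $L^2_\phi(\mc{A})$; by density and boundedness this gives $u(a\lact v)=a\lact u(v)$ for all $v\in H$, which is property (1) of \cref{thm:StandardFormIsUnique} with $\pi=\id$.

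\textbf{Step 3: $u$ matches the modular data, $J_{L^2_\phi}=uJ_Hu^*$ and $P_{L^2_\phi}=uP_H$.} Here I would invoke that the Tomita operator $S_H$ associated to $(\mc{A},H,\xi)$ is $a\lact\xi\mapsto a^*\lact\xi$ and the Tomita operator $S_{L^2_\phi}$ associated to $(\mc{A},L^2_\phi(\mc{A}),\mathds{1})$ is $a\mapsto a^*$; since $u$ sends $a\lact\xi$ to $a$ it intertwines these two Tomita operators on the respective cores, hence $u S_H u^* = S_{L^2_\phi}$, and taking polar decompositions yields $uJ_Hu^*=J_{L^2_\phi}$ and $u\Delta_H^{1/2}u^*=\Delta_{L^2_\phi}^{1/2}$. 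For the cones, recall $P_H$ is the closure of $\{J_Ha J_H a\lact\xi : a\in\mc{A}\}$ and $P_{L^2_\phi}$ is the closure of $\{J_{L^2_\phi}aJ_{L^2_\phi}\,a : a\in\mc{A}\}$; applying $u$ and using Steps 2--3 maps the first generating set onto the second, so $uP_H=P_{L^2_\phi}$. This establishes properties (2) and (3), so by the uniqueness clause of \cref{thm:StandardFormIsUnique} the $u$ just constructed is \emph{the} unitary asserted there. Actually I realize that since $\pi=\id$, one could also argue more softly, but the explicit matching of Tomita operators is the cleanest route.

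\textbf{Main obstacle.} Everything here is routine once one is willing to quote the standard identification of the Tomita operators; the only mild care needed is the domain/core bookkeeping in Step 3 --- one must check that $\mc{A}\lact\xi$ is a core for $S_H$ (true by definition of $S_H$ as the closure of the map on $\mc{A}\lact\xi$) and that $u$ maps this core bijectively onto the corresponding core $\mc{A}$ for $S_{L^2_\phi}$, so that the intertwining relation for the closures is legitimate. This is the one place where a reader might want a sentence of justification rather than a bare assertion.
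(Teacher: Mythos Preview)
Your proof is correct and follows essentially the same approach as the paper: the paper's proof is the one-line assertion that ``a straightforward verification shows that $u$ satisfies properties 1--3 from \cref{thm:StandardFormIsUnique},'' and your Steps 1--3 carry out precisely that verification (isometry, intertwining of the left actions, intertwining of the Tomita operators and hence of $J$ and $P$), then appeal to uniqueness.
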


\begin{proof}
 A straightforward verification shows that  $u$ satisfies properties 1-3 from \cref{thm:StandardFormIsUnique}.
\end{proof}

\subsection{Connes fusion of bimodules}\label{sec:ConnesFusion}

In this appendix we review Connes fusion, and provide results about functoriality and associativity specifically adapted to our situation.
Let $\mc{A}_{1},\mc{A}_{2}$, and $\mc{A}_{3}$ be von Neumann algebras.
Let $H$ be an $\mc{A}_{1}$-$\mc{A}_{2}$-bimodule and let $K$ be an $\mc{A}_{2}$-$\mc{A}_{3}$-bimodule.
Our goal is to explain the definition of the Connes fusion product of $H$ with $K$.
In short, the Connes fusion of $H$ with $K$ is an $\mc{A}_{1}$-$\mc{A}_{3}$-bimodule $H \boxtimes_{\phi} K$, which is defined with respect to some faithful and normal state $\phi: \mc{A}_{2} \rightarrow \C$.
The result will be, up to unique isomorphism, independent of the state $\phi$.
Our main references are \cite{thom11} and \cite[Chapter IX]{Ta03}.
The articles \cite{Brouwer2003} and \cite{Bartels2014} provide results similar to ours concerning functoriality and associativity of Connes fusion.

We consider the standard form $L^{2}_{\phi}(\mc{A}_{2})$ of $\mc{A}_{2}$ as an $\mathcal{A}_2$-$\mathcal{A}_2$-bimodule, as explained in \ref{sec:StandardForms}.
We write $\mc{D}(H,\phi) \defeq \Hom_{-,\mc{A}_{2}}(L^{2}_{\phi}(\mc{A}_{2}),H)$ for the space of bounded right module maps from $L^{2}_{\phi}(\mc{A}_{2})$ into $H$. This space is canonically an $\mc{A}_{1}$-$\mc{A}_{2}$-bimodule; explicitly, if $x \in \mc{D}(H,\phi)$, $v \in L^{2}_{\phi}(\mc{A}_{2})$, $a_{1} \in \mc{A}_{1}$ and $a_{2} \in \mc{A}_{2}$, then we have
\begin{align*}
(a_{1} \lact x)(v) \defeq a_{1} \lact x(v) \quad\text{ and }\quad
(x \ract a_{2})(v) \defeq x(a_{2} \lact v).
\end{align*}
We note that $\mc{D}(H,\phi)$ includes into $H$ as a dense subspace through the map $x \mapsto x(\mathds{1})$. This map is, however, not an intertwiner between bimodules on the nose, in fact, one may verify that the right action is twisted by conjugation by the modular operator $\Delta^{1/2}$.

There is a canonical $\mc{A}_{2}$-valued inner product on $\mc{D}(H, \phi)$, defined as follows.
If $x \in \mc{D}(H,\phi)$, then its adjoint, written $x^{*}$, is an element of $\Hom_{-,\mc{A}_{2}}(H,L^{2}_{\phi}(\mc{A}_{2}))$. Hence, if $x,y \in \mc{D}(H,\phi)$, then
\begin{equation*}
y^{*}x \in \Hom_{-,\mc{A}_{2}}(L^{2}_{\phi}(\mc{A}_{2}),L^{2}_{\phi}(\mc{A}_{2})).
\end{equation*}
There is a canonical isomorphism $p_{\phi}: \Hom_{-,\mc{A}_{2}}(L^{2}_{\phi}(\mc{A}_{2}),L^{2}_{\phi}(\mc{A}_{2})) \rightarrow \mc{A}_{2}$, which is determined by the relation
$p_{\phi}(x) \lact v = x(v)$,
for $x \in \Hom_{-\mc{A}_{2}}(L^{2}_{\phi}(\mc{A}_{2}),L^{2}_{\phi}(\mc{A}_{2}))$ and $v \in L^{2}_{\phi}(\mc{A}_{2})$.
The aforementioned $\mc{A}_{2}$-valued inner product on $\mc{D}(H,\phi)$ is given by
$(x,y) = p_{\phi}(y^{*}x)$.
On the algebraic tensor product $\mc{D}(H,\phi) \otimes K$ we define a (degenerate) sesquilinear form by
        \begin{equation*}
        \langle (x \otimes v), (y \otimes w) \rangle_{\phi} = \langle p_{\phi}(y^{*}x) \lact v, w \rangle_{K}.
        \end{equation*}

\begin{definition}\label{def:ConnesFusionProduct}
        The \emph{Connes fusion product $H \boxtimes_{\phi} K$ of $H$ with $K$ relative to the faithful and normal state $\phi$} of $\mathcal{A}_2$ is  the completion of
        \begin{equation*}
        \mc{D}(H,\phi) \otimes K / \ker \langle \cdot, \cdot \rangle_{\phi}
        \end{equation*}
        with respect to the inner product $\langle \cdot, \cdot \rangle_{\phi}$, with the left $\mathcal{A}_1$ action obtained from the  $\mc{A}_{1}$-$\mc{A}_{2}$-bimodule structure of $\mathcal{D}(H,\phi)$, and the right $\mathcal{A}_3$ action obtained from the $\mc{A}_{2}$-$\mc{A}_{3}$ bimodule structure on $K$.
\end{definition}

The definition above does not treat $H$ and $K$ on equal footing, the following observation tells us that this is just an artifact of our description.
We define $\mc{D}'(K, \phi) = \Hom_{\mc{A}_{2}-}(L^{2}_{\phi}(\mc{A}_{2}),K)$ to be the space of bounded left module maps from $L^{2}_{\phi}(\mc{A}_{2})$ into $K$.
We then have that $\mc{D}'(K,\phi)$ includes into $K$ as a dense subspace, through the map $x\mapsto x(\mathds{1})$.
Using the canonical isomorphism $p'_{\phi}: \Hom_{\mc{A}_{2}-}(L_{\phi}^{2}(\mc{A}_{2}),L_{\phi}^{2}(\mc{A}_{2}))\rightarrow \mc{A}_{2}$ we define sesquilinear forms on $H \otimes \mc{D}'(K,\phi)$ and on $\mc{D}(H,\phi) \otimes \mc{D}'(K,\phi)$ respectively
\begin{align*}
 \langle (v \otimes x'), (w \otimes y') \rangle'_{\phi} &= \langle v \ract p'_{\phi}((y')^{*}x'), w \rangle_{H}, & & v,w \in H, \quad x',y' \in \mc{D}'(K,\phi), \\
 \langle (x \otimes x'), (y \otimes y') \rangle''_{\phi} &= \langle p_{\phi}(y^{*}x) \lact \mathds{1} \ract p'_{\phi}((y')^{*}x'), \mathds{1} \rangle_{L^{2}_{\phi}(\mc{A}_{2})} & & x,y \in \mc{D}(H,\phi) \quad x',y' \in \mc{D}'(K,\phi).
\end{align*}
In \cite[Section 5.2]{thom11} or \cite[Proposition 3.15 \& Definition 3.16]{Ta03} the following is proved:

\begin{lemma}\label{lem:AltFusionDefinition}
 The spaces $H \otimes \mc{D}'(K,\phi)/ \ker \langle \cdot, \cdot \rangle'_{\phi}$ and $\mc{D}(H,\phi) \otimes \mc{D}'(K,\phi)/\ker \langle \cdot, \cdot \rangle''_{\phi}$ are dense in $H \boxtimes_{\phi} K$.
\end{lemma}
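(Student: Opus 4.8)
The statement to prove is \cref{lem:AltFusionDefinition}: that the two alternative models $H \otimes \mc{D}'(K,\phi)/\ker\langle\cdot,\cdot\rangle'_{\phi}$ and $\mc{D}(H,\phi)\otimes\mc{D}'(K,\phi)/\ker\langle\cdot,\cdot\rangle''_{\phi}$ are dense in $H \boxtimes_{\phi} K$. Since the excerpt cites \cite[Section 5.2]{thom11} and \cite[Proposition 3.15 \& Definition 3.16]{Ta03} for this, the role of the proof here is to assemble the standard Tomita--Takesaki facts into a self-contained argument. The plan is to exploit the symmetry of the situation under the modular conjugation $J$, together with the density of $\mc{D}(H,\phi)$ in $H$ and of $\mc{D}'(K,\phi)$ in $K$.

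First I would establish the two elementary density facts that are the backbone of the argument: the inclusion $\mc{D}(H,\phi)\hookrightarrow H$, $x\mapsto x(\mathds{1})$, has dense image, and likewise $\mc{D}'(K,\phi)\hookrightarrow K$, $x'\mapsto x'(\mathds{1})$, has dense image. These are standard (the left/right bounded vectors are dense in any module of a von Neumann algebra equipped with a standard form); I would cite \cite[Chapter IX]{Ta03} or \cite{thom11} rather than reprove them. Next, by \cref{def:ConnesFusionProduct}, $H\boxtimes_\phi K$ is by construction the completion of $\mc{D}(H,\phi)\otimes K/\ker\langle\cdot,\cdot\rangle_\phi$, so the span of classes $[x\otimes w]$ with $x\in\mc{D}(H,\phi)$, $w\in K$ is dense. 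Using density of $\mc{D}'(K,\phi)$ in $K$ and continuity of $w\mapsto \langle x\otimes w, x\otimes w\rangle_\phi = \langle p_\phi(x^*x)\lact w, w\rangle_K$ (which is bounded by $\|p_\phi(x^*x)\|\,\|w\|_K^2$), one sees that classes $[x\otimes x'(\mathds{1})]$ with $x\in\mc{D}(H,\phi)$, $x'\in\mc{D}'(K,\phi)$ are already dense. Matching this against the form $\langle\cdot,\cdot\rangle''_\phi$ — which is precisely designed so that $x\otimes x'\mapsto [x\otimes x'(\mathds{1})]$ is isometric onto its image — gives the second assertion.

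For the first assertion I would run the symmetric argument on the other side. One checks that the assignment $v\otimes x'\mapsto$ (the appropriate class in $H\boxtimes_\phi K$) is well-defined and isometric for $\langle\cdot,\cdot\rangle'_\phi$; concretely, if $v = x(\mathds 1)$ for $x\in\mc D(H,\phi)$ then $v\otimes x'$ should correspond to $[x\otimes x'(\mathds 1)]$, and one extends by continuity in $v$ using that $\langle v\otimes x', v\otimes x'\rangle'_\phi = \langle v\ract p'_\phi((x')^*x'), v\rangle_H \le \|p'_\phi((x')^*x')\|\,\|v\|_H^2$. Combining this with the density established above (classes $[x\otimes x'(\mathds 1)]$ are dense, hence a fortiori the images of $v\otimes x'$ with $v$ ranging over all of $H$ are dense) yields that $H\otimes\mc D'(K,\phi)/\ker\langle\cdot,\cdot\rangle'_\phi$ is dense in $H\boxtimes_\phi K$.

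The main obstacle I anticipate is bookkeeping the modular twist. The inclusion $\mc D(H,\phi)\hookrightarrow H$ is \emph{not} an intertwiner on the nose — as the excerpt itself notes, the right action gets conjugated by $\Delta^{1/2}$ — so one must be careful that the identifications $v\otimes x'\leftrightarrow [x\otimes x'(\mathds 1)]$ and $x\otimes x'\leftrightarrow[x\otimes x'(\mathds 1)]$ genuinely respect the sesquilinear forms and land in the stated quotient. The cleanest route is to verify the isometry property on the dense subspaces where everything is literally a bounded module map (so that $p_\phi$, $p'_\phi$, and the pairing with $\mathds 1$ can be manipulated algebraically), and then invoke continuity; I would lean on \cite[Proposition 3.15]{Ta03} for the precise compatibility of $p_\phi$ and $p'_\phi$ with the $\mc A_2$-bimodule structure on $L^2_\phi(\mc A_2)$. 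No genuinely new idea beyond standard Tomita--Takesaki theory should be required.
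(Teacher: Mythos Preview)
Your sketch is correct and follows the standard route one finds in the cited references. Note, however, that the paper itself does not give a proof of \cref{lem:AltFusionDefinition} at all: it simply records the statement and points to \cite[Section~5.2]{thom11} and \cite[Proposition~3.15 \& Definition~3.16]{Ta03}. So there is no ``paper's own proof'' to compare against; your outline is essentially what those references do, namely use density of left/right bounded vectors together with the (easily checked) isometry of the natural maps between the three pre-Hilbert spaces, which you have identified correctly.
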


\cref{lem:AltFusionDefinition} allows us to identify $H \boxtimes_{\phi} K$ with the completion of $H \otimes \mc{D}'(K,\phi)/\ker \langle \cdot, \cdot \rangle_{\phi}'$, which will be used in \cref{lem:ConnesFusionAssociative} in order to write down the associator for the Connes fusion product in a nice way.

Next is the functoriality of the Connes fusion product. Since we have not seen this written up in the way we need it, we will discuss this in detail. Let $\mc{B}_{1}, \mc{B}_{2}$ and $\mc{B}_{3}$ be further von Neumann algebras.
Let $H'$ be a $\mc{B}_{1}$-$\mc{B}_{2}$-bimodule and let $K'$ be a $\mc{B}_{2}$-$\mc{B}_{3}$-bimodule, let
$\nu_{i}:\mc{A}_{i} \rightarrow \mc{B}_{i}$, for $i=1,2,3$,
be isomorphisms of von Neumann algebras, and
let $\nu_{H}: H \rightarrow H'$ and $\nu_{K}: K \rightarrow K'$ be unitary maps, such that $(\nu_1,\nu_2,\nu_H)$ and $(\nu_2,\nu_3,\nu_K)$ are intertwiners.
Let $\phi':\mc{B}_{2} \rightarrow \C$ be a faithful and normal state. Finally, denote by $\psi: \mc{A}_{2} \rightarrow \C$ the faithful and normal state $\phi' \circ \nu_{2}$.
 Let $u: L_{\phi}^{2}(\mc{A}_{2}) \rightarrow L_{\psi}^{2}(\mc{A}_{2})$ be the unitary given by Theorem \ref{thm:StandardFormIsUnique}, with $\pi = \mathds{1}$.
 
\begin{proposition}\label{lem:ConnesFusionOfMaps}
Connes fusion is a functor. Explicitly,  the homomorphism $\nu_{2}:\mc{A}_{2} \rightarrow \mc{B}_{2}$ extends to a unitary map $\overline{\nu}_{2}:L_{\psi}^{2}(\mc{A}_{2}) \rightarrow L^{2}_{\phi'}( \mc{B}_{2})$ such that $(\nu_2,\nu_2,\overline{\nu}_2)$ is an intertwiner. Then, the map
\begin{align}\label{eq:DenselyDefinedFusion}
    \mc{D}(H, \phi) \otimes K \rightarrow \mc{D}(H', \phi') \otimes K', \quad
    x \otimes v \mapsto \nu_{H} x u^{*} \overline{\nu}_{2}^{*} \otimes \nu_{K}(v),
\end{align}
induces a unitary map
\begin{equation*}
    \nu_{H} \boxtimes \nu_{K}: H \boxtimes_{\phi} K \rightarrow H' \boxtimes_{\phi'} K',
\end{equation*}
such that $(\nu_1,\nu_3,\nu_{H} \boxtimes \nu_{K})$
is an  intertwiner.
Moreover, the construction of this intertwiner is compatible with  composition.
\end{proposition}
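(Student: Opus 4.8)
The plan is to verify each of the claims in Proposition~\ref{lem:ConnesFusionOfMaps} in turn, checking that the formula \eqref{eq:DenselyDefinedFusion} descends to the quotients and completions, that the descended map is unitary, that it intertwines the outer actions, and finally that the whole assignment respects composition. Throughout, the guiding principle is that all the constructions are ``spatial'' in the sense that they are built out of intertwiners of bimodules, so the verifications reduce to bookkeeping with the defining relations.

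First I would establish the existence of $\overline\nu_2$. Both $L^2_\psi(\mathcal{A}_2)$ (with $\mathcal{A}_2$ acting via $\nu_2$-twisted left multiplication is not quite what we want; rather $L^2_\psi(\mathcal{A}_2)$ as a standard form of $\mathcal{A}_2$) and $L^2_{\phi'}(\mathcal{B}_2)$ as a standard form of $\mathcal{B}_2$ are standard forms; since $\psi = \phi' \circ \nu_2$, the isomorphism $\nu_2 : \mathcal{A}_2 \to \mathcal{B}_2$ sends $\psi$ to $\phi'$, so \cref{thm:StandardFormIsUnique} produces a unique unitary $\overline\nu_2 : L^2_\psi(\mathcal{A}_2) \to L^2_{\phi'}(\mathcal{B}_2)$ with $\nu_2(a) = \overline\nu_2 a \overline\nu_2^{*}$, $J_{\phi'} = \overline\nu_2 J_\psi \overline\nu_2^{*}$, and $P_{\phi'} = \overline\nu_2 P_\psi$. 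By \cref{re:bimodulefromstandardform} this makes $(\nu_2, \nu_2, \overline\nu_2)$ an intertwiner of $\mathcal{A}_2$-$\mathcal{A}_2$-bimodules. Explicitly $\overline\nu_2$ is the closure of $a \mapsto \nu_2(a)$ on the dense subspace $\mathcal{A}_2 \subset L^2_\psi(\mathcal{A}_2)$.

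Next I would check that \eqref{eq:DenselyDefinedFusion} is well-defined on $\mc{D}(H,\phi) \otimes K$, i.e.\ that $x \mapsto \nu_H x u^{*} \overline\nu_2^{*}$ really lands in $\mc{D}(H',\phi') = \Hom_{-,\mathcal{B}_2}(L^2_{\phi'}(\mathcal{B}_2), H')$: the composite $\overline\nu_2^{*} : L^2_{\phi'}(\mathcal{B}_2) \to L^2_\psi(\mathcal{A}_2)$ intertwines the right $\mathcal{B}_2$ and $\mathcal{A}_2$ actions along $\nu_2^{-1}$, $u^{*} : L^2_\psi(\mathcal{A}_2) \to L^2_\phi(\mathcal{A}_2)$ is an intertwiner of $\mathcal{A}_2$-$\mathcal{A}_2$-bimodules (\cref{lem:PutInStandardForm}, $\pi = \id$), $x$ intertwines the right $\mathcal{A}_2$-actions, and $\nu_H$ intertwines along $\nu_2$; composing, the right $\mathcal{B}_2$-actions match up, so indeed $\nu_H x u^{*}\overline\nu_2^{*} \in \mc{D}(H',\phi')$. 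Then the key computation is that the form $\langle \cdot,\cdot\rangle_{\phi'}$ pulls back to $\langle\cdot,\cdot\rangle_\phi$. For this one writes out, for $x,y \in \mc{D}(H,\phi)$ and $v,w\in K$,
\begin{equation*}
\langle (\nu_H x u^{*}\overline\nu_2^{*}) \otimes \nu_K(v), (\nu_H y u^{*}\overline\nu_2^{*}) \otimes \nu_K(w)\rangle_{\phi'} = \langle p_{\phi'}\big((\nu_H y u^{*}\overline\nu_2^{*})^{*}(\nu_H x u^{*}\overline\nu_2^{*})\big) \lact \nu_K(v), \nu_K(w)\rangle_{K'},
\end{equation*}
and simplifies $(\nu_H y u^{*}\overline\nu_2^{*})^{*}(\nu_H x u^{*}\overline\nu_2^{*}) = \overline\nu_2 u (y^{*}x) u^{*}\overline\nu_2^{*}$ using $\nu_H^{*}\nu_H = \id$; applying $p_{\phi'}$ and using that $\overline\nu_2 u$ conjugates $p_\phi$ to $p_{\phi'}\circ$ (the right translate of $\nu_2$) — more precisely $p_{\phi'}(\overline\nu_2 u\, z\, u^{*}\overline\nu_2^{*}) = \nu_2(p_\phi(z))$ for $z \in \Hom_{-,\mathcal{A}_2}(L^2_\phi(\mathcal{A}_2),L^2_\phi(\mathcal{A}_2))$, which follows from the defining relation of $p$ and the intertwining properties of $u$ and $\overline\nu_2$ — one gets $\nu_2(p_\phi(y^{*}x)) \lact \nu_K(v)$, and then $\nu_K$ being an intertwiner along $\nu_2$ turns this into $\nu_K(p_\phi(y^{*}x)\lact v)$, whence unitarity of $\nu_K$ gives $\langle p_\phi(y^{*}x)\lact v, w\rangle_K = \langle (x\otimes v),(y\otimes w)\rangle_\phi$. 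Hence \eqref{eq:DenselyDefinedFusion} preserves the sesquilinear forms, so it sends $\ker\langle\cdot,\cdot\rangle_\phi$ into $\ker\langle\cdot,\cdot\rangle_{\phi'}$ and extends to an isometry $\nu_H \boxtimes \nu_K$ on the completions; applying the same argument to $(\nu_1^{-1},\nu_2^{-1},\nu_3^{-1})$ with the inverse unitaries produces a two-sided inverse, so $\nu_H\boxtimes\nu_K$ is unitary. That it intertwines the left $\mathcal{A}_1$/$\mathcal{B}_1$-actions along $\nu_1$ and the right $\mathcal{A}_3$/$\mathcal{B}_3$-actions along $\nu_3$ is immediate from the formula: the left action only touches the $\mc{D}(H,\phi)$-leg where $\nu_H$ intertwines along $\nu_1$, and the right action only touches the $K$-leg where $\nu_K$ intertwines along $\nu_3$. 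Finally, compatibility with composition: given a second stack of data $\nu_i' : \mathcal{B}_i \to \mathcal{C}_i$, unitaries $\nu'_{H'},\nu'_{K'}$, and a state $\phi''$ on $\mathcal{C}_2$, one checks $\overline{\nu_2'\nu_2} = \overline{\nu_2'}\,\overline{\nu_2}$ (uniqueness in \cref{thm:StandardFormIsUnique}, together with the compatibility of the auxiliary $u$'s for the three states $\phi,\psi,\psi'$) and then the formula \eqref{eq:DenselyDefinedFusion} composes on the nose, giving $(\nu'_{H'}\boxtimes\nu'_{K'})\circ(\nu_H\boxtimes\nu_K) = (\nu'_{H'}\nu_H)\boxtimes(\nu'_{K'}\nu_K)$ on the dense subspace and hence everywhere; functoriality on identities is obvious.

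The main obstacle, and the step deserving the most care, is the clean bookkeeping of how the three ``change of standard form'' unitaries — $u : L^2_\phi(\mathcal{A}_2)\to L^2_\psi(\mathcal{A}_2)$ (comparing two states on the same algebra), $\overline\nu_2 : L^2_\psi(\mathcal{A}_2)\to L^2_{\phi'}(\mathcal{B}_2)$ (transporting along $\nu_2$), and their analogues at the next stage — interact with the canonical isomorphisms $p_\phi$. In particular one must be careful that the twisting by the modular operator $\Delta^{1/2}$ hidden in the inclusion $\mc{D}(H,\phi)\hookrightarrow H$ does not spoil anything; it does not, because the whole argument is phrased in terms of the right-module-map spaces $\mc{D}(H,\phi)$ and the genuine $\mathcal{A}_2$-valued inner product $(x,y) = p_\phi(y^{*}x)$, never in terms of vectors of $H$ directly, so the only identities needed are the intertwining relations for $u$, $\overline\nu_2$, $\nu_H$, $\nu_K$ and the defining relation of $p$. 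Everything else is routine.
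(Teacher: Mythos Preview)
Your proposal is correct and follows essentially the same route as the paper: both arguments verify that \eqref{eq:DenselyDefinedFusion} preserves the sesquilinear forms via the identity $p_{\phi'}(\overline\nu_2 u\, y^{*}x\, u^{*}\overline\nu_2^{*}) = \nu_2(p_\phi(y^{*}x))$ together with the intertwining property of $\nu_K$, and both establish compatibility with composition by appealing to the uniqueness in \cref{thm:StandardFormIsUnique} to identify the two candidate compositions of standard-form comparison unitaries. The only cosmetic differences are that the paper constructs $\overline\nu_2$ directly by checking it preserves the GNS inner products (rather than invoking \cref{thm:StandardFormIsUnique}), and that the paper spells out the verification of properties (1)--(3) for $u'\overline\nu_2 u = \overline\nu_2 u''$ in the composition step, whereas you leave this as a consequence of uniqueness.
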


Here, compatibility with  composition means the following. Suppose we are given the following further data: von Neumann algebras $\mc{C}_{i}$ and von Neumann algebra isomorphisms $\nu_{i}': \mc{B}_{i} \rightarrow \mc{C}_{i}$ for $i=1,2,3$, a $\mc{C}_{1}$-$\mc{C}_{2}$-bimodule $H''$ and a $\mc{C}_{2}$-$\mc{C}_{3}$-bimodule $K''$, a faithful and normal state $\phi'':\mc{C}_{2} \rightarrow \C$, and unitary maps $\nu_{H'}: H' \rightarrow H''$ and $\nu_{K'}: K' \rightarrow K''$ such that $(\nu_1',\nu_2',\nu_{H'})$ and $(\nu_2',\nu_3',\nu_{K'})$ are intertwiners.
Then the following diagram commutes:
\begin{equation}
\label{pic:MultiplicativeConnesFusion}
\begin{tikzpicture}[scale=1.5]
 \node (A) at (0,0) {$H \boxtimes_{\phi} K$};
 \node (B) at (2.5,1) {$H' \boxtimes_{\phi'} K'$};
 \node (C) at (5,0) {$H'' \boxtimes_{\phi''} K''$};
 \path[->,font=\scriptsize]
 (A) edge node[above]{$\nu_{H} \boxtimes \nu_{K}$} (B)
 (B) edge node[above]{$\nu_{H'} \boxtimes \nu_{K'}$} (C)
 (A) edge node[below]{$\nu_{H'} \nu_{H} \boxtimes \nu_{K'} \nu_{K}$} (C);
\end{tikzpicture}
\end{equation}

\begin{proof}[Proof of \cref{lem:ConnesFusionOfMaps}]
 That $\nu_{2}$ extends to a unitary map follows from the fact that it intertwines the inner product $\mc{A}_{2} \times \mc{A}_{2} \rightarrow \C, (a_{1},a_{2}) \mapsto \psi(a_{2}^{*}a_{1})$ with the inner product $\mc{B}_{2} \times \mc{B}_{2} \rightarrow \C, (a_{1},a_{2}) \mapsto \phi'(a_{2}^{*}a_{1})$.
 That $(\nu_2,\nu_2,\overline{\nu}_2)$ is an intertwiner follows from the fact that $\nu_{2}$ is a isomorphism, which implies that $J_{\phi'} = \overline{\nu}_{2} J_{\phi} \overline{\nu}_{2}^{*}$.
 
 To prove that the map $x \otimes v \mapsto \nu_{H} x u^{*} \overline{\nu}_{2}^{*} \otimes \nu_{K}(v)$ induces an isomorphism it suffices to show that it intertwines the inner products. Explicitly, we need to prove that for all $x \otimes v$ and $y \otimes w$ in $\mc{D}(H,\phi) \otimes K$ we have
 \begin{equation*}
  \langle \nu_{H} x u^{*} \overline{\nu}_{2}^{*} \otimes \nu_{K}(v), \nu_{H} y u^{*} \overline{\nu}_{2}^{*} \otimes \nu_{K}(w) \rangle = \langle x \otimes v, y \otimes w \rangle.
 \end{equation*}
 We start from the left-hand side
 \begin{align*}
   \langle \nu_{H} x u^{*} \overline{\nu}_{2}^{*} \otimes \nu_{K}(v), \nu_{H} y u^{*} \overline{\nu}_{2}^{*} \otimes \nu_{K}(w) \rangle &= \langle p_{\phi'}( \overline{\nu}_{2} u y^{*}x u^{*} \overline{\nu}_{2}^{*} ) \nu_{K}(v), \nu_{K}(w) \rangle \\
   &= \langle \nu_{K}^{*} p_{\phi'}( \overline{\nu}_{2} u y^{*}x u^{*} \overline{\nu}_{2}^{*} ) \nu_{K}(v), w \rangle.
 \end{align*}
 Hence it suffices to show that
 \begin{equation}\label{eq:nuGoal}
  \nu_{K}^{*} p_{\phi'}( \overline{\nu}_{2} u y^{*}x u^{*} \overline{\nu}_{2}^{*} ) \nu_{K}(v) = p_{\phi}(y^{*}x)v.
 \end{equation}
 Using the fact that $\nu_{K}$ is an intertwiner along $\nu_{2}$ we obtain
 \begin{equation}\label{eq:nuHelp}
  \nu_{K}^{*} p_{\phi'}( \overline{\nu}_{2} u y^{*}x u^{*} \overline{\nu}_{2}^{*} ) \nu_{K} = \nu_{2}^{*}( p_{\phi'}( \overline{\nu}_{2} u y^{*}x u^{*} \overline{\nu}_{2}^{*} ))
 \end{equation}
 Now, we compute the action of the right hand side of \cref{eq:nuHelp} on an element $a \in L^{2}_{\psi}(\mc{B}_{2})$, using the definitions of $p_{\phi'}$ and $p_{\phi}$:
 \begin{align*}
  \nu_{2}^{*}( p_{\phi'}( \overline{\nu}_{2} u y^{*}x u^{*} \overline{\nu}_{2}^{*} )) a &= \overline{\nu}_{2}^{*}( p_{\phi'}( \overline{\nu}_{2} u y^{*}x u^{*} \overline{\nu}_{2}^{*} )\overline{\nu}_{2}(a)) 
= u y^{*} x u^{*} a   
= up_{\phi}(y^{*}x )u^{*}(a)  
= p_{\phi}(y^{*}x)a,
 \end{align*}
 since this holds for all $a$ in $L^{2}_{\psi}(\mc{B}_{2})$ we obtain
$\nu_{2}^{*}( p_{\phi'}( \overline{\nu}_{2} u y^{*}x u^{*} \overline{\nu}_{2}^{*} )) = p_{\phi}(y^{*} x)$.
 Together with \cref{eq:nuHelp}, this implies \cref{eq:nuGoal}.
 
 Finally, to prove that the diagram \cref{pic:MultiplicativeConnesFusion} commutes we shall prove that the following diagram commutes:
 \begin{equation*}
  \begin{tikzpicture}[scale=2]
\node (A) at (0,0) {$\mc{D}(H, \phi) \otimes K$};
\node (B) at (2,0) {$\mc{D}(H',\phi') \otimes K'$};
\node (C) at (4,0) {$\mc{D}(H'',\phi'') \otimes K''$};
\path[->,font=\scriptsize]
(A) edge (B)
(B) edge (C)
(A) edge[bend right] (C);
 \end{tikzpicture}
 \end{equation*}
 The composition of the arrows on top is the map
 \begin{equation}\label{eq:TopComposition}
  x \otimes v \mapsto \nu_{H'} \nu_{H} x u^{*}\overline{\nu}_{2}^{*} (u')^{*} (\overline{\nu}_{2}')^{*} \otimes \nu_{K'}\nu_{K}(v),
 \end{equation}
 where $u': L^{2}_{\phi'}(\mc{B}_{2}) \rightarrow L^{2}_{\phi'' \nu_{2}'}(\mc{B}_{2})$ is the unitary given by \cref{thm:StandardFormIsUnique}, with $\pi = \mathds{1}$.
 On the other hand, the bottom map is given by
 \begin{equation}\label{eq:BottomComposition}
  x \otimes v \mapsto \nu_{H'}\nu_{H} x (u'')^{*} \overline{\nu}_{2}^{*} (\overline{\nu}_{2}')^{*} \otimes \nu_{K'} \nu_{K}(v),
 \end{equation}
 where $u'' : L^{2}_{\phi}(\mc{A}_{2}) \rightarrow L^{2}_{\phi'' \nu_{2}' \nu_{2}}(\mc{A}_{2})$ is the unitary given by \cref{thm:StandardFormIsUnique}, with $\pi = \mathds{1}$.
 Note that in this expression, $\overline{\nu}_{2}$ is viewed as an isomorphism from $L^{2}_{\phi'' \nu_{2}' \nu_{2}}(\mc{A}_{2})$ into $L^{2}_{\phi'' \nu_{2}'}(\mc{B}_{2})$, (instead of from $L^{2}_{\phi' \nu_{2}}(\mc{B}_{2})$ into $L^{2}_{\phi'}(\mc{B}_{2})$ as before).
 Comparing \cref{eq:TopComposition} with \cref{eq:BottomComposition}, we see that it is sufficient to prove that
 \begin{equation*}
  u^{*}\overline{\nu}_{2}^{*} (u')^{*} (\overline{\nu}_{2}')^{*} = (u'')^{*} \overline{\nu}_{2}^{*} (\overline{\nu}_{2}')^{*}.
 \end{equation*}
 Taking the adjoint on both sides of the equation and cancelling $\overline{\nu}_{2}'$, we see that this is equivalent to
 \begin{equation*}
  u' \overline{\nu}_{2} u = \overline{\nu}_{2} u''.
 \end{equation*}
 One checks that both $u' \overline{\nu}_{2} u$ and $\overline{\nu}_{2}u''$ are isomorphisms from $L^{2}_{\phi}(\mc{A}_{2})$ into $L^{2}_{\phi'' \nu_{2}'}(\mc{B}_{2})$. We claim that they both satisfy properties (1-3) from \cref{thm:StandardFormIsUnique}, hence that by the uniqueness statement in that theorem we conclude they must be equal. Let us check that they do in fact satisfy properties (1-3):
 \begin{enumerate}
  \item Both maps intertwine the left action along $\nu_{2}$.
  \item By construction we have $\overline{\nu_{2}} u'' J_{\phi} (u'')^{*}  \overline{\nu_{2}}^{*} = J_{\phi'' \nu_{2}'} = u' \overline{\nu}_{2} u J_{\phi} u^{*} \overline{\nu}_{2}^{*} (u')^{*}$.
  \item From \cref{thm:StandardFormIsUnique} we have that $uP_{\phi} = P_{\phi' \nu_{2}}$. Now we argue that $\overline{\nu}_{2}P_{\phi' \nu_{2}} = P_{\phi'}$. Let $J_{\phi' \nu_{2}} a J_{\phi'\nu_{2}} a \in P_{\phi' \nu_{2}}$ be arbitrary. Then we compute
  \begin{equation*}
   \overline{\nu}_{2} J_{\phi' \nu_{2}} a J_{\phi' \nu_{2}} a = \overline{\nu}_{2} J_{\phi' \nu_{2}} \overline{\nu}_{2}^{*} \overline{\nu}_{2} a \overline{\nu}_{2}^{*} \overline{\nu}_{2} J_{\phi'\nu_{2}}\overline{\nu}_{2}^{*} \overline{\nu}_{2} a = J_{\phi'} \nu_{2}(a) J_{\phi'} \overline{\nu}_{2}a,
  \end{equation*}
  from which the claim follows. Then, another application of \cref{thm:StandardFormIsUnique} yields $u'P_{\phi'} = P_{\phi'' \nu_{2}'}$, hence $u' \overline{\nu_{2}}u P_{\phi} = P_{\phi'' \nu_{2}'}$. A similar argument then shows that $\overline{\nu}_{2} u'' P_{\phi} = P_{\phi'' \nu_{2}'}$.
 \end{enumerate}
 This completes the proof.
\end{proof}

\begin{remark}
In \cite[Theorem 6.23]{Bartels2014} a statement more general than \cref{lem:ConnesFusionOfMaps} is given, where none of the maps $\nu_{i}$, $\nu_{H}$, or $\nu_{K}$ is assumed to be an isomorphism.
However, in this more general setting Equation \cref{eq:DenselyDefinedFusion} does not make sense, and it is this explicit form that we make use of in the main text.
\end{remark}

We may specialize \cref{lem:ConnesFusionOfMaps} to the case that all isomorphisms $\nu$ are identities, to conclude that for any two faithful and normal states $\phi, \phi'$ on $\mc{B}_{2}$, there exists a natural isomorphism
\begin{equation*}
 H \boxtimes_{\phi} K \rightarrow H \boxtimes_{\phi'} K.
\end{equation*}
The fact that the diagram \cref{pic:MultiplicativeConnesFusion} commutes then tells us that these isomorphisms are coherent, which allows  to define the Connes fusion product $H \boxtimes_{\mc{A}_{2}} K$ as the colimit of $H \boxtimes_{\phi} K$, where $\phi$ ranges over all faithful and normal states of $\mc{A}_{2}$. If no state is preferred, then we always refer to this limit, and if later a state is picked, then we have a unique isomorphism. In \cite[Exercise IX.3.8 (p.210)]{Ta03} another approach to defining a tensor product of bimodules without reference to a state is given.

\begin{proposition}
\label{lem:ConnesFusionAssociative}
\label{lem:FusionOfMapsCompatibleAssociative}
Connes fusion is  associative. Explicitly, if $H$ is an $\mc{A}_{1}$-$\mc{A}_{2}$-bimodule, $K$ is an $\mc{A}_{2}$-$\mc{A}_{3}$-bimodule, and $L$ is an $\mc{A}_{3}$-$\mc{A}_{4}$-bimodule, $\phi$ is a faithful and normal state on $\mc{A}_{2}$, and $\psi$ is a faithful and normal state on $\mc{A}_{3}$, then the map
 \begin{align*}
  (\mc{D}(H,\phi) \otimes K) \otimes \mc{D}'(L,\psi) &\rightarrow \mc{D}(H,\phi) \otimes (K \otimes \mc{D}'(L,\psi)) \\
  ((x \otimes v) \otimes y) & \mapsto (x \otimes (v \otimes y)),
 \end{align*}
 induces  a unitary intertwiner 
\begin{equation*} 
 \alpha_{H,K,L}:(H \boxtimes_{\phi} K) \boxtimes_{\psi} L \rightarrow H \boxtimes_{\phi} (K \boxtimes_{\psi} L)
\end{equation*}
of $\mathcal{A}_1$-$\mathcal{A}_4$-bimodules.
 Moreover, these intertwiners are natural and satisfy the pentagon identity.
\end{proposition}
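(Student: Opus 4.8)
\textbf{Proof proposal for Proposition~\ref{lem:ConnesFusionAssociative}.}

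The plan is to proceed in three stages: first produce the associator as a unitary, then check it is an intertwiner of $\mathcal{A}_1$-$\mathcal{A}_4$-bimodules, and finally establish naturality and the pentagon. For the first stage, I would fix faithful normal states $\phi$ on $\mathcal{A}_2$ and $\psi$ on $\mathcal{A}_3$, and use the description of Connes fusion afforded by \cref{lem:AltFusionDefinition}. Concretely, I would realize $(H \boxtimes_{\phi} K) \boxtimes_{\psi} L$ on the dense subspace $(\mathcal{D}(H,\phi) \otimes K) \otimes \mathcal{D}'(L,\psi)$ with the sesquilinear form inherited by combining $\langle\cdot,\cdot\rangle_\phi$ on the first factor with $\langle\cdot,\cdot\rangle'_\psi$ on the second, and similarly realize $H \boxtimes_{\phi} (K \boxtimes_{\psi} L)$ on $\mathcal{D}(H,\phi) \otimes (K \otimes \mathcal{D}'(L,\psi))$. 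The key computation is that the obvious reassociation map on $(x\otimes v)\otimes y \mapsto x \otimes (v\otimes y)$ preserves the inner products. Expanding both sides using the definitions of $p_\phi$, $p'_\psi$, and the $\mathcal{A}_2$- and $\mathcal{A}_3$-module structures on the $\mathcal{D}$ and $\mathcal{D}'$ spaces, both inner products reduce to the same expression
\begin{equation*}
\langle p_\phi(y_2^* x_2) \lact v_1 \ract p'_\psi((y_3')^* x_3'), w_1 \rangle_K,
\end{equation*}
after moving the $\mathcal{A}_2$-action and the $\mathcal{A}_3$-action past each other inside $K$ (they commute, since $K$ is a genuine $\mathcal{A}_2$-$\mathcal{A}_3$-bimodule). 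Since the map is inner-product preserving on a dense subspace, it descends to the kernel of each form and extends to a unitary $\alpha_{H,K,L}$; the density statements of \cref{lem:AltFusionDefinition} guarantee the images are dense, so this is a unitary isomorphism.

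The second stage is routine: on the dense subspace, the left $\mathcal{A}_1$-action is $a_1 \lact (x \otimes v \otimes y) = (a_1 \lact x)\otimes v \otimes y$ on both sides and is visibly preserved by reassociation, and likewise the right $\mathcal{A}_4$-action lands on the $\mathcal{D}'(L,\psi)$ tensor factor on both sides. So $(\mathrm{id}_{\mathcal{A}_1},\mathrm{id}_{\mathcal{A}_4},\alpha_{H,K,L})$ is an intertwiner of $\mathcal{A}_1$-$\mathcal{A}_4$-bimodules.

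For naturality, given intertwiners of bimodules (or isomorphisms in the sense of \cref{lem:ConnesFusionOfMaps}) $H \to H'$, $K \to K'$, $L \to L'$, one checks that the induced maps on the $\mathcal{D}$ and $\mathcal{D}'$ spaces given by the explicit formula \cref{eq:DenselyDefinedFusion} commute with reassociation; this is immediate from that formula since the unitaries $u$, $\bar\nu$ act on the $L^2$-factors being contracted, and reassociation does not touch those contractions — one only needs to track that the same $u$'s and $\bar\nu$'s appear on both routes, which follows from the compatibility-with-composition part of \cref{lem:ConnesFusionOfMaps}. The pentagon identity is then verified on the dense subspace $((\mathcal{D}(H,\phi)\otimes K)\otimes \mathcal{D}'(L,\psi))\otimes\ldots$ of the fourfold fusion, where both composites of associators send a decomposable tensor to the same fully-reassociated decomposable tensor; since both sides are bounded and agree on a dense subspace, they are equal. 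The main obstacle I anticipate is purely bookkeeping: setting up the correct dense subspaces for the iterated fusions (using \cref{lem:AltFusionDefinition} to identify $(H\boxtimes_\phi K)\boxtimes_\psi L$ with a completion of $(\mathcal{D}(H,\phi)\otimes K)\otimes \mathcal{D}'(L,\psi)$ and being careful about which side carries $\mathcal{D}$ versus $\mathcal{D}'$), and then carrying the $p_\phi$/$p'_\psi$ manipulations through without sign or adjoint errors. No genuinely new idea is needed beyond what is already packaged in \cref{lem:AltFusionDefinition} and \cref{lem:ConnesFusionOfMaps}; the content is the verification that the reassociation is isometric, which is where I would focus the write-up.
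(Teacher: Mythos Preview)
Your proposal is correct and follows essentially the same approach that the paper relies on: the paper's own proof is very brief, deferring the isomorphism statement to \cite[Theorem 3.20]{Ta03} and the pentagon identity to \cite{Brouwer2003}, and noting that naturality is a straightforward computation from the explicit dense-subspace formulas in \cref{lem:ConnesFusionOfMaps}. Your sketch spells out exactly these computations---in particular the isometry check reducing both inner products to $\langle p_\phi(x_2^*x_1)\lact v_1 \ract p'_\psi(y_2^*y_1), v_2\rangle_K$ via the commuting bimodule actions on $K$---and correctly identifies \cref{lem:AltFusionDefinition} as the tool for setting up the dense subspaces on both sides.
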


Here, naturality means the following.
Let $H,K,L$ be $\mc{A}_{1}$-$\mc{A}_{2}$-, $\mc{A}_{2}$-$\mc{A}_{3}$-, and $\mc{A}_{3}$-$\mc{A}_{4}$-bimodules respectively, and we let $H',K',L'$ be $\mc{B}_{1}$-$\mc{B}_{2}$-, $\mc{B}_{2}$-$\mc{B}_{3}$-, and $\mc{B}_{3}$-$\mc{B}_{4}$-bimodules respectively. Moreover, we consider isomorphisms $\nu_{i}: \mc{A}_{i} \rightarrow \mc{B}_{i}$ for $i=1,2,3,4$, and unitary  maps $\nu_{H}: H \rightarrow H'$, $\nu_{K}: K \rightarrow K'$ and $\nu_{L}: L \rightarrow L'$ such that $(\nu_1,\nu_2,\nu_H)$, $(\nu_2,\nu_3,\nu_K)$, and $(\nu_3,\nu_4,\nu_L)$ are intertwiners. Finally, let $\phi$, $\psi$, $\phi'$, $\psi'$ be faithful and normal states on $\mathcal{A}_2$, $\mathcal{A}_3$, $\mathcal{B}_2$, and $\mathcal{B}_3$, respectively. Then, naturality means the commutativity of the diagram 
 \begin{equation*}
        \begin{tikzpicture}[scale=1.8]
        \node (A) at (0,1) {$(H \boxtimes_{\phi} K) \boxtimes_{\psi} L$};
        \node (B) at (3,1) {$H \boxtimes_{\phi} (K \boxtimes_{\psi} L)$};
        \node (C) at (0,0) {$(H' \boxtimes_{\phi'} K') \boxtimes_{\psi'} L'$};
        \node (D) at (3,0) {$H' \boxtimes_{\phi'} (K' \boxtimes_{\psi'} L')$.};
        \path[->,font=\scriptsize]
        (A) edge node[above]{$\alpha_{H,K,L}$} (B)
        (A) edge node[left]{$(\nu_{H} \boxtimes \nu_{K}) \boxtimes \nu_{L} $} (C)
        (B) edge node[right]{$\nu_{H} \boxtimes (\nu_{K} \boxtimes \nu_{L}) $} (D)
        (C) edge node[below]{$\alpha_{H',K',L'}$} (D);
        \end{tikzpicture}
\end{equation*}

\begin{proof}[Proof of \cref{lem:ConnesFusionAssociative}]
 That the above map indeed induces an isomorphism of bimodules is \cite[Theorem 3.20]{Ta03}.
 A proof of the pentagon identity can be found in \cite{Brouwer2003}. Naturality can be proved by a straightforward computation using the explicit forms of the isomorphisms on the appropriate dense subspaces from \cref{lem:ConnesFusionAssociative,lem:ConnesFusionOfMaps} \end{proof}

Finally, we discuss the fact that the standard form of a von Neumann algebra is neutral with respect to confusion.  

\begin{proposition}\label{lem:L2IsConnesNeutral}
        The $\mc{A}_{2}$-$\mathcal{A}_2$-bimodule $L^{2}_{\phi}(\mc{A}_{2})$ is neutral with respect to Connes fusion. Explicitly, for every $\mc{A}_{2}$-$\mc{A}_{3}$-module $K$ the map
        \begin{align*}
        \mc{D}(L^{2}_{\phi}(\mc{A}_{2}),\phi) \otimes K \rightarrow K, \quad
        x \otimes v  \mapsto p_{\phi}(x) \lact v,
        \end{align*}
induces a unitary intertwiner 
\begin{equation*}
\lambda_{K}: L_{\phi}^{2}(\mc{A}_{2}) \boxtimes_{\phi} K \rightarrow K
\end{equation*}
of $\mathcal{A}_2$-$\mathcal{A}_3$-bimodules. Likewise, for every $\mc{A}_{1}$-$\mc{A}_{2}$-bimodule $H$, the map
        \begin{equation*}
         H       \otimes \mc{D}'(L_{\phi}^{2}(\mc{A}_{2}),\phi) \rightarrow H, \quad w \otimes y \mapsto w \ract p'_{\phi}(y),
        \end{equation*}
induces a unitary intertwiner
\begin{equation*}
\rho_{H}: H \boxtimes_{\phi} L_{\phi}^{2}(\mc{A}_{2}) \rightarrow H
\end{equation*}
of $\mathcal{A}_1$-$\mathcal{A}_2$-bimodules.         Moreover, these maps are natural, and compatible with the associator in the sense that~
\begin{equation*}        
        \rho_{H} \boxtimes \mathds{1}_{K} = (\mathds{1}_{H} \boxtimes \lambda_{K}) \circ \alpha_{H,L_{\phi}^{2}(\mc{A}_{2}),K}\text{.}
\end{equation*}
        Finally, we have $\rho_{L^{2}_{\phi}(\mc{A}_{2})} = \lambda_{L^{2}_{\phi}(\mc{A}_{2})}$.
\end{proposition}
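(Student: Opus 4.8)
The plan is to reduce every clause of the proposition to a computation on the explicit dense subspaces recorded in \cref{sec:ConnesFusion}, the structural input being two observations about the bimodule $L^{2}_{\phi}(\mc{A}_{2})$: first, $\mc{D}(L^{2}_{\phi}(\mc{A}_{2}),\phi)=\Hom_{-,\mc{A}_{2}}(L^{2}_{\phi}(\mc{A}_{2}),L^{2}_{\phi}(\mc{A}_{2}))$ is exactly the domain of $p_{\phi}$, and dually $\mc{D}'(L^{2}_{\phi}(\mc{A}_{2}),\phi)$ is the domain of $p'_{\phi}$; second, $p_{\phi}$ and $p'_{\phi}$ are isomorphisms onto $\mc{A}_{2}$ that are compatible with adjoints and send the identity operator on $L^{2}_{\phi}(\mc{A}_{2})$ to $\mathds{1}\in\mc{A}_{2}$. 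The bare existence of the unitors $\lambda_{K}$ and $\rho_{H}$ is the classical statement that a standard form is neutral for Connes fusion (\cite[Section 5.2]{thom11}, \cite[Chapter IX]{Ta03}), but I would redo it in order to record the explicit formulas, since exactly these formulas are invoked in the proof of \cref{thm:FusionProductsAgree}.

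For $\lambda_{K}$, the first step is isometry: because the left $\mc{A}_{2}$-action on $K$ is a $\ast$-representation and $p_{\phi}$ is a $\ast$-homomorphism for the composition product, one has
\begin{equation*}
\langle p_{\phi}(x)\lact v,\,p_{\phi}(y)\lact w\rangle_{K}=\langle p_{\phi}(y)^{*}p_{\phi}(x)\lact v,\,w\rangle_{K}=\langle p_{\phi}(y^{*}x)\lact v,\,w\rangle_{K}=\langle x\otimes v,\,y\otimes w\rangle_{\phi},
\end{equation*}
so $x\otimes v\mapsto p_{\phi}(x)\lact v$ annihilates $\ker\langle\cdot,\cdot\rangle_{\phi}$ and extends to an isometry $\lambda_{K}\colon L^{2}_{\phi}(\mc{A}_{2})\boxtimes_{\phi}K\to K$. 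It is onto since $p_{\phi}$ is onto $\mc{A}_{2}$ and $\mathds{1}$ acts as the identity, so $\lambda_{K}(\mathds{1}_{L^{2}_{\phi}(\mc{A}_{2})}\otimes w)=w$; hence $\lambda_{K}$ is unitary, and that it intertwines the left $\mc{A}_{2}$- and right $\mc{A}_{3}$-actions is read off the bimodule structures on $\mc{D}(L^{2}_{\phi}(\mc{A}_{2}),\phi)$ and $K$. The intertwiner $\rho_{H}$ is the mirror image: via \cref{lem:AltFusionDefinition} I would identify $H\boxtimes_{\phi}L^{2}_{\phi}(\mc{A}_{2})$ with the completion of $H\otimes\mc{D}'(L^{2}_{\phi}(\mc{A}_{2}),\phi)/\ker\langle\cdot,\cdot\rangle'_{\phi}$ and run the same computation with $p'_{\phi}$ and the right action, producing the unitary intertwiner $w\otimes y\mapsto w\ract p'_{\phi}(y)$ of $\mc{A}_{1}$-$\mc{A}_{2}$-bimodules.

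Naturality of $\lambda$ is then obtained by inserting the explicit form of $\nu_{H}\boxtimes\nu_{K}$ from \cref{lem:ConnesFusionOfMaps} — in the present situation the left tensor factor is a standard form, so $\nu_{H}$ is the comparison unitary $\overline{\nu}_{2}$ and the auxiliary unitary $u$ of \cref{thm:StandardFormIsUnique} enters — and checking, with the identities relating $p_{\phi}$ and $p_{\phi'}$ proved inside that lemma, that $\nu_{K}(p_{\phi}(x)\lact v)$ coincides with $p_{\phi'}$ of the transported operator evaluated on $\nu_{K}(v)$; naturality of $\rho$ is the dual computation. The triangle identity $\rho_{H}\boxtimes\mathds{1}_{K}=(\mathds{1}_{H}\boxtimes\lambda_{K})\circ\alpha_{H,L^{2}_{\phi}(\mc{A}_{2}),K}$ I would verify on a common dense subspace, using \cref{lem:AltFusionDefinition} to pass between the $\mc{D}(-,\phi)\otimes(-)$ and $(-)\otimes\mc{D}'(-,\phi)$ presentations of the various fusion products and the explicit description of $\alpha$ from \cref{lem:ConnesFusionAssociative}; both sides, evaluated on $w\otimes y\otimes v$ with $y\in\mc{D}'(L^{2}_{\phi}(\mc{A}_{2}),\phi)$, collapse to the map determined by $w\mapsto w\ract p'_{\phi}(y)$. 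Alternatively this coherence can be cited from \cite{Brouwer2003,Bartels2014}.

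Finally, for the last equality: $\rho_{L^{2}_{\phi}(\mc{A}_{2})}$ and $\lambda_{L^{2}_{\phi}(\mc{A}_{2})}$ are both unitary intertwiners $L^{2}_{\phi}(\mc{A}_{2})\boxtimes_{\phi}L^{2}_{\phi}(\mc{A}_{2})\to L^{2}_{\phi}(\mc{A}_{2})$, and here $\mc{D}(L^{2}_{\phi}(\mc{A}_{2}),\phi)$ and $\mc{D}'(L^{2}_{\phi}(\mc{A}_{2}),\phi)$ are both the relevant $\Hom$-algebra, so by \cref{lem:AltFusionDefinition} the subspace spanned by $x\otimes y$ with $x\in\mc{D}(L^{2}_{\phi}(\mc{A}_{2}),\phi)$, $y\in\mc{D}'(L^{2}_{\phi}(\mc{A}_{2}),\phi)$ is dense, and it suffices to compare the two maps there. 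Viewing $y$ as the vector $y(\mathds{1})\in L^{2}_{\phi}(\mc{A}_{2})$ gives $\lambda_{L^{2}_{\phi}(\mc{A}_{2})}(x\otimes y)=p_{\phi}(x)\lact y(\mathds{1})=x(y(\mathds{1}))$, while viewing $x$ as $x(\mathds{1})$ gives $\rho_{L^{2}_{\phi}(\mc{A}_{2})}(x\otimes y)=x(\mathds{1})\ract p'_{\phi}(y)$; since $y(\mathds{1})=\mathds{1}\ract p'_{\phi}(y)$ and $x$ commutes with the right $\mc{A}_{2}$-action, $x(y(\mathds{1}))=x(\mathds{1}\ract p'_{\phi}(y))=x(\mathds{1})\ract p'_{\phi}(y)$, so the two maps agree on this dense subspace and hence everywhere. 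The main obstacle in all of this is organizational rather than conceptual: keeping the two presentations of each Connes fusion product and their comparison unitaries straight, in particular across the three fusion products appearing in the triangle.
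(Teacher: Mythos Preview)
Your proposal is correct and follows essentially the same approach as the paper: the paper declares the construction of $\lambda_{K}$, $\rho_{H}$, naturality, and the triangle identity to be straightforward and defers to \cite{Brouwer2003} for details, while you spell these out explicitly on the dense subspaces. For the final equality $\rho_{L^{2}_{\phi}(\mc{A}_{2})}=\lambda_{L^{2}_{\phi}(\mc{A}_{2})}$ both you and the paper compare the two maps on $\mc{D}(L^{2}_{\phi}(\mc{A}_{2}),\phi)\otimes\mc{D}'(L^{2}_{\phi}(\mc{A}_{2}),\phi)$ via the same chain $p_{\phi}(x)\lact y(\mathds{1}) = p_{\phi}(x)\lact\mathds{1}\ract p'_{\phi}(y) = x(\mathds{1})\ract p'_{\phi}(y)$.
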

\begin{proof}
It is straightforward to see that the given maps induce the claimed intertwiners, and that naturality and  the compatibility condition is satisfied. For more detail, we refer to \cite{Brouwer2003}, see in particular Proposition 3.5.3 therein.
   To prove that $\rho_{L^{2}_{\phi}(\mc{A}_{2})} = \lambda_{L^{2}_{\phi}(\mc{A}_{2})}$ it suffices to prove that the diagram
        \begin{equation*}
        \xymatrix{
         \mc{D}(L^{2}_{\phi}(\mc{A}_{2}),\phi) \otimes\mc{D}'(L_{\phi}^{2}(\mc{A}_{2}),\phi) \ar[r] \ar[d] &          \mc{D}(L^{2}_{\phi}(\mc{A}_{2}),\phi) \otimes L_{\phi}^{2}(\mc{A}_{2}) \ar[d] \\
         L^{2}_{\phi}(\mc{A}_{2}) \otimes\mc{D}'(L_{\phi}^{2}(\mc{A}_{2}),\phi) \ar[r] & L_{\phi}^{2}(\mc{A}_{2}) 
        }
        \end{equation*}
        commutes, which follows from the computation
        \begin{equation*}
         x(\mathds{1}) \ract p'_{\phi}(y) = p_{\phi}(x) \lact \mathds{1} \ract p'_{\phi}(y)  = p_{\phi}(x)\lact y(\mathds{1}),
        \end{equation*}
        where $x \in \mc{D}(L_{\phi}^{2}(\mc{A}_{2}),\phi)$, $y \in \mc{D}'(L_{\phi}^{2}(\mc{A}_{2}),\phi)$ and $\mathds{1}$ is the unit of $\mc{A}_{2}$.
\end{proof}

Finally, we want to transfer the results of \cref{lem:L2IsConnesNeutral} from the canonical standard form to other standard forms. 
Let $I$ be a left $\mc{A}_2$-module, and let $\xi \in I$ be a cyclic and separating vector, so that $I$ becomes a standard form of $\mathcal{A}_2$, see \cref{lem:TTstandardform}.
Let $\phi: \mc{A}_2\rightarrow \C$ be the faithful and normal state $\phi(a) = \langle a \lact \xi, \xi \rangle$, and consider the corresponding standard form $L^{2}_{\phi}(\mc{A}_2)$. By \cref{thm:StandardFormIsUnique}, both standard forms are isomorphic under a unique isomorphism $u:  I \to  L^{2}_{\phi}(\mc{A}_2)$, which is by  \cref{lem:PutInStandardForm} given by the extension of the map
     $ a \lact \xi \mapsto a$. 
Further, we recall from \cref{re:bimodulefromstandardform} that standard forms are $\mathcal{A}_2$-$\mathcal{A}_2$-bimodules, and that $u$ is an intertwiner of $\mathcal{A}_2$-$\mathcal{A}_2$-bimodules.

\begin{corollary}
\label{co:standardformneutral}
Let $I$ be a left $\mc{A}_2$-module, and let $\xi \in I$ be a cyclic and separating vector. Then, $I$ is neutral with respect to Connes fusion. More explicitly, for every $\mathcal{A}_2$-$\mathcal{A}_3$-bimodule $K$ and every $\mathcal{A}_1$-$\mathcal{A}_2$-bimodule $H$ the unitary intertwiners 
\begin{align*}
\lambda_K^{I}: I \boxtimes_{\phi} K \to K, \quad \lambda^{I}_K\defeq  \lambda^{}_K \circ (u \boxtimes \mathds{1}_{K})
\\
\rho^{I}_H: H \boxtimes_{\phi} I\to H, \quad\rho^{I}_H\defeq \rho^{}_H \circ (\mathds{1}_{H}\boxtimes u)
\end{align*}
are natural and compatible with the associator. Moreover, we have $\lambda^{I}_I=\rho^{I}_I$. \end{corollary}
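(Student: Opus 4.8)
The plan is to deduce \cref{co:standardformneutral} directly from \cref{lem:L2IsConnesNeutral} by transport along the unique isomorphism $u: I \to L^2_\phi(\mathcal{A}_2)$ of standard forms. First I would recall the setup: by \cref{lem:TTstandardform} the pair $(I,\xi)$ equips $I$ with the structure of a standard form of $\mathcal{A}_2$, hence (via \cref{re:bimodulefromstandardform}) with an $\mathcal{A}_2$-$\mathcal{A}_2$-bimodule structure; by \cref{thm:StandardFormIsUnique} there is a unique unitary $u: I \to L^2_\phi(\mathcal{A}_2)$ intertwining the module structures, modular conjugations, and cones, and by \cref{lem:PutInStandardForm} it is the closure of $a \lact \xi \mapsto a$; finally $u$ is an intertwiner of $\mathcal{A}_2$-$\mathcal{A}_2$-bimodules (this is the content of the last sentence before the corollary). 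Then $\lambda^I_K \defeq \lambda_K \circ (u \boxtimes \mathds{1}_K)$ and $\rho^I_H \defeq \rho_H \circ (\mathds{1}_H \boxtimes u)$ are composites of unitary intertwiners — $u \boxtimes \mathds{1}_K$ is a unitary intertwiner by the functoriality of Connes fusion (\cref{lem:ConnesFusionOfMaps}, with $\nu_i$ all identities except the obvious ones), and $\lambda_K$, $\rho_H$ are unitary intertwiners by \cref{lem:L2IsConnesNeutral} — so each $\lambda^I_K$, $\rho^I_H$ is a unitary intertwiner, which proves $I$ is neutral with respect to Connes fusion.

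Next I would check naturality. Given isomorphisms $\nu_i$ and intertwiners $\nu_H,\nu_K$ as in the naturality statement of \cref{lem:L2IsConnesNeutral}, one has a square for $\lambda$ and a square for the functoriality map $u \boxtimes \mathds{1}_K$; the latter is the compatibility-with-composition of \cref{lem:ConnesFusionOfMaps} (diagram \cref{pic:MultiplicativeConnesFusion}), once one observes that $u': I' \to L^2_{\phi'}(\mathcal{A}_2')$ and the comparison unitary $\overline{\nu}_2$ fit together so that $u'$ conjugated by $\overline\nu_2$ agrees with $u$ transported along $\nu_2$ — this is exactly the identity $u' \overline\nu_2 u = \overline\nu_2 u''$ type of argument already carried out inside the proof of \cref{lem:ConnesFusionOfMaps}, specialized appropriately. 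Pasting the two commuting squares gives naturality of $\lambda^I$, and the analogous pasting gives naturality of $\rho^I$. For the compatibility with the associator, I would paste the compatibility square for $\lambda,\rho$ from \cref{lem:L2IsConnesNeutral}, namely $\rho_H \boxtimes \mathds{1}_K = (\mathds{1}_H \boxtimes \lambda_K)\circ \alpha_{H,L^2_\phi(\mathcal{A}_2),K}$, with two instances of naturality of the associator $\alpha$ (\cref{lem:ConnesFusionAssociative}) applied to the intertwiner $u$ in the middle slot; concretely, $\alpha_{H,I,K}$ and $\alpha_{H,L^2_\phi(\mathcal{A}_2),K}$ are related by conjugation with $\mathds{1}_H \boxtimes u$ on one side and $(\mathds{1}_H \boxtimes u)\boxtimes \mathds{1}_K$ on the other, and substituting this into the definitions of $\lambda^I,\rho^I$ yields $\rho^I_H \boxtimes \mathds{1}_K = (\mathds{1}_H \boxtimes \lambda^I_K)\circ \alpha_{H,I,K}$.

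Finally, for $\lambda^I_I = \rho^I_I$: by definition $\lambda^I_I = \lambda_I \circ (u \boxtimes \mathds{1}_I)$ and $\rho^I_I = \rho_I \circ (\mathds{1}_I \boxtimes u)$. I would first rewrite both in terms of the canonical standard form. Using naturality of $\lambda$ applied to $u:I \to L^2_\phi(\mathcal{A}_2)$ in the $K$-slot, $\lambda_I \circ (u \boxtimes \mathds{1}_I) = \lambda_{L^2_\phi(\mathcal{A}_2)}\circ (u \boxtimes u)$ precomposed with the correction coming from $u$ acting on the second factor, i.e. one moves both copies of $u$ outward to obtain $u^{-1}\circ \lambda_{L^2_\phi(\mathcal{A}_2)} \circ (u\boxtimes u)$; similarly $\rho_I \circ (\mathds{1}_I \boxtimes u) = u^{-1}\circ \rho_{L^2_\phi(\mathcal{A}_2)}\circ (u \boxtimes u)$. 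Then the identity $\rho_{L^2_\phi(\mathcal{A}_2)} = \lambda_{L^2_\phi(\mathcal{A}_2)}$, which is the last assertion of \cref{lem:L2IsConnesNeutral}, gives $\lambda^I_I = \rho^I_I$. The main obstacle I anticipate is purely bookkeeping: making the repeated uses of naturality of $\lambda,\rho,\alpha$ and of the composition-compatibility of Connes fusion line up with the correct states ($\phi$ versus $\phi'$ versus $\phi\circ\nu_2$) and the correct comparison unitaries, since all of these were only set up in the canonical-standard-form language and must be pulled back through $u$; but no genuinely new idea is needed, and indeed the proof can be compressed to the observation that $u$ is an isomorphism of standard forms, hence of $\mathcal{A}_2$-$\mathcal{A}_2$-bimodules, so everything in \cref{lem:L2IsConnesNeutral} transports verbatim — which is essentially what the published one-line proof ("It is straightforward \ldots we refer to \cite{Brouwer2003}") does.
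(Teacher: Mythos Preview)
Your proposal is correct and follows essentially the same approach as the paper: transport the unitors $\lambda,\rho$ from $L^2_\phi(\mathcal{A}_2)$ to $I$ along the bimodule isomorphism $u$, deduce compatibility with the associator from the naturality of $\alpha$ (\cref{lem:ConnesFusionAssociative}) applied to $u$ in the middle slot, and deduce naturality and $\lambda^I_I=\rho^I_I$ from the naturality of $\lambda$ and $\rho$ themselves. Your sketch is considerably more explicit than the paper's two-sentence proof, but the ingredients are identical; one small slip is that the parenthetical about ``the published one-line proof (`It is straightforward \ldots we refer to \cite{Brouwer2003}')'' refers to the proof of \cref{lem:L2IsConnesNeutral}, not to this corollary.
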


\begin{proof}
Compatibility with the associator follows from the definition of $\lambda_K^{I}$ and $\rho_H^{I}$ and the naturality of the associator proved in \cref{lem:FusionOfMapsCompatibleAssociative}. 
Naturality and the coincidence $\lambda^{I}_I=\rho^{I}_I$ follow from the naturality of $\lambda_K$ and $\rho_H$.  
\end{proof}

\newcommand{\etalchar}[1]{$^{#1}$}

\def\kobiburl#1{
   \IfSubStr
     {#1}
     {://arxiv.org/abs/}
     {\kobibarxiv{#1}}
     {\kobiblink{#1}}}
\def\kobibarxiv#1{\href{#1}{\texttt{[arxiv:\StrGobbleLeft{#1}{21}]}}}
\def\kobiblink#1{
  \StrSubstitute{#1}{\~{}}{\string~}[\myurl]
  \StrSubstitute{#1}{_}{\underline{\;\;}}[\mylink]
  \StrSubstitute{\mylink}{&}{\&}[\mylink]
  \StrSubstitute{\mylink}{/}{/\allowbreak}[\mylink]
  \newline Available as: \mbox{\;}
  \href{\myurl}{\texttt{\mylink}}}

\raggedright
\addcontentsline{toc}{section}{\refname}
\small
\bibliographystyle{kobib}
\bibliography{bibfile}

Peter Kristel ({\it peter.kristel@umanitoba.ca})

\smallskip

University of Manitoba\\
Department of Mathematics\\
186 Dysart Road\\
Winnipeg, MB R3T 2N2 \\
Canada

\medskip

Konrad Waldorf ({\it konrad.waldorf@uni-greifswald.de})

\smallskip

Universit\"at Greifswald\\
Institut f\"ur Mathematik und Informatik\\
Walther-Rathenau-Str. 47\\
17487 Greifswald\\
Germany

\end{document}